\newtheorem{theorem}{Theorem}[section]
\newtheorem{corollary}[theorem]{Corollary}
\newtheorem{lemma}[theorem]{Lemma}
\newtheorem{prop}[theorem]{Proposition}
\theoremstyle{definition}
\newtheorem{definition}[theorem]{Definition}
\newtheorem{example}[theorem]{Example}
\newtheorem{remark}[theorem]{Remark}
\newcommand{\N}{\mathbb{N}}
\newcommand{\Z}{\mathbb{Z}}
\newcommand{\Q}{\mathbb{Q}}
\newcommand{\R}{\mathbb{R}}
\newcommand{\C}{\mathbb{C}}
\renewcommand{\L}{\mathbb{L}}
\newcommand{\CP}{\mathbb{CP}}
\renewcommand{\k}{\Bbbk}
\newcommand{\HH}{\mathbf{H}}
\DeclareMathAlphabet{\pazocal}{OMS}{zplm}{m}{n}
\newcommand{\B}{{\pazocal{B}}}
\newcommand{\NN}{{\pazocal{N}}}
\newcommand{\RR}{{\mathcal R}}
\newcommand{\VV}{{\mathcal V}}
\newcommand{\F}{{\mathcal{F}}}
\newcommand{\cE}{{\mathcal{E}}}
\newcommand{\M}{{\mathcal{M}}}
\newcommand{\X}{{\mathcal{X}}}
\newcommand{\G}{{\mathcal{G}}}
\newcommand{\LL}{{\mathcal{L}}}
\newcommand{\CC}{{\mathcal{C}}}
\renewcommand{\AA}{{\mathcal{A}}}
\newcommand{\g}{{\mathfrak{g}}}
\newcommand{\h}{{\mathfrak{h}}}
\newcommand{\gl}{{\mathfrak{gl}}}
\renewcommand{\sl}{{\mathfrak{sl}}}
\newcommand{\m}{{\mathfrak{m}}}
\newcommand{\sol}{{\mathfrak{sol}}}
\newcommand{\ii}{\mathrm{i}}
\newcommand{\art}{\text{{\sffamily A}}}
\DeclareMathOperator{\gr}{gr}
\DeclareMathOperator{\im}{im}
\DeclareMathOperator{\id}{id}
\DeclareMathOperator{\ab}{{ab}}
\DeclareMathOperator{\GL}{GL}
\DeclareMathOperator{\SL}{SL}
\DeclareMathOperator{\PSL}{PSL}
\DeclareMathOperator{\Hom}{{Hom}}
\DeclareMathOperator{\ev}{ev}
\DeclareMathOperator{\Ev}{Ev}
\DeclareMathOperator{\ad}{ad}
\DeclareMathOperator{\ideal}{ideal}
\DeclareMathOperator{\orb}{orb}
\DeclareMathOperator{\abf}{abf}
\DeclareMathOperator{\MG}{MG}
\DeclareMathOperator{\Def}{Def}
\DeclareMathOperator{\mon}{mon}
\DeclareMathOperator{\DR}{dR}
\DeclareMathOperator{\Alb}{Alb}
\newcommand{\surj}{\twoheadrightarrow}
\newcommand{\inj}{\hookrightarrow}
\newcommand\isom{\xrightarrow{
 \,\smash{\raisebox{-0.4ex}{\ensuremath{\scriptstyle\simeq}}}\,}}
\newcommand{\abs}[1]{\left| #1 \right|}
\newcommand{\DS}{\displaystyle}
\def\dot{\mathchar"013A}  
\newcommand{\hdot}{{\raise2pt\hbox to0.35em{\Huge $\dot$}}} 
\newcommand{\sdot}{{\hbox to0.35em{\Huge $\dot$}}} 
\newcommand{\bwedge}{\mbox{\small $\bigwedge$}}
\newcommand{\bwedgedot}{\bwedge\!^{\hdot}}
\let\emptyset\varnothing
\newcommand{\Ka}{K\"{a}hler }
\newcommand{\bmp}{\overline{M}\,\!'}
\newenvironment{romenum}
{ 

\begin{enumerate}}{\end{enumerate}}
\newenvironment{alphenum}
{

\begin{enumerate}}{\end{enumerate}}
\newcommand{\CDGA}{\textup{\texttt{CDGA}}}
\newcommand{\DGL}{\textup{\texttt{DGL}}}
\newcommand{\ACDGA}{\textup{\texttt{ACDGA}}}
\newcommand{\FDGA}{\textup{\texttt{FDGA}}}
\newcommand{\MHD}{\textup{\texttt{MHD}}}
\newcommand{\Lie}{\textup{\texttt{Lie}}}
\newcommand{\Art}{\textup{\texttt{Art}}}
\newcommand{\Tp}{\textup{\texttt{Top}}}
\newcommand{\Set}{\textup{\texttt{Set}}}
\newcommand{\Cat}{\textup{\texttt{C}}}
\newcommand{\ga}{\large{\textsc{cga}}}
\newcommand{\cdga}{\large{\textsc{cdga }}}
\newcommand{\dga}{\large{\textsc{cdga}}}
\newcommand{\acdga}{\large{\textsc{acdga }}}
\newcommand{\adga}{\large{\textsc{acdga}}}
\newcommand{\dgl}{\large{\textsc{dgl}}}
\newcommand{\fdga}{\large{\textsc{fdga}}}
\definecolor{dkgreen}{RGB}{0,100,0}
\definecolor{dkbrown}{RGB}{139,69,19}
\numberwithin{equation}{section}
\begin{document}

\title[Topological and infinitesimal embedded jump loci]{%
Naturality properties and comparison results for topological 
and infinitesimal embedded jump loci}

\author[Stefan Papadima]{Stefan Papadima$^1$$\dagger$}
\address{Simion Stoilow Institute of Mathematics, 
P.O. Box 1-764,
RO-014700 Bucharest, Romania}
\email{\href{mailto:Stefan.Papadima@imar.ro}{Stefan.Papadima@imar.ro}}
\thanks{$^1$Work partially supported by the Romanian Ministry 
of Research and Innovation, CNCS--UEFISCDI, grant
PN-III-P4-ID-PCE-2016-0030, within PNCDI III}
\thanks{$\dagger$Deceased January 10, 2018}

\author[Alexander~I.~Suciu]{Alexander~I.~Suciu$^2$}
\address{Department of Mathematics,
Northeastern University,
Boston, MA 02115, USA}
\email{\href{mailto:a.suciu@northeastern.edu}{a.suciu@northeastern.edu}}
\urladdr{\href{http://web.northeastern.edu/suciu/}%
{web.northeastern.edu/suciu/}}
\thanks{$^2$Partially supported by the Simons Foundation 
collaboration grant for mathematicians 354156}

\subjclass[2010]{Primary
14B12, 
14F35,  
55N25, 
55P62. 
Secondary
20C15,  
57S15.  
}

\keywords{Representation variety, flat connection, cohomology jump loci, 
filtered differential graded algebra, relative minimal model, mixed Hodge 
structure, analytic local ring, Artinian local ring, differential graded Lie algebra, 
deformation theory, formal spaces and maps, quasi-compact K\"{a}hler manifold, 
hyperplane arrangement, principal bundle.}

\begin{abstract}
We use augmented commutative differential graded algebra ($\adga$) models 
to study $G$-representation varieties of fundamental groups $\pi=\pi_1(M)$ 
and their embedded cohomology jump loci, around the trivial 
representation $1$.  When the space $M$ admits a finite family 
of maps, uniformly modeled by $\adga$ morphisms, and 
certain finiteness and connectivity assumptions are satisfied,  
the germs at $1$ of $\Hom (\pi,G)$ and of the embedded 
jump loci can be described in terms of their infinitesimal counterparts,
naturally with respect to the given families. 
This approach leads to fairly explicit answers 
when $M$ is either a compact K\"{a}hler manifold, the complement 
of a central complex hyperplane arrangement, or the total space of a principal 
bundle with formal base space, provided the Lie algebra of the linear 
algebraic group $G$ is a non-abelian
subalgebra of $\sl_2(\C)$.
\end{abstract}

\maketitle
\setcounter{tocdepth}{1}
\tableofcontents

\section{Introduction and statement of results}
\label{sect:intro}

\subsection{Representation varieties and jump loci}
\label{subsec:rep jumps}
Sheaf cohomology is ubiquitous in geometry and topology. The parameter 
space for rank $n$ locally constant sheaves on a path-connected, pointed 
CW-complex $X$ with finitely many $1$-cells may be identified with the 
$\GL_n$-representation variety of the fundamental group of $X$.  
Twisted cohomology on $X$ is encoded by the filtrations of these varieties 
by the (embedded) jump loci.  While $\GL_1$-representation varieties 
are finite unions of affine tori, the picture 
changes dramatically in higher rank.  For instance, the universality theorem 
of Kapovich and Millson \cite{KM} states that $\PSL_2$-representation varieties 
may have arbitrarily bad singularities, away from the origin $1$ (the trivial representation). 
This is the reason why we focus here on analytic germs at the origin of the embedded
cohomology jump loci. The general case is analyzed by Budur and Wang in \cite{BW}, 
but it seems that explicit computations away from $1$ are intractable in full generality. 

By the main result from \cite{DP-ccm}, Theorem B, the germs at $1$ of the 
embedded jump loci of $X$ are isomorphic to the germs at $0$ of the 
infinitesimal embedded jump loci of a commutative, differential graded algebra $A$, 
provided this $\dga$ models $X$, and certain mild finiteness assumptions  
are satisfied. Furthermore, in the abelian case, this identification is natural.  
One of our main goals here is to extend this natural comparison to the 
non-abelian setting, by studying the behavior of jump loci under suitable 
continuous maps between spaces and $\cdga$ maps between their models. 

By construction, both representation varieties and their infinitesimal analogues 
are (bi)functorial.  The naturality properties of both types of jump loci are 
summarized in Corollaries \ref{cor:jumpnat-top} and  \ref{cor:jumpnat-inf}.  
As we point out in Example \ref{ex:nonat}, naturality at this level requires 
connectivity assumptions for maps, defined in \S\ref{subsec:qconn}.
What greatly simplifies things in the abelian case is the existence of a global, 
exponential map which relates representation varieties to their infinitesimal 
counterparts.  By way of contrast, it follows from Example \ref{ex:nolift} that 
no such map exists in the $\PSL_2$ case, even for compact Riemann surfaces. 

To avoid this major difficulty, we construct local analytic isomorphisms between 
the two types of jump loci by means of Artin approximation.  That is, we replace 
the respective local analytic rings by their completions (or by functors of Artin rings), 
and deduce local analytic naturality from naturality at the level of completions. 
This we do in Proposition \ref{prop:simartin}, which is a general result about 
simultaneous Artin approximation.   We need this `simultaneous' framework in 
view of the applications to be derived later on, which involve families of maps 
between spaces.  

The condition that makes our approach to a natural comparison between 
embedded jump loci work is based on the $q$-equivalence relation for 
morphisms between augmented commutative differential graded algebras ($\adga$s), 
denoted by $\simeq_q$, and detailed in Definition \ref{def:qeqmaps}. 
The primary examples we have in mind are the $\adga$ morphisms 
$\Omega(f)$ between Sullivan--de Rham models induced by pointed, 
continuous maps between topological spaces. 

\subsection{Natural comparison with respect to finite families of maps}
\label{subsec:nat family}
In more detail, the embedded jump loci we consider in this paper are as follows.
First let $X$ be a pointed, path-connected space with fundamental group $\pi$, and
let $\iota\colon G \to \GL(V)$ be a representation. For each $i,r\ge 0$, the embedded
jump locus of $X$ with respect to $\iota$ is the pair 
\begin{equation}
\label{eq:embjl-intro}
\left (\Hom(\pi,G),\VV^i_r(X,\iota) \right), 
\end{equation}
where $\VV^i_r(X,\iota)$ is the set of homomorphisms $\rho$ 
for which the $i$-th cohomology group of $X$ with coefficients in the 
local system $V_{\iota\circ \rho}$ has dimension at least $r$.

Next, let $A$ be a $\dga$, and let $\theta\colon \g\to \gl(V)$ be a
Lie algebra representation. The infinitesimal analog of the representation variety is
the set  $\F(A,\g)\subseteq A^1\otimes \g$ of $\g$-valued flat connections on $A$. 
For each $i,r\ge 0$, the 
infinitesimal embedded jump locus of $A$ with respect to $\theta$ is the pair 
\begin{equation}
\label{eq:infjl-intro}
\left( \F(A,\g) , \RR^i_r(A, \theta) \right), 
\end{equation}
where $\RR^i_r(A,\theta)$ is the set of flat connections $\omega$ 
for which the $i$-th cohomology group of the cochain complex 
$(A\otimes V, d_{\omega})$ defined in \eqref{eq:aomoto} has dimension at least $r$.

Assume now that $\iota$ is a rational representation of linear algebraic 
groups, and both $\g$ and $V$ are finite-dimensional. Under mild 
$q$-finiteness conditions on $X$ and $A$ (explained in \S\ref{subsec:models}),
 both $\Hom(\pi,G)$ and $\F(A,\g)$ are affine varieties, and their jump loci 
 are closed subvarieties, for all $i\le q$ and $r\ge0$.

We may now state our first main result.  Let $\{f\colon X\to X_f\}_{f\in E}$ be a 
finite family of continuous maps between pointed, path-connected spaces.  
For each $f\in E$, we denote by $f_{\sharp} \colon \pi\to \pi_f$ the induced 
homomorphism on fundamental groups. Let also $\{\Phi_f\colon A_f\to A\}_{f\in E}$ 
be a family of $\adga$ morphisms. Consider a rational representation 
of linear algebraic groups, $\iota\colon G\to \GL(V)$, over $\k=\R$ or $\C$, 
with tangential representation $\theta\colon \g\to \gl(V)$.  
For an affine $\k$-variety $\X$, we denote by $\X_{(x)}$ the 
$\k$-analytic germ of $\X$ at a point $x\in \X$. 

\begin{theorem}
\label{thm:main-intro}
Fix an integer $q\ge 1$,
and suppose the following conditions hold:
\begin{enumerate}
\item All the above spaces and $\dga$s are $q$-finite.
\item Both $f$ and $\Phi_f$ are $(q-1)$-connected maps, for all $f\in E$. 
\item $\Omega(f)\simeq_q \Phi_f$ in $\ACDGA$, uniformly with respect to $f\in E$.
\end{enumerate}
Under these assumptions, we may find  local analytic isomorphisms 
$a\colon \F(A,\g)_{(0)} \isom \Hom(\pi,G)_{(1)}$ 
and $a_f\colon \F(A_f,\g)_{(0)} \isom \Hom(\pi_f,G)_{(1)}$ for all $f\in E$ 
with the property that the following diagram commutes, for all $f\in E$:
\[
\xymatrix{
\F(A,\g)_{(0)} \ar[r]^(.45){a}& \Hom(\pi,G)_{(1)}\\
\F(A_f,\g)_{(0)} \ar[u]^{\Phi_f\otimes \id} \ar[r]^(.45){a_f}
&  \Hom(\pi_f,G)_{(1)}\ar[u]_{f_{\sharp}^{!}}.
}
\]
Moreover, this construction induces the following commuting diagram of 
(local, reduced) embedded jump loci, for all $f\in E$, $i\le q$, and $r\ge 0$:
\[
\xymatrix{
(\F(A,\g), \RR^i_r(A,\theta))_{(0)} \ar[r]^(.48){a}& (\Hom(\pi,G), \VV^i_r(X,\iota))_{(1)}\,\,
\\
(\F(A_f,\g), \RR^i_r(A_f,\theta))_{(0)}\ar[u]^{\Phi_f\otimes \id} \ar[r]^(.48){a_f}
&  (\Hom(\pi_f,G), \VV^i_r(X_f,\iota))_{(1)}\, ,   \ar[u]_{f_{\sharp}^{!}}
}
\]
where both horizontal arrows are isomorphisms of analytic pairs. 
\end{theorem}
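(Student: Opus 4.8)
The plan is to reduce the theorem to a naturality statement at the level of complete local rings (equivalently, functors of Artin rings), where the rigidity of formal deformation theory lets one check commutativity directly, and then to transfer the conclusion back to analytic germs by the simultaneous Artin approximation of Proposition~\ref{prop:simartin}. The necessity of this detour is the content of Example~\ref{ex:nolift}: in the $\sl_2(\C)$ setting there is no global exponential map relating $\F(A,\g)$ to $\Hom(\pi,G)$, so the isomorphisms $a$ and $a_f$ cannot be written down explicitly and tested for naturality by hand.

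First I would recast both sides in deformation-theoretic terms. Near $0$, the scheme $\F(A,\g)$ is the Maurer--Cartan locus of the differential graded Lie algebra $A\otimes\g$, with differential $d\otimes\id$ and bracket induced by the product on $A$ and the bracket on $\g$, so its formal germ at $0$ is pro-represented by the associated Maurer--Cartan functor. By Theorem~B of \cite{DP-ccm}, once a model $A$ of $X$ is fixed, the germ $\Hom(\pi,G)_{(1)}$ receives an analytic isomorphism from $\F(A,\g)_{(0)}$; the issue here is to make such an isomorphism, and the analogous one for $X_f$ and $A_f$, compatible with $f_{\sharp}^{!}$ and $\Phi_f\otimes\id$. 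Hypotheses (1)--(3) are precisely what makes this possible: $q$-finiteness keeps the rings Noetherian and the varieties honestly affine; the $(q-1)$-connectivity of $f$ and of $\Phi_f$ --- which by Example~\ref{ex:nonat} cannot be dropped --- forces the induced maps on minimal models and on cohomology through degree $q$ to behave well; and the \emph{uniform} $q$-equivalence $\Omega(f)\simeq_q\Phi_f$ lets one choose the zig-zag of CDGA morphisms relating $\Omega(X)$ to $A$, and $\Omega(X_f)$ to $A_f$, so that it intertwines $\Omega(f)$ with $\Phi_f$ for every $f\in E$ at once. Tensoring with $\g$, passing to Maurer--Cartan functors, and applying \cite{DP-ccm} then yields isomorphisms $\hat a$ and $\hat a_f$ of pro-representing rings fitting into a strictly commuting square with $\Phi_f\otimes\id$ and $f_{\sharp}^{!}$, because that square is induced by an honest commutative diagram of Lie algebra morphisms.

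Next I would feed this commuting square of completions, together with the finite family $\{f\}_{f\in E}$, into Proposition~\ref{prop:simartin}, obtaining analytic germ isomorphisms $a$ and $a_f$ that induce $\hat a$ and $\hat a_f$ on completions and are compatible with $\Phi_f\otimes\id$ and $f_{\sharp}^{!}$; this is the first asserted diagram. For the embedded jump loci I would invoke Corollaries~\ref{cor:jumpnat-top} and~\ref{cor:jumpnat-inf}: the assignments $X\mapsto\VV^i_r(X,\iota)$ and $A\mapsto\RR^i_r(A,\theta)$ are functorial under $f_{\sharp}^{!}$ and $\Phi_f\otimes\id$ respectively. By the jump-loci part of Theorem~B of \cite{DP-ccm}, applied to $A$ (a $q$-model of $X$ by hypothesis (3)), the germ isomorphism $a$ carries $\RR^i_r(A,\theta)_{(0)}$ onto $\VV^i_r(X,\iota)_{(1)}$ for all $i\le q$ and $r\ge 0$ --- the bound $i\le q$ being exactly what the $q$-equivalence controls, since $(A\otimes V,d_\omega)$ computes the twisted cohomology of $X$ only through degree $q$ --- and likewise $a_f$ matches $\RR^i_r(A_f,\theta)$ with $\VV^i_r(X_f,\iota)$. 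Since the two horizontal matchings are thereby fixed and the underlying germ square already commutes, the second diagram commutes and its horizontal arrows are isomorphisms of analytic pairs.

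The main obstacle is twofold, and both aspects concern \emph{families}. First, one must arrange the Lie-algebra-level data to be strictly, not merely homotopically, compatible across all of $E$; this is where the uniformity built into $\Omega(f)\simeq_q\Phi_f$ and the connectivity hypotheses are essential, since a naive choice of minimal-model zig-zags commutes only up to homotopy, and such homotopies do not descend to the Maurer--Cartan schemes. Second, one needs Artin approximation in the simultaneous form of Proposition~\ref{prop:simartin}, so that a single analytic isomorphism $a$ is compatible with every $a_f$ at once rather than one $f$ at a time. Packaging the germs, the maps $\Phi_f\otimes\id$ and $f_{\sharp}^{!}$, and the formal isomorphisms $\hat a$, $\hat a_f$ into one approximation problem over a suitable Noetherian base, and verifying that it meets the hypotheses of Proposition~\ref{prop:simartin}, is the technical heart of the argument.
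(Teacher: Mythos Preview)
Your overall strategy matches the paper's: pass to completions via deformation functors, verify naturality there, then descend via Proposition~\ref{prop:simartin}. However, there is a genuine gap in your treatment of the embedded jump loci.

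You propose to apply Proposition~\ref{prop:simartin} to the bare commuting square of completions, obtain $a$ and $a_f$, and then invoke Theorem~B of \cite{DP-ccm} separately to conclude that $a$ carries $\RR^i_r(A,\theta)_{(0)}$ onto $\VV^i_r(X,\iota)_{(1)}$. This does not work: Theorem~B asserts only that \emph{some} analytic isomorphism matches the jump loci, not that the particular $a$ produced by your Artin approximation does. Approximation pins down $a$ only through $\gr_1$, and distinct analytic isomorphisms with the same linear part need not act identically on a given ideal. The paper's remedy is to feed the jump-locus ideals $I^i_r$, $\overline{I}^i_r$, $I^i_r(f)$, $\overline{I}^i_r(f)$ directly into Proposition~\ref{prop:simartin} as the families indexed by $F$, after first checking (Lemma~\ref{lem:embhat}, resting on \cite[Lem.~9.9]{DP-ccm}) that the completed isomorphisms $\alpha$, $\alpha_f$ identify the completed ideals. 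The properness and compatibility conditions in hypothesis~\eqref{p1} of Proposition~\ref{prop:simartin} also need to be verified, which the paper does via \eqref{eq:simvoid} and Corollaries~\ref{cor:jumpnat-top}--\ref{cor:jumpnat-inf}.

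Two smaller points. First, your account of why uniformity is needed is slightly off. You say one must arrange the Lie-algebra data to be ``strictly, not merely homotopically, compatible,'' since homotopies ``do not descend to the Maurer--Cartan schemes.'' In fact the zig-zag squares \eqref{eq:ziggyf} commute only up to augmented homotopy, and the whole point of Lemma~\ref{lem:htpy} and Corollary~\ref{cor:man} is that such homotopies \emph{do} descend to equalities of deformation functors once one passes to the augmented gauge quotient. Uniformity is needed for a different reason: it guarantees that the top zig-zag $Z_f$ induces the \emph{same} bijection $\beta_{Z_f}$ for every $f\in E$, so that a single $\alpha\colon\widehat{R}\isom\widehat{\overline{R}}$ fits into the square of Lemma~\ref{lem:comm} simultaneously for all $f$. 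Second, you should record explicitly that the $(q-1)$-connectivity of $f$ and $\Phi_f$ is used (via $0$-connectivity, i.e., surjectivity of $f_\sharp$ and injectivity of $\Phi_f^1$) to make $\phi_f$ and $\bar\phi_f$ epimorphisms of analytic algebras, which is part of the standing setup of Proposition~\ref{prop:simartin}.
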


The meaning of the $q$-equivalence relation between continuous pointed maps 
and $\adga$ maps, uniformly with respect to finite families, is explained in Definition 
\ref{def:unifq}. For a one-element family $\{f\}$, this condition simply means that 
$\Omega(f)\simeq_q \Phi_f$ in $\ACDGA$.  For a certain type of two-element 
family, the uniformity condition is verified in the next theorem.

Let $f\colon X\to Y$ be a continuous, pointed map between path-connected spaces.  
Let $\pi$ be the fundamental group of $X$, let $\abf\colon \pi\surj \pi_{\abf}$ 
be the projection onto its maximal torsion-free abelian quotient, and 
let $f_0\colon X\to K(\pi_{\abf},1)$ be a classifying map for this   
projection. Set  $A^{\hdot}_0= (\bwedgedot H^1(X), d=0)$.  

\begin{theorem}
\label{thm:2unif-intro}
Suppose that 
$X$ and $Y$ are $q$-finite, for some $q\ge 1$, and $\Omega(f)\simeq_q \Phi$ in 
$\ACDGA$, where $\Phi\colon A_Y\to A_X$ is a morphism between $q$-finite 
$\adga$s.   There is then an $\adga$ map 
$\Phi_0\colon A_0 \to A_X$ inducing an isomorphism on $H^1$, and such that 
$\Omega(f_0)\simeq_q \Phi_0$ in $\ACDGA$, uniformly with respect to the 
families $\{f,f_0\}$ and $\{\Phi,\Phi_0\}$. Moreover, if $f$ and $\Phi$ are $0$-connected 
maps, then all hypotheses from Theorem \ref{thm:main-intro} are satisfied for $q=1$. 
\end{theorem}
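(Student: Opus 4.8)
The plan is to produce $\Phi_0$ by hand, to verify $\Omega(f_0)\simeq_q\Phi_0$ via degree-one obstruction theory for free $\cdga$s, and then to read off the hypotheses of Theorem~\ref{thm:main-intro}. For the construction of $\Phi_0$, recall that $\Omega(f)\simeq_q\Phi$ supplies, through Definition~\ref{def:qeqmaps}, a chain of $q$-quasi-isomorphisms of $\adga$s joining $A_X$ to $\Omega(X)$; composing it gives a morphism $\mu\colon A_X\to\Omega(X)$ in $\mathrm{Ho}(\ACDGA)$, in particular an isomorphism $H^1(\mu)\colon H^1(A_X)\isom H^1(X)$. I would then pick a $\k$-linear section $s\colon H^1(X)\to Z^1(A_X)$ of the surjection $Z^1(A_X)\surj H^1(A_X)\xrightarrow{H^1(\mu)}H^1(X)$; since $A_0=(\bwedgedot H^1(X),d=0)$ is free on $H^1(X)$ placed in degree $1$ with zero differential, $s$ extends uniquely to a $\cdga$ morphism $\Phi_0\colon A_0\to A_X$. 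It sends $\bwedge^{\ge1}H^1(X)$ into $A_X^{\ge1}$, hence is compatible with the augmentations, and $H^1(\Phi_0)=H^1(\mu)^{-1}$ is an isomorphism; note also that $A_0$ has finite type, so it is $q$-finite, which lets one even speak of $\Omega(f_0)\simeq_q\Phi_0$.

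To compare $\Omega(f_0)$ with $\Phi_0$, write $T=K(\pi_{\abf},1)$. Since $X$ is $q$-finite and $q\ge1$, the group $\pi$ is finitely generated, so $\pi_{\abf}\cong\Z^n$ with $n=\dim_\k H^1(X)<\infty$ and $T$ is an $n$-torus; $T$ is formal, so its minimal model $(\bwedgedot H^1(T),0)$ carries an $\adga$ quasi-isomorphism $\lambda_0$ to $\Omega(T)$ that is the identity on $H^1$. The classifying map $f_0$ induces an isomorphism $H^1(f_0)\colon H^1(T)\isom H^1(X)$ — under $H^1(-;\k)=\Hom(-,\k)$ it is precomposition with $\abf$, which is bijective because $\k$ has characteristic zero — so $\lambda:=\lambda_0\circ(\bwedge H^1(f_0))^{-1}\colon A_0\to\Omega(T)$ is again an $\adga$ quasi-isomorphism. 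Consider the square
\[
\xymatrix{
A_0\ar[r]^{\Phi_0}\ar[d]_{\lambda} & A_X\ar[d]^{\mu}\\
\Omega(T)\ar[r]^{\Omega(f_0)} & \Omega(X)
}
\]
in $\mathrm{Ho}(\ACDGA)$: both composites $A_0\to\Omega(X)$ induce $\id_{H^1(X)}$ on first cohomology, being $H^1(\mu)\circ H^1(\mu)^{-1}$ and $H^1(f_0)\circ H^1(f_0)^{-1}$ respectively. Because $A_0$ is free, generated in degree $1$, with zero differential, any two $\adga$ morphisms out of $A_0$ that agree on $H^1$ are $\adga$-homotopic (an explicit polynomial homotopy being $v\mapsto(1-t)\varphi(v)+t\psi(v)+h(v)\,dt$, where $d\,h(v)=\psi(v)-\varphi(v)$), so the square commutes in $\mathrm{Ho}(\ACDGA)$ — which is precisely the assertion $\Omega(f_0)\simeq_q\Phi_0$. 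Since the chain of $q$-quasi-isomorphisms $A_X\to\Omega(X)$ entering this square is the one already fixed for $\Omega(f)\simeq_q\Phi$, and $\lambda$ is a genuine quasi-isomorphism, the two equivalences are realized over a common model of $X$ and of $\Omega(X)$, i.e. uniformly with respect to the families $\{f,f_0\}$ and $\{\Phi,\Phi_0\}$ in the sense of Definition~\ref{def:unifq}.

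Finally, under the extra assumption that $f$ and $\Phi$ are $0$-connected, I would take $E=\{f,f_0\}$ with $(A_f,\Phi_f)=(A_Y,\Phi)$ and $(A_{f_0},\Phi_{f_0})=(A_0,\Phi_0)$ and check the three hypotheses of Theorem~\ref{thm:main-intro} for $q=1$: $X,Y,A_X,A_Y$ are $1$-finite by assumption, $T$ is a finite complex and $A_0$ has finite type, so all are $1$-finite; $f$ and $\Phi$ are $0$-connected by assumption, $f_0$ is $0$-connected because $H^0(f_0)$ and $H^1(f_0)$ are isomorphisms, and $\Phi_0$ is $0$-connected because $H^0(\Phi_0)$ and $H^1(\Phi_0)$ are isomorphisms; and $\Omega(f)\simeq_q\Phi$ with $q\ge1$ forces $\Omega(f)\simeq_1\Phi$, which combined with the uniform $\Omega(f_0)\simeq_1\Phi_0$ of the previous paragraph supplies condition~(3). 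The one genuinely delicate point is the uniformity in the second paragraph: one must match the two homotopy-commutative squares to the exact configuration required by Definition~\ref{def:unifq}, carried over a single chain of $q$-quasi-isomorphisms out of $A_X$ and into $\Omega(X)$; all the rest — the construction and $H^1$-behaviour of $\Phi_0$, the formality of $K(\pi_{\abf},1)$, and the degree-one obstruction calculus for free $\cdga$s — is routine.
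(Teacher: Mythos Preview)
Your approach differs from the paper's in how $\Phi_0$ is produced and, more importantly, in how uniformity is established. The paper first invokes Proposition~\ref{prop:ztos} to replace the given zig-zag $Z$ (coming from $\Omega(f)\simeq_q\Phi$) by a \emph{special} short zig-zag $S\colon \Omega(X)\xleftarrow{\rho}\M\xrightarrow{\bar\rho}A_X$ through a $q$-minimal model $\M$ extending a $\pi_1$-adapted $1$-minimal model $\NN$, with $\beta_Z=\beta_S$. It then uses the canonical inclusion $j\colon A_0\hookrightarrow\NN\hookrightarrow\M$ from \cite{DP-ccm} and sets $\Phi_0=\bar\rho\circ j$. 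Diagram~\eqref{eq:abfdgr} then exhibits the ladder for $f_0$ explicitly, with $S$ as its top row; uniformity is automatic because both ladders share the top zig-zag $S$, and $\beta_S=\beta_Z$ by construction.

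Your construction of $\Phi_0$ via a linear section $s\colon H^1(X)\to Z^1(A_X)$ is fine, and the degree-one obstruction argument for augmented homotopy of maps out of $A_0$ is correct. The genuine gap is exactly where you flag it. You write the square ``in $\mathrm{Ho}(\ACDGA)$'', but the right vertical $\mu$ is not an actual $\acdga$ map: the zig-zag $Z$ consists only of $q$-equivalences, which need not be invertible in the homotopy category, so the square is a heuristic, not a diagram in $\ACDGA_0$. Definition~\ref{def:unifq} asks for an honest ladder as in \eqref{eq:ziggyf}, with top row $Z_{f_0}$ satisfying $\beta_{Z_{f_0}}=\beta_{Z_f}$. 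To finish your argument you would have to take $Z_{f_0}=Z$, set the bottom row to $\Omega(T)\xleftarrow{\lambda}A_0=\cdots=A_0$, and lift $\Phi_0$ step by step through the intermediate $\cdga$s of $Z$ (using that $A_0$ is a minimal Sullivan algebra generated in degree $1$ and that each $\psi_i$ is a $q$-equivalence with $q\ge1$), checking at the end that the leftmost square commutes up to augmented homotopy via your $H^1$-obstruction argument. This can be made to work, but it is precisely the labor that Proposition~\ref{prop:ztos} together with the $\pi_1$-adapted model $\NN$ packages for free in the paper's proof.
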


\subsection{A general framework for applications}
\label{subsec:apps}
Let $\pi$ be the fundamental group of a $1$-finite manifold $M$.  
We aim at finding structural results for (non-abelian) embedded jump loci of $M$ near the 
origin, in low degrees. To start with, we want  to extract from the geometry of $M$ 
a finite family of group epimorphisms, $\{f_{\sharp}\colon \pi\surj \pi_f\}_{f\in \cE(M)}$, induced 
on fundamental groups by maps $f\colon M\to M_f$ onto manifolds of smaller dimension. 
Next, we set $E(M)=\cE(M)\cup \{f_0\}$, where $f_0$ is a classifying map for 
the projection $\abf \colon \pi\surj \pi_{\abf}$.  

Let $\iota\colon G\to \GL(V)$ be a rational representation of $\C$-linear algebraic groups.
For a group homomorphism $h\colon \pi\to \pi'$, we let 
$h^{!}\colon \Hom(\pi',G)\to \Hom(\pi,G)$ denote the induced morphism 
between the corresponding representation varieties. 
As explained in Remark \ref{rem:tri}, the abelian part of $\Hom(\pi,G)$ near the 
trivial representation coincides with the germ $\abf^{!} (\Hom(\pi_{\abf} , G))_{(1)}$.
By naturality of jump loci, the natural inclusion,
\begin{equation}
\label{eq:repincl-intro}
\Hom(\pi,G) \supseteq \bigcup_{f\in E(M)} f_{\sharp}^{!} \Hom (\pi_f,G)\, ,
\end{equation}
induces inclusions 
\begin{equation}
\label{eq:vincl-intro}
\VV^i_r(M,\iota) \supseteq \bigcup_{f\in E(M)} f^{!} \VV^i_r (M_f,\iota)
\end{equation}
for all $i\le 1$ and $r\ge 1$. Finally, we 
ask whether the inclusions \eqref{eq:repincl-intro} and \eqref{eq:vincl-intro} 
become equalities near $1$.

We focus on the rank $2$ case, when the Lie algebra $\g$ of $G$ is a non-abelian 
subalgebra of $\sl_2(\C)$.  Our techniques allow us to treat simultaneously two 
interesting classes of examples: (1) quasi-compact K\"{a}hler manifolds 
(in particular, quasi-projective manifolds), and (2) closed, 
smooth manifolds endowed with a free action of a  compact, connected, real 
Lie group $K$. In the first case, the family $\cE(M)$ consists of equivalence classes 
of `admissible' maps (in the sense of Arapura \cite{Ar}) from $M$ to smooth complex 
curves of negative Euler characteristic.  In the second case, $\cE(M)$ has only one 
element, namely the bundle projection $M\to M/K$.  

When the group $G$ is $\SL_2(\C)$ or $\PSL_2(\C)$ and $M$ is a quasi-projective 
manifold, equality in \eqref{eq:repincl-intro} is related to deep results of Corlette--Simpson 
\cite{CS} and  Loray--Pereira--Touzet \cite{LPT}, which give a rather intricate  
classification for the $G$-representations of $\pi_1(M)$, 
also valid away from $1$.  When $M$ is a quasi-compact K\"{a}hler manifold, 
$\iota=\id_{\C^{\times}}$, and $i=r=1$,  equality in \eqref{eq:vincl-intro} near $1$  
is equivalent to the subtle description of $\VV^1_1(M, \iota)$ from \cite{Ar}, again 
also valid away from $1$.  Thus, our results below may be viewed as a 
more precise version of the aforementioned work, in a broader context, 
albeit only near the origin. 

\begin{theorem}
\label{thm:intro2}
Let $G$ be a $\C$-linear algebraic group with non-abelian Lie algebra 
$\g\subseteq \sl_2(\C)$, and let $\iota\colon G\to \GL(V)$ be a rational 
representation.  For $i=r=1$ and for $i=0$, $r\ge 1$, both 
\eqref{eq:repincl-intro} and \eqref{eq:vincl-intro} become 
equalities near the origin $1$, provided $\pi=\pi_1(M)$ and either
\begin{enumerate}
\item \label{di1} 
$M$ is a compact, connected K\"{a}hler manifold;
\item  \label{di2} 
$M$ is the complement of a (central) complex hyperplane arrangement;
\item  \label{di3}  
$M$ is a closed, connected, differentiable manifold supporting a free action by a 
compact, connected real Lie group $K$, and the orbit space $M/K$ is formal
in the sense of Sullivan \cite{Su}.
\end{enumerate}
Here $E(M)=\cE(M)\cup \{f_0\}$, where $f_0$ realizes 
$\abf \colon \pi\surj \pi_{\abf}$ 
and the set $\cE(M)$ consists of all admissible maps in the
first two cases, and the projection $M \to M/K$ in the third.
\end{theorem}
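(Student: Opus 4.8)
The plan is to verify that in each of the three cases, the hypotheses of Theorem~\ref{thm:main-intro} (together with the two-element uniformity provided by Theorem~\ref{thm:2unif-intro}) are met for the family $E(M)$, so that the topological embedded jump loci near $1$ are identified with their infinitesimal counterparts, and then to prove the infinitesimal equalities by a direct computation exploiting the hypothesis $\g\subseteq\sl_2(\C)$ non-abelian. First I would set up, in each case, the $q$-finite $\adga$ model $A_M$ and the $\adga$ morphisms $\Phi_f\colon A_{M_f}\to A_M$ modeling the maps $f\in\cE(M)$, checking that $\Omega(f)\simeq_1\Phi_f$: for (1), $M$ being compact \Ka is formal (Deligne--Griffiths--Morgan--Sullivan), so $A_M$ may be taken to be $(H^*(M),0)$, and the Arapura admissible maps to curves are modeled by the pullback maps on cohomology; for (2), the Orlik--Solomon algebra is a formal model, and the pencil sub-arrangements / multinets supply the maps to curves; for (3), one uses the minimal model of the fibration $K\to M\to M/K$, with $M/K$ formal by hypothesis, and the fiber a compact connected Lie group, hence formal with $H^*(K)$ an exterior algebra on odd generators. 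In all three cases the relevant maps $f$ are $0$-connected (surjective on $\pi_1$ by construction of $\cE(M)$, and $\pi_0$-surjective trivially), and so is $\Phi_0$ on $H^1$; then Theorem~\ref{thm:2unif-intro} upgrades the data to a uniform family over $E(M)=\cE(M)\cup\{f_0\}$ and certifies that all hypotheses of Theorem~\ref{thm:main-intro} hold for $q=1$.

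Granting that, Theorem~\ref{thm:main-intro} gives a local analytic isomorphism of pairs identifying $(\Hom(\pi,G),\VV^i_r(M,\iota))_{(1)}$ with $(\F(A_M,\g),\RR^i_r(A_M,\theta))_{(0)}$, compatibly with the maps $f_\sharp^!$ and $\Phi_f\otimes\id$, for $i\le 1$ and all $r$. So it suffices to prove the \emph{infinitesimal} equalities
\[
\F(A_M,\g)_{(0)}=\bigcup_{f\in E(M)}(\Phi_f\otimes\id)\,\F(A_{M_f},\g)_{(0)}
\]
and the analogous statements for $\RR^1_1$ and for $\RR^0_r$, $r\ge 1$. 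This is where the assumption $\g\subseteq\sl_2(\C)$, non-abelian, does the real work: a $\g$-valued flat connection on $A_M$ is an element $\omega\in A^1_M\otimes\g$ with $d\omega+\tfrac12[\omega,\omega]=0$. Writing $\omega=\sum_j \eta_j\otimes g_j$ with $\eta_j\in A^1_M$ and $g_j\in\g$, the flatness equation in a non-abelian subalgebra of $\sl_2$ (which is either a Borel, i.e.\ the $2$-dimensional solvable algebra, or all of $\sl_2$) forces strong constraints: up to conjugation one reduces to the case where the "leading" part of $\omega$ lies in a Cartan, and the bracket term $[\omega,\omega]$ pairs the $\eta_j$'s via the cup product $A^1_M\wedge A^1_M\to A^2_M$. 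One shows that any such $\omega$ is, after an $\Aut(\g)$-change of coordinates, supported (in its degree-one leading term) on a subspace $W\subseteq H^1(M)$ that is \emph{isotropic} for the cup product, i.e.\ $W\wedge W\to H^2(M)$ vanishes, or on a single cohomology class coming from $H^1(M/K)$ in case (3); and then one invokes the known description of the resonance varieties $\RR^1_1(A_M)$ in each case --- for (1) and (2) the components of $\RR^1_1$ are precisely the $p^*H^1(C)$ for admissible/pencil maps $p$ (this is the infinitesimal "Arapura/resonance" theorem, due in the \Ka case to work building on \cite{Ar} and in the arrangement case to the multinet description), and for (3) $\RR^1_1$ is the image of $H^1(M/K)$. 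Each isotropic $W$ realizing a flat connection is thus contained in some $\Phi_f^*(A^1_{M_f})$, and conversely every flat connection pulled back from $A_{M_f}$ (where $M_f$ is a curve or the orbit space, so $\F(A_{M_f},\g)$ is well understood --- for a curve of negative Euler characteristic the rank-two flat connections near $0$ are pulled back from the abelianization, and for $M/K$ formal the connection is essentially abelian on $H^1(M/K)$) lies in the union; this gives both inclusions. The jump-loci equalities for $\RR^1_1$ follow because the first cohomology of $(A_M\otimes V,d_\omega)$ jumps exactly along these same components (again by the resonance computation), and the $\RR^0_r$ case is elementary: $H^0(A_M\otimes V,d_\omega)$ is the $\omega$-invariants of $V$, which only depends on the image of $\omega$ in a rank-one quotient, hence is detected on the abelian factor $f_0$.

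The main obstacle I expect is the infinitesimal computation of $\F(A_M,\g)$ itself --- showing every $\g$-valued flat connection on the model is, up to conjugation, pulled back along one of the finitely many maps in $E(M)$. In the rank-one (abelian) case this is immediate from Hodge theory / the structure of $\RR^1_1$, but for $\g\subseteq\sl_2$ non-abelian one must genuinely control the non-linear flatness equation and the conjugation action of $G$ (or $\Aut(\g)$) on the connection, and match the resulting stratification with the components of the resonance variety; the delicate point is that a flat connection need not have a single cohomology class as leading term --- it can be "genuinely two-dimensional" in a Borel --- and one must show this still forces an isotropic plane of classes pulled back from a curve. This is precisely the local, infinitesimal shadow of the Corlette--Simpson \cite{CS} and Loray--Pereira--Touzet \cite{LPT} classification, and carrying it out cleanly --- reducing to isotropic subspaces of $H^1$ and then quoting the resonance decomposition in each of the three geometric settings --- is the technical heart of the argument. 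Everything else (formality, modeling the maps, $0$-connectivity, and then transporting via Theorems~\ref{thm:main-intro} and~\ref{thm:2unif-intro}) is, comparatively, bookkeeping.
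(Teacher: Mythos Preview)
Your overall architecture---model the family $E(M)$ by $\adga$ maps, verify the hypotheses of Theorem~\ref{thm:main-intro}, transport to the infinitesimal side, and then compute $\F(A_M,\g)$ and $\RR^1_1(A_M,\theta)$ directly---is exactly the paper's strategy. But there is a genuine gap in how you obtain \emph{uniformity}.

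Theorem~\ref{thm:2unif-intro} takes a \emph{single} map $f$ with $\Omega(f)\simeq_q\Phi$ and extends this to a uniform two-element family $\{f,f_0\}$. It does \emph{not} take an arbitrary finite family $\cE(M)$ and adjoin $f_0$ uniformly. In case~\eqref{di3} this is harmless, since $\cE(M)=\{f\}$ is a singleton (the bundle projection), and the paper indeed proceeds via Theorem~\ref{thm:2unif-intro} there. But in cases~\eqref{di1} and~\eqref{di2} the set $\cE(M)$ may have many elements, and you give no mechanism to make the equivalences $\Omega(f)\simeq\Phi_f$ uniform across \emph{all} $f\in\cE(M)$ simultaneously. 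Knowing $\Omega(f)\simeq\Phi_f$ for each $f$ separately is not enough: different zig-zags for different $f$'s can yield different bijections $\beta_{Z_f}$, and then Theorem~\ref{thm:main-intro} does not apply. The paper closes this gap by exhibiting a single \emph{functorial} zig-zag: in the K\"ahler case, the $dd^c$-lemma zig-zag of \cite{DGMS} is natural with respect to all holomorphic maps (Proposition~\ref{prop:unifk}), and the Albanese map is holomorphic, so the full family $E(M)$ is handled at once; for arrangements, the Brieskorn map $\xi_\AA\colon(H^\hdot(M_\AA),0)\to\Omega_{\DR}(M_\AA)$ is shown to be natural with respect to every $f\in E(M_\AA)$ by a direct log-form computation (Proposition~\ref{prop:unifarr}). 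You need an argument of this shape, not an appeal to Theorem~\ref{thm:2unif-intro}.

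On the infinitesimal side, your isotropic-subspace sketch is in the right spirit, but the paper does not argue this way from scratch: it quotes the decompositions $\F(H^\hdot(M),\g)=\F^1\cup\bigcup_f f^!\F$ and $\RR^1_1(H^\hdot(M),\theta)=\Pi\cup\bigcup_f f^!\F$ from \cite{MPPS} (Lemma~\ref{lem:l3k}), and then identifies $\F^1$ and $\Pi$ with the pullbacks along $\Phi_0$ via Lemma~\ref{lem:l4k}. For case~\eqref{di3} the analogous decomposition comes from \cite{PS-15}, combined with Lemmas~\ref{lem:mppsfam} and~\ref{lem:flatres}. Two remarks on your sketch: your assertion that ``for a curve of negative Euler characteristic the rank-two flat connections near $0$ are pulled back from the abelianization'' is false (see Example~\ref{ex:nolift}), though fortunately you do not need it---the curves $M_f$ are the \emph{targets}, and what is used is rather that $\VV^1_1(M_f,\iota)=\Hom(\pi_f,G)$ when $\chi(M_f)<0$. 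And the $i=0$ case is handled more simply than you indicate: once the equality for $\Hom$ is established, the equality for $\VV^0_r$ follows immediately from Corollary~\ref{cor:jumpnat-top}, with no separate invariants computation.
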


The common strategy of proof is to choose appropriate uniform $\adga$ models 
for the family $E(M)$ and apply Theorem \ref{thm:main-intro} to replace topological 
by infinitesimal equalities.  In turn, the latter equalities are verified using results 
from \cite{MPPS} for parts \eqref{di1}--\eqref{di2} and from \cite{PS-15} for part \eqref{di3}.

Compact K\"{a}hler manifolds and complements of complex hyperplane arrangements 
provide highly non-trivial examples of uniform formality (over $\k=\R$ or $\C$) 
with respect to finite families of maps. In Proposition \ref{prop:unifk}, we reinterpret 
the main result from \cite{DGMS} in the following form: $\Omega_{\k}(f)\simeq H^{\hdot}(f,\k)$ 
in $\ACDGA$, uniformly with respect to an arbitrary finite family of holomorphic maps 
between compact K\"{a}hler manifolds.  Similarly, we recast in Proposition \ref{prop:unifarr} 
the main result of \cite{FY}, as follows:   $\Omega_{\k}(f)\simeq H^{\hdot}(f,\k)$ 
in $\ACDGA$, uniformly with respect to the family $E(M_{\AA})$, for any central 
complex hyperplane arrangement $\AA$ in $\C^3$ with complement $M_{\AA}$.  In this way, 
we are able to apply Theorem \ref{thm:main-intro} in order to 
prove parts \eqref{di1}--\eqref{di2} of Theorem \ref{thm:intro2}.

For part \eqref{di3}, let $N=M/K$ be the orbit space of the free $K$-action on $M$, 
and let $f\colon M\to N$ be the projection map of the resulting principal $K$-bundle. 
Assuming that $N$ has a finite model $A_N$ over a field $\k$ of characteristic $0$, 
we construct in Proposition \ref{prop:hmodel} a finite model $A_M$ for $M$ and an 
$\adga$ map $\Phi_f\colon A_N\to A_M$ such that $\Omega_{\k}(f)\simeq \Phi_f$ 
in $\ACDGA$.  In the case when $N$ is formal, we may take $A_N=(H^{\hdot}(N,\k),d=0)$.  
Applying now Theorems \ref{thm:main-intro} and \ref{thm:2unif-intro} 
completes the proof of Theorem \ref{thm:intro2}\eqref{di3}.

Further applications of the techniques that go into proving the above results 
can be found in our recent preprint \cite{PS-17}.  In particular, in the context of 
Theorem \ref{thm:intro2}, parts \eqref{di1}--\eqref{di2}, but for an arbitrary complex 
linear algebraic group $G$, it is shown in \cite[Theorem 1.1(2)]{PS-17} that the germs 
$f_{\sharp}^{!} \Hom (\pi_f,G)_{(1)}$ and  $g_{\sharp}^{!} \Hom (\pi_g,G)_{(1)}$ 
from decomposition \eqref{eq:repincl-intro}  intersect only at the origin, provided 
the maps $f,g\in \cE(M)$ are distinct. This transversality property is a substantial 
non-abelian extension of the corresponding rank $1$ result, proved in \cite{DPS-duke}
in the case when $\iota$ is the standard isomorphism $\C^{\times} \isom \GL_1(\C)$.

\subsection{Formal maps and regular maps}
\label{subsec:fregmaps}

The uniformity property for one-element families of maps may be 
verified in two further classes of examples: formal maps between 
formal spaces, and regular maps between quasi-projective manifolds. 

By definition, a continuous map $f\colon X \to Y$ is formal over $\k$ if it is 
modeled in $\CDGA$ by the morphism 
$H^{\hdot}(f,\k)\colon H^{\hdot}(Y,\k)\to H^{\hdot}(X,\k)$, cf.~\cite{Su, VP}.  
In Proposition \ref{prop:mainf}, we prove the following:  
If $f$ is formal over $\k$ and $H^1(f,\k)$ is injective, 
then $\Omega_{\k}(f) \simeq H^{\hdot}(f,\k)$ in $\k$-$\ACDGA$.  
Applying Theorem \ref{thm:main-intro} yields relevant information 
(summarized in Proposition \ref{prop:fmap})
on the map induced by $f$ between the corresponding 
embedded jump loci. 

To state the quasi-projective analogue of the formality property, we need to 
recall from \cite{Mo, CG} some relevant facts.  Every quasi-projective manifold $M$ 
is of the form $\overline{M}\setminus D$, where $\overline{M}$ is a smooth, projective variety 
and $D$ is a normal-crossing divisor in $\overline{M}$. A regular map between two such  
manifolds, $f\colon M\to M'$, is induced by a regular map $\bar{f}\colon \overline{M} \to 
\bmp$ with the property that $\bar{f}^{-1}(D')\subseteq D$.  The manifold $M$ 
admits as a finite $\dga$ model over $\C$ Morgan's Gysin model $\MG(\overline{M}, D)$.  
Furthermore, the regular map $f$ is modeled in $\C$-$\CDGA$ by a certain map 
$\Phi (\bar{f})\colon \MG(\bmp, D') \to \MG(\overline{M}, D)$.  

In Proposition \ref{lem:mainqp}, we use relative Sullivan models for 
$\dga$ maps to  improve on these known facts, as follows.  
Let $f\colon M\to M'$ be a regular map between quasi-projective manifolds, 
and let $\bar{f}\colon (\overline{M},D) \to (\bmp,D')$ be an extension as above.  
If $H^1(f)$ is injective, then $\Omega_{\C}(f) \simeq \Phi (\bar{f})$ in $\C$-$\ACDGA$.
We indicate in Remark \ref{rem:deligne} some possible applications of this result.

\subsection{Conventions}
\label{subsec:conv}
All spaces are assumed to be path-connected.  The default 
coefficient ring is a field $\k$ of characteristic $0$.  (When speaking 
about analytic germs and analytic algebras, $\k$ will be either $\R$ or $\C$.) 
Graded $\k$-vector spaces are non-negatively graded.  

\section{Artin approximation}
\label{sect:artin}

Our approach to naturality properties of cohomology jump loci 
is based on (simultaneous) Artin approximation, using the 
book Tougeron~\cite{T} as a basic reference.  
We start by proving a general result of this type. 

Given a local ring $(R,\m)$, we denote by $\gr_{\hdot}(R)$ the 
associated graded ring with respect to the $\m$-adic filtration. 
The $\m$-adic completion of $R$ will be denoted by $\widehat{R}$.  
If $I \subset R$ is an ideal, $\widehat{I}$ will stand for the extended 
ideal $\widehat{R}\cdot I$ of $\widehat{R}$.  Morphisms between 
local rings are assumed to be local. 

We will use M.~Artin's theorem on approximating formal power series 
solutions of analytic equations by convergent power series (see 
\cite[III.4]{T}) in the following form.

\begin{theorem}
\label{thm:martin}
Let $R$ and $\overline{R}$ be two analytic algebras, and 
let $\{I_k\}_{k\in F}$ and $\{\overline{I}_k\}_{k\in F}$ be two 
finite families of proper ideals in these algebras. Suppose 
$\alpha\colon \widehat{R} \to \widehat{\overline{R}}$ is 
a morphism sending $\widehat{I}_k$ to $\widehat{\overline{I}}_k$ 
for all $k$. There is then a morphism $a\colon R \to \overline{R}$ 
such that $a(I_k)\subset \overline{I}_k$ for all $k$ and 
$\gr_1(a)=\gr_1(\alpha)$. 
\end{theorem}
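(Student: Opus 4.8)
The plan is to reduce Theorem~\ref{thm:martin} to the standard Artin approximation theorem of \cite[III.4]{T} by rewriting the data encoded in $\alpha$ as a system of analytic equations whose indeterminates are the coefficients of the images of generators, and whose solutions over $\widehat{\overline{R}}$ are exactly the morphisms respecting the filtrations by the $I_k$. First I would fix finite presentations $R=\k\{x_1,\dots,x_m\}/J$ and $\overline{R}=\k\{y_1,\dots,y_n\}/\overline{J}$, together with generators of each ideal $I_k$ (modulo $J$) and $\overline{I}_k$ (modulo $\overline{J}$). A morphism $a\colon R\to\overline{R}$ is the same thing as an $m$-tuple $(a_1,\dots,a_m)$ of elements of $\overline{R}$, subject to the relations: the generators of $J$ must vanish when evaluated at $a$. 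The two extra conditions—$a(I_k)\subset\overline{I}_k$ for all $k$, and $\gr_1(a)=\gr_1(\alpha)$—need to be turned into analytic equations as well. For the inclusion $a(I_k)\subset\overline{I}_k$, I would introduce auxiliary unknowns $c_{k,ij}\in\overline{R}$ and impose that each generator $g_{ki}$ of $I_k$, evaluated at $a$, equals $\sum_j c_{k,ij}\,\overline{g}_{kj}$, where $\overline{g}_{kj}$ are generators of $\overline{I}_k$; solvability of this equation in the $c$'s is precisely membership of $a(g_{ki})$ in $\overline{I}_k\widehat{\overline{R}}\cap\overline{R}$, which by faithful flatness of completion equals $\overline{I}_k$. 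For the linear-part condition, note that $\gr_1$ only sees the images $a_i\bmod\m_{\overline{R}}^2$; since $\alpha$ is prescribed, $\gr_1(\alpha)$ fixes these images, so I would simply write $a_i=\ell_i+(\text{higher order unknowns})$ where $\ell_i$ is the (known) linear form $\gr_1(\alpha)(x_i)$, thereby building the constraint directly into the shape of the ansatz rather than as an equation.

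The second step is to observe that $\alpha\colon\widehat{R}\to\widehat{\overline{R}}$, together with the hypothesis $\alpha(\widehat{I}_k)\subseteq\widehat{\overline{I}}_k$, furnishes a \emph{formal} solution of the whole system just assembled: take $\hat a_i=\alpha(x_i)\in\widehat{\overline{R}}$ and let the $\hat c_{k,ij}\in\widehat{\overline{R}}$ be the coefficients witnessing $\alpha(\widehat{I}_k)\subseteq\widehat{\overline{I}}_k$. The linear-part normalization holds by construction. Then I would invoke Artin's theorem in the form of \cite[III.4]{T}: given a finite system of analytic equations over $\overline{R}$ (here in the unknowns $a_i$, $c_{k,ij}$, and whatever unknowns were needed to express membership modulo $\overline{J}$) admitting a formal solution agreeing with a prescribed jet to some fixed finite order, there is a convergent solution agreeing with that formal solution to that order. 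Taking the order to be $1$ (or $2$, to be safe about $\gr_1$) yields convergent $a_i\in\overline{R}$ with $a_i\equiv\alpha(x_i)\bmod\m_{\overline{R}}^2$, hence $\gr_1(a)=\gr_1(\alpha)$, together with convergent $c$'s showing $a(I_k)\subset\overline{I}_k$; and the vanishing of the generators of $J$ at $a$ guarantees that $a$ descends to a well-defined local homomorphism $R\to\overline{R}$.

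The main obstacle, and the point I would be most careful about, is the bookkeeping that translates the \emph{ideal-theoretic} conditions into honest \emph{analytic equations} of the type to which \cite[III.4]{T} applies—in particular handling the quotient presentations cleanly (both $R/J$ and $\overline{R}/\overline{J}$), making sure the auxiliary unknowns $c_{k,ij}$ range over $\overline{R}$ and not some larger ring, and checking that ``$a(g_{ki})\in\overline{I}_k$'' is genuinely equivalent to solvability of the linear-in-$c$ equation, which is where faithful flatness of $\overline{R}\to\widehat{\overline{R}}$ enters (so that a formal witness $\hat c$ guarantees the \emph{existence} of a convergent witness, which is exactly what the approximation theorem then produces). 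A secondary subtlety is that the family $F$ is finite, so the total system remains finite and the simultaneous approximation is no harder than the single-ideal case; it is worth remarking explicitly that finiteness of $F$ is what makes the reduction to the classical statement legitimate. Everything else is the routine mechanics of applying Artin approximation, and I would not belabor it.
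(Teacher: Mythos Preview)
The paper does not actually prove Theorem~\ref{thm:martin}: it is stated without proof as a convenient reformulation of Artin's approximation theorem, with a bare citation to \cite[III.4]{T}. Your proposal supplies precisely the missing derivation, and it is correct and standard: encode a local morphism $R\to\overline{R}$ by the images of generators, turn the relations defining $R$ and the ideal-membership conditions $a(I_k)\subset\overline{I}_k$ into a finite system of analytic equations (the latter via auxiliary coefficient unknowns), observe that $\alpha$ furnishes a formal solution, and invoke Artin approximation to obtain a convergent solution agreeing with the formal one modulo $\m_{\overline{R}}^2$, which secures $\gr_1(a)=\gr_1(\alpha)$.

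Two small remarks. First, you offer two mechanisms for the $\gr_1$ condition---building the linear part into the ansatz, and approximating to order~$2$---and only one is needed; the second is the cleaner route and is exactly what the ``to any prescribed order'' clause in Artin's theorem is for. Second, your caution about faithful flatness is well placed but slightly misdirected: you do not need it to \emph{justify} the equivalence between ideal membership and solvability of the linear-in-$c$ system, because Artin approximation produces the convergent $c$'s directly as part of the solution; faithful flatness would only be relevant if you tried to separate the two steps. Otherwise the bookkeeping is exactly as you describe, and the finiteness of $F$ is indeed what keeps the system finite.
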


The next lemma will also be useful in the sequel. 

\begin{lemma}
\label{lem:gr}
Let $R$ and $\overline{R}$ be two analytic algebras, and 
let $I\subset R$ and $\overline{I} \subset \overline{R}$ be two 
proper ideals. Suppose $a\colon R \to \overline{R}$ is a 
morphism that sends $I$ to $\overline{I}$, and 
$\alpha\colon \widehat{R} \to \widehat{\overline{R}}$ is 
an isomorphism 
such that $\alpha(\widehat{I})= \widehat{\overline{I}}$ and 
$\gr_1(a)=\gr_1(\alpha)$. Then $a$ is an isomorphism 
and $a(I)=\overline{I}$.
\end{lemma}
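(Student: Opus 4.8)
The plan is to deduce the statement from Theorem~\ref{thm:martin} together with a standard fact on filtered morphisms of complete local rings. First I would observe that an analytic algebra morphism $a\colon R\to\overline{R}$ induces a morphism of $\m$-adic completions $\widehat a\colon \widehat R\to\widehat{\overline R}$, and that $\gr_{\hdot}(\widehat a)=\gr_{\hdot}(a)$; in particular $\gr_1(\widehat a)=\gr_1(a)=\gr_1(\alpha)$. Since $\widehat R$ and $\widehat{\overline R}$ are complete with respect to their maximal-ideal filtrations, and both filtrations are separated (Krull intersection), a morphism between them is an isomorphism as soon as the induced map on the associated graded rings is an isomorphism. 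Thus the key reduction is: show $\gr_{\hdot}(\widehat a)=\gr_{\hdot}(\alpha)$, and then use that $\alpha$ is an isomorphism to conclude $\gr_{\hdot}(\widehat a)$ is an isomorphism, hence $\widehat a$ is an isomorphism.

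The subtlety is that $\gr_1(\widehat a)=\gr_1(\alpha)$ does not by itself force $\gr_{\hdot}(\widehat a)=\gr_{\hdot}(\alpha)$ in an arbitrary complete local ring. Here, however, $\widehat R$ and $\widehat{\overline R}$ are rings of formal power series (completions of analytic algebras), so each $\gr_{\hdot}$ is a polynomial ring generated in degree $1$, and any local morphism of such rings is determined on all of $\gr_{\hdot}$ by its degree-$1$ part. Hence $\gr_1(\widehat a)=\gr_1(\alpha)$ propagates multiplicatively to give $\gr_{\hdot}(\widehat a)=\gr_{\hdot}(\alpha)$ as ring maps. Since $\alpha$ is assumed to be an isomorphism, $\gr_1(\alpha)$ is a linear isomorphism, so $\gr_1(\widehat a)$ is too, and therefore $\widehat a$ is an isomorphism of complete local rings. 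By faithful flatness of completion (or by a direct argument using that $R\to\widehat R$ is injective and that $a$ becomes an isomorphism after completion, combined with $R$ and $\overline{R}$ being Noetherian), it follows that $a\colon R\to\overline{R}$ is itself an isomorphism of analytic algebras.

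It remains to verify $a(I)=\overline{I}$. By hypothesis $a(I)\subseteq\overline{I}$, so $a$ induces a surjection $R/I\to\overline{R}/\overline{I}$ precisely when $a(I)=\overline{I}$; equivalently I must show that the induced ring map $\overline a\colon R/I\to\overline{R}/\overline{I}$ is injective, since then $a^{-1}(\overline{I})=I$ and, $a$ being surjective, $a(I)=\overline{I}$. To see injectivity of $\overline a$, pass once more to completions: $\widehat{R/I}=\widehat R/\widehat I$ and $\widehat{\overline R/\overline I}=\widehat{\overline R}/\widehat{\overline I}$, and the induced map between them is $\alpha$ modulo ideals, which by the assumption $\alpha(\widehat I)=\widehat{\overline I}$ and the bijectivity of $\alpha$ is an isomorphism $\widehat R/\widehat I\isom\widehat{\overline R}/\widehat{\overline I}$. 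Since $R/I$ and $\overline R/\overline I$ are analytic (hence Noetherian) local rings and their completions are faithfully flat, injectivity of $R/I\to\overline R/\overline I$ follows from injectivity of the completed map. Therefore $a(I)=\overline{I}$, completing the argument.

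The main obstacle I anticipate is the passage from ``$\gr_1$ agree'' to ``$\gr_{\hdot}$ agree, hence $\widehat a$ is an isomorphism'': one must genuinely use that the completions are power series rings (so that $\gr_{\hdot}$ is generated in degree $1$) rather than general complete local rings, and one must be careful that $\alpha$ and $\widehat a$ are both \emph{local} morphisms so that the comparison on $\gr_{\hdot}$ is legitimate. Once this is in place, the faithful flatness of completion for Noetherian local rings handles the descent of ``isomorphism'' and of ``$a(I)=\overline I$'' from $\widehat R$ back to $R$ in a routine way.
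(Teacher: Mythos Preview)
Your overall strategy---reducing to completions and using that the associated graded is generated in degree~$1$---is sound, and is essentially the content of the result from \cite[III.5]{T} that the paper invokes as a black box. However, there are two issues with the execution.

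First, completions of analytic algebras are \emph{not} formal power series rings in general: an analytic algebra is a quotient $\k\{x_1,\dots,x_n\}/J$, and its completion is $\k[[x_1,\dots,x_n]]/\widehat J$, which need not be regular. Fortunately, the fact you actually need---that $\gr_{\hdot}(R)=\bigoplus_n \m^n/\m^{n+1}$ is generated as a $\k$-algebra by $\gr_1=\m/\m^2$---holds for \emph{any} local ring, so your conclusion $\gr_{\hdot}(\widehat a)=\gr_{\hdot}(\alpha)$ (and hence that $\widehat a$, and then $a$, is an isomorphism) does follow once you supply this correct justification.

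Second, and more seriously, when you pass to the quotients $R/I$ and $\overline R/\overline I$, the map induced on completions is $\widehat a$ modulo the extended ideals, \emph{not} $\alpha$ modulo the extended ideals. Having $\gr_{\hdot}(\widehat a)=\gr_{\hdot}(\alpha)$ does not make $\widehat a=\alpha$, so you cannot invoke the hypothesis $\alpha(\widehat I)=\widehat{\overline I}$ to conclude directly that the completed quotient map is an isomorphism. The fix is to rerun your first argument on the induced morphism $a'\colon R/I\to\overline R/\overline I$: since $\gr_1(a')$ and $\gr_1(\alpha')$ (where $\alpha'$ is the isomorphism induced by $\alpha$ on the completed quotients) are both obtained from $\gr_1(a)=\gr_1(\alpha)$ by passing to quotients, they coincide, and the same graded/completion argument then shows $a'$ is an isomorphism, whence $a(I)=\overline I$. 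This is precisely the paper's proof: it applies the Tougeron criterion (surjectivity on $\gr_1$ together with isomorphic completions implies isomorphism) once to $a$ and once to $a'$.
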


\begin{proof}
It follows from \cite[III.5]{T} that a morphism of analytic algebras 
with isomorphic completions must be an isomorphism, provided 
the given morphism induces a surjection on $\gr_1$. Consequently, 
both $a\colon R \to \overline{R}$ and the induced morphism, 
$a'\colon R/I \to \overline{R}/\overline{I}$, 
are isomorphisms, and the claim follows. 
\end{proof}

We are now ready to describe the setup for our approximation 
result.  Let $\{\phi_i\colon R\surj R_i\}_{i\in E}$ and 
$\{\bar\phi_i\colon \overline{R}\surj \overline{R}_i\}_{i\in E}$ be two 
families of epimorphisms between analytic algebras, indexed by the same finite set $E$. 
Furthermore, let $\{I^j \subseteq R\}_{j\in F}$, 
$\{\overline{I}^j \subseteq \overline{R}\}_{j\in F}$, 
$\{I^j_i \subseteq R_i\}_{j\in F}$, and 
$\{\overline{I}^j_i \subseteq \overline{R}_i\}_{j\in F}$ be 
families of ideals in the respective analytic algebras, 
indexed by the same finite set $F$.  
Finally, let $\alpha \colon \widehat{R}\isom \widehat{\overline{R}}$ and 
$\{\alpha_i \colon \widehat{R}_i\isom \widehat{\overline{R}}_i\}_{i\in E}$
be isomorphisms between the respective completions.

\begin{prop}
\label{prop:simartin}
In the above setup, assume the following 
conditions hold, for all  $i\in E$ and $j\in F$ (as the case may be):
\begin{enumerate}
\item \label{p1}
$I^j\ne R \Leftrightarrow \overline{I}^j\ne \overline{R}, \, 
I^j_i\ne R_i \Leftrightarrow \overline{I}^j_i\ne \overline{R}_i, \,  
I^j = R \Rightarrow I^j_i = R_i, \, 
\overline{I}^j = \overline{R} \Rightarrow \overline{I}^j_i = \overline{R}_i$ ;
\item \label{p2}
$\alpha(\widehat{I}^j) = \widehat{\overline{I}}^j$ ; 
\item \label{p3}
$\alpha_i(\widehat{I}^j_i) = \widehat{\overline{I}}^j_i$ ;
\item  \label{p4}
$\hat{\bar\phi}_i \circ \alpha = 
\alpha_i\circ \hat\phi_i$ . 
\end{enumerate}
There exist then isomorphisms 
$a \colon R\isom \overline{R}$ and 
$\{a_i \colon R_i\isom \overline{R}_i\}_{i\in E}$ such that 
\begin{romenum}
\item \label{i1}
$a(I^j) = \overline{I}^j$ \/
for all $j\in F$;
\item  \label{i2}
$a_i(I^j_i) = \overline{I}^j_i$ \/
for all $i\in E$ and $j\in F$;
\item  \label{i3}
$\bar\phi_i \circ a = a_i\circ \phi_i$ \/ for all $i\in E$.
\end{romenum}
\end{prop}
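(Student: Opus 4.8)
The plan is to reduce Proposition~\ref{prop:simartin} to a single application of Theorem~\ref{thm:martin} by packaging the entire family of rings and epimorphisms into two ambient analytic algebras, then transport the conclusion back to the individual factors. Concretely, set $R^{\flat} = R \times \prod_{i\in E} R_i$ and $\overline{R}^{\flat} = \overline{R} \times \prod_{i\in E} \overline{R}_i$. These are again analytic algebras (a finite product of analytic algebras is analytic, being a quotient of a convergent power series ring), with maximal ideals the products of the respective maximal ideals, and $\widehat{R^{\flat}} = \widehat{R}\times\prod_i \widehat{R}_i$, similarly for $\overline{R}^{\flat}$. The map $\alpha^{\flat} := \alpha \times \prod_i \alpha_i \colon \widehat{R^{\flat}} \isom \widehat{\overline{R^{\flat}}}$ is an isomorphism of the completions. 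First I would record these identifications carefully, since $\gr_1$ of a product is the direct sum of the $\gr_1$'s and I will want $\gr_1(\alpha^{\flat}) = \gr_1(a^{\flat})$ to split coordinatewise.

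Next I would encode all four families of conditions as ideals in $R^{\flat}$ and $\overline{R}^{\flat}$. The graphs of the epimorphisms are handled via the kernels: for each $i\in E$, let $\Gamma_i \subseteq R^{\flat}$ be the ideal generated by $\{(x, 0,\dots,0) - (0,\dots,\phi_i(x),\dots,0) : x\in R\}$, placed in the $R$-slot and the $R_i$-slot; requiring a morphism to send $\Gamma_i$ into $\overline{\Gamma}_i$ forces compatibility with $\phi_i$ and $\bar\phi_i$ up to the kernels — but since the $\phi_i$ are \emph{surjective}, knowing the image of the graph ideal pins down the induced map on $R_i$ exactly, giving \ref{i3}. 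For the jump-loci ideals, I would take, for each $j\in F$, the ideals $I^j \times \prod_i I^j_i \subseteq R^{\flat}$ and $\overline{I}^j\times\prod_i \overline{I}^j_i \subseteq \overline{R}^{\flat}$. Hypotheses \ref{p1} guarantee these are proper exactly when they should be and that no bookkeeping pathology arises from an ideal collapsing to the whole ring in some slot but not another; hypotheses \ref{p2}, \ref{p3}, \ref{p4} together say precisely that $\alpha^{\flat}$ carries the completed version of each of these combined ideals (the graph ideals $\widehat{\Gamma}_i$ and the product ideals) to its barred counterpart. Condition \ref{p4} is the one that makes $\alpha^{\flat}(\widehat{\Gamma}_i) = \widehat{\overline{\Gamma}}_i$.

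With this setup, Theorem~\ref{thm:martin} applied to $R^{\flat}$, $\overline{R}^{\flat}$ and the finite family of ideals $\{\Gamma_i\}_{i\in E} \cup \{I^j\times\prod_i I^j_i\}_{j\in F}$ produces a morphism $a^{\flat}\colon R^{\flat}\to\overline{R}^{\flat}$ with $a^{\flat}(\Gamma_i)\subseteq\overline{\Gamma}_i$, $a^{\flat}(I^j\times\prod_i I^j_i)\subseteq \overline{I}^j\times\prod_i\overline{I}^j_i$, and $\gr_1(a^{\flat}) = \gr_1(\alpha^{\flat})$. The inclusion $a^{\flat}(\Gamma_i)\subseteq\overline{\Gamma}_i$ forces $a^{\flat}$ to respect the product decomposition in a way compatible with the $\phi_i$: projecting to the $R$-slot gives a morphism $a\colon R\to\overline{R}$, projecting to the $R_i$-slot a morphism $a_i\colon R_i\to\overline{R}_i$, and the graph condition yields $\bar\phi_i\circ a = a_i\circ\phi_i$ (here surjectivity of $\phi_i$ is what upgrades "$a_i\circ\phi_i$ and $\bar\phi_i\circ a$ agree on the image of the graph" to genuine equality of maps out of $R_i$). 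Since $\gr_1$ splits over the product, $\gr_1(a)=\gr_1(\alpha)$ and $\gr_1(a_i)=\gr_1(\alpha_i)$ for each $i$; applying Lemma~\ref{lem:gr} to the pair $(R,\overline{R})$ with ideal $I^j$, and to each $(R_i,\overline{R}_i)$ with ideal $I^j_i$ — using that $\alpha$, $\alpha_i$ are isomorphisms and carry the completed ideals onto their counterparts — shows $a$, $a_i$ are all isomorphisms and that the ideal inclusions in \ref{i1}, \ref{i2} are equalities. This gives all of \ref{i1}--\ref{i3}.

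I expect the main obstacle to be the bookkeeping around the graph ideals: one must check that "morphism sends $\Gamma_i$ into $\overline{\Gamma}_i$" is equivalent to the desired intertwining relation, that the resulting $a$, $a_i$ are genuinely the component maps of $a^{\flat}$ (i.e.\ $a^{\flat}$ really is $a\times\prod a_i$, not merely block-triangular), and that this compatibility survives passage to $\gr_1$ and to the completions so that the hypotheses of Lemma~\ref{lem:gr} are met slot by slot. A cleaner alternative for this step is to avoid graphs and instead build $\overline{R}^{\flat}$-module or pushout structures encoding the $\phi_i$ directly, but the graph-ideal approach keeps everything inside the literal hypotheses of Theorem~\ref{thm:martin}. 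The conditions in \ref{p1} are exactly the compatibility needed so that no $I^j$ or $I^j_i$ improperly becomes the unit ideal when assembled; one should verify the combined ideal $I^j\times\prod_i I^j_i$ is proper iff the relevant barred one is, which is immediate from \ref{p1} since a product ideal is proper iff it is proper in every factor.
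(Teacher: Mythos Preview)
Your packaging strategy has a genuine gap at the very first step: the product
$R^{\flat} = R \times \prod_{i\in E} R_i$ is \emph{not} a local ring (as soon as
$E\ne\emptyset$), hence not an analytic algebra in the sense used here.
Quotients of convergent power series rings are always local, so your claim that
a finite product of analytic algebras is again one cannot hold. Consequently
there is no single maximal ideal, the ``maximal ideal'' you name
($\m\times\prod_i\m_i$) is not maximal, the $\m$-adic filtration and
$\gr_{\hdot}$ you need are not available, and Theorem~\ref{thm:martin} (stated
for analytic algebras with local morphisms) simply does not apply to
$R^{\flat}$. Even setting this aside, two further points would need attention:
your parenthetical that a product ideal is proper iff it is proper in every
factor is backwards (it is proper iff it is proper in \emph{some} factor), and
it is not clear that sending $\Gamma_i$ into $\overline{\Gamma}_i$ forces
$a^{\flat}$ to be block-diagonal rather than merely block-triangular.

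The paper's proof avoids the product construction entirely by staying inside the
single local ring $R$. It pulls the data on the quotients back along the
surjections $\phi_i$: the kernels $K_i=\ker(\phi_i)$ encode the quotient
structure, and for each pair $(i,j)$ one chooses lifts
$J^j_i\subset R$ with $\phi_i(J^j_i)=I^j_i$ (similarly on the barred side). One
then seeks a single morphism $a\colon R\to\overline{R}$ sending each $K_i$ into
$\overline{K}_i$, each $I^j$ into $\overline{I}^j$, and each $J^j_i$ into
$\overline{J}^j_i+\overline{K}_i$, with $\gr_1(a)=\gr_1(\alpha)$. Hypothesis
\eqref{p4} guarantees that $\alpha$ respects the completed versions of all these
ideals, so Theorem~\ref{thm:martin} applies directly to $R$ and $\overline{R}$.
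The induced maps $a_i\colon R_i\to\overline{R}_i$ then come for free from
$a(K_i)\subseteq\overline{K}_i$, and Lemma~\ref{lem:gr} upgrades all inclusions
to equalities and all morphisms to isomorphisms, exactly as in the last step of
your outline.
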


\begin{proof}
Without loss of generality, we may assume all ideals in sight are 
proper, replacing if need be the set $F$ by subsets $F_0\subseteq F$ 
and $F_i\subseteq F_0$ for $i\in E$.  
For each $i\in E$, put $K_i=\ker (\phi_i)$ and 
$\overline{K}_i=\ker (\bar\phi_i)$, and pick proper ideals, 
$\{J^j_i\subset R\}_{j\in F_i}$ and 
$\{\overline{J}^j_i\subset \overline{R}\}_{j\in F_i}$, such that 
$\phi_i(J^j_i)=I^j_i$ and $\bar\phi_i(\overline{J}^j_i)=\overline{I}^j_i$. 
We claim that it is enough to find a morphism $a\colon R\to \overline{R}$ 
such that 
\begin{alphenum}
\item \label{s1}
$\gr_1(a)=\gr_1(\alpha)$;
\item \label{s2}
$a(K_i)\subseteq \overline{K}_i$, for all $i\in E$;
\item \label{s3}
$a(I^j)\subseteq \overline{I}^j$, for all $j\in F_0$;
\item \label{s4}
$a(J^j_i)\subseteq \overline{J}^j_i +  \overline{K}_i$, for all $(i,j)\in E\times F_i$.
\end{alphenum}

Indeed, by \eqref{s2}, the morphism $a\colon R\to \overline{R}$ induces 
morphisms $a_i\colon R_i\to \overline{R}_i$ for all $i\in E$. 
In view of \eqref{s1}, we may apply Lemma \ref{lem:gr} and  
deduce that the map $a$ is an isomorphism. By construction, 
property \eqref{i3} is satisfied. By assumption, equality \eqref{p4} holds,   
and so $\alpha(\widehat{K}_i)=\widehat{\overline{K}}_i$. 
Again by Lemma \ref{lem:gr}, the 
maps $a_i$ must be isomorphisms, for all $i\in E$.
In view of \eqref{s3}, we may also apply Lemma \ref{lem:gr} 
to each of the ideals $I^j\subset R$ and $\overline{I}^j\subset \overline{R}$ 
for $j\in F_0$, and deduce that $a(I^j)=\overline{I}^j$, 
thereby verifying property \eqref{i1}. 

Finally, to verify property \eqref{i2}, we apply Lemma \ref{lem:gr} to the 
ideals $I^j_i=(J^j_i + K_i)/K_i \subset R/K_i=R_i$ and 
$\overline{I}^j_i=(\overline{J}^j_i +  \overline{K}_i)/ \overline{K}_i  
\subset \overline{R}/\overline{K}_i=\overline{R}_i $ 
for $(i,j)\in E\times F_i$.  We know from \eqref{s2} and \eqref{s4} 
that the morphisms $a_i\colon R_i\to \overline{R}_i$ preserve 
these ideals. The fact that the isomorphisms $\alpha_i$ identify the completions
of these ideals follows from their construction, together with assumption \eqref{p3}.
Moreover, since $a_i$ is induced by $a$ and $\alpha_i$ is induced by $\alpha$, 
property \eqref{s1} implies that $\gr_1(a_i)=\gr_1(\alpha_i)$.  Thus, 
Lemma \ref{lem:gr} applies once again to show that 
$a_i(I^j_i)=\overline{I}^j_i$.  
This completes the verification of our claim. 

Assumptions \eqref{p2}--\eqref{p4} insure that the map 
$\alpha \colon \widehat{R}\isom \widehat{\overline{R}}$ is 
a formal series solution of the analytic system \eqref{s2}--\eqref{s4}. 
Applying now Theorem \ref{thm:martin} completes the proof.
\end{proof}

\section{Algebraic models of spaces and maps}
\label{sect:cgga}

The rational homotopy theory of Quillen \cite{Qu}, as reinterpreted by 
Sullivan in \cite{Su}, provides a very useful mechanism for 
studying topological properties of spaces and continuous 
maps by considering commutative differential graded algebra 
(for short, $\dga$) models for them.  In this section, we review 
the basics of this theory, and draw some consequences in the 
formal case. 

\subsection{$q$-connectivity and $q$-equivalences}
\label{subsec:qconn}
We start with some basic terminology, related to 
connectivity properties of spaces and $\dga$s. Fix $0\le q\le \infty$, 
usually to be omitted from notation when $q=\infty$. Let 
$\psi\colon C^{\hdot} \to C'^{\hdot}$ be a morphism of 
graded vector spaces.  We say $\psi$ is {\em $q$-connected}\/ 
if it is an isomorphism in degrees up to $q$ and a monomorphism 
in degree $q+1$.  

When such a map $\psi$ is a $q$-connected morphism of 
cochain complexes, it is straightforward to check that the 
induced morphism in cohomology, 
$H^{\hdot}(\psi)\colon H^{\hdot} (C) \to H^{\hdot} (C')$, 
is again $q$-connected.  Cochain maps inducing a $q$-connected 
map in cohomology will be called {\em $q$-equivalences}. For $q=\infty$,
these maps are also called quasi-isomorphisms in the literature.

We say that a commutative graded algebra (for short, a $\ga$) $A^{\hdot}$ 
is \emph{connected}\/ if $A^0$ is 
the $\k$-span of the unit $1$ (and thus $A^0=\k$).  If $A$ is a connected 
$\dga$, then clearly its cohomology algebra, $H^{\hdot}(A)$, is again connected.  

This terminology is inspired by algebraic topology, where a 
continuous map $f\colon X\to Y$ between two topological 
spaces is said to be $q$-connected if it induces isomorphisms 
on homotopy groups up to degree $q$ and an epimorphism 
in degree $q+1$.  By Hurewicz's theorem, the induced map 
in cohomology, $H^{\hdot}(f)\colon H^{\hdot} (Y) \to H^{\hdot} (X)$, 
is also $q$-connected.  Note that the map $X\to \{\text{pt}\}$ 
is $q$-connected if and only if the space $X$ is $q$-connected. 

Finally, let $\Cat$ be a subcategory of the category 
of cochain complexes, for instance, $\CDGA$ or the category of differential graded Lie algebras, 
$\DGL$.  Two objects in this category, $C$ and $C'$, have the same {\em $q$-type}\/ 
(denoted $C\simeq_q C'$) if they can be connected in $\Cat$ 
by a zig-zag of $q$-equivalences.  

\subsection{CDGA models for spaces}
\label{subsec:models}

We now review the construction and some basic properties of $\dga$ models 
of spaces, following \cite{Su, Mo, Hal, Le, FHT, DP-ccm}.  
We will denote by $\Omega^{\hdot}_{\k}(X)$ 
Sullivan's de Rham algebra of a topological space $X$, constructed by using 
differential forms with $\k$-polynomial coefficients on standard simplices, see \cite{Su}.  
The resulting functor $\Tp \to \text{$\k$-$\CDGA$}$ has, among other things, the 
property that $H^{\hdot}(\Omega^{\hdot}_{\k}(X)) \cong H^{\hdot}(X,\k)$, 
as graded $\k$-algebras.

To define monodromy representations of flat connections (over $\k=\R$ or $\C$), 
we will also need the similar $\dga$ $\Omega^{\hdot}(X,\k)$, constructed from usual 
smooth $\k$-forms.   It is known that $\Omega^{\hdot}(X,\k)$ has the same $\infty$-type 
as the sub-$\dga$ $\Omega^{\hdot}_{\k}(X)$, in a natural way.  

Let $A$ be a $\dga$.  For $0\le q\le \infty$, we say that $A$ is 
a {\em $q$-model}\/ for the space $X$ if $\Omega_{\k}(X)\simeq_q A$.  
We also say that $X$ is {\em $q$-finite}\/  if it has the homotopy type 
of a connected CW-complex with finite $q$-skeleton.  Similarly, 
we say that $A$ is $q$-finite if it is connected and $\dim \bigoplus_{i\le q} A^i<\infty$. 
Once again, we shall omit $q$ from the notation when $q=\infty$. 

The category $\ACDGA$ of {\em augmented}, commutative differential graded 
algebras has objects $(A,\varepsilon)$, where the augmentation map 
$\varepsilon \colon A\to \k$ is a morphism of $\dga$s, while the morphisms 
in this category are the $\cdga$ maps commuting with augmentations. 
When $X$ is a pointed space, both $\Omega^{\hdot}(X,\k)$ and 
$\Omega^{\hdot}_{\k}(X)$ become $\ACDGA$s, again in a natural way. 

A connected  $\dga$~$A$ has a unique augmentation map, sending $A^{+}$ 
to $0$, and the unit to $1$.  Moreover, for every augmented $\dga$~$A'$, 
we have that 
\begin{equation}
\label{eq:homacdga}
\Hom_{\ACDGA} (A,A') = \Hom_{\CDGA} (A,A') .
\end{equation}

\subsection{Hirsch extensions and relative minimal models}
\label{subsec:minmod}

Let $U^{\hdot}=\bigoplus_{i\ge 1} U^i$ be a positively graded $\k$-vector space.  The 
{\em free commutative graded algebra}\/ on $U$, denoted by $\bwedge U^{\hdot}$, 
is the tensor product of the symmetric graded algebra on $U^{\text{even}}$ and 
the exterior graded algebra on $U^{\text{odd}}$.  We say that a $\dga$ is 
free if the underlying $\ga$ has this property. 
Since $\bwedge U^{\hdot}$ is connected, it has a unique augmentation, 
denoted by $\varepsilon_U$. 

Let $A=(A^{\hdot},d_A)$ be a $\dga$, and denote by $Z^{\hdot}(A)$ the graded 
vector space of cocycles.  Given a finite-dimensional graded vector space $U^{\hdot}$,  
and a degree $1$ linear map, $\tau\colon U^{\hdot} \to Z^{\hdot+1}(A)$, we denote by 
$(A\otimes_{\tau} \bwedge U , d)$ the corresponding {\em Hirsch extension}.  By 
definition, this is the $\dga$ whose underlying $\ga$ is  $A^{\hdot}\otimes \bwedge U^{\hdot}$, 
and whose differential restricts to $d_A$ on $A$ and to $\tau$ on $U$.  
If $A$ is an $\adga$ with augmentation $\varepsilon_A$, then 
$A\otimes_{\tau} \bwedge U$ is also an $\adga$, with augmentation 
$\varepsilon_A\otimes \varepsilon _U$.  The Hirsch extension depends only 
on the map $[\tau]\colon U^{\hdot} \to H^{\hdot+1}(A)$, in the following 
sense:  if $[\tau]=[\tau']$, then $A\otimes_{\tau} \bwedge U \cong 
A\otimes_{\tau'} \bwedge U$ in $\CDGA$, via an isomorphism extending $\id_A$.  
When $\dim U=1$, we speak of an `elementary' Hirsch extension. 

A {\em relative Sullivan algebra}\/ with base $B$ is a direct limit 
of elementary Hirsch extensions, starting from the $\dga$ $B$. 
When the base is $\k$, concentrated in degree $0$, we simply speak 
of a Sullivan algebra.  Such a $\dga$ is necessarily of the form 
$\M=(\bwedge U^{\hdot}, d)$.  If $\im(d)\subseteq \bwedge^{\ge 2} U$, 
we say $\M$ is a {\em minimal}\/ Sullivan algebra. If, moreover, all 
Hirsch extensions have degree at most $q$, the $\dga$ $\M$ is 
said to be {\em $q$-minimal}. 

A {\em $q$-minimal model map}\/ for a $\dga$ $A$ is a $q$-equivalence 
$\rho\colon \M_q\to A$, with $\M_q$ a $q$-minimal Sullivan algebra. 
Any $\dga$ $A$ whose cohomology algebra is connected admits 
a $q$-minimal model map. 
If $\rho'\colon \M'_q\to A$ is another $q$-minimal model map for $A$, 
then $\M'_q$ and $\M_q$ are isomorphic in $\CDGA$. 
Consequently, if $A$ and $\overline{A}$ are two $\dga$s with connected homology, then 
$A\simeq_{q} \overline{A}$ in $\CDGA$ if and only if there is 
a $q$-minimal $\dga$ $\M_q$, and a short zig-zag of 
$q$-equivalences in $\CDGA$ of the form 
\begin{equation}
\label{eq:shortzig}
\xymatrix{A & \mathcal{M}_q \ar_{\rho}[l]  \ar^{\bar\rho}[r] & \overline{A}}. 
\end{equation}

Recall that a relative Sullivan algebra with base $B$ is 
a $\dga$ of the form $A=(B \otimes \bigwedge U, d)$.  
When $B$ is an augmented algebra, with augmentation ideal 
$\widetilde{B}:= \ker (\varepsilon_B)$, 
the quotient $\dga$, $A/(\widetilde{B} \otimes \bigwedge U)=(\bigwedge U,\bar{d})$, is 
called the {\em fiber}\/ of $A$. Following \cite{Hal, FHT}, 
we say that $A$ is a {\em minimal Sullivan algebra in the relative sense}\/ 
if the fiber is minimal. Allowing also degree $0$ Hirsch extensions, we may speak 
of {\em weak}\/ relative Sullivan (minimal) algebras. 

Let $\Phi\colon B\to C$ be a $\cdga$ map, and assume $B$ is augmented 
and $H^{\hdot}(B)$ is connected. A {\em relative minimal model map}\/ for $\Phi$ 
is an $\infty$-equivalence of $\dga$s, 
$h\colon \M\to C$, where $\M=(B \otimes \bigwedge U, d)$ 
is a relative minimal Sullivan algebra and $\left. h \right|_B= \Phi$.
If $\Phi$ is a $0$-equivalence, then $\Phi$ admits a relative minimal model map; 
moreover, any two such maps, $h$ and $h'$, have isomorphic fibers, 
see \cite[\S14]{FHT}.  In fact, existence and uniqueness also hold in 
the weak sense, assuming only that $H^0(\Phi)$ is an isomorphism, 
see \cite[Ch.~6]{Hal}.  In particular, if $\Phi\colon B\to C$ is a $0$-equivalence 
and $h\colon \M\to C$ is any relative minimal model map for $\Phi$ in the 
weak sense, then necessarily the fiber of $\M$ is connected. Hence, if 
$\Phi$ is an $\acdga$ map, then $\M$ is canonically augmented and 
both $h$ and the inclusion $B\inj \M$ preserve augmentations. 

\subsection{Homotopies and equivalences}
\label{subsec:homeq} 

Let $\bwedge (t, dt)$ be the free, 
contractible $\dga$ generated by $t$ in degree $0$ and $dt$ in degree $1$. 
For each $s\in \k$, let $\ev_s\colon  \bwedge (t, dt) \to \k$ be the 
$\dga$ map sending $t$ to $s$ and $dt$ to $0$.  This 
map induces another $\dga$ map,
\begin{equation}
\label{eq:Evs dga}
\Ev_s = \id\otimes \ev_s \colon A \otimes \bwedge (t, dt) \to A \otimes \k =A. 
\end{equation}

\begin{definition}
Two $\dga$ maps 
$\psi_0, \psi_1\colon A\to A'$ are said to be {\em homotopic}\/ 
(in $\CDGA$)
if there is a $\dga$ map $\Psi\colon A\to A' \otimes \bwedge (t, dt)$ 
such that $\Ev_s\circ \Psi = \psi_s$ for $s=0,1$.  Likewise,  
two $\adga$ maps $\psi_0$ and $\psi_1$ as above are homotopic 
(in $\ACDGA$) if the homotopy $\Psi$ also satisfies $\Ev'\circ \Psi =\varepsilon$, 
where $\Ev'$ denotes the $\dga$ map $\varepsilon' \otimes \id \colon 
A' \otimes \bwedge (t, dt) \to  \bwedge (t, dt)$.  
\end{definition}

Plainly, equality of maps 
implies homotopy, in both categories. Note that augmented homotopy 
is strictly stronger than homotopy. Another useful remark is that homotopic maps 
in $\CDGA$ induce the same map in cohomology. 

Denote by $\ACDGA_0$ the full subcategory of $\ACDGA$ whose 
objects have connected cohomology.  Fix an integer $q\ge 0$.

\begin{definition}
\label{def:qeqmaps}
An {\em elementary $q$-equivalence}\/ in $\ACDGA_0$ between two $\ACDGA_0$-morphisms
$\Phi_0\colon A_0' \to A_0$ and $\Phi_1\colon A_1' \to A_1$ consists of 
two $\acdga$ maps, $\psi\colon A_1\to A_0$ and $\psi' \colon A'_1\to A'_0$, 
both of which are $q$-equivalences, and such that $\psi\circ \Phi_1$ is 
homotopic to $\Phi_0\circ \psi'$
in $\ACDGA$. We denote by $\simeq_q$ the associated 
equivalence relation between morphisms in $\ACDGA_0$. 
\end{definition}

In other words, if $\Phi\colon A' \to A$ and $\bar{\Phi}\colon B' \to B$ are 
two such morphisms, we say that $\Phi\simeq_q \bar{\Phi}$ in $\ACDGA_0$ if there 
are two zig-zags, $Z$ and $Z'$, of $q$-equivalences in $\ACDGA$, and $\adga$ maps
$\Phi_1, \dots, \Phi_{\ell -1}$ such that the following diagram commutes, 
up to augmented homotopy:
\begin{equation}
\label{eq:ziggy}
\begin{gathered}
\xymatrix{
Z: \hspace{-20pt} & A  & A_1 \ar_(.45){\psi_0}[l]  \ar^{\psi_1}[r] & \cdots 
& A_{\ell-1}   \ar[l]\ar^{\psi_{\ell-1}}[r] & B \, \phantom{.}
\\
Z': \hspace{-20pt} & A'  \ar^{\Phi}[u] & A'_1 
 \ar^{\Phi_1}[u] \ar_(.45){\psi'_0}[l]  \ar^{\psi'_1}[r] & \cdots 
& A'_{\ell-1}   \ar_{\Phi_{\ell-1}}[u] \ar[l]\ar^{\psi'_{\ell-1}}[r] & B' \, .  
\ar_{\bar{\Phi}}[u]
}
\end{gathered}
\end{equation}

Forgetting augmentations in Definition \ref{def:qeqmaps}, we obtain 
the equivalence relation $\simeq_q$ between maps in $\CDGA_0$. 
When $q=\infty$, we will simply write this as $\Phi\simeq \bar\Phi$. 

\begin{lemma}
\label{lem:zig}
Let $A$ and $\overline{A}$ be two $\adga$s. Assume $H^{\hdot}(A)$ is connected, 
and let $\rho \colon \mathcal{M}_q \to A$ be a $q$-minimal $\dga$ model map as above. 
Then $A\simeq_q \overline{A}$ in $\CDGA$ if and only if there is a short zig-zag 
of $q$-equivalences in $\ACDGA$ as in \eqref{eq:shortzig}.
\end{lemma}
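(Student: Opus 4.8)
The plan is to deduce this statement from the corresponding ``unaugmented'' fact recorded in \S\ref{subsec:minmod} --- namely, that for $\dga$s with connected cohomology, $A \simeq_q \overline{A}$ in $\CDGA$ is equivalent to the existence of a short zig-zag of $q$-equivalences $A \xleftarrow{\rho} \M_q \xrightarrow{\bar\rho} \overline{A}$ as in \eqref{eq:shortzig} --- by upgrading every map in sight to an $\acdga$ map. The ``if'' direction is immediate: a short zig-zag of $q$-equivalences in $\ACDGA$ is in particular one in $\CDGA$, so $A \simeq_q \overline{A}$ in $\CDGA$. The entire content is in the ``only if'' direction, and here the key observation is that each of the three $\dga$s involved has connected cohomology (for $A$ this is assumed; for $\M_q$ it is automatic since $\M_q$ is a connected $\dga$; and for $\overline{A}$ it follows because $\rho$ and $\bar\rho$ are $q$-equivalences with $q \ge 0$, so $H^0(\overline{A}) \cong H^0(\M_q) = \k$). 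Hence each carries a \emph{unique} augmentation sending the positive-degree part to $0$.

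First I would invoke the $\CDGA$-level result to obtain a short zig-zag $A \xleftarrow{\rho} \M_q \xrightarrow{\bar\rho} \overline{A}$ of $q$-equivalences in $\CDGA$, with $\M_q$ a $q$-minimal Sullivan algebra --- and here I can even take $\rho$ to be the prescribed $q$-minimal model map from the hypothesis, since any two such are isomorphic over $A$ and the statement is insensitive to that choice. The next step is to check that $\rho$ and $\bar\rho$ automatically commute with the canonical augmentations. This is the routine point: a $\dga$ map $\phi\colon B \to C$ between connected-cohomology $\dga$s, where $B$ is moreover a \emph{connected} $\dga$ (so $B^0 = \k\cdot 1$), must send the unique augmentation of $B$ to the unique augmentation of $C$ --- indeed $\varepsilon_C \circ \phi$ is a $\dga$ map $B \to \k$, hence an augmentation of $B$, hence equal to $\varepsilon_B$ by uniqueness. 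Since $\M_q$ is a connected $\dga$, this applies verbatim to both $\rho\colon \M_q \to A$ and $\bar\rho\colon \M_q \to \overline{A}$. Therefore $\rho$ and $\bar\rho$ are morphisms in $\ACDGA$, and they are still $q$-equivalences, so $A \xleftarrow{\rho} \M_q \xrightarrow{\bar\rho} \overline{A}$ is a short zig-zag of $q$-equivalences in $\ACDGA$, as required.

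I do not anticipate a serious obstacle here; the only thing to be careful about is the bookkeeping ensuring that connectivity of cohomology propagates to $\overline{A}$ (so that its canonical augmentation exists) and that ``connected $\dga$'' --- not merely ``connected cohomology'' --- is what is needed for the source of a $\dga$ map to have a forced augmentation-compatibility, which is exactly why the intermediate object is taken to be the Sullivan algebra $\M_q$ rather than an arbitrary $\dga$. One should also note that no homotopies enter: because $\M_q$ is the common source, strict commutativity in \eqref{eq:shortzig} already holds, so there is nothing to rectify up to augmented homotopy, in contrast with the more elaborate zig-zag \eqref{eq:ziggy} for maps.
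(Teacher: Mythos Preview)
Your proposal is correct and follows essentially the same approach as the paper's proof, which is just two sentences: invoke the $\CDGA$-level short zig-zag from \S\ref{subsec:minmod}, then note that connectedness of $\M_q$ ``takes care of the augmentations'' (i.e., exactly your argument via \eqref{eq:homacdga} that any $\dga$ map out of a connected $\dga$ is automatically augmented). One minor wording slip: you speak of ``the unique augmentation of $C$'' for $C=A,\overline{A}$, but these need only have connected cohomology, not be connected, so you should say ``the \emph{given} augmentation of $C$''; your actual argument ($\varepsilon_C\circ\phi$ is an augmentation of the connected source $\M_q$, hence equals $\varepsilon_{\M_q}$) is unaffected and works for any chosen augmentation on the target.
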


\begin{proof}
By the discussion from \S\ref{subsec:minmod}, we have that 
$A\simeq_q \overline{A}$ in $\CDGA$ if and only if $A$ and $\overline{A}$  
share the same $q$-minimal model $\mathcal{M}_q$.  The fact that $\mathcal{M}_q$ 
is connected takes care of the augmentations.  
\end{proof}

\subsection{Formal spaces and maps}
\label{subsec:fmaps}

We conclude this section with some formality notions, 
for both spaces and maps, as well as models thereof.  To start with, we say 
that a $\dga$ $A$ is {\em $q$-formal}\/ if $(A^{\hdot},d)\simeq_q (H^{\hdot} (A),d=0)$ 
in $\CDGA$. Clearly, $q$-formality implies $p$-formality, for all $p\le q$. 
By definition, a space $X$ is $q$-formal over $\k$ if $\Omega_{\k}(X)$ 
has this property.   A $q$-finite, $q$-formal space $X$ has the $q$-finite 
$q$-model $(H^{\hdot} (X),d=0)$.  As before, we will mostly omit 
$q$ from notation when $q=\infty$. 
Compact \Ka manifolds are well-known to be formal, by the main result 
from \cite{DGMS}. 

Following \cite{Su, DGMS, VP}, we say that a morphism $\Phi\colon A'\to A$ 
in $\CDGA_0$ is {\em formal}\/ if there is a diagram consisting of two 
elementary equivalences in $\CDGA_0$, 
\begin{equation}
\label{eq:phif}
\begin{gathered}
\xymatrixcolsep{30pt}
\xymatrix{
A  &\M \ar_(.45){\psi}[l]  \ar^(.35){\bar\psi}[r] & (H^{\hdot}(A), d=0)\,\,
\\
A' \ar^{\Phi}[u] &\M' \ar_(.45){\psi'}[l] \ar^{\widehat{\Phi}}[u] \ar^(.35){\bar\psi'}[r] 
& (H^{\hdot}(A'), d=0)\,, \ar_{H^{\hdot}(\Phi)}[u]
}
\end{gathered}
\end{equation}
such that 
both $\M$ and $\M'$ are minimal Sullivan algebras.  Furthermore, 
we say that a continuous map $f\colon X\to X'$ is formal 
(over $\k$) if the induced morphism between Sullivan de Rham models,  
$\Omega_{\k}(f)\colon \Omega_{\k}(X')\to 
\Omega_{\k}(X)$, has this property. 

\begin{prop}
\label{prop:mainf}
Let $f\colon X\to X'$ be a continuous map between pointed spaces. 
Assume that $f$ is formal over $\k$, and $H^1(f)$ is injective. Then 
$\Omega(f)\simeq H^{\hdot}(f)$ in $\k$-$\ACDGA_0$. 
\end{prop}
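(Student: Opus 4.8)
The plan is to upgrade the formality diagram \eqref{eq:phif} for $\Phi = \Omega_\k(f)$ from an unaugmented statement to an augmented one, using the hypothesis that $H^1(f)$ is injective to ensure the relevant Sullivan algebras are connected and hence carry canonical augmentations compatible with all the maps in sight. Recall that a space $X$ being pointed makes $\Omega_\k(X)$ an object of $\ACDGA_0$ in a natural way, and a pointed map $f$ makes $\Omega_\k(f)$ a morphism in $\ACDGA_0$. By the definition of formality of a map, we already have the commuting diagram \eqref{eq:phif} with $A = \Omega_\k(X)$, $A' = \Omega_\k(X')$, $\Phi = \Omega_\k(f)$, both $\M$ and $\M'$ minimal Sullivan algebras, and all four horizontal maps $\infty$-equivalences in $\CDGA_0$.

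First I would observe that since $\M$ and $\M'$ are Sullivan algebras of the form $(\bwedge U, d)$ with $d$ minimal, they are connected $\dga$s, so each has a unique augmentation (sending $U$, equivalently $A^+$, to $0$), and likewise $(H^\hdot(A), d=0)$ and $(H^\hdot(A'), d=0)$ are connected with unique augmentations given by projection onto $H^0 = \k$. By \eqref{eq:homacdga}, $\Hom_{\ACDGA} = \Hom_{\CDGA}$ out of a connected source; hence every morphism in \eqref{eq:phif} that has connected source — namely $\psi, \bar\psi, \psi', \bar\psi', \widehat{\Phi}$ — is automatically a morphism in $\ACDGA_0$, and the diagram commutes there. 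Thus $\widehat\Phi \colon \M' \to \M$ is an $\adga$ map, $H^\hdot(\Phi) = H^\hdot(f)$ is an $\adga$ map, and the two elementary equivalences of \eqref{eq:phif} are elementary $\infty$-equivalences in $\ACDGA_0$ in the sense of Definition \ref{def:qeqmaps} — provided I also know that the horizontal maps $\psi, \bar\psi$ (resp. $\psi', \bar\psi'$) induce the augmentation on $\M$ (resp. $\M'$) compatibly with the augmentations of $A$ and $H^\hdot(A)$; but this is automatic because $A = \Omega_\k(X)$ carries the evaluation-at-basepoint augmentation, which on cohomology is projection to $H^0$, and $\psi$ being a $\cdga$ map out of the connected $\M$ must send $\M^+$ into $A^+ = \ker(\varepsilon_A)$. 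So each square commutes strictly (hence up to augmented homotopy), giving $\Omega(f) \simeq H^\hdot(f)$ in $\ACDGA_0$.

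The one place where the injectivity of $H^1(f)$ genuinely enters — and the step I expect to be the main subtlety rather than a routine bookkeeping matter — is in reconciling the abstract minimal model $\M'$ appearing as the source of $\widehat\Phi$ with the requirement in the applications (Theorem \ref{thm:main-intro}) that the $\adga$ map really models $f$ on the nose; more precisely, one must check that the minimal model $\M'$ of $A'$ that makes the right-hand square of \eqref{eq:phif} commute can be chosen so that $\widehat\Phi$ restricted to the degree-one generators is injective, which translates on cohomology to $H^1(\widehat\Phi) = H^1(f)$ being injective. This is exactly the hypothesis, and it is needed so that the later relative-minimal-model constructions (which require $H^1$ of the base map to inject, or at least $H^0$ to be an isomorphism and the fiber to be connected) can be applied; without it the zig-zag would still exist in $\CDGA_0$ but the augmentation/connectivity control needed downstream would fail. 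I would spell this out by noting that $H^1(\M') = H^1(A')$ and $H^1(\M) = H^1(A)$ via the $\infty$-equivalences, and $H^1(\widehat\Phi)$ corresponds to $H^1(\Phi) = H^1(f)$ under these identifications, so injectivity transfers directly.

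To assemble the proof I would therefore: (i) invoke the definition of formality of the map $f$ to produce the diagram \eqref{eq:phif} of $\infty$-equivalences in $\CDGA_0$ with minimal $\M, \M'$; (ii) use connectedness of $\M, \M', H^\hdot(A), H^\hdot(A')$ together with \eqref{eq:homacdga} to promote every map and every commuting square in \eqref{eq:phif} to $\ACDGA_0$, the squares commuting strictly and a fortiori up to augmented homotopy; (iii) note that this is precisely a zig-zag of elementary $\infty$-equivalences between $\Omega(f)$ and $H^\hdot(f)$ as in \eqref{eq:ziggy}, so $\Omega(f) \simeq H^\hdot(f)$ in $\ACDGA_0$; and (iv) record that $H^1(f)$ injective guarantees $H^1(H^\hdot(f))$, equivalently $H^1(\widehat\Phi)$, is injective, which is the form of the hypothesis consumed by the downstream applications. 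The whole argument is short once the connectedness observation is in place; the only real content is identifying that the hypothesis "$H^1(f)$ injective" is what carries the augmented data through, and I would flag that explicitly rather than bury it.
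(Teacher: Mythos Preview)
Your argument has a genuine gap at the step ``each square commutes strictly (hence up to augmented homotopy).'' The definition of formality of a map in \eqref{eq:phif} asks only that the two squares be elementary equivalences in $\CDGA_0$, i.e., that they commute up to \emph{unaugmented} homotopy. You correctly observe that, since $\M$ and $\M'$ are minimal Sullivan algebras and hence connected, every individual arrow in \eqref{eq:phif} is automatically an $\adga$ map. But this does not promote a $\CDGA$ homotopy $\Psi\colon \M'\to \Omega(X)\otimes\bwedge(t,dt)$ to an $\ACDGA$ homotopy: the condition $(\varepsilon_{\Omega(X)}\otimes\id)\circ\Psi=\varepsilon_{\M'}$ can fail in degree~$1$, since $(\bwedge(t,dt))^1=\k[t]\,dt$ is nonzero and there is no reason for the degree-$1$ component of $\Psi$ to land in $\ker(\varepsilon_{\Omega(X)})\otimes\bwedge(t,dt)$. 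So the passage from homotopy-commutativity in $\CDGA$ to homotopy-commutativity in $\ACDGA$ is not free, and your proof does not supply it.

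This is exactly why the paper does \emph{not} work directly with the abstract diagram \eqref{eq:phif}. Instead it invokes Vigu\'e-Poirrier's construction, which produces a \emph{strictly} commuting diagram \eqref{eq:vp} whose upper-middle object is a relative (weak) Sullivan algebra $\bwedge U'\otimes\bwedge U$ over the minimal model $\bwedge U'$ of $\Omega(X')$, rather than the minimal model of $\Omega(X)$. The price is that $\bwedge U'\otimes\bwedge U$ is not a priori connected: it is connected precisely when the fiber generators satisfy $U^0=0$, and this is forced by the hypothesis that $H^1(f)$ is injective (equivalently, that $H^{\hdot}(f)\circ\bar\psi'$ is a $0$-equivalence). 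So the injectivity of $H^1(f)$ is not merely bookkeeping for downstream applications as you suggest in (iv); it is the substantive input that makes the strictly commuting diagram live in $\ACDGA_0$ at all.
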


\begin{proof}
In \cite[II.3]{VP}, Vigu\'e-Poirrier uses the formality of the map 
$f$ to construct a commuting diagram in $\CDGA$ of the form 
\begin{equation}
\label{eq:vp}
\begin{gathered}
\xymatrixcolsep{30pt}
\xymatrix{
\Omega(X)  &\bigwedge U' \otimes \bigwedge U 
\ar_(.55){\psi}[l]  \ar^(.47){\bar\psi}[r] & (H^{\hdot}(X), d=0)
\\
\Omega(X')  \ar^{\Omega(f)}[u] &\bigwedge U'\ar_(.55){\psi'}[l] 
\ar@{^{(}->}^{j}[u] \ar^(.39){\bar\psi'}[r] 
& (H^{\hdot}(X'), d=0) \ar_{H^{\hdot}(f)}[u]
}
\end{gathered}
\end{equation}
where all horizontal arrows are $\infty$-equivalences and $j$ is the 
canonical inclusion. Moreover, $\psi'$ and $\bar\psi'$ are 
minimal model maps (in particular, $\bigwedge U'$ is connected), 
and $\bigwedge U' \otimes \bigwedge U$ is a relative minimal 
Sullivan algebra in the weak sense, with base $\bigwedge U'$.

The injectivity assumption on $H^1(f)$ implies that $H^{\hdot}(f)\circ \bar\psi'$ 
is a $0$-equivalence.  From the discussion in \S\ref{subsec:minmod}, we 
deduce that $U^0=0$. This shows that  
$\bigwedge U' \otimes \bigwedge U$ is also connected.  
Hence, \eqref{eq:vp} is a commuting diagram 
in $\ACDGA_0$, and our claim follows.
\end{proof}

\section{Deformation theory of representation varieties}
\label{sect:defrefp}

Following Goldman--Millson \cite{GM} and Manetti \cite{Ma},  we recall, in 
a convenient form, two basic properties of the deformation functor associated to a 
differential graded Lie algebra ($\dgl$ for short).  We then apply these techniques 
to the representation varieties of discrete groups.  

\subsection{Deformation functors}
\label{subsec:dfun}

We denote by $\Art$ the category of Artinian local $\k$-algebras. 
Given such an algebra $(\art,\m_{\art})$ and a differential graded Lie algebra 
$L$, we consider the (nilpotent) $\dgl$~$L\otimes \m_{\art}$, and 
the set of solutions (called {\em flat connections}) 
to the Maurer--Cartan equation, 
\begin{equation}
\label{eq:mc}
\F(L\otimes \m_{\art}) = \{ \omega \in L^1\otimes \m_{\art} \mid 
d\omega + \tfrac{1}{2} [\omega,\omega]=0\},
\end{equation}
with basepoint $0\in \F(L\otimes \m_{\art})$.   Clearly, this construction is 
bifunctorial.  Let 
\begin{equation}
\label{eq:gauge}
\G(L\otimes \m_{\art})=\exp (L^0 \otimes \m_{\art})
\end{equation}
be the (Campbell--Hausdorff) {\em gauge group}\/ of the nilpotent 
Lie algebra $L^0\otimes \m_{\art}$, with underlying set  $L^0\otimes \m_{\art}$. 
This group acts bifunctorially 
on $\F(L\otimes \m_{\art}) $ by 
\begin{equation}
\label{eq:gauge action}
\exp(\alpha)\cdot \omega = \omega + \sum_{n=0}^{\infty} 
\frac{\ad(\alpha)^n}{(n+1)!} ([\alpha,\omega] - d\alpha).
\end{equation}

The {\em deformation functor}, $\Def_L\colon \Art  \to \Set$, 
is defined by 
\begin{equation}
\label{eq:deff}
\Def_L(\art) =  \F(L\otimes \m_{\art})/\G(L\otimes \m_{\art}).
\end{equation}
It is readily seen that every $\dgl$-morphism $\psi\colon L\to L'$ induces 
a natural transformation, $\Def_{\psi} \colon \Def_L \to \Def_{L'}$.  

We now may state the Deligne--Schlesinger--Stasheff theorem, 
as recorded and proved in \cite[Thm.~2.4]{GM}. 

\begin{theorem}
\label{thm:1type}
If $L\simeq_1 L'$ in $\DGL$, then the deformation functors 
$\Def_L$ and  $\Def_{L'}$ are naturally isomorphic.  
\end{theorem}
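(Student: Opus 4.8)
The statement to be proved is Theorem~\ref{thm:1type}: if $L\simeq_1 L'$ in $\DGL$, then $\Def_L$ and $\Def_{L'}$ are naturally isomorphic as functors $\Art\to\Set$. Since this is quoted as \cite[Thm.~2.4]{GM}, the task is to explain the structure of that argument rather than to reconstruct it from scratch. The plan is to reduce the statement to the case of a single $1$-equivalence of $\dgl$s, and then to show that a $1$-equivalence induces a natural isomorphism on deformation functors.

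\textbf{First step: reduction to a single $1$-equivalence.} By definition of the $q$-type relation from \S\ref{subsec:qconn}, $L\simeq_1 L'$ means there is a zig-zag of $1$-equivalences in $\DGL$ connecting $L$ to $L'$. Since $L\mapsto \Def_L$ is a functor on $\dgl$-morphisms (every $\psi\colon L\to L'$ induces $\Def_\psi$), it suffices to prove that each individual $1$-equivalence $\psi\colon L\to L'$ induces a natural isomorphism $\Def_\psi\colon \Def_L\isom\Def_{L'}$; one then composes the resulting isomorphisms and their inverses along the zig-zag. So the crux is the one-arrow case.

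\textbf{Second step: a $1$-equivalence induces an isomorphism on Maurer--Cartan sets modulo gauge, over each Artinian base.} Fix $(\art,\m_\art)\in\Art$ and set $N=\m_\art$, a nilpotent ideal. Tensoring $\psi$ with $N$ gives a morphism of nilpotent $\dgl$s $\psi\otimes\id\colon L\otimes N\to L'\otimes N$, compatible with the Maurer--Cartan equation \eqref{eq:mc} and with the gauge action \eqref{eq:gauge action}, hence descending to a map $\Def_L(\art)\to\Def_{L'}(\art)$. To see this is a bijection one argues by induction on the nilpotency length of $N$, using a principal-parts (small-extension) filtration $0\to J\to \art\to \art'\to 0$ with $\m_\art J=0$. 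The key point is that the obstruction-deformation theory controlling the fibers of $\Def_L(\art)\to\Def_L(\art')$ over such a small extension is governed entirely by $H^1(L)$ (tangent/infinitesimal deformations), $H^2(L)$ (obstructions), and $H^0(L)$ (infinitesimal gauge symmetries): the relevant exact sequences of pointed sets are functorial in $L$, and $\psi$ being $1$-connected means $H^0(\psi)$ and $H^1(\psi)$ are isomorphisms and $H^2(\psi)$ is injective. One checks that injectivity on $H^2$ suffices to transport obstructions (an MC element lifts over $L$ iff its obstruction class vanishes iff its image vanishes in $H^2(L')$ iff it lifts over $L'$), while the iso on $H^0,H^1$ gives bijectivity on the set of lifts modulo gauge. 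Standard five-lemma-type bookkeeping over the induction then yields bijectivity of $\Def_\psi$ at every Artinian algebra. Naturality in $\art$ is immediate since everything in sight ($\F(L\otimes\m_\art)$, the gauge group, their quotient) is visibly functorial in $\art$.

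\textbf{Main obstacle.} The genuinely delicate part is the inductive step over a small extension: one must show that the map on \emph{gauge-equivalence classes} of Maurer--Cartan elements is bijective, not merely that the map on MC elements has the expected kernel/cokernel, because the gauge group also varies with $L$. Concretely, injectivity modulo gauge requires that if two MC elements over $L\otimes\m_\art$ become gauge-equivalent in $L'\otimes\m_\art$, then an honest gauge in $L^0\otimes\m_\art$ (not just in $L'^0\otimes\m_\art$) already conjugates one to the other; this is where the isomorphism $H^0(\psi)$, $H^1(\psi)$ together with the nilpotency of $\m_\art$ and another round of induction is used. Keeping the two inductions (on MC lifting and on gauge lifting) properly interleaved, and verifying compatibility of the connecting maps with $\psi$ at each stage, is the technical heart of the argument; this is carried out in detail in \cite[\S2]{GM} (building on Goldman--Millson and Schlesinger--Stasheff), and we refer the reader there.
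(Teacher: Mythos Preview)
The paper does not prove Theorem~\ref{thm:1type}; it simply records it as the Deligne--Schlesinger--Stasheff theorem and cites \cite[Thm.~2.4]{GM} for the proof. Your proposal correctly identifies this and gives a faithful sketch of the Goldman--Millson argument (reduction to a single $1$-equivalence, then induction over small extensions using the $H^0$/$H^1$/$H^2$ obstruction package), so it is consistent with the paper's treatment.
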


\subsection{Homotopy invariance}
\label{subsec:hinv}

The homotopy relation in $\DGL$ takes the following form.
Given a $\dgl$ $L$, let us form the $\dgl$ $L\otimes \bwedge (t, dt)$, 
endowed with the canonical tensor product structure. 
For each $s\in \k$, we defined in \S\ref{subsec:homeq}
an evaluation $\dga$ map, $\ev_s\colon  \bwedge (t, dt) \to \k$, 
which sends $t\mapsto s$ and $dt\mapsto 0$.
Proceeding as before, we extend this map to a $\dgl$ map, 
$\Ev_s = \id\otimes \ev_s \colon L \otimes \bwedge (t, dt) \to L \otimes \k =L$.

Two $\dgl$ maps $\psi_0, \psi_1\colon L\to L'$ are said to be {\em homotopic}\/ 
if there is a $\dgl$ map $\Psi\colon L\to L' \otimes \bwedge (t, dt)$ such that 
$\Ev_s\circ \Psi = \psi_s$ for $s=0,1$. 
The notion of homotopy between $\dgl$ maps is related to deformation 
functors via the following basic result of Manetti \cite[Thm.~5.5]{Ma}. 

\begin{theorem}
\label{thm:manetti}
Let   $L$  be a $\dgl$, and let $\art$ be a local Artin algebra. Two flat connections  
$\beta_0, \beta_1\in \F(L\otimes \m_{\art})$ are equal in $\Def_{L}(\art)$ 
if and only if there is a flat connection 
$\omega\in \F(L\otimes \bwedge (t, dt)\otimes \m_{\art})$ 
such that $(\Ev_s \otimes \id) \omega = \beta_s$ for $s=0,1$.
\end{theorem}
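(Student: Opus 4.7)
The plan is to interpret degree-one elements of $L \otimes \bwedge(t,dt) \otimes \m_{\art}$ as one-parameter families of flat connections together with an infinitesimal gauge. Writing such an element uniquely as
\[
\omega \;=\; a(t) + b(t)\,dt, \qquad a(t) \in L^1 \otimes \k[t] \otimes \m_{\art},\ b(t) \in L^0 \otimes \k[t] \otimes \m_{\art},
\]
and unwinding the Maurer--Cartan equation using $(dt)^2=0$, I would show that $\omega$ is flat if and only if $a(t)$ is a polynomial path of flat connections in $\F(L \otimes \m_{\art})$, linked by an evolution equation of the form $\dot a(t) = d_L b(t) + [a(t), b(t)]$ (up to a sign depending on the bracket convention in $L \otimes \bwedge(t,dt)$). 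Since $(\Ev_s \otimes \id)\omega = a(s)$, the theorem reduces to showing that $\beta_0, \beta_1$ are gauge-equivalent precisely when such a pair $(a(t), b(t))$ exists with the prescribed boundary values.

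For the direction $(\Rightarrow)$, suppose $\beta_1 = \exp(\alpha)\cdot \beta_0$ with $\alpha \in L^0 \otimes \m_{\art}$. I take the constant gauge $b(t)\equiv \pm\alpha$ (sign fixed by the evolution equation above) and the one-parameter orbit $a(t) := \exp(t\alpha)\cdot \beta_0$, computed via the formula \eqref{eq:gauge action}. Nilpotence of $\m_{\art}$ makes the series terminate, so $a(t)$ is polynomial in $t$. Flatness of each $a(t)$ is automatic because the gauge action preserves $\F$, and the evolution equation is verified by direct differentiation of \eqref{eq:gauge action} in $t$. Hence $\omega := a(t) + b(t)\,dt$ is the desired flat connection with $\Ev_0 \omega = \beta_0$ and $\Ev_1 \omega = \beta_1$.

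For the direction $(\Leftarrow)$, given a flat $\omega = a(t) + b(t)\,dt$ with $a(s) = \beta_s$ for $s=0,1$, I would construct a parallel transport $g(t)$ by solving the linear ODE $\dot g(t) = b(t)\cdot g(t)$ with $g(0) = 1$, inside the enveloping algebra of the nilpotent Lie algebra $L^0 \otimes \m_{\art} \otimes \k[t]$. Because $\m_{\art}^N = 0$ for some $N$, the iterated-integral expansion terminates as a polynomial in $t$; a standard BCH/coproduct argument shows that $g(t)$ is group-like, so it lies in $\exp(L^0 \otimes \m_{\art} \otimes \k[t])$. The evolution equation then forces $\tfrac{d}{dt}\bigl(g(t)^{-1}\cdot a(t)\bigr) = 0$, so $g(t)^{-1}\cdot a(t) \equiv g(0)^{-1}\cdot a(0) = \beta_0$. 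Evaluating at $t=1$ yields $\beta_0 = g(1)^{-1}\cdot \beta_1$ with $g(1) \in \exp(L^0 \otimes \m_{\art})$, which is the required gauge equivalence.

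The main obstacle is the backward direction, and within it the twin verifications that (i) the polynomial solution of the transport ODE is group-like rather than merely living in an enveloping algebra, and (ii) its interaction with $d_L$ is the one predicted by the gauge action formula \eqref{eq:gauge action}, making $g(t)^{-1}\cdot a(t)$ constant. Both points reduce to Baker--Campbell--Hausdorff bookkeeping, rendered harmless by the nilpotence of $\m_{\art}$, which collapses every formal series into a finite sum.
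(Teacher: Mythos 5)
Your argument is correct in outline, but note that the paper does not actually prove this statement: it is quoted as Theorem 5.5 of Manetti \cite{Ma}, so there is no internal proof to compare against. What you have written is essentially the standard (Schlessinger--Stasheff/Manetti) argument, made self-contained: decompose $\omega=a(t)+b(t)\,dt$, observe that the Maurer--Cartan equation in $L\otimes\bwedge(t,dt)\otimes\m_{\art}$ is equivalent to $a(t)$ being a polynomial path in $\F(L\otimes\m_{\art})$ governed by the evolution equation $\dot a=d_Lb+[a,b]$, and then integrate the time-dependent infinitesimal gauge $b(t)$ by a polynomial parallel transport, all series terminating by nilpotence of $\m_{\art}$. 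Two small points of bookkeeping: with the paper's conventions the transport equation you want is $\dot g=-b\,g$ (equivalently, act by $g(t)^{-1}$ defined from $+b$), consistent with the sign ambiguity you flagged in the decomposition; and the cleanest way to carry out your items (i)--(ii) is the usual trick of adjoining a degree-one element $\delta$ with $[\delta,x]=d_Lx$, so that the gauge action \eqref{eq:gauge action} becomes conjugation of $\omega+\delta$ by group-like elements in the (nilpotently truncated) enveloping algebra; then group-likeness of $g(t)$ follows from uniqueness of solutions of the linear ODE with primitive coefficient, and the constancy of $g(t)^{-1}\cdot a(t)$ is a one-line differentiation, with no genuine BCH manipulation needed. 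With those details filled in, your proof is complete, and it has the advantage over the paper of not outsourcing the statement to \cite{Ma}.
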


This theorem has an immediate corollary, which will be useful in the sequel.

\begin{corollary}
\label{cor:man}
If $\psi_0, \psi_1\colon L\to L'$ are homotopic in $\DGL$, then  
$\Def_{\psi_0}=\Def_{\psi_1}$. 
\end{corollary}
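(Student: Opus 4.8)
The statement is the corollary that homotopic $\DGL$-maps induce equal natural transformations on deformation functors, so the plan is to reduce it directly to Manetti's Theorem~\ref{thm:manetti}. First I would fix a local Artin algebra $(\art,\m_{\art})$ and a flat connection $\beta\in\F(L\otimes\m_{\art})$, and show that $\Def_{\psi_0}(\art)$ and $\Def_{\psi_1}(\art)$ send the class $[\beta]$ to the same element of $\Def_{L'}(\art)$. By the definition of the induced natural transformation, $\Def_{\psi_s}(\art)[\beta]=[(\psi_s\otimes\id_{\m_{\art}})\beta]$ for $s=0,1$, so it suffices to prove that these two flat connections on $L'\otimes\m_{\art}$ are gauge equivalent, i.e.\ equal in $\Def_{L'}(\art)$.

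The key step is to produce the connecting flat connection over $\bwedge(t,dt)$ demanded by Theorem~\ref{thm:manetti}. By hypothesis there is a $\dgl$ homotopy $\Psi\colon L\to L'\otimes\bwedge(t,dt)$ with $\Ev_s\circ\Psi=\psi_s$ for $s=0,1$. Tensoring with $\m_{\art}$ gives a $\dgl$ map $\Psi\otimes\id\colon L\otimes\m_{\art}\to L'\otimes\bwedge(t,dt)\otimes\m_{\art}$; since $\dgl$ maps carry Maurer--Cartan solutions to Maurer--Cartan solutions (as $d$ and the bracket are preserved and tensoring by the nilpotent $\m_{\art}$ is harmless), the element $\omega:=(\Psi\otimes\id)(\beta)$ lies in $\F\big(L'\otimes\bwedge(t,dt)\otimes\m_{\art}\big)$. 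Because evaluation commutes with the tensor factor $\m_{\art}$, one has $(\Ev_s\otimes\id)\omega=(\Ev_s\otimes\id)(\Psi\otimes\id)(\beta)=\big((\Ev_s\circ\Psi)\otimes\id\big)(\beta)=(\psi_s\otimes\id)(\beta)$ for $s=0,1$. Thus $\omega$ witnesses, via Theorem~\ref{thm:manetti} applied to $L'$, that $(\psi_0\otimes\id)(\beta)$ and $(\psi_1\otimes\id)(\beta)$ are equal in $\Def_{L'}(\art)$.

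Since $\beta$ and $\art$ were arbitrary, this shows $\Def_{\psi_0}(\art)=\Def_{\psi_1}(\art)$ on the underlying sets for every $\art\in\Art$; naturality in $\art$ is automatic because both sides are the restrictions of natural transformations, so $\Def_{\psi_0}=\Def_{\psi_1}$ as natural transformations $\Def_L\to\Def_{L'}$. I do not anticipate a real obstacle here: the only point requiring a line of care is checking that the identities $\Ev_s\circ\Psi=\psi_s$ survive tensoring by $\m_{\art}$ and pass through the comparison map of Theorem~\ref{thm:manetti} intact, which is a routine bookkeeping verification once one writes $(\Ev_s\otimes\id)\circ(\Psi\otimes\id)=(\Ev_s\circ\Psi)\otimes\id$.
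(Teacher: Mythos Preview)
Your proof is correct and is exactly the argument the paper has in mind: the paper does not spell out a proof but simply calls the corollary ``immediate'' from Theorem~\ref{thm:manetti}, and what you have written is precisely the one-paragraph unpacking of that immediacy.
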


\subsection{Deformation theory of $\dga$s}
\label{subsec:defdga}
We consider now the bifunctor $\CDGA \times \Lie \to \DGL$ 
which associates to a $\dga$~$A$ and a Lie algebra $\g$ the $\dgl$
\begin{equation}
\label{eq:dgl lag}
L = A\otimes \g,
\end{equation}
endowed with the canonical tensor product structure $[a\otimes g ,
a'\otimes g'] = aa' \otimes [g,g']$ and differential 
$\partial (a\otimes g) = da \otimes g$. 

We will also need an augmented version of this construction. 
Given an augmented $\dga$~$(A,\varepsilon)$ and a Lie algebra $\g$, 
we denote by $\widetilde{A}=\ker(\varepsilon)$ the augmentation differential ideal, 
and we consider the sub-$\dgl$ $\widetilde{L} = \widetilde{A}\otimes \g$ 
of the $\dgl$ $L = A\otimes \g$. This construction is again bifunctorial. 

\begin{remark}
\label{rem:tilde}
Given a $q$-equivalence $\psi\in \Hom_{\ACDGA} (A,A')$, it is easy 
to check that the induced maps, $\tilde{\psi}\colon \widetilde{A} \to \widetilde{A}'$ 
and $\tilde{\psi}\otimes \id\colon \widetilde{L} \to \widetilde{L}'$, are again 
$q$-equivalences, provided that both $H^{\hdot}(A)$ and $H^{\hdot}(A')$ 
are connected.  Consequently, if  $A\simeq_q A'$ in $\ACDGA_0$ 
then $\widetilde{L} \simeq_q \widetilde{L}'$ in $\DGL$.
\end{remark}

Let $\g$ be a Lie algebra.    The proof of the next lemma is straightforward.

\begin{lemma}
\label{lem:htpy}
A $\CDGA$ homotopy, 
$\Psi\colon A\to A' \otimes \bwedge (t, dt)$,   
between two maps, $\psi_0$ and $\psi_1$, induces a $\DGL$ homotopy, 
$\Psi\otimes \id \colon A\otimes \g \to A' \otimes \g \otimes \bwedge (t, dt)$, between 
the maps $\psi_0\otimes  \id$ and $\psi_1\otimes \id$.  Moreover, 
if $\Psi$ is an augmented homotopy, then $\Psi$ induces a $\DGL$ homotopy, 
$\widetilde{\Psi}\otimes \id \colon \widetilde{A}\otimes \g \to 
\widetilde{A}' \otimes \g \otimes \bwedge (t, dt)$, between 
the maps $\tilde\psi_0\otimes  \id$ and $\tilde\psi_1\otimes \id$.  
\end{lemma}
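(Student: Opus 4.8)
The plan is to unwind the definitions on both sides and check that the tensor product with $\g$ (respectively, the restriction to augmentation ideals after tensoring with $\g$) carries a $\CDGA$ homotopy to a $\DGL$ homotopy. First I would recall that if $\Psi\colon A\to A'\otimes\bwedge(t,dt)$ is a $\CDGA$ homotopy between $\psi_0$ and $\psi_1$, then by definition $\Ev_s\circ\Psi=\psi_s$ for $s=0,1$, where $\Ev_s=\id\otimes\ev_s$. I would then form $\Psi\otimes\id_{\g}$, noting that $A'\otimes\bwedge(t,dt)\otimes\g$ is exactly $(A'\otimes\g)\otimes\bwedge(t,dt)$ up to the canonical reordering of tensor factors, with the tensor-product $\dgl$ structure on $A'\otimes\g$ as in \eqref{eq:dgl lag}. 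The key point is that $\Psi\otimes\id_{\g}$ is a morphism of $\dgl$s: it is a chain map because $\Psi$ is, and it respects brackets because $\Psi$ is an algebra map and the bracket on $A\otimes\g$ is $[a\otimes g,a'\otimes g']=aa'\otimes[g,g']$, so the Leibniz-type compatibility follows from multiplicativity of $\Psi$ together with the fact that $\id_{\g}$ preserves $[\,,\,]$; the differential compatibility uses that $\Psi$ commutes with $d$ and that $\partial(a\otimes g)=da\otimes g$.

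Next I would verify the evaluation condition. Under the identification $A'\otimes\g\otimes\bwedge(t,dt)\cong(A'\otimes\g)\otimes\bwedge(t,dt)$, the $\dgl$ evaluation map $\Ev_s=\id_{A'\otimes\g}\otimes\ev_s$ agrees with $(\id_{A'}\otimes\ev_s)\otimes\id_{\g}$ after reordering. Hence
\[
\Ev_s\circ(\Psi\otimes\id_{\g}) \;=\; \bigl((\id_{A'}\otimes\ev_s)\circ\Psi\bigr)\otimes\id_{\g} \;=\; \psi_s\otimes\id_{\g},
\]
which is precisely the required identity showing $\Psi\otimes\id_{\g}$ is a $\DGL$ homotopy from $\psi_0\otimes\id_{\g}$ to $\psi_1\otimes\id_{\g}$.

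For the augmented case I would additionally use the second condition in the definition of augmented homotopy, namely $\Ev'\circ\Psi=\varepsilon$, where $\Ev'=\varepsilon'\otimes\id\colon A'\otimes\bwedge(t,dt)\to\bwedge(t,dt)$. This says exactly that $\Psi$ maps the augmentation ideal $\widetilde{A}$ into $\widetilde{A}'\otimes\bwedge(t,dt)$, so $\Psi$ restricts to a $\dga$ map $\widetilde{\Psi}\colon\widetilde{A}\to\widetilde{A}'\otimes\bwedge(t,dt)$. Tensoring with $\g$ then gives $\widetilde{\Psi}\otimes\id_{\g}\colon\widetilde{A}\otimes\g\to\widetilde{A}'\otimes\g\otimes\bwedge(t,dt)$, which is a $\dgl$ morphism by the same bracket-and-differential check as before (now using that $\widetilde{A}$ is a differential ideal, so it is closed under the relevant products appearing in the bracket), and the evaluation identity $\Ev_s\circ(\widetilde{\Psi}\otimes\id_{\g})=\tilde\psi_s\otimes\id_{\g}$ follows by restricting the computation above. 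The only mildly delicate point — the main obstacle, such as it is — is bookkeeping the canonical isomorphism $A'\otimes\g\otimes\bwedge(t,dt)\cong(A'\otimes\g)\otimes\bwedge(t,dt)$ compatibly with all the $\dgl$ structure maps and the evaluations, so that "the canonical tensor product structure" on $L\otimes\bwedge(t,dt)$ from \S\ref{subsec:hinv} matches what one gets by first tensoring $A'$ with $\bwedge(t,dt)$ and then with $\g$; once that identification is pinned down, everything is routine, which is why the lemma is asserted to have a straightforward proof.
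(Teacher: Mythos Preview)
Your proposal is correct and is exactly the routine unwinding of definitions that the paper has in mind; indeed, the paper does not spell out a proof at all, merely stating that ``the proof of the next lemma is straightforward.'' Your verification that $\Psi\otimes\id_{\g}$ is a $\dgl$ map, that the evaluation identities transport as claimed, and that the augmented condition $\Ev'\circ\Psi=\varepsilon$ forces $\Psi(\widetilde{A})\subseteq\widetilde{A}'\otimes\bwedge(t,dt)$ is precisely what is needed.
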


\subsection{Deformation theory of augmented $\dga$s}
\label{subsec:deg adga}
Our next goal is to relate $q$-types of $\dga$s to the deformation theory 
of $\adga$s. 
Fix $q\ge 1$, and let $Z$ be a zig-zag of $q$-equivalences in $\ACDGA$,
\begin{equation}
\label{eq:defz}
\xymatrix{
A_0 & A_1 \ar_(.45){\psi_0}[l]  \ar^{\psi_1}[r] & \cdots 
& A_{\ell-1}   \ar[l]\ar^{\psi_{\ell-1}}[r] & A_{\ell}
},
\end{equation}
where $H^0(A_0)=\k\cdot 1$.  
By Remark \ref{rem:tilde} and Theorem \ref{thm:1type}, the zig-zag $Z$ induces a 
natural bijection 
\begin{equation}
\label{eq:betaz}
\xymatrix{
\beta_Z\colon \F(\widetilde{A}_{\ell}\otimes \g \otimes \m_{\art}) / 
\G (\widetilde{A}_{\ell}\otimes \g \otimes \m_{\art}) 
\ar^(.51){\simeq}[r] & \F(\widetilde{A}_0\otimes \g \otimes \m_{\art}) / 
\G (\widetilde{A}_0\otimes \g \otimes \m_{\art}) 
}
\end{equation}
for all local Artin algebras $\art$. It is important to note that, if $Z$ and $Z'$ 
are two different zig-zags of $q$-equivalences connecting $A_0$ to $A_{\ell}$, 
then the bijections $\beta_Z$ and $\beta_{Z'}$ may also be different. 

\begin{prop}
\label{prop:ztos}
Let $\rho\colon \mathcal{M}_q\to A_0$ be a $q$-minimal model map. 
There is then a short zig-zag of $q$-equivalences in $\ACDGA$,
\[
\xymatrix{S\colon A_0 & \mathcal{M}_q \ar_(.42){\rho}[l]  \ar^{\bar\rho}[r] & A_{\ell}
},
\]
such that $\beta_Z=\beta_{S}$.
\end{prop}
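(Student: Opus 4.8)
The plan is to reduce the general zig-zag $Z$ to a short zig-zag through the $q$-minimal model by showing that the chosen $q$-minimal model map $\rho\colon \mathcal{M}_q \to A_0$ lifts, up to augmented homotopy, along the zig-zag $Z$ to produce a compatible $q$-equivalence $\bar\rho\colon \mathcal{M}_q \to A_\ell$. The key point is that the induced bijection $\beta_Z$ on deformation-functor quotients depends only on the homotopy class of the zig-zag in a suitable sense, together with the fact that $q$-equivalences between $\ACDGA_0$-objects induce, via Remark \ref{rem:tilde}, $q$-equivalences of the associated $\dgl$s $\widetilde{A} \otimes \g$, hence (by Theorem \ref{thm:1type}) natural isomorphisms of deformation functors. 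Once $\bar\rho$ is constructed with the right compatibility, the short zig-zag $S\colon A_0 \xleftarrow{\rho} \mathcal{M}_q \xrightarrow{\bar\rho} A_\ell$ will automatically satisfy $\beta_Z = \beta_S$ because both are computed by composing the same chain of natural isomorphisms $\Def_{\widetilde{A}_i \otimes \g} \cong \Def_{\widetilde{A}_{i\pm 1}\otimes \g}$.

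First I would invoke the lifting property of minimal Sullivan algebras: since $\mathcal{M}_q$ is $q$-minimal and each $\psi_j$ in $Z$ is a $q$-equivalence in $\ACDGA$, the composite $q$-minimal model map $\rho$ can be lifted through the zig-zag. Concretely, one traverses $Z$ from $A_0$ to $A_\ell$, at each backward-pointing arrow using the obstruction-free lifting of maps out of a ($q$-)minimal algebra along a ($q$-)equivalence, and at each forward arrow simply composing; the standard argument (as in \cite{Su, Hal, FHT}) gives a $\cdga$ map $\bar\rho\colon \mathcal{M}_q \to A_\ell$, well-defined up to homotopy, such that each triangle in the resulting diagram commutes up to homotopy in $\CDGA$. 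Because $\mathcal{M}_q$ is connected it carries a unique augmentation, and by the discussion in \S\ref{subsec:minmod} the lift and all intermediate homotopies can be taken to preserve augmentations, so the whole picture lives in $\ACDGA$. Since $\bar\rho$ is a composite of $q$-equivalences (up to homotopy, which preserves cohomology), $\bar\rho$ is itself a $q$-equivalence, so $S$ is a genuine short zig-zag of $q$-equivalences in $\ACDGA$.

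Next I would verify $\beta_Z = \beta_S$. By construction $\beta_Z$ is the composite, over the edges of $Z$, of the natural isomorphisms of deformation functors supplied by Remark \ref{rem:tilde} and Theorem \ref{thm:1type}; likewise $\beta_S$ is the composite over the two edges $\rho$ and $\bar\rho$. Using Lemma \ref{lem:htpy} together with Corollary \ref{cor:man}, homotopic $\acdga$ maps induce the same map on $\Def$ of the associated $\dgl$s, so the homotopy-commuting triangles from the lifting step translate into honest commuting triangles of natural isomorphisms between the various $\Def_{\widetilde{A}_i \otimes \g}$ (and $\Def_{\widetilde{\mathcal{M}}_q \otimes \g}$). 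Pasting these triangles along $Z$ shows that the edge-composite defining $\beta_Z$ equals the two-edge composite defining $\beta_S$, as maps $\F(\widetilde{A}_\ell \otimes \g \otimes \m_{\art})/\G \to \F(\widetilde{A}_0 \otimes \g \otimes \m_{\art})/\G$, naturally in $\art$.

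The main obstacle I anticipate is bookkeeping the augmentations and the homotopies simultaneously: one must ensure that the lift $\bar\rho$ and every intermediate homotopy can be chosen in $\ACDGA$ rather than merely in $\CDGA$, and that the homotopies used are augmented homotopies (so that Lemma \ref{lem:htpy} and Corollary \ref{cor:man} apply to the $\widetilde{L}$-level). This is where connectivity of $\mathcal{M}_q$ and of $H^{\hdot}(A_0)$ is essential — it guarantees unique augmentations and lets one promote $\CDGA$-homotopies to $\ACDGA$-homotopies — but the argument has to be spelled out carefully at each arrow of the zig-zag rather than being invoked as a black box. A secondary point worth care is that "$\beta_Z$ depends only on the homotopy class of each edge" genuinely requires Corollary \ref{cor:man} on the $\widetilde{L}$-level, which in turn needs the second half of Lemma \ref{lem:htpy}; keeping the tilde-constructions and the augmented homotopies in lockstep is the delicate part of the write-up.
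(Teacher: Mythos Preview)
Your proposal is correct and follows essentially the same approach as the paper: construct $\bar\rho$ by traversing $Z$ from $A_0$ to $A_\ell$, composing along forward arrows and lifting (up to augmented homotopy) along backward arrows via the lifting property of the minimal Sullivan algebra $\mathcal{M}_q$, then deduce $\beta_Z=\beta_S$ from Lemma~\ref{lem:htpy} and Corollary~\ref{cor:man}. The paper organizes this as an explicit induction producing $q$-equivalences $\rho_i\colon \mathcal{M}_q\to A_i$ with homotopy-commutative triangles in $\ACDGA$, exactly as you outline; your discussion of the augmentation bookkeeping is handled in the paper by the single remark that the lifting property up to homotopy for $\cdga$ maps also holds for $\acdga$ maps.
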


\begin{proof}
We will construct, by induction on $0\le i \le \ell$, a collection of $q$-minimal model 
maps $\rho_i\colon \mathcal{M}_q \to A_i$, which form, together with the maps 
$\psi_i$ from \eqref{eq:defz}, homotopy-com\-mutative triangles in $\ACDGA$,  
starting with $\rho_0=\rho$.  Once this done, we set $\bar\rho=\rho_{\ell}$.  The equality   
$\beta_Z=\beta_{S}$ then follows from Lemma \ref{lem:htpy} and Corollary \ref{cor:man}.

For the induction step, we first assume that $\psi_i\colon A_i\to A_{i+1}$.  
Then we take $\rho_{i+1} = \psi_i\circ \rho_i$.  Finally, suppose that 
$\psi_i\colon A_{i+1}\to A_{i}$. The lifting property up to homotopy for 
$\dga$ maps also holds for $\adga$ maps, and implies that we may find 
a $\dga$ map $\rho_{i+1}\colon \mathcal{M}_q \to A_{i+1}$ such that 
$\psi_i\circ \rho_{i+1}$ is homotopic to $\rho_i$ in $\ACDGA$.  The 
fact that $\rho_{i+1}$ must be a $q$-equivalence is easily checked, 
thereby completing the proof.
\end{proof}

\subsection{Representation varieties}
\label{subsec:rep vars}   

Let $\pi$ be a discrete group, and let $G$ be a $\k$-linear algebraic group. 
The set $\Hom(\pi, G)$ of group homomorphisms from $\pi$ to $G$ has a  
natural structure of an affine scheme.  This set depends bi-functorially on 
$\pi$ and $G$, and has a natural base point, the  trivial representation, $1$. 
Furthermore, $G$ acts by conjugation on $\Hom(\pi, G)$.

Now suppose $\pi$ is a finitely generated group.  (Note that the 
fundamental group $\pi=\pi_1(X,x)$ of a pointed CW-space is finitely 
generated if and only if $X$ is $1$-finite.) In this case, the set $\Hom(\pi, G)$ 
has a natural structure of affine variety, called the {\em $G$-representation 
variety}\/ of $\pi$.   Moreover, every homomorphism $\varphi\colon \pi \to \pi'$ 
induces an algebraic morphism between the corresponding representation varieties,
$\varphi^{!}\colon \Hom (\pi', G) \to\Hom (\pi, G)$.  We will come back to this  
point in Lemma \ref{lem:zeronto-top}. 

Clearly, the $G$-representation variety of the free group $F_n$ 
is equal to the $n$-fold direct product $G^{n}$.  Much is known 
about the varieties of commuting matrices, 
for instance, that $\Hom(\Z^2,\GL_n(\C))$ is irreducible.  
Nevertheless, many open questions remain about 
the precise structure of the varieties $\Hom(\Z^n,G)$, see 
for instance \cite{BFM, AC} and references therein.   
Perhaps the most-studied family of representation 
varieties is that of fundamental groups of closed 
orientable surfaces $\Sigma_g$.  For instance, 
it is known that $\Hom(\pi_1(\Sigma_g),G)$ is connected 
if $G=\SL_n(\C)$, and an absolutely irreducible and $\Q$-rational variety 
if $G=\GL_n(\C)$, see \cite{Go, RBC}.

\subsection{Flat connections}
\label{subsec:flat conn}   

The infinitesimal counterpart to the representation varieties is provided by 
the space of flat connections. 
Given a $\dga$ $A$ and a Lie algebra $\g$, we will denote by $\F(A,\g)$ 
the set of flat connections on the $\dgl$ $A\otimes \g$. This set behaves 
bi-functorially, and has a natural basepoint, the trivial flat connection $0$. 
For a local Artin $\k$-algebra $\art$, the gauge group
\begin{equation}
\label{eq:gauge-bis}
\G(A\otimes\g\otimes \m_{\art})=\exp (A^0 \otimes\g\otimes \m_{\art})
\end{equation}
acts naturally on $\F(A\otimes\g\otimes \m_{\art})$.  If $A$ is an augmented 
$\dga$, we have that $\F(\widetilde{A}\otimes\g\otimes \m_{\art})=\F(A\otimes\g\otimes \m_{\art})$ 
and $\G(\widetilde{A}\otimes\g\otimes \m_{\art})\subseteq \G(A\otimes\g\otimes \m_{\art})$, 
with the augmented gauge group acting by restriction. In the particular case when 
$A$ is connected, the augmented gauge group is trivial, and we obtain a natural 
identification,
\begin{equation}
\label{eq:flatdef}
\F(A, \g\otimes \m_{\art})=\Def_{\widetilde{A}\otimes \g} (\art).
\end{equation}

If both $A^1$ and $\g$ are finite-dimensional, then the set 
$\F(A,\g)$ has a natural structure of affine variety, which 
we shall call the {\em $\g$-variety of flat connections}\/ on 
the $\dga$ $A$.

Now let $(X,x)$ be a pointed space with fundamental group $\pi$, 
and let $G$ be a linear algebraic group over $\k=\R$ or $\C$, 
with Lie algebra $\g$. The monodromy construction from 
\cite[\S6.3]{DP-ccm} gives a map 
\begin{equation}
\label{eq:mon}
\xymatrix{\mon\colon \F(\Omega(X,\k), \g) \ar[r]& \Hom(\pi,G)}
\end{equation}
which extends the classical monodromy map for smooth manifolds, 
and has nice naturality properties.  Furthermore, for each local Artin $\k$-algebra 
$\art$, we have a natural monodromy map 
\begin{equation}
\label{eq:mon-bis}
\xymatrix{\mon\colon \F(\Omega(X,\k), \g \otimes \m_{\art})  \ar[r]&
\Hom(\pi,\exp (\g \otimes \m_{\art}))}.
\end{equation}

The equivariance property of the monodromy map for smooth 
manifolds described in \cite[(5-8)]{GM} can be extended 
to arbitrary topological spaces, as follows.  

\begin{lemma}
\label{lem:moneq}
For any gauge equivalence $a\in \G(\Omega(X,\k) \otimes \g \otimes \m_{\art})$, 
we have a commuting diagram,
\[
\xymatrixcolsep{28pt}
\xymatrix{
\F(\Omega(X,\k), \g \otimes \m_{\art})  \ar^(.48){\mon}[r] \ar^{a}[d] &
\Hom(\pi,\exp (\g \otimes \m_{\art})) \ar^{c_a}[d]\\
\F(\Omega(X,\k), \g \otimes \m_{\art})  \ar^(.48){\mon}[r]&
\Hom(\pi,\exp (\g \otimes \m_{\art})),
}
\]
where $c_a$ stands for the conjugation action by $-(\varepsilon \otimes \id)(a)$
and $\varepsilon \otimes \id\colon \Omega^0(X,\k)\otimes \g \otimes \m_{\art}  \to 
 \g \otimes \m_{\art} $ is given by the augmentation $\varepsilon$ of $\Omega(X,\k)$ 
corresponding to the basepoint $x$. 
Consequently, the monodromy map factors through the action of 
the augmented gauge group. 
\end{lemma}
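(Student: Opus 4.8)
The plan is to reduce the statement to a local, chart-wise computation that mirrors the proof of the equivariance formula \cite[(5-8)]{GM} for smooth manifolds, while being careful about the fact that $\Omega^{\hdot}(X,\k)$ is the algebra of Sullivan's smooth $\k$-forms on the singular simplices of $X$ rather than genuine forms on a manifold. First I would recall the explicit definition of the monodromy map $\mon$ from \cite[\S6.3]{DP-ccm}: given a flat connection $\omega\in\F(\Omega(X,\k),\g\otimes\m_{\art})$, one integrates $\omega$ along paths (equivalently, takes the holonomy of the corresponding flat $\exp(\g\otimes\m_{\art})$-connection, which makes sense because the coefficients are nilpotent so there are no convergence issues), and this holonomy depends only on the based homotopy class of the loop, yielding a homomorphism $\pi\to\exp(\g\otimes\m_{\art})$. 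The key point is that this construction is natural with respect to the simplicial structure and reduces, on each singular simplex $\sigma\colon\Delta^n\to X$, to the classical parallel-transport computation for forms on $\Delta^n$.

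Next I would carry out the core step: the transformation law for holonomy under a gauge transformation. If $a=\exp(\alpha)$ with $\alpha\in\Omega^0(X,\k)\otimes\g\otimes\m_{\art}$, then the gauge action \eqref{eq:gauge action} is precisely the one under which the connection form $\omega$ transforms as $a\cdot\omega = \exp(\alpha)\,\omega\,\exp(-\alpha) - (d\exp(\alpha))\exp(-\alpha)$ when one writes things multiplicatively in the (pro-)unipotent group; this is the standard gauge-transformation formula, and the infinite series in \eqref{eq:gauge action} is exactly its expansion. From this it is a classical ODE fact that the holonomy of $a\cdot\omega$ along a path $\gamma$ from $x_0$ to $x_1$ equals $g(x_1)^{-1}\,\mathrm{hol}_\gamma(\omega)\,g(x_0)$, where $g(x)=\exp(\alpha(x))$ evaluated at the point (using that $\Omega^0$-elements of $\Omega^{\hdot}(X,\k)$ can be evaluated at points, in particular at the basepoint via the augmentation $\varepsilon$). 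Restricting to loops based at $x$, both endpoints are $x$, so the holonomy gets conjugated by $g(x)^{-1}=\exp(-(\varepsilon\otimes\id)(\alpha))$; unwinding signs, conjugation by $-(\varepsilon\otimes\id)(a)$ is exactly $c_a$ as defined. This gives the commuting square. I expect the main obstacle to be the careful bookkeeping that the monodromy map of \cite{DP-ccm}, built out of iterated integrals / simplicial parallel transport rather than honest smooth parallel transport, genuinely satisfies this transformation law; one must check it simplex-by-simplex and verify compatibility with the simplicial face and degeneracy maps, but since the coefficient ring $\m_{\art}$ is nilpotent, all the relevant series are finite and the argument is formal once the right bookkeeping is set up.

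Finally, for the last sentence of the statement, I would observe that $\varepsilon\otimes\id$ is surjective onto $\g\otimes\m_{\art}$, so as $a$ ranges over $\G(\Omega(X,\k)\otimes\g\otimes\m_{\art})$ the element $(\varepsilon\otimes\id)(a)$ ranges over all of $\exp(\g\otimes\m_{\art})$, hence $c_a$ ranges over all inner automorphisms of $\Hom(\pi,\exp(\g\otimes\m_{\art}))$ coming from conjugation in the target group. In particular, the augmented gauge group $\G(\widetilde{\Omega}(X,\k)\otimes\g\otimes\m_{\art})$, which consists of those $a$ with $(\varepsilon\otimes\id)(a)=0$, acts trivially on the image under $\mon$; therefore $\mon$ descends to a map from $\F(\Omega(X,\k),\g\otimes\m_{\art})/\G(\widetilde{\Omega}(X,\k)\otimes\g\otimes\m_{\art})$, which by the identification \eqref{eq:flatdef} (using that $\Omega^{\hdot}(X,\k)$ has connected cohomology, $X$ being path-connected) is the same as $\Def_{\widetilde{\Omega}(X,\k)\otimes\g}(\art)$. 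This last reduction is immediate from the diagram, so the only real work is establishing the commuting square itself.
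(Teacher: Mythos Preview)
The paper does not actually prove this lemma: it is stated as an extension of the equivariance formula \cite[(5-8)]{GM} to arbitrary topological spaces, and no argument is given beyond that reference. Your proposal fills in precisely the details one would expect---reducing to the classical gauge-transformation law for holonomy, carried out simplex-by-simplex in the Sullivan model---and is correct. In that sense your approach \emph{is} the paper's approach, just made explicit.

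One small imprecision worth flagging: in your final paragraph you invoke \eqref{eq:flatdef} to identify the quotient with $\Def_{\widetilde{\Omega}(X,\k)\otimes\g}(\art)$. That equation, as stated in the paper, applies only when $A$ is connected, which $\Omega(X,\k)$ is not. The identification you want is rather the one recorded just before \eqref{eq:flatdef}, namely $\F(\widetilde{A}\otimes\g\otimes\m_{\art})=\F(A\otimes\g\otimes\m_{\art})$ together with the definition \eqref{eq:deff} of $\Def$. This does not affect the proof of the lemma itself, whose ``Consequently'' clause only asserts that $\mon$ factors through the augmented gauge action, and that follows immediately from the commuting square once you note that $c_a$ is trivial when $(\varepsilon\otimes\id)(a)=0$.
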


We will repeatedly work under the assumptions of Theorem B from \cite{DP-ccm}. 
Namely, we fix an integer $q\ge 1$, and we let $X$ be a pointed, $q$-finite space with 
fundamental group $\pi$. Next, we assume there is a $q$-finite $\dga$ $A$ such 
that $\Omega(X,\k)\simeq_q A$ in $\CDGA$.  Finally, we let $G$ be a 
linear algebraic group over $\k=\R$ or $\C$, with Lie algebra $\g$.  

Now let $\rho_1\colon \mathcal{N}\to \Omega(X,\k)$ be a 
`$\pi_1$-adapted' $1$-minimal model map, as in \cite[\S6.4]{DP-ccm}.  
By minimal model theory of $\dga$s, we may extend $\rho_1$ to a 
$q$-minimal model map, $\rho_q\colon \mathcal{M}_q \to \Omega(X,\k)$. 
By Lemma \ref{lem:zig}, we may find a zig-zag of $q$-equivalences 
in $\ACDGA$ of the form 
\begin{equation}
\label{eq:zigspec}
S\colon \xymatrix{
 \Omega(X,\k) & \mathcal{M}_q \ar_(.36){\rho_q}[l]  \ar^{\bar\rho_q}[r] & A
}
\end{equation}
which fits into the basic setup from  \cite[\S7.2]{DP-ccm}.  We will 
call such zig-zag {\em special}. 

The next result is a topological analog of Theorem 6.8 from \cite{GM}, 
proved only for smooth manifolds. 

\begin{theorem}
\label{thm:monbij}
Let $X$ be a $1$-finite space. Then the natural map
\[
\xymatrix{\mon\colon \F(\Omega(X,\k)\otimes \g \otimes \m_{\art})/
\G(\widetilde{\Omega}(X,\k) \otimes \g \otimes \m_{\art})  \ar[r]&
\Hom(\pi,\exp (\g \otimes \m_{\art}))}
\]
from Lemma \ref{lem:moneq} is a bijection, for all local Artin $\k$-algebras $\art$.
\end{theorem}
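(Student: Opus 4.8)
The plan is to reduce the statement for a general $1$-finite space $X$ to the case of a smooth manifold (or a $K(\pi,1)$), where the corresponding bijectivity result is already available in the literature, and then transport the conclusion across a chain of naturally-commuting maps. Concretely, first I would recall that, by hypothesis, $X$ is $1$-finite, so $\pi=\pi_1(X,x)$ is finitely generated, and a $\pi_1$-adapted $1$-minimal model map $\rho_1\colon \mathcal N\to \Omega(X,\k)$ exists as in \cite[\S6.4]{DP-ccm}. The key point is that $\rho_1$ is a $1$-equivalence of $\adga$s, so by Remark \ref{rem:tilde} the induced map $\tilde\rho_1\otimes\id\colon \widetilde{\mathcal N}\otimes\g\to \widetilde\Omega(X,\k)\otimes\g$ is a $1$-equivalence of $\DGL$s, and hence by Theorem \ref{thm:1type} it induces, for every local Artin algebra $\art$, a natural bijection between the deformation sets
\[
\Def_{\widetilde{\mathcal N}\otimes\g}(\art)=\F(\mathcal N\otimes\g\otimes\m_{\art})/\G(\widetilde{\mathcal N}\otimes\g\otimes\m_{\art})
\]
and $\F(\Omega(X,\k)\otimes\g\otimes\m_{\art})/\G(\widetilde\Omega(X,\k)\otimes\g\otimes\m_{\art})$, using the identification \eqref{eq:flatdef}.

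Next I would use the monodromy map's functoriality. The continuous classifying map $X\to K(\pi,1)$, together with the fact that $\mathcal N$ is a $1$-minimal model for both $X$ and $K(\pi,1)$, produces a commuting square relating $\mon$ for $X$ and $\mon$ for $K(\pi,1)$ (here Lemma \ref{lem:moneq}, which says $\mon$ factors through the augmented gauge action, is what makes $\mon$ well defined on the quotient). Since $K(\pi,1)$-level statement is exactly the computation of $\Hom(\pi,\exp(\g\otimes\m_{\art}))$ in terms of the Maurer--Cartan set of the $1$-minimal model of $\pi$ --- this is the content of \cite[Thm.~6.8]{GM} in the smooth case, and its proof only uses a $1$-minimal model of the group, so it goes through verbatim for an arbitrary finitely generated $\pi$ --- we obtain that the composite
\[
\F(\mathcal N\otimes\g\otimes\m_{\art})/\G(\widetilde{\mathcal N}\otimes\g\otimes\m_{\art})\xrightarrow{\ \simeq\ }\Hom(\pi,\exp(\g\otimes\m_{\art}))
\]
is a bijection. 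Chasing the naturality square then identifies this composite with $\mon$ for $X$, precomposed with the bijection from the previous paragraph, giving the desired result.

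I would organize the write-up as: (1) invoke $1$-finiteness and the $\pi_1$-adapted $1$-minimal model $\rho_1$; (2) apply Remark \ref{rem:tilde} and Theorem \ref{thm:1type} to get the bijection $\beta$ on Artin-level deformation quotients; (3) observe that the monodromy map for $\Omega(X,\k)$ factors through the augmented gauge group by Lemma \ref{lem:moneq}, so the displayed map makes sense; (4) compare with the monodromy map of the $1$-minimal model $\mathcal N$ (equivalently, of $K(\pi,1)$) via naturality of $\mon$; (5) cite that the latter is a bijection because, by classical deformation theory of finitely generated groups, $\Hom(\pi,\exp(\g\otimes\m_{\art}))$ is computed by the Maurer--Cartan set of any $1$-minimal model of $\pi$ --- this is the topological analog of \cite[Thm.~6.8]{GM}, whose proof uses nothing beyond a $1$-minimal model. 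Composing, $\mon$ for $X$ is a bijection.

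The main obstacle I anticipate is step (4)--(5): one must be careful that the monodromy construction of \cite[\S6.3]{DP-ccm} is genuinely natural with respect to the comparison map $X\to K(\pi,1)$ and compatible with the gauge/deformation-theoretic identification built from $\rho_1$, so that the two a priori different bijections (the $\DGL$-level $\beta$ and the group-theoretic one) are intertwined by $\mon$ exactly as claimed; and that the smooth-manifold argument of \cite[Thm.~6.8]{GM} really only invokes a $1$-minimal model (so that it extends to arbitrary $1$-finite $X$ with no smoothness input). Both are essentially bookkeeping --- tracking which zig-zag of $1$-equivalences induces which bijection, as flagged in the remark after \eqref{eq:betaz} --- but they are where the care is needed, since distinct zig-zags can give distinct bijections. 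Everything else (finite generation of $\pi$, the identification \eqref{eq:flatdef}, the factorization through the augmented gauge group) is immediate from the results already assembled above.
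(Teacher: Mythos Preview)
Your overall architecture matches the paper's proof: take the $\pi_1$-adapted $1$-minimal model map $\rho_1\colon\mathcal N\to\Omega(X,\k)$, use Remark~\ref{rem:tilde} and Theorem~\ref{thm:1type} to identify the two deformation quotients, use Lemma~\ref{lem:moneq} to factor $\mon$ through the augmented gauge action, and then deduce that $\mon$ is a bijection from the fact that the composite
\[
\F(\mathcal N\otimes\g\otimes\m_{\art})\xrightarrow{\ \rho_1\otimes\id\ }\F(\Omega(X,\k)\otimes\g\otimes\m_{\art})\xrightarrow{\ \mon\ }\Hom(\pi,\exp(\g\otimes\m_{\art}))
\]
is a bijection.

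The gap is in how you justify this last bijectivity. The paper does \emph{not} argue that the Goldman--Millson proof of \cite[Thm.~6.8]{GM} ``only uses a $1$-minimal model'' and hence extends; it simply cites \cite[Prop.~6.16]{DP-ccm}, which establishes precisely that the displayed composite is a bijection for any pointed space $X$ (no smoothness hypothesis). Your detour through the classifying map $X\to K(\pi,1)$ does not help: $K(\pi,1)$ for a merely finitely generated $\pi$ is in general not a smooth manifold, so \cite[Thm.~6.8]{GM} does not apply to it either; and the bare assertion that the smooth-manifold argument ``goes through verbatim'' is exactly what would need to be checked---it is the substance of \cite[Prop.~6.16]{DP-ccm}, not a triviality. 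Replace your step~(5) by a direct citation of \cite[Prop.~6.16]{DP-ccm}, drop the $K(\pi,1)$ naturality square, and your write-up becomes the paper's proof.
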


\begin{proof}
Let $\rho_1\colon \mathcal{N}\to \Omega(X,\k)$ be a $\pi_1$-adapted 
$1$-minimal model map.   By \cite[Prop.~6.16]{DP-ccm}, the composite
\[
\xymatrixcolsep{28pt}
\xymatrix{
\F(\mathcal{N}\otimes \g \otimes \m_{\art}) \ar^(.48){\rho_1\otimes \id}[r] 
& \F(\Omega(X,\k)\otimes \g \otimes \m_{\art})  
\ar^{\mon}[r] & \Hom(\pi,\exp (\g \otimes \m_{\art}))
}
\]
is a bijection.  Since $\mathcal{N}$ is connected, formula \eqref{eq:flatdef} allows us to replace 
$\F(\mathcal{N} \otimes \g \otimes \m_{\art})$ by $\Def_{\widetilde{\mathcal{N}}\otimes \g} (\art)$. 
Using now Lemma \ref{lem:moneq}, we see that the above bijection is equal to the 
composite 
\[
\xymatrixcolsep{30pt}
\xymatrix{
\Def_{\widetilde{\mathcal{N}}\otimes \g} (\art) \ar^(.45){\tilde\rho_1\otimes \id}[r] 
&\Def_{\widetilde{\Omega}(X,\k)\otimes \g} (\art) \ar^(.42){\mon}[r] 
& \Hom(\pi,\exp (\g \otimes \m_{\art}))
}.
\]
Finally,  it follows from Theorem \ref{thm:1type} and Remark \ref{rem:tilde} 
that the map $\tilde\rho_1\otimes \id$ is also a bijection, and this completes 
the proof.
\end{proof}

Assume again that the hypotheses of Theorem B from \cite{DP-ccm} 
are satisfied.  Let $Z$ be a zig-zag of $q$-equivalences in $\ACDGA$ 
as in \eqref{eq:defz}, connecting $A_0=\Omega(X,\k)$ to $A_{\ell}=A$. 
Using Theorem \ref{thm:monbij} and formula \eqref{eq:flatdef}, 
we may then define a natural bijection 
\begin{equation}
\label{eq:alphaz}
\xymatrix{\alpha_Z := \mon \circ \beta_Z \colon \F(A, \g\otimes \m_{\art}) 
\ar^(.56){\simeq}[r]&  \Hom(\pi,\exp (\g \otimes \m_{\art})) }.
\end{equation}

\begin{corollary}
\label{cor:alphaspec}
For any zig-zag $Z$ as above, there is a special zig-zag $S$ 
such that  $\alpha_Z=\alpha_S$. 
\end{corollary}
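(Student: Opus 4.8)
The plan is to reduce the comparison of the bijection $\alpha_Z$ built from an arbitrary zig-zag $Z$ to the comparison of the underlying Deformation-theoretic bijections $\beta_Z$, and then invoke Proposition \ref{prop:ztos}. Recall from \eqref{eq:alphaz} that $\alpha_Z = \mon \circ \beta_Z$, where $\beta_Z$ is the natural bijection of \eqref{eq:betaz} attached to the zig-zag $Z$ (regarded now as a zig-zag connecting $A_0 = \Omega(X,\k)$ to $A_\ell = A$), and $\mon$ is the fixed monodromy bijection furnished by Theorem \ref{thm:monbij} together with the identification \eqref{eq:flatdef}. Crucially, the map $\mon$ depends only on the space $X$ and the group $G$ — not on the choice of zig-zag — so once we know that a given special zig-zag $S$ has $\beta_S = \beta_Z$, we get $\alpha_S = \mon \circ \beta_S = \mon \circ \beta_Z = \alpha_Z$ for free.

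So the first step is simply to recall the standing hypotheses: $X$ is $q$-finite with $\pi = \pi_1(X)$ finitely generated, $A$ is a $q$-finite $\dga$ with $\Omega(X,\k) \simeq_q A$ in $\CDGA$, and $Z$ is a zig-zag of $q$-equivalences in $\ACDGA$ connecting $A_0 = \Omega(X,\k)$ to $A_\ell = A$. Since $H^0(\Omega(X,\k)) = \k\cdot 1$ (as $X$ is path-connected), the hypotheses of Proposition \ref{prop:ztos} are met with this $A_0$. Next, fix a $\pi_1$-adapted $1$-minimal model map $\rho_1\colon \mathcal{N}\to \Omega(X,\k)$ and extend it, by minimal model theory, to a $q$-minimal model map $\rho_q\colon \mathcal{M}_q \to \Omega(X,\k)$, exactly as in the construction of special zig-zags in \eqref{eq:zigspec}.

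Now apply Proposition \ref{prop:ztos} to the zig-zag $Z$ and the $q$-minimal model map $\rho_q$: it produces a short zig-zag of $q$-equivalences in $\ACDGA$ of the form $S\colon \Omega(X,\k) \xleftarrow{\rho_q} \mathcal{M}_q \xrightarrow{\bar\rho_q} A$, with the property that $\beta_Z = \beta_S$. Because $\rho_q$ restricts to the $\pi_1$-adapted map $\rho_1$ on the $1$-minimal part, this short zig-zag $S$ is special in the sense of \eqref{eq:zigspec}, hence $\alpha_S = \mon \circ \beta_S$ is defined by \eqref{eq:alphaz}. Combining, $\alpha_Z = \mon \circ \beta_Z = \mon \circ \beta_S = \alpha_S$, which is the assertion.

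The only point requiring care — and the main obstacle, such as it is — is checking that the zig-zag $S$ delivered by Proposition \ref{prop:ztos} genuinely qualifies as \emph{special}, i.e.\ that its left leg is (homotopic to) a $\pi_1$-adapted $q$-minimal model map and not merely some abstract $q$-minimal model map. This is handled by feeding Proposition \ref{prop:ztos} the specific map $\rho_q$ extending the chosen $\pi_1$-adapted $\rho_1$: since $\rho_0 = \rho$ in the inductive construction of that proposition, the left leg of $S$ is precisely $\rho_q$, which by construction is $\pi_1$-adapted, so $S$ fits the setup of \cite[\S7.2]{DP-ccm} used to define $\alpha_S$. Everything else is a formal consequence of the zig-zag-independence of $\mon$.
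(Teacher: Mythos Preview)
Your proof is correct and follows essentially the same approach as the paper: fix a $q$-minimal model map $\rho_q$ extending a $\pi_1$-adapted $\rho_1$, apply Proposition~\ref{prop:ztos} to obtain a short zig-zag $S$ with $\beta_Z=\beta_S$, and conclude via $\alpha_Z=\mon\circ\beta_Z=\mon\circ\beta_S=\alpha_S$. The paper's proof is simply a terser version of what you wrote, omitting the explicit verification that $S$ is special (which, as you correctly note, holds because Proposition~\ref{prop:ztos} takes $\rho_0=\rho_q$ as input).
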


\begin{proof}
Let $\rho_q\colon \mathcal{M}_q\to \Omega(X,\k)$ be a $q$-minimal model map 
extending a $\pi_1$-adapted $1$-minimal model map $\rho_1\colon \mathcal{N}\to \Omega(X,\k)$. 
By Proposition \ref{prop:ztos}, there is a special zig-zag $S$ as in diagram \eqref{eq:zigspec}  
such that $\beta_Z=\beta_S$.  The claim follows.
\end{proof}

\section{Cohomology jump loci and naturality properties}
\label{sect:cjl}

We now define two types of cohomology jump loci (one for spaces 
and the other for $\dga$s), and study some of the naturality 
properties these algebraic varieties enjoy. 

\subsection{Embedded cohomology jump loci}
\label{subsec:jumps}

Let $(X,x)$ be a pointed, path-connected space.  Set $\pi=\pi_1(X,x)$.  
For a $\k$-linear algebraic group $G$, the set $\Hom(\pi, G)$ is a 
parameter space for finite-dimensional local systems on $X$ of type $G$.   
When the space $X$ is $1$-finite (or, equivalently, when the group $\pi$ 
is finitely generated), this parameter space is an affine $\k$-variety. When $\k=\R$ or $\C$,
we let $\Hom(\pi, G)_{(1)}$ 
be the analytic germ at $1$ of this variety, and we denote by 
$R=R(\pi,G)$ the analytic local algebra of this germ. 

Given a $\cdga$ $A^{\hdot}$ and a Lie algebra $\g$, let  
$\F(A,\g)$ be the set of $\g$-valued flat connections on $A$. 
When both $A^1$ and $\g$ are finite-dimensional, this set is 
an affine variety.  We shall denote by $\overline{R}=\overline{R}(A,\g)$ 
the analytic local algebra of the germ $\F(A,\g)_{(0)}$.
Assume now that both $X$ and $A$ are $1$-finite, and 
that $\Omega_\k(X)\simeq_{1} A$ as $\dga$s.  Letting 
$\g$ be the Lie algebra of $G$, it then follows from 
\cite[Prop.~7.6]{DP-ccm} that the local algebras 
$R$ and $\overline{R}$ are isomorphic. 

Given a representation $\tau\colon \pi\to \GL(V)$, we let 
$V_\tau$ denote the local system on $X$ associated to $\tau$, 
that is, the left $\pi$-module $V$ defined by $g\cdot v=\tau(g) v$. 
Furthermore, we let $H^{\hdot}(X,V_\tau)$ be the twisted cohomology 
of $X$ with coefficients in this local system, see e.g.~\cite{Wh}.

\begin{definition}
\label{def:cv}
The {\em characteristic varieties}\/ of the space $X$ in degree $i\ge 0$ 
and depth $r\ge 0$ with respect to a  representation 
$\iota\colon G \to \GL(V)$ are the sets 
\[
\VV^i_r(X,\iota) =\{ \rho \in \Hom(\pi,G) \mid 
\dim_{\k} H^i(X, V_{\iota \circ \rho} )\ge r\}.
\]
\end{definition}

For each $i\ge 0$, the sequence $\{\VV^i_r(X,\iota)\}_{r\ge 0}$ is 
a descending filtration of $\Hom(\pi,G) = \VV^i_0(X,\iota)$. 
In the rank $1$ case, i.e., when $\iota$ is the canonical identification 
$\k^{\times}\to \GL_1(\k)$, we will drop the map $\iota$ from the notation, 
and simply write $\VV^i_r(X)$.   When $X=K(\pi,1)$ is a classifying space 
for the group $\pi$, we will denote the corresponding characteristic varieties 
by $\VV^i_r(\pi,\iota)$. 

We will refer to the pairs 
\begin{equation}
\label{eq:homcv}
\left( \Hom(\pi,G) , \VV^i_r(X,\iota) \right)
\end{equation}
as the (global) {\em embedded jump loci}\/ of $X$ with respect to $\iota$. 
Clearly, such pairs depend only on the homotopy 
type of $X$ and on the representation $\iota$.
If $\iota$ is a rational representation and $X$ is a $q$-finite space for some $q\ge 1$, 
then the sets $\VV^i_r(X,\iota)$ 
are closed subvarieties of the representation variety  $\Hom(\pi,G)$, for 
all $i\le q$ and $r\ge 0$; see \cite{DP-ccm, BW}.  

\subsection{Infinitesimal cohomology jump loci}
\label{subsec:resonance}

To define the infinitesimal counterpart of these loci, we start 
with a $\dga$ $A^{\hdot}$, a Lie algebra $\g$, and a 
representation $\theta\colon \g\to \gl(V)$. 
For each flat connection $\omega\in \F(A,\g)$, we turn the 
tensor product $A\otimes V$ into a cochain complex,
\begin{equation}
\label{eq:aomoto}
\xymatrixcolsep{22pt}
\xymatrix{(A\otimes V , d_{\omega})\colon  \
A^0 \otimes V \ar^(.65){d_{\omega}}[r] & A^1\otimes V
\ar^(.5){d_{\omega}}[r]
& A^2\otimes V   \ar^(.55){d_{\omega}}[r]& \cdots },
\end{equation}
using as differential the covariant derivative
$d_{\omega}=d\otimes \id_V + \ad_{\omega}$.  Here, if 
$\omega=\sum_i a_i \otimes g_i$,  with
$a_i\in A^1$ and $g_i\in \g$, then
$\ad_{\omega}(a\otimes v) = 
\sum_{i} a_i  a \otimes \theta(g_i)(v)$, 
for all $a \in A$ and $v\in V$.   It is readily checked that
the flatness condition on $\omega$ insures that $d_{\omega}^2=0$, 
see \cite{DP-ccm}.

\begin{definition}
\label{eq:resvar}
The {\em resonance varieties}\/ of the $\cdga$ $A^{\hdot}$ in 
degree $i\ge 0$ and depth $r\ge 0$ with respect to a representation 
$\theta\colon \g\to \gl(V)$ are the sets 
\begin{equation}
\label{eq:rra}
\RR^i_r(A, \theta)= \{\omega \in \F(A,\g)
\mid \dim_{\k} H^i(A \otimes V, d_{\omega}) \ge  r\}.
\end{equation}
\end{definition}

For each $i\ge 0$, the sequence $\{\RR^i_r(A,\theta)\}_{r\ge 0}$ is 
a descending filtration of $\F(A,\g) = \RR^i_0(A,\theta)$. 
In the rank one case, i.e., the case when $\theta$ is the 
canonical identification $\k\to \gl_1(\k)$, 
we will simply write $\RR^i_r(A)$ for the 
corresponding sets. 

We will refer to the pairs 
\begin{equation}
\label{eq:flatrv}
\left( \F(A,\g) , \RR^i_r(A, \theta) \right)
\end{equation}
as the (global) {\em infinitesimal embedded jump loci}\/ of 
$A$ with respect to $\theta$. 
If $A$ is $q$-finite for some $q\ge 1$, and both $\g$ and $V$ are finite-dimensional,
the sets $\RR^i_r(A, \theta)$ are closed subvarieties of
$\F(A,\g)$, for all $i\le q$ and $r\ge 0$; see \cite{DP-ccm, BW}.  

Assume now that both the space $X$ and the $\dga$ $A^{\hdot}$ 
are $q$-finite, for some $q\ge 1$, and that $\Omega_{\k}(X)\simeq_{q} A$ 
as $\dga$s. Let $\iota\colon G\to \GL(V)$ be a rational  
representation, and let $\theta \colon \g \to \gl(V)$ be its 
tangential representation.  As shown in \cite[Thm.~B]{DP-ccm}, 
there is then an analytic isomorphism $\F(A,\g)_{(0)} \isom \Hom(\pi,G)_{(1)}$ 
restricting to isomorphisms $\RR^i_r(A,\theta)_{(0)} \isom \VV^i_r(X,\iota)_{(1)}$ 
between the reduced analytic germs of the corresponding jump loci, for all $i\le q$ 
and $r\ge 0$. 

We aim in this section at also taking into account in this setting of continuous 
maps between pointed spaces and of augmented maps between their $q$-models. 
We start with a preliminary observation, which follows directly from the definitions. 
Namely, for all $i\le q$ and $r\ge 0$, 
\begin{equation}
\label{eq:simvoid} 
1\in \VV^i_r(X,\iota) \Leftrightarrow 0\in \RR^i_r(A,\theta) \Leftrightarrow 
b_i\cdot \dim(V) \ge r,
\end{equation}
where $b_i=b_i(X)=b_i(A)$ denotes the $i$-th (untwisted) Betti number. 

\subsection{Naturality properties of representation varieties}
\label{subsec:natj reps}

As mentioned previously, both ambient spaces for jump loci,
$\Hom(\pi,G)$ and $\F(A,\g)$, are bifunctorial.  On the other 
hand, for continuous and $\cdga$~maps, naturality of (global) 
jump loci requires certain connectivity hypotheses. 
To begin, we only assume the minimally required 
connectivity and finiteness conditions.  

\begin{lemma}
\label{lem:zeronto-top}
Let $f\colon (X,x)\to (X',x')$ be a $0$-connected, pointed map, 
and let $f_{\sharp}$ be the induced homomorphism on fundamental groups. 
Assume that $X$ is $1$-finite.  Then, for every linear algebraic group $G$, the 
morphism induced by $f_{\sharp}$ on representation varieties,
\begin{equation}
\label{eq:ztop}
\xymatrixcolsep{18pt}
\xymatrix{
f_{\sharp}^{!}\colon \Hom (\pi_1(X'), G) \ar[r]& \Hom (\pi_1(X), G)},
\end{equation}
is an isomorphism onto a closed subvariety.
\end{lemma}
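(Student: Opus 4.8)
The plan is to use the fact that a $0$-connected map $f$ induces a surjection $f_{\sharp}\colon \pi \surj \pi'$ on fundamental groups, and then translate this surjectivity into an algebraic statement about representation varieties. First I would recall that since $f$ is $0$-connected, the map $f_{\sharp}\colon \pi_1(X)\to \pi_1(X')$ is onto; moreover, since $X$ is $1$-finite, $\pi=\pi_1(X)$ is finitely generated, hence so is $\pi'=\pi_1(X')$, and both $\Hom(\pi,G)$ and $\Hom(\pi',G)$ carry their natural affine variety structures. The general principle is: if $\varphi\colon \pi\surj \pi'$ is a surjection of finitely generated groups, then the induced morphism $\varphi^{!}\colon \Hom(\pi',G)\to \Hom(\pi,G)$, given by $\rho\mapsto \rho\circ\varphi$, is a closed embedding of varieties. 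So I would prove this algebraic lemma and then apply it with $\varphi=f_{\sharp}$.

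To prove the algebraic statement, I would argue as follows. Injectivity of $\varphi^{!}$ as a map of sets is immediate: if $\rho_1\circ\varphi=\rho_2\circ\varphi$ and $\varphi$ is onto, then $\rho_1=\rho_2$. For the scheme/variety structure, pick a finite generating set $g_1,\dots,g_n$ of $\pi$ and let $F_n\surj \pi$ be the corresponding presentation; this realizes $\Hom(\pi,G)$ as the closed subvariety of $G^n=\Hom(F_n,G)$ cut out by the relations of $\pi$. Since $\varphi$ is surjective, the elements $\varphi(g_1),\dots,\varphi(g_n)$ generate $\pi'$, giving a compatible presentation $F_n\surj \pi'$ whose relations contain those of $\pi$; thus $\Hom(\pi',G)\subseteq \Hom(\pi,G)$ as closed subvarieties of $G^n$, and under this identification $\varphi^{!}$ is exactly the inclusion. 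Hence $\varphi^{!}$ is an isomorphism onto a closed subvariety. Finally I would note that $f_{\sharp}^{!}$ preserves the basepoints (it sends the trivial representation to the trivial representation) and is $G$-equivariant, though strictly speaking only the closed-embedding claim is asserted.

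The one point requiring a little care — and the step I expect to be the mildest obstacle — is making sure the choice of generating set for $\pi'$ is genuinely compatible, i.e.\ that one can lift a generating set of $\pi$ to a generating set realizing both presentations over the \emph{same} free group $F_n$, so that the two relation ideals are literally nested. This is routine: one simply pushes forward the chosen generators of $\pi$ along $\varphi$ and uses that $\varphi$ is onto. Everything else is bookkeeping about the functor $\Hom(-,G)$ turning group epimorphisms into closed immersions of affine schemes, together with the standard identification of $\pi_1$-surjectivity with $0$-connectedness and the $1$-finiteness hypothesis ensuring finite generation.
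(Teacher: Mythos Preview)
Your proposal is correct and follows essentially the same route as the paper's own proof: both use that $0$-connectedness gives surjectivity of $f_{\sharp}$, that $1$-finiteness makes $\pi$ (hence $\pi'$) finitely generated, and then present $\pi$ and $\pi'$ over the same free group $F_n$ so that $\Hom(\pi',G)\subseteq \Hom(\pi,G)$ sit as nested closed subvarieties of $G^n$, with $f_{\sharp}^{!}$ being the inclusion. Your slightly more explicit bookkeeping (set-theoretic injectivity, lifting of generators) matches exactly what the paper leaves implicit.
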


\begin{proof}
Our $0$-connectivity assumption on $f$ means that the homomorphism 
$f_{\sharp}$ is surjective. Our $1$-finiteness assumption on $X$, then, 
implies that both fundamental groups are finitely generated. 
Let us present $\pi_1(X)$ as the quotient $F_m/R$ of a free group on 
$m$ generators, and then use the presentation for $\pi_1(X')$ 
induced by $f_{\sharp}$.  

By construction, the representation variety $\Hom (\pi_1(X), G)$ 
is the closed subvariety of $G^m$ defined by the equations given 
by the relators in $R$. The variety $\Hom (\pi_1(X'), G)$ sits also in
$G^m$, with the same defining equations as $\Hom (\pi_1(X), G)$, 
plus the equations coming from the lifts to $F_m$ of the elements of 
$\ker(f_{\sharp})$.  The claim readily follows.
\end{proof}

\subsection{Holonomy Lie algebras}
\label{subsec:holo}

Before proceeding, let us recall from \cite[\S4]{MPPS} the construction 
of the {\em holonomy Lie algebra}\/ $\h(A)$ of a $1$-finite $\dga$ $(A,d)$.
Set $A_i=(A^i)^*$, and let $\L(A_1)$ be the free Lie algebra on the dual 
vector space $A_1$.  We then define 
\begin{equation}
\label{eq:holo}
\h(A) := \L(A_1) / \ideal(\im(d^*+\cup^*)),
\end{equation}
where $d^*\colon A_2\to A_1=\L^1(A_1)$
and $\cup^*\colon A_2\to A_1\wedge A_1=\L^2(A_1)$ are the 
maps dual to the differential and the multiplication map in $A$, 
respectively.   This construction is functorial: if $\psi\colon A'\to A$
is a morphism of $1$-finite $\dga$s, then the linear map 
$\psi_1=(\psi^1)^*\colon A_1\to A'_1$
extends to a Lie algebra morphism $\L(\psi_1)\colon \L(A_1)\to \L(A'_1)$,
which in turn induces a Lie algebra morphism $\h(\psi)\colon \h(A)\to \h(A')$.
Finally, as shown in \cite[Prop.~4.5]{MPPS}, the canonical isomorphism
$A^1\otimes \g \isom  \Hom (A_1,\g)$ restricts to an identification
$\F(A,\g) \cong  \Hom_{\Lie} (\h(A), \g)$. 

\begin{lemma}
\label{lem:zeronto-inf}
Let $\psi\colon A'\to A$ be a $0$-connected $\dga$~map.   
Assume that $A$ is $1$-finite.  Then, for every finite dimensional 
Lie algebra $\g$, the morphism 
\begin{equation}
\label{eq:zalg}
\xymatrixcolsep{18pt}
\xymatrix{
\psi \otimes \id \colon \F(A', \g) \ar[r]& \F (A, \g)}
\end{equation}
is an isomorphism onto a closed subvariety.
\end{lemma}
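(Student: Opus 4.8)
The plan is to reduce the statement to the corresponding fact about holonomy Lie algebras, using the identification $\F(A,\g)\cong\Hom_{\Lie}(\h(A),\g)$ from \cite[Prop.~4.5]{MPPS} recalled above, and then to argue exactly as in the proof of Lemma \ref{lem:zeronto-top}, replacing the free group $F_m$ by a free Lie algebra. First I would observe that a $0$-connected $\dga$ map $\psi\colon A'\to A$ induces an \emph{isomorphism} $\psi^1\colon (A')^1\isom A^1$ and a \emph{monomorphism} $\psi^2\colon (A')^2\inj A^2$; dualizing, $\psi_1=(\psi^1)^*\colon A_1\isom A'_1$ is an isomorphism and $\psi_2=(\psi^2)^*\colon A_2\surj A'_2$ is surjective. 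Hence the induced map of free Lie algebras $\L(\psi_1)\colon \L(A_1)\isom\L(A'_1)$ is an isomorphism.

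Next I would track the defining ideals under this isomorphism. Write $\h(A)=\L(A_1)/\ideal(\im(d^*_A+\cup^*_A))$ and similarly for $A'$. Functoriality of the construction in $A$ is exactly naturality of $d^*$ and $\cup^*$, so under the isomorphism $\L(\psi_1)$ the ideal generated by $\im(d^*_{A'}+\cup^*_{A'})$ corresponds to the ideal generated by $\L(\psi_1)\bigl((d^*_{A'}+\cup^*_{A'})(A'_2)\bigr)=(d^*_A+\cup^*_A)(\psi_2(A'_2))$. Since $\psi_2$ is surjective, $\psi_2(A'_2)=A_2$, so this is precisely the defining ideal of $\h(A)$. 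Therefore $\L(\psi_1)$ descends to an isomorphism $\h(\psi)\colon\h(A)\isom\h(A')$ — wait, but the conclusion of the lemma is that $\F(A',\g)\to\F(A,\g)$ is a \emph{closed embedding}, not an isomorphism, so I must be more careful: $\psi^2$ is only a monomorphism, hence $\psi_2$ is only surjective, which is what I used; but I have \emph{not} used that $\psi^2$ need not be an isomorphism, and indeed it need not be. Let me redo this: surjectivity of $\psi_2$ gives that $\h(A)$ is a \emph{quotient} of $\h(A')$ — more precisely, $\h(\psi)\colon\h(A)\to\h(A')$ is, after transporting along the isomorphism $\L(\psi_1)$, the quotient map $\L(A'_1)/\ideal(\text{smaller})\to\L(A'_1)/\ideal(\text{smaller})$... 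I need to recheck the variance. The map $\h(\psi)$ goes $\h(A)\to\h(A')$, and on generators it is $\L(\psi_1)$, which is an isomorphism of free Lie algebras; the relations of $\h(A')$ pull back (via $\L(\psi_1)^{-1}$) into a \emph{sub}set of the relations of $\h(A)$ because $\psi_2$ is onto, so $\h(A)$ is a quotient of $\h(A')$ and $\h(\psi)$ is a \emph{surjection} $\h(A)\surj\h(A')$. Dualizing the functor $\Hom_{\Lie}(-,\g)$, a surjection of Lie algebras induces an injection $\Hom_{\Lie}(\h(A'),\g)\inj\Hom_{\Lie}(\h(A),\g)$, i.e.\ $\psi\otimes\id\colon\F(A',\g)\inj\F(A,\g)$ is injective; and it is a closed embedding because $\F(A',\g)$ is cut out inside $\Hom(A'_1,\g)\cong A^1\otimes\g\cong\Hom(A_1,\g)$ by the flatness equations of $A$ together with the extra linear-and-quadratic equations coming from a lift to $\L(A_1)$ of a generating set of the kernel of $\h(A)\surj\h(A')$ — the precise Lie-algebra analogue of the argument in Lemma \ref{lem:zeronto-top}.

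Concretely, then, the key steps are: (1) from $0$-connectivity of $\psi$ extract that $\psi_1$ is a linear isomorphism and $\psi_2$ is linearly surjective; (2) conclude $\L(\psi_1)$ is an isomorphism of free Lie algebras carrying a generating set of the relations of $\h(A')$ into a subset of those of $\h(A)$, so that $\h(\psi)\colon\h(A)\surj\h(A')$ is onto; (3) apply the contravariant functor $\Hom_{\Lie}(-,\g)$ and the identification $\F(A,\g)\cong\Hom_{\Lie}(\h(A),\g)$ to get that $\psi\otimes\id$ is an injective morphism of affine varieties; (4) exhibit the image as a closed subvariety by writing down explicit defining equations, exactly as the relators in $\ker(f_\sharp)$ do in Lemma \ref{lem:zeronto-top}, here the lifts to $\L(A_1)$ of Lie generators of $\ker(\h(A)\surj\h(A'))$. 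The only mild subtlety — and the step I would write out most carefully — is step (2): one must be sure that the ideal comparison really goes the right way, i.e.\ that surjectivity of $\psi_2$ makes the $\h(A')$-relations a \emph{sub}-collection of the $\h(A)$-relations (so $\h(A)$ surjects onto $\h(A')$), rather than the other way around; everything else is formal functoriality plus the observation, already used in Lemma \ref{lem:zeronto-top}, that a variety defined in affine space by a list of polynomial equations is a closed subvariety of the one defined by a sublist.
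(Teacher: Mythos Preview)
Your overall strategy---pass to holonomy Lie algebras, show $\h(\psi)$ is surjective, then argue exactly as in Lemma~\ref{lem:zeronto-top} with free Lie algebras in place of free groups---is precisely the paper's approach. However, you have misread the definition of $0$-connected by one degree. A $0$-connected map is an isomorphism in degree~$0$ and a monomorphism in degree~$1$; there is no constraint whatsoever on $\psi^2$. So your step~(1) is wrong: $\psi^1$ is only injective, not an isomorphism, and you know nothing about $\psi^2$. Consequently your detailed ideal-tracking in step~(2), which relies on $\L(\psi_1)$ being an isomorphism and on $\psi_2$ being surjective, does not stand as written.

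Fortunately, the correct hypothesis makes the argument shorter, not harder. From injectivity of $\psi^1$ (and $1$-finiteness of $A$, which forces $(A')^1$ to be finite-dimensional too) you get that $\psi_1=(\psi^1)^*\colon A_1\to A'_1$ is \emph{surjective}, hence $\L(\psi_1)\colon\L(A_1)\to\L(A'_1)$ is surjective. Functoriality of the holonomy construction already guarantees that $\L(\psi_1)$ carries the defining ideal of $\h(A)$ into that of $\h(A')$, so $\h(\psi)\colon\h(A)\surj\h(A')$ is onto with no further ideal bookkeeping needed. Your steps~(3)--(4) then go through: present $\h(A)=\L(A_1)/R$ and, via the surjection $\h(\psi)$, present $\h(A')=\L(A_1)/(R+R')$; both representation varieties sit in $\Hom(A_1,\g)\cong A^1\otimes\g$, the second cut out by strictly more polynomial equations. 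Note that in this corrected picture $(A')^1\otimes\g$ and $A^1\otimes\g$ are genuinely different ambient spaces, and the embedding is $\psi^1\otimes\id$, not an identification---another place where your ``$\psi^1$ iso'' slip propagated.
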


\begin{proof}
Our $0$-connectivity assumption on $\psi$ means that 
both $A$ and $A'$ are connected $\dga$s, 
and that $\psi$ is injective in degree $1$.  
Our $1$-finiteness assumption on $A$, then, 
implies that $A'$ is also $1$-finite.
Furthermore, the injectivity of $\psi^1$ also implies 
that the map $\h(\psi)\colon \h(A)\to \h(A')$ is surjective. 

Using the above discussion, we may replace the affine map 
$\psi \otimes \id \colon \F(A', \g) \to \F (A, \g)$ between spaces 
of flat connections by the induced map 
\begin{equation}
\label{eq:holo shriek}
\xymatrixcolsep{18pt}
\xymatrix{
\h(\psi)^{!} \colon \Hom_{\Lie} (\h(A'), \g)  \ar[r]& \Hom_{\Lie} (\h(A), \g)}
\end{equation}
between representation varieties of Lie algebras.  The desired conclusion 
follows by the same argument as in Lemma \ref{lem:zeronto-top}, with 
groups replaced by Lie algebras.
\end{proof}

As we saw in the above proof, the $0$-connectivity of the $\dga$ map $\psi$ 
implies the surjectivity of the Lie algebra map $\h(\psi)$. The next example 
shows that the latter property is strictly weaker than the former.  Nevertheless, 
we chose to state the lemma the way we did, since higher connectivity properties 
for $\dga$ maps will be needed later on. 

\begin{example}
\label{ex:holonto}
Let $A$ be the cohomology ring of $S^1\vee S^1$, with trivial product 
and differential. Plainly, the holonomy Lie algebra $\h=\h(A)$ is the free 
Lie algebra on $2$ generators.  Let $\h/\Gamma_3 (\h)$ be the third nilpotent 
quotient of $\h$, and let $A'$ be the cochain $\dga$ of this nilpotent Lie algebra. 
Denote by $\psi \colon A'^1 \to A^1$ the dual of the composite $A_1 \to \h \to \h/\Gamma_3 \h$. 
It is not hard to check that $\psi$ extends to a morphism $\psi \colon A' \to A$ between 
finite $\dga$s, with the property that $\h(\psi)$ is surjective.  On the other hand, the 
map $\psi$ is {\em not}\/ $0$-connected, as can be seen by inspecting dimensions 
in degree $1$.
\end{example}

\subsection{Naturality properties of jump loci}
\label{subsec:natjumps}

We now turn to the naturality properties of embedded cohomology jump loci.

\begin{lemma}
\label{lem:natjump-top}
Let $f\colon (X,x)\to (X',x')$ be a $q$-connected, pointed map, 
and let $f_{\sharp}$ be the induced homomorphism on fundamental groups. 
Let $\iota\colon G\to \GL(V)$ be a  representation.  
Then the natural map 
\begin{equation}
\label{natv}
\xymatrixcolsep{18pt}
\xymatrix{
H^{\hdot}(f)\colon H^{\hdot}(X', V_{\iota \circ \rho'}) \ar[r]& 
H^{\hdot}(X, V_{\iota \circ \rho})
},
\end{equation}
where $\rho=f_{\sharp}^{!}(\rho')$ for $\rho'\in  \Hom (\pi_1(X'), G)$, is $q$-connected.
\end{lemma}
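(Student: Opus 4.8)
The plan is to reduce the statement about twisted cohomology to a statement about ordinary (untwisted) cohomology with coefficients in a suitable local system, so that the classical fact ``a $q$-connected map induces a $q$-connected map on cohomology with local coefficients'' can be applied. First I would recall that, since $f$ is $q$-connected with $q \ge 1$, the induced homomorphism $f_{\sharp}\colon \pi_1(X) \to \pi_1(X')$ is an isomorphism (it is at least surjective for $q \ge 0$, and injective since $\pi_2(X') \to \pi_1$ has no effect — more precisely $f_\sharp$ is an isomorphism as soon as $q \ge 1$). Hence the local system $V_{\iota\circ\rho}$ on $X$ is exactly the pullback $f^* V_{\iota\circ\rho'}$ of the local system $V_{\iota\circ\rho'}$ on $X'$, because $\rho = f_\sharp^!(\rho') = \rho' \circ f_\sharp$ and the $\pi_1(X)$-module structure on $V$ factors through $f_\sharp$.

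Next I would invoke the appropriate comparison theorem for cohomology with local coefficients under a $q$-connected map. The cleanest route is to replace $f$ up to homotopy by an inclusion $X \hookrightarrow X'$ of CW-complexes (using a mapping cylinder) such that the pair $(X',X)$ is $q$-connected, i.e. $X'$ is obtained from $X$ by attaching cells of dimension $\ge q+1$. Then for the relative cochain complex $C^{\hdot}(X',X; \mathcal{V})$ of any local system $\mathcal{V}$ on $X'$, the chains in degrees $\le q$ vanish and the chains in degree $q+1$ are free on the attached $(q+1)$-cells; this gives $H^i(X',X;\mathcal{V}) = 0$ for $i \le q$ and an injection-type control in degree $q+1$. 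Feeding this into the long exact sequence of the pair $(X',X)$ with coefficients in $\mathcal{V} = V_{\iota\circ\rho'}$ yields that $H^i(X';\mathcal{V}) \to H^i(X;f^*\mathcal{V})$ is an isomorphism for $i \le q$ and a monomorphism for $i = q+1$ — which is precisely the assertion that $H^{\hdot}(f)$ is $q$-connected. One must be a little careful about the cellular chain model for a local system and the orientation/attaching-map data, but this is standard; alternatively one can cite the obstruction-theoretic/Whitehead-tower argument, or the spectral sequence of the universal cover together with the ordinary $q$-connectivity of $\widetilde{f}$.

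The only genuinely delicate point is to make sure the coefficient module is really the \emph{same} on both sides, i.e. that the identification $V_{\iota\circ\rho} \cong f^*V_{\iota\circ\rho'}$ is compatible with the map $H^{\hdot}(f)$ as defined in the paper (the pullback map on twisted cohomology associated to a map of spaces equipped with compatible representations). Once the local systems are literally pullbacks along $f_\sharp$ and $f_\sharp$ is an isomorphism of fundamental groups, this compatibility is automatic from the functoriality of twisted cohomology, and the main obstacle evaporates into bookkeeping. So the hard part is really just the first paragraph's reduction — verifying $f_\sharp$ is an isomorphism and that the coefficient systems match — after which the result is the classical relative-Hurewicz-type statement for local coefficients applied verbatim.
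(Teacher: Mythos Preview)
Your approach is essentially identical to the paper's: replace $f$ by a CW-inclusion, use the relative cellular cochain complex with local coefficients to control $H^{\hdot}(X',X;\mathcal{V})$, and conclude via the long exact sequence of the pair. However, there is an off-by-one error. Under the paper's convention, $f$ being $q$-connected means $f_*$ is an isomorphism on $\pi_i$ for $i\le q$ and an epimorphism on $\pi_{q+1}$; equivalently, the pair $(X',X)$ is $(q+1)$-connected, so $X'$ is obtained from $X$ by attaching cells of dimension at least $q+2$, not $q+1$. This yields $H^i(X',X;\mathcal{V})=0$ for all $i\le q+1$ (not merely $i\le q$), and only then does the long exact sequence
\[
H^i(X',X;\mathcal{V}) \to H^i(X';\mathcal{V}) \to H^i(X;\mathcal{V}) \to H^{i+1}(X',X;\mathcal{V})
\]
give an isomorphism for $i\le q$ and a monomorphism for $i=q+1$. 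With only the weaker vanishing $H^i(X',X;\mathcal{V})=0$ for $i\le q$ that you state, the sequence produces an isomorphism for $i\le q-1$ and a monomorphism for $i=q$, which is one degree short of what is claimed; your ``injection-type control in degree $q+1$'' does not rescue this.

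A minor second point: the opening discussion about needing $f_\sharp$ to be an isomorphism (hence $q\ge 1$) is unnecessary. By definition $\rho=\rho'\circ f_\sharp$, so $V_{\iota\circ\rho}$ is literally the pullback $f^*V_{\iota\circ\rho'}$ regardless of whether $f_\sharp$ is injective; the paper simply takes $G=\GL(V)$, $\iota=\id$, and works with an arbitrary local system on $X'$ restricted to $X$, with no case distinction on $q$.
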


\begin{proof}
Without loss of generality, we may assume that $G=\GL(V)$ and $\iota$ is 
the identity map.   Using standard CW-approximation results from homotopy 
theory, as recounted for instance in \cite[Ch.~V]{Wh}, we may replace $f$, up to homotopy, 
by the inclusion of a CW-subcomplex $X$ into a CW-complex $X'$.  Since 
$f$ is assumed to be $q$-connected, $X'$ may be obtained by attaching cells 
of dimension at least $q+2$ to $X$.  

Using the long exact sequence in cohomology 
for the pair $(X',X)$, we see that our claim is equivalent to the vanishing of the 
twisted cohomology groups $H^i (X',X; V)$ for $i\le q+1$, for an arbitrary local 
system $V$ on $X'$.  Denote by $\{X_n'\}$ the relative skeletal filtration of $X'$. 
It is well-known that $H^{\hdot} (X',X; V)$ can be computed as the cohomology 
of the cellular twisted cochain complex, whose degree $n$ term is 
$H^{n} (X'_{n},X'_{n-1}; V)$, see e.g.~\cite[Ch.~VI]{Wh}. On the 
other hand, $X'_n=X'_{n-1}=X$ for $n\le q+1$, and this completes the proof. 
\end{proof}

\begin{lemma}
\label{lem:natjump-inf}
Let $\psi\colon A'\to A$ be a $q$-connected map in $\CDGA$, and  
let $\theta\colon \g\to \gl(V)$ be a  Lie algebra representation.  
Then the natural map 
\begin{equation}
\label{natr}
\xymatrixcolsep{18pt}
\xymatrix{
H^{\hdot}(\psi)\colon H^{\hdot}(A' \otimes V, d_{\omega'}) \ar[r]& 
H^{\hdot}(A \otimes V, d_{\omega})
},
\end{equation}
where $\omega=(\psi\otimes \id) (\omega')$ for $\omega' \in \F(A', \g)$ 
is $q$-connected.
\end{lemma}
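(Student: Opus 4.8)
The plan is to reduce, as usual, to the case $\g=\gl(V)$ and $\theta=\id$, so that the twisted complex $(A\otimes V,d_\omega)$ becomes the Aomoto-type complex attached to an arbitrary flat connection. The key structural observation is that the covariant differential $d_\omega$ does \emph{not} preserve the grading of $A$ coming from a single degree, but it \emph{does} respect a filtration: write $d_\omega = d\otimes\id_V + \ad_\omega$, where $d\otimes\id_V$ raises the $A$-degree by $1$ and $\ad_\omega$ (being multiplication by an element of $A^1$) also raises it by $1$. So in fact $(A\otimes V,d_\omega)$ is a genuine cochain complex of graded vector spaces, and the map $\psi\otimes\id_V\colon (A'\otimes V,d_{\omega'})\to (A\otimes V,d_\omega)$ is a morphism of such complexes: it commutes with $d\otimes\id$ because $\psi$ is a cochain map, and it commutes with $\ad_{\omega'}\mapsto\ad_\omega$ precisely because $\omega=(\psi\otimes\id)(\omega')$ and $\psi$ is multiplicative.

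The heart of the argument is then a filtration/spectral-sequence bookkeeping step. I would filter each complex $A\otimes V$ by $F^p = \bigoplus_{i\ge p} A^i\otimes V$; this is a decreasing filtration, bounded below, and exhaustive in each total degree since $A$ is non-negatively graded. On the associated graded, $\ad_\omega$ drops out (it strictly increases the filtration), so the $E_0$-page of the resulting spectral sequence is just $(A^{\hdot}\otimes V, d\otimes\id_V)$ with its original cohomological grading, i.e. the untwisted complex. The map $\psi\otimes\id_V$ is filtered, hence induces a morphism of spectral sequences, and on $E_1 = H^{\hdot}(A\otimes V, d\otimes\id) \cong H^{\hdot}(A)\otimes V$ it acts as $H^{\hdot}(\psi)\otimes\id_V$. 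Since $\psi$ is a $q$-connected morphism of cochain complexes, $H^{\hdot}(\psi)$ is $q$-connected (this is recorded in \S\ref{subsec:qconn}), and therefore $H^{\hdot}(\psi)\otimes\id_V$ is $q$-connected as well — an isomorphism in degrees $\le q$ and a monomorphism in degree $q+1$.

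To pass from the $E_1$-level statement to the desired conclusion on the abutment, I would run the standard comparison argument for filtered complexes with bounded-below, exhaustive filtrations: if a filtered map induces an isomorphism on $E_1$ in a range of total degrees $\le q$ and a monomorphism in total degree $q+1$, then it induces an isomorphism on cohomology in degrees $\le q$ and a monomorphism in degree $q+1$. This is a diagram chase through the five-term-type exact sequences relating successive $E_r$-pages, using that each $E_r\to E_{r+1}$ is subquotient-compatible and that the filtration on each fixed total degree is finite (here even length $\le i+1$ in total degree $i$). Concretely, because $\omega$ has $A$-degree exactly $1$, the spectral sequence in total degree $i$ has at most $i+1$ nonzero columns and degenerates after finitely many pages, so no convergence subtleties arise.

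The main obstacle — or at least the only place requiring genuine care rather than routine verification — is making the $q$-connectivity bookkeeping precise across the pages of the spectral sequence, since $q$-connectivity (iso in degrees $\le q$, mono in degree $q+1$) is \emph{not} simply ``isomorphism'', and one must track how the mono-in-degree-$(q+1)$ condition propagates: a differential entering total degree $q+1$ could in principle have source in total degree $q$ or $q+2$, and one needs that the $E_1$-iso in degrees $\le q$ kills the first worry while the mono in degree $q+1$ handles the rest. An alternative, possibly cleaner route that avoids spectral sequences entirely: since both complexes are bounded below in filtration, argue by induction on the filtration degree using the short exact sequences $0\to F^{p+1}\to F^p\to A^p\otimes V\to 0$ (with $A^p\otimes V$ carrying the shifted differential induced by $d\otimes\id$), apply the long exact sequence in cohomology, invoke the $q$-connectivity of $H^{\hdot}(\psi)\otimes\id_V$ on the quotient terms, and conclude by the five lemma in the appropriate range. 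Either way, once $\psi\otimes\id_V$ is recognized as a filtered cochain map whose associated graded is $\psi\otimes\id_V$ on the untwisted complex, the result is forced, and it is the exact mirror of the topological Lemma \ref{lem:natjump-top} with the cellular skeletal filtration replaced by the $A$-degree filtration.
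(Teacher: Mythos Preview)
Your spectral-sequence setup is broken. With the filtration $F^p = \bigoplus_{i\ge p} A^i\otimes V$, both $d\otimes\id_V$ and $\ad_\omega$ raise the $A$-degree by exactly $1$, hence both send $F^p$ into $F^{p+1}$; neither ``drops out'' relative to the other on the associated graded. The resulting spectral sequence is the trivial (filtration b\^ete) one: $E_0^{p,0}=A^p\otimes V$ with $d_0=0$, then $E_1=E_0$ with $d_1$ equal to the full $d_\omega$, and $E_2=H^{\hdot}(A\otimes V,d_\omega)$ already. So $H^{\hdot}(A)\otimes V$ never appears on any page, and the $E_1$-comparison you want to run is vacuous. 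The same objection applies to your ``cleaner route'' via short exact sequences: the quotient $A^p\otimes V$ sits in a single degree and carries no residual differential coming from $d\otimes\id$.

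The fix is to drop the filtration entirely and notice that you have already done the work. You verified that $\psi\otimes\id_V$ is a genuine cochain map between $(A'\otimes V,d_{\omega'})$ and $(A\otimes V,d_\omega)$. Now use the hypothesis at full strength: $\psi$ is $q$-connected \emph{as a map of graded vector spaces} (not merely a $q$-equivalence), and therefore so is $\psi\otimes\id_V$. Then invoke the elementary fact recorded in \S\ref{subsec:qconn}: a $q$-connected morphism of cochain complexes induces a $q$-connected map in cohomology. This is exactly the paper's proof (which cites \cite[Lem.~2.6]{MPPS} for the last step), and it is a one-liner once the cochain-map verification is in hand.
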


\begin{proof}
Without loss of generality, we may assume that $\g=\gl(V)$ and $\theta$ 
is the identity map. Since $\psi$ is $q$-connected, the cochain map 
$\psi\otimes \id \colon (A' \otimes V, d_{\omega'})\to (A \otimes V, d_{\omega})$
is again $q$-connected.  The claim follows from Lemma 2.6 in \cite{MPPS} 
and its proof.  
\end{proof}

\begin{corollary}
\label{cor:jumpnat-top}
Let $f\colon X\to X'$ be a $(q-1)$-connected map between 
$q$-finite pointed spaces,  for some $q\ge 1$, 
and let $\iota\colon G\to \GL(V)$ be a rational  representation.
Then the induced morphism 
\begin{equation}
\label{eq:nattop}
\xymatrixcolsep{18pt}
\xymatrix{
f_{\sharp}^{!}\colon \Hom (\pi_1(X'), G) \ar[r]& \Hom (\pi_1(X), G)}
\end{equation}
is a closed embedding which induces isomorphisms 
$\VV^i_r(X',\iota) \to \VV^i_r(X,\iota) \cap \Hom (\pi_1(X'), G)$ 
for all $i< q$ and $r\ge 0$, and embeddings 
$\VV^q_r(X',\iota) \to \VV^q_r(X,\iota)$ for all $r\ge 0$.
\end{corollary}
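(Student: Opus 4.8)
The plan is to combine the two naturality lemmas just proved with the basic structural facts about characteristic varieties recalled in \S\ref{subsec:jumps}. First I would observe that since $f$ is $(q-1)$-connected, Lemma \ref{lem:zeronto-top} applies (as $q\ge 1$, so $f$ is in particular $0$-connected), giving that $f_{\sharp}^{!}\colon \Hom(\pi_1(X'),G)\to \Hom(\pi_1(X),G)$ is an isomorphism onto a closed subvariety; this is exactly the asserted closed embedding, and it lets me henceforth identify $\Hom(\pi_1(X'),G)$ with its image $\im(f_{\sharp}^{!})\subseteq \Hom(\pi_1(X),G)$. The $q$-finiteness of $X$ and $X'$, together with $\iota$ being rational, guarantees (by the results cited from \cite{DP-ccm, BW}) that the jump loci $\VV^i_r(X,\iota)$ and $\VV^i_r(X',\iota)$ are genuine closed subvarieties for all $i\le q$, so all the set-theoretic statements below are automatically statements about closed subvarieties.

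Next I would unwind what the inclusion-induced map $f_{\sharp}^{!}$ does on the level of twisted cohomology. Take $\rho'\in \Hom(\pi_1(X'),G)$ and put $\rho=f_{\sharp}^{!}(\rho')=\rho'\circ f_{\sharp}$. Then the local system $V_{\iota\circ\rho}$ on $X$ is precisely the pullback $f^{*}V_{\iota\circ\rho'}$, so Lemma \ref{lem:natjump-top} (with the given $q=q$, since $f$ is actually $(q-1)$-connected I should note this carefully—see below) tells me that $H^{\hdot}(f)\colon H^{\hdot}(X',V_{\iota\circ\rho'})\to H^{\hdot}(X,V_{\iota\circ\rho})$ is $(q-1)$-connected, hence an isomorphism in degrees $<q$ and a monomorphism in degree $q$. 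Consequently, for $i<q$ one has $\dim_{\k}H^i(X,V_{\iota\circ\rho})=\dim_{\k}H^i(X',V_{\iota\circ\rho'})$ for every $\rho'$, while for $i=q$ one has $\dim_{\k}H^q(X,V_{\iota\circ\rho})\ge \dim_{\k}H^q(X',V_{\iota\circ\rho'})$.

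From these numerical (in)equalities the conclusion follows directly. For $i<q$: a point $\rho'\in \Hom(\pi_1(X'),G)$ lies in $\VV^i_r(X',\iota)$ iff $\dim_{\k}H^i(X',V_{\iota\circ\rho'})\ge r$ iff $\dim_{\k}H^i(X,V_{\iota\circ\rho})\ge r$ iff $f_{\sharp}^{!}(\rho')\in \VV^i_r(X,\iota)$; under the identification of $\Hom(\pi_1(X'),G)$ with the closed subvariety $\im(f_{\sharp}^{!})$, this says exactly $f_{\sharp}^{!}\big(\VV^i_r(X',\iota)\big)=\VV^i_r(X,\iota)\cap \im(f_{\sharp}^{!})$, and since $f_{\sharp}^{!}$ is a closed embedding, the restriction $\VV^i_r(X',\iota)\to \VV^i_r(X,\iota)\cap \Hom(\pi_1(X'),G)$ is an isomorphism of (closed, reduced) subvarieties. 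For $i=q$: the inequality only gives the implication $\rho'\in \VV^q_r(X',\iota)\Rightarrow f_{\sharp}^{!}(\rho')\in \VV^q_r(X,\iota)$, so we only get a closed embedding $\VV^q_r(X',\iota)\hookrightarrow \VV^q_r(X,\iota)$, not an isomorphism onto an intersection—which is precisely what the statement claims.

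The one genuinely delicate point, and the step I would be most careful about, is the bookkeeping of connectivity indices: the corollary hypothesizes $f$ to be $(q-1)$-connected, whereas Lemma \ref{lem:natjump-top} is stated for a $q$-connected map and yields a $q$-connected map on twisted cohomology. So I must apply that lemma with its ``$q$'' equal to our ``$q-1$'', obtaining a $(q-1)$-connected comparison map $H^{\hdot}(f)$, i.e.\ an isomorphism in degrees $\le q-1$ and a monomorphism in degree $q$. That is exactly the dichotomy producing equality of twisted Betti numbers for $i<q$ and only an inequality for $i=q$, matching the two different conclusions (``isomorphisms onto an intersection'' versus ``embeddings'') in the statement. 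I would also record the trivial but needed fact that the naturality square identifying $V_{\iota\circ\rho}$ with $f^{*}V_{\iota\circ\rho'}$ is compatible with the cohomology comparison map, so that Lemma \ref{lem:natjump-top} genuinely computes the fibers over the image of $f_{\sharp}^{!}$; this is immediate from functoriality of twisted cohomology but deserves an explicit sentence. No further input is needed: there is no passage to germs here (the statement is about global jump loci), so Artin approximation plays no role, and the argument is purely a matter of assembling the two naturality lemmas with the definition of the $\VV^i_r$.
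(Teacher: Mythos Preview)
Your proposal is correct and follows precisely the paper's approach: invoke Lemma~\ref{lem:zeronto-top} (using that $(q-1)$-connected with $q\ge 1$ implies $0$-connected) for the closed-embedding claim, and then apply Lemma~\ref{lem:natjump-top} with its parameter shifted to $q-1$ to obtain the $(q-1)$-connectedness of $H^{\hdot}(f)$ on twisted cohomology, from which the dimension equalities for $i<q$ and inequality for $i=q$ yield the stated isomorphisms and embeddings. Your careful tracking of the index shift is exactly the point the paper leaves implicit in its two-sentence proof.
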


\begin{proof}  
The fact that $f_{\sharp}^{!}$ is a closed embedding follows from Lemma \ref{lem:zeronto-top}. 
The other assertions are immediate consequences of Lemma \ref{lem:natjump-top}.
\end{proof}

If $f\colon \pi\surj\pi'$ is an epimorphism between finitely generated groups,  
the case $q=1$ from Corollary \ref{cor:jumpnat-top} implies that the morphism 
$f^{!}\colon \Hom(\pi',\k^{\times}) \to \Hom(\pi,\k^{\times})$ sends  
$\VV^1_1(\pi')$ into $\VV^1_1(\pi)$.  Without the $0$-connectivity (i.e., 
surjectivity) assumption on $f$, the conclusion may fail, as illustrated in the 
following simple example. 

\begin{example}
\label{ex:nonat}
Let $f\colon \Z \to F_2=\langle x,y\rangle $ be the inclusion sending 
$1$ to $x$.  Then $f^{!}\colon (\k^{\times})^2 \to \k^{\times}$ is the projection onto 
the first factor. On the other hand, $\VV^1_1(F_2)=(\k^{\times})^2$, whereas 
$\VV^1_1(\Z)=\{1\}$.
\end{example}

\begin{corollary}
\label{cor:jumpnat-inf}
Let $\psi\colon A'\to A$ be a $(q-1)$-connected map between $q$-finite $\dga$s
for some $q\ge 1$, and  
let $\theta\colon \g\to \gl(V)$ be a  Lie algebra representation, 
with $\g$ and $V$ finite-dimensional.   Then the natural morphism
\begin{equation}
\label{natinf}
\xymatrixcolsep{18pt}
\xymatrix{
\psi\otimes \id\colon  \F(A' , \g) \ar[r]&  \F(A , \g)
}
\end{equation}
is a closed embedding which induces isomorphisms 
$\RR^i_r(A',\theta) \to \RR^i_r(A,\theta) \cap \F(A' , \g)$ 
for all $i<q$ and $r\ge 0$, and embeddings 
$\RR^q_r(A',\theta) \to \RR^q_r(A,\theta)$ for all $r\ge 0$.
\end{corollary}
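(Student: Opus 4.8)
The plan is to run the same two-step argument used for Corollary~\ref{cor:jumpnat-top}, with the topological inputs Lemmas~\ref{lem:zeronto-top} and~\ref{lem:natjump-top} replaced by their infinitesimal counterparts Lemmas~\ref{lem:zeronto-inf} and~\ref{lem:natjump-inf}. Since $q\ge 1$, the hypothesis that $\psi$ is $(q-1)$-connected implies in particular that $\psi$ is $0$-connected, and the $q$-finiteness of $A$ makes $A$ a $1$-finite $\dga$. Hence Lemma~\ref{lem:zeronto-inf} applies and shows that $\psi\otimes\id\colon\F(A',\g)\to\F(A,\g)$ is an isomorphism onto a closed subvariety of $\F(A,\g)$; this takes care of the first assertion. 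Moreover, by the $q$-finiteness of $A$ and $A'$ and the finite-dimensionality of $\g$ and $V$, the sets $\RR^i_r(A',\theta)$ and $\RR^i_r(A,\theta)$ occurring in the statement are closed subvarieties of the corresponding varieties of flat connections for all $i\le q$, as recorded in \S\ref{subsec:resonance}.

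For the jump loci, I would fix a flat connection $\omega'\in\F(A',\g)$ and set $\omega=(\psi\otimes\id)(\omega')$. Applying Lemma~\ref{lem:natjump-inf} to the present map $\psi$ (which is $(q-1)$-connected), the induced map $H^{\hdot}(\psi)\colon H^{\hdot}(A'\otimes V,d_{\omega'})\to H^{\hdot}(A\otimes V,d_{\omega})$ is $(q-1)$-connected, i.e., an isomorphism in degrees $i\le q-1$ and a monomorphism in degree $q$. Therefore, for $i<q$ we have $\dim_{\k}H^i(A'\otimes V,d_{\omega'})=\dim_{\k}H^i(A\otimes V,d_{\omega})$, so $\omega'\in\RR^i_r(A',\theta)$ if and only if $(\psi\otimes\id)(\omega')\in\RR^i_r(A,\theta)$; combined with the fact that $\psi\otimes\id$ is a closed embedding with image the copy of $\F(A',\g)$ inside $\F(A,\g)$, this shows that $\psi\otimes\id$ restricts to an isomorphism $\RR^i_r(A',\theta)\isom\RR^i_r(A,\theta)\cap\F(A',\g)$. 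For $i=q$, the monomorphicity of $H^{q}(\psi)$ gives $\dim_{\k}H^q(A'\otimes V,d_{\omega'})\le\dim_{\k}H^q(A\otimes V,d_{\omega})$, hence $\omega'\in\RR^q_r(A',\theta)$ implies $(\psi\otimes\id)(\omega')\in\RR^q_r(A,\theta)$, so the restriction of the closed embedding $\psi\otimes\id$ gives a closed embedding $\RR^q_r(A',\theta)\to\RR^q_r(A,\theta)$.

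I do not anticipate a genuine obstacle: the proof is formally parallel to the topological one, and in fact shorter, since the holonomy Lie algebra reformulation from \S\ref{subsec:holo} (used already for Lemma~\ref{lem:zeronto-inf}) makes the ``closed embedding'' step purely combinatorial. The only points deserving attention are the index bookkeeping --- extracting from the $(q-1)$-connectivity of $H^{\hdot}(\psi)$ that the resonance loci are identified precisely in degrees $i<q$, while degree $q$ yields only an embedding --- and the fact that the stated maps are isomorphisms (respectively closed embeddings) of schemes and not merely bijections of underlying sets; the latter is already built into Lemma~\ref{lem:natjump-inf}, since the identification of the relevant truncations of the Aomoto complexes $(A'\otimes V,d_{\omega'})$ and $(A\otimes V,d_{\omega})$ provided there is compatible with the variation of $\omega'$, so it carries over the determinantal scheme structures on the resonance varieties.
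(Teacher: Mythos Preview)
Your proposal is correct and follows essentially the same approach as the paper: the paper's proof simply cites Lemma~\ref{lem:zeronto-inf} for the closed-embedding assertion and Lemma~\ref{lem:natjump-inf} for the jump-loci assertions, exactly as you do, only without spelling out the dimension bookkeeping. Your final paragraph on determinantal scheme structures is extra care the paper does not take (it works with reduced varieties throughout, cf.\ the radical ideals in \S\ref{subsec:fartin}), so set-theoretic equality already suffices there.
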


\begin{proof}  
The fact that $\psi\otimes \id$ is a closed embedding follows 
from Lemma \ref{lem:zeronto-inf}. The other assertions are 
immediate consequences of Lemma \ref{lem:natjump-inf}.
\end{proof}

\begin{corollary}
\label{cor:pijump}
Let $X$ be a pointed space with fundamental group $\pi$, 
let $f\colon X\to K:=K(\pi,1)$ be a classifying map, 
and let $\iota\colon G\to \GL(V)$ be a representation. 
Then the induced isomorphism  
$f_{\sharp}^{!}\colon  \Hom(\pi_1(K),G) \to \Hom(\pi_1(X),G)$ 
restricts to isomorphisms $\VV^i_r(\pi,\iota) \cong \VV^i_r(X,\iota)$ 
for $i\le 1$ and $r\ge 0$.
\end{corollary}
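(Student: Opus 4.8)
The plan is to deduce this directly from Corollary \ref{cor:jumpnat-top} together with the fact that a classifying map $f\colon X\to K(\pi,1)$ is a $1$-equivalence, hence in particular $(q-1)$-connected for $q=2$. First I would observe that $f_{\sharp}\colon \pi_1(X)\to \pi_1(K)$ is an isomorphism (both being $\pi$), so that $f$ induces isomorphisms on $\pi_1$; the higher homotopy groups of $K$ vanish, so $f$ is automatically a map that is $1$-connected (it induces an iso on $\pi_1$ and an epimorphism, trivially, on $\pi_2$). Consequently $f$ is a $(q-1)$-connected map for $q=2$.

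There is, however, a finiteness subtlety: Corollary \ref{cor:jumpnat-top} is stated for maps between $q$-finite spaces, and $K(\pi,1)$ need not be $2$-finite, nor even $1$-finite in the CW sense beyond having finitely many $1$-cells, when $\pi$ is merely finitely generated. The way around this is to note that $X$ is assumed to be a space with finitely generated fundamental group (so that $\Hom(\pi,G)$ is an affine variety at all), and to apply the $q=1$ case of Corollary \ref{cor:jumpnat-top}, which only requires $1$-finiteness and $0$-connectivity. Since $f_{\sharp}$ is an isomorphism, it is in particular surjective, so $f$ is $0$-connected; the $q=1$ case then gives that $f_{\sharp}^{!}$ is a closed embedding inducing isomorphisms $\VV^0_r(K,\iota)\to \VV^0_r(X,\iota)\cap \Hom(\pi_1(K),G)$ and embeddings $\VV^1_r(K,\iota)\to \VV^1_r(X,\iota)$. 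But since $f_{\sharp}$ is an isomorphism of groups, $f_{\sharp}^{!}$ is an isomorphism of representation varieties onto all of $\Hom(\pi_1(X),G)$, so the intersection with $\Hom(\pi_1(K),G)$ is vacuous and the degree-$1$ embedding is likewise an isomorphism onto the full jump locus.

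It remains to upgrade the degree-$1$ statement from an embedding to an isomorphism, and to treat degrees $0$ and $1$ uniformly. For this I would invoke Lemma \ref{lem:natjump-top} directly with $q=1$: a classifying map is a $1$-equivalence (it induces isomorphisms on all homotopy groups up to degree $1$ and — vacuously — an epimorphism on $\pi_2$, but more to the point it induces isomorphisms on $H^i$ for $i\le 1$ with arbitrary local coefficients, since $H^i(X;V)\cong H^i(\pi;V)$ for $i\le 1$ for any $\pi$-module $V$). Hence the comparison map $H^i(f)\colon H^i(K,V_{\iota\circ\rho'})\to H^i(X,V_{\iota\circ\rho})$ is an isomorphism for $i\le 1$, so $\dim H^i(X,V_{\iota\circ\rho})\ge r$ if and only if $\dim H^i(K,V_{\iota\circ\rho'})\ge r$, whence $(f_{\sharp}^{!})^{-1}(\VV^i_r(X,\iota))=\VV^i_r(\pi,\iota)$ for $i\le 1$ and $r\ge 0$. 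Translating this equality of sets through the isomorphism $f_{\sharp}^{!}$ gives $\VV^i_r(\pi,\iota)\cong \VV^i_r(X,\iota)$ as claimed.

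The main obstacle I anticipate is purely bookkeeping: making sure the degree range and the connectivity conventions line up, and that the $q$-finiteness hypotheses in Corollary \ref{cor:jumpnat-top} are not actually needed here — which is why I would bypass that corollary and argue straight from Lemma \ref{lem:natjump-top} (whose proof, via the cellular twisted cochain complex and the relative skeletal filtration, requires no finiteness at all beyond $X$ being a CW-complex) together with the elementary fact that a classifying map induces isomorphisms on low-degree twisted cohomology. The identification of $\Hom(\pi_1(K),G)$ with $\Hom(\pi_1(X),G)$ via $f_{\sharp}^{!}$ is immediate since $f_{\sharp}$ is an iso, so no representation-variety finiteness is at issue either.
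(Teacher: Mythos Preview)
Your proposal is correct and, after the detour, lands on exactly the paper's argument: the classifying map $f$ is $1$-connected, so Lemma~\ref{lem:natjump-top} with $q=1$ gives that $H^i(f)$ is an isomorphism on twisted cohomology for $i\le 1$, whence the jump loci match. The initial attempt via Corollary~\ref{cor:jumpnat-top} is unnecessary---as you yourself note, Lemma~\ref{lem:natjump-top} has no finiteness hypotheses, so you can (and the paper does) invoke it directly in one line.
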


\begin{proof}  
The map $f$ is $1$-connected, and so the claim follows from Lemma \ref{lem:natjump-top}.
\end{proof}

\subsection{Finite families of epimorphisms}
\label{subsec:families}
We conclude this section with a setup that will often recur 
in the sequel. Let $\pi$ be a finitely generated group, and 
let $\{f \colon \pi\surj \pi_f\}_{f\in E}$ be a 
finite family of epimorphisms.  Let 
$\iota\colon G\to \GL(V)$ be a rational representation 
of $\C$-linear algebraic groups.  By Corollary  \ref{cor:jumpnat-top}, 
the natural inclusion
\begin{equation}
\label{eq:repincl}
\Hom(\pi,G) \supseteq \bigcup_{f\in E} f^{!} \Hom (\pi_f,G)
\end{equation}
induces for each $i\le 1$ and $r\ge 1$ an inclusion 
\begin{equation}
\label{eq:vincl}
\VV^i_r(\pi,\iota) \supseteq \bigcup_{f\in E} f^{!} \VV^i_r (\pi_f,\iota).
\end{equation}

One of our main goals for the remainder of this paper is to delineate  
several large classes of groups endowed with the required finite families 
of epimorphisms, for which the above two inclusions 
hold as equalities near $1$.

\section{A natural comparison between embedded jump loci}
\label{sect:nat}

This section is devoted to proving our main naturality result. 

\subsection{Functors of Artin rings}
\label{subsec:fartin}

Returning now to the setup from \S\S\ref{subsec:rep vars}-\ref{subsec:flat conn},
let $X$ be a pointed, $1$-finite space, and assume there is a 
$1$-finite $\dga$ such that $\Omega_{\k}(X)\simeq_1 A$ in $\CDGA$. 
Let $\pi=\pi_1(X)$, and let $G$ be a linear algebraic group, with Lie 
algebra $\g$.  We wish to compare the analytic germs $\Hom(\pi,G)_{(1)}$ and 
$\F(A,\g)_{(0)}$.  Let $R$ and $\overline{R}$ be the respective coordinate 
local algebras.  By Artin approximation, we may start by looking at the 
completions of these rings, $\widehat{R}$ and $\widehat{\overline{R}}$.  
Alternatively, we may analyze the corresponding functors of Artin rings, 
\begin{equation}
\label{eq:artin rings}
h_{\widehat{R}}(\art) = \Hom(\widehat{R},\art)  \ \text{\: and \:} \ 
h_{\widehat{\overline{R}}}(\art) = \Hom(\widehat{\overline{R}},\art), 
\end{equation}
for $\art$ a local Artin algebra, where $\Hom$ stands for morphisms of local 
algebras.  We recall that $h_{\widehat{R}}(\art) = \Hom(\pi, \exp(\g\otimes \m_{\art}))$, 
see~\cite{GM}, and $h_{\widehat{\overline{R}}}(\art) = \F(A, \g\otimes \m_{\art})$, 
see~\cite{DP-ccm}. 

Let $Z$ be a zig-zag of $1$-equivalences in 
$\ACDGA$ connecting $\Omega_{\k}(X)$ to $A$, as in \eqref{eq:defz}.  
It follows from Theorem \ref{thm:monbij} that the natural bijection $\alpha_Z$ 
defined in \eqref{eq:alphaz} yields an isomorphism 
\begin{equation}
\label{eq:hatiso}
\xymatrixcolsep{18pt}
\xymatrix{
\alpha_Z \colon \widehat{R} \ar[r]& \widehat{\overline{R}}
}.
\end{equation}

Assume now that $X$ and $A$ are $q$-finite, for some $q\ge 1$, and 
that $\Omega_{\k}(X)\simeq_q A$ in $\CDGA$.  As usual, let 
$\iota\colon G\to \GL(V)$ be a rational representation, with 
tangential representation $\theta\colon\g\to \gl(V)$. For each $i\le q$ 
and $r\ge 0$, we denote by $I^i_r \subseteq R$ the radical of the defining ideal 
of the germ $\VV^i_r(X,\iota)_{(1)}$ inside $\Hom(\pi,G)_{(1)}$. Similarly, we will let 
$\overline{I}^i_r \subseteq \overline{R}$ stand for the radical of the defining 
ideal of the germ $\RR^i_r(A,\theta)_{(0)}$ inside $\F(A,\g)_{(0)}$.

\begin{lemma}
\label{lem:embhat}
For any zig-zag $Z$ as above, the isomorphism 
$\alpha_Z \colon \widehat{R} \to \widehat{\overline{R}}$ from 
\eqref{eq:hatiso} identifies $\widehat{I^i_r}$ with 
$\widehat{\overline{I}}{}^i_r$, for all $i\le q$ 
and $r\ge 0$.
\end{lemma}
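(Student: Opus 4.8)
The statement I want to establish is that the completion-level isomorphism $\alpha_Z$ carries the completed radical ideals $\widehat{I^i_r}$ to $\widehat{\overline{I}}{}^i_r$, for all $i\le q$ and $r\ge0$. The plan is to reduce everything to the cochain-complex-level comparison already assembled in the paper, applied functorially over every Artin ring $\art$. Concretely, recall that a germ $\X_{(x)}$ inside an ambient germ is determined by its functor of points on Artinian rings, and that the radical ideal of a subvariety is visible through which Artin-ring-valued points lie on the subvariety (a point $\phi\colon R\to\art$ factors through $R/I^i_r$ iff its image lies scheme-theoretically, hence set-theoretically, in the locus). So it suffices to show that for every local Artin $\k$-algebra $\art$, the bijection $\alpha_Z(\art)\colon \Hom(\pi,\exp(\g\otimes\m_\art))\to\F(A,\g\otimes\m_\art)$ (which is exactly $\alpha_Z$ of \eqref{eq:alphaz}, inducing \eqref{eq:hatiso}) matches up the subsets cut out by the jump-loci conditions in each degree $i\le q$ and depth $r$.

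First I would unwind $\alpha_Z$ via Corollary \ref{cor:alphaspec}: replace the arbitrary zig-zag $Z$ by a special zig-zag $S\colon \Omega_\k(X)\xleftarrow{\rho_q}\mathcal M_q\xrightarrow{\bar\rho_q}A$, so that $\alpha_Z=\alpha_S=\mon\circ\beta_S$. Since $\rho_q$ and $\bar\rho_q$ are $q$-equivalences between $q$-finite $\adga$s, Remark \ref{rem:tilde} gives $q$-equivalences $\widetilde{\mathcal M}_q\otimes\g\to\widetilde{\Omega}_\k(X)\otimes\g$ and $\widetilde{\mathcal M}_q\otimes\g\to\widetilde A\otimes\g$ in $\DGL$, inducing the natural bijection $\beta_S$ on deformation functors. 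The point is that a $q$-equivalence $\psi\colon A'\to A$ of $\dga$s induces, for each flat connection $\omega'$ and its image $\omega=(\psi\otimes\id)\omega'$, a $q$-connected map on twisted cohomology $H^{\hdot}(A'\otimes V,d_{\omega'})\to H^{\hdot}(A\otimes V,d_\omega)$ — this is exactly Lemma \ref{lem:natjump-inf} — and in particular an \emph{isomorphism} in degrees $\le q$. The same holds on the topological side for the twisted cohomology of $X$ versus $\mathcal M_q$ once one passes through the monodromy identification and the comparison theorem of \cite{DP-ccm} (the twisted Betti numbers in degrees $\le q$ are computed equally by $X$, by $\mathcal M_q$, and by $A$, compatibly along $\beta_S$ and $\mon$). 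Therefore, for $\rho\in\Hom(\pi,\exp(\g\otimes\m_\art))$ corresponding to $\omega\in\F(A,\g\otimes\m_\art)$ under $\alpha_S$, and for each $i\le q$, the ranks $\dim_{\art} H^i(X,V_{\iota\circ\rho})$ and $\dim_{\art} H^i(A\otimes V,d_\omega)$ agree (as functions of $\art$), so $\alpha_S$ identifies the locus where this rank is $\ge r$ — i.e., $\alpha_S$ carries $\VV^i_r$ to $\RR^i_r$ functorially over $\art$. Passing to the inverse limit over all $\art$ (equivalently, over the pro-Artin ring structure of the completions), this says $\alpha_Z$ identifies the completed \emph{reduced} defining ideals, which are exactly $\widehat{I^i_r}$ and $\widehat{\overline I}{}^i_r$.

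The care that is needed — and where I expect the genuine bookkeeping lies — is the passage from "the bijection $\alpha_Z$ on $\art$-points respects the set-theoretic jump loci, for every $\art$" to "the radical ideals of the completions correspond." One has to be precise about the fact that the germs $\VV^i_r(X,\iota)_{(1)}$ and $\RR^i_r(A,\theta)_{(0)}$ are honest analytic subgerms (which holds for $i\le q$ by $q$-finiteness, as recorded after Definitions \ref{def:cv} and \ref{eq:resvar}), so that their completed reduced ideals are determined by the reduced $\art$-points of the ambient germs lying on them — and then that the semicontinuity defining these loci is "the same" rank function on both sides, uniformly in $\art$. Here I would lean on the identification in \cite[Thm.~B]{DP-ccm} that $\alpha_Z$ already induces an isomorphism of reduced germs $\F(A,\g)_{(0)}\isom\Hom(\pi,G)_{(1)}$ restricting to $\RR^i_r(A,\theta)_{(0)}\isom\VV^i_r(X,\iota)_{(1)}$ for $i\le q$; the only genuinely new content of this lemma beyond that theorem is that the \emph{specific} completion-level map $\alpha_Z$ coming from the specific zig-zag $Z$ — not merely \emph{some} isomorphism — is the one doing the job. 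That compatibility is precisely what Corollary \ref{cor:alphaspec} is for: it lets us assume $Z$ is special, and for special zig-zags the comparison in \cite{DP-ccm} was built out of exactly this data. So the main obstacle is not a hard argument but a careful alignment of conventions: checking that "the" isomorphism of \cite[Thm.~B]{DP-ccm} coincides with $\alpha_S$ on completions, and that Lemma \ref{lem:natjump-inf} (together with its topological twin, Lemma \ref{lem:natjump-top}, applied to the minimal model map) really gives matching rank functions on twisted cohomology in all degrees $\le q$, functorially in $\art$. Once that is in place, taking radicals and completing is formal.
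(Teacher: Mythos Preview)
Your proposal is correct and follows essentially the same route as the paper: reduce via Corollary~\ref{cor:alphaspec} to a special zig-zag $S$, and then invoke the comparison already established in \cite{DP-ccm} for that case. The paper does this in two lines, citing \cite[Lem.~9.9]{DP-ccm} directly for the special case, whereas you sketch more of the underlying mechanism (matching twisted-cohomology ranks functorially over $\art$) before ultimately deferring to \cite[Thm.~B]{DP-ccm}; your intermediate discussion of ``$\dim_{\art} H^i$'' is informal (the actual argument in \cite{DP-ccm} tracks Fitting ideals rather than dimensions over $\art$), but since you correctly identify that the substance lies in \cite{DP-ccm} and that the only new point here is pinning down \emph{which} isomorphism does the job, the argument stands.
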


\begin{proof}
In view of Corollary \ref{cor:alphaspec}, we may replace the zig-zag $Z$ 
by a special zig-zag $S$.  The claim for $S$ follows from \cite[Lem.~9.9]{DP-ccm}. 
\end{proof}

Now let $f\colon X\to X'$ be a pointed map, let $\Phi\colon A'\to A$ 
be a $\dga$ map, and assume both spaces and $\dga$s are $q$-finite, 
for some $q\ge 1$.  Let $\phi\colon R\to R'$ be the morphism of local rings 
induced by the map $f_{\sharp}^{!}\colon \Hom (\pi',G)\to \Hom (\pi,G)$, 
and let  $\bar\phi\colon \overline{R}\to \overline{R}'$ be the morphism of local rings 
induced by the map $\Phi\otimes\id \colon\F(A',\g)\to \F(A,\g)$.  

Suppose there is a $q$-equivalence in $\ACDGA_0$ between the maps 
$\Omega_\k(f) \colon \Omega_\k(X')\to \Omega_\k(X)$ and 
$\Phi\colon A'\to A$.   We then obtain zig-zags $Z'$ from $\Omega_\k(X')$ 
to $A'$ and $Z$ from $\Omega_\k(X)$ to $A$; let 
$\alpha' \colon \widehat{R}' \to \widehat{\overline{R'}}$ 
and $\alpha \colon \widehat{R} \to \widehat{\overline{R}}$ be the 
corresponding isomorphisms, given by \eqref{eq:hatiso}.

\begin{lemma}
\label{lem:comm}
With the above setup, we have that $\alpha' \circ \hat\phi = \hat{\bar\phi} \circ\alpha$.
\end{lemma}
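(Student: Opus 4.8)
The plan is to reduce the statement to a compatibility between two monodromy-type bijections by choosing the zig-zags $Z$ and $Z'$ coherently, and then to invoke the functoriality of the monodromy map together with the functoriality of the bijection $\beta_Z$ on deformation functors. First I would unwind the definitions: by \eqref{eq:alphaz} and \eqref{eq:hatiso}, the isomorphism $\alpha$ is, at the level of functors of Artin rings, the composite $\alpha_Z = \mon\circ\beta_Z$, where $\beta_Z$ is the natural bijection of \eqref{eq:betaz} induced by the zig-zag $Z$ connecting $\Omega_{\k}(X)$ to $A$, and similarly $\alpha'=\mon\circ\beta_{Z'}$. Thus, testing on an arbitrary local Artin algebra $\art$, the claimed equality $\alpha'\circ\hat\phi=\hat{\bar\phi}\circ\alpha$ becomes the assertion that the square
\[
\xymatrix{
\F(A,\g\otimes\m_{\art}) \ar[r]^{\alpha_Z} \ar[d]_{\bar\phi\otimes\id} & \Hom(\pi,\exp(\g\otimes\m_{\art})) \ar[d]^{f_{\sharp}^{!}}\\
\F(A',\g\otimes\m_{\art}) \ar[r]^{\alpha_{Z'}} & \Hom(\pi',\exp(\g\otimes\m_{\art}))
}
\]
commutes, where on the left we use the identification $\hat{\bar\phi}$ with $\Phi\otimes\id$ via \eqref{eq:flatdef}, and similarly on the right $\hat\phi$ with $f_{\sharp}^{!}$.

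Next I would exploit the hypothesis that $\Omega_{\k}(f)\simeq_q\Phi$ in $\ACDGA_0$. By Definition \ref{def:qeqmaps}, this equivalence is witnessed by a ladder diagram \eqref{eq:ziggy}: two zig-zags of $q$-equivalences in $\ACDGA$, a top one $Z$ running from $\Omega_{\k}(X)$ to $A$ and a bottom one $Z'$ running from $\Omega_{\k}(X')$ to $A'$, together with vertical $\adga$ maps $\Phi_1,\dots,\Phi_{\ell-1}$ making each square commute up to augmented homotopy, with the two outer vertical maps being $\Omega_{\k}(f)$ and $\Phi$. It is precisely these $Z$ and $Z'$ that we take in the statement. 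The point is that for each rung of the ladder, the relevant square of deformation functors commutes: on the $\Def$ level this follows from Remark \ref{rem:tilde} (the maps $\tilde\psi_i\otimes\id$, $\tilde\psi'_i\otimes\id$ are $q$-equivalences, so induce natural bijections of deformation functors), from the naturality of $\Def$ under morphisms of $\dgl$s, and from Corollary \ref{cor:man} (augmented homotopies induce equal maps of deformation functors — via Lemma \ref{lem:htpy} applied to the augmented homotopies). Stitching these squares together, we conclude that $\beta_{Z'}\circ(\Phi\otimes\id)^* = (\Omega_{\k}(f)\otimes\id)^*\circ\beta_Z$ as maps of deformation functors, where the starred maps are the evident ones — here one uses that $\beta_Z$ is, by construction, the composite of the rung-by-rung bijections.

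Finally I would bring in the monodromy map. The topological side of the square is controlled by the naturality of $\mon$ under the continuous map $f$: the monodromy construction of \cite[\S6.3]{DP-ccm} is functorial, so the square relating $\F(\Omega_{\k}(X'),\g\otimes\m_{\art})\to\F(\Omega_{\k}(X),\g\otimes\m_{\art})$ (induced by $\Omega_{\k}(f)$) to $\Hom(\pi',\dots)\to\Hom(\pi,\dots)$ (induced by $f_{\sharp}$) commutes, and by Lemma \ref{lem:moneq} this descends to the quotients by the augmented gauge groups. Composing this with the $\Def$-level commutativity of the previous paragraph yields exactly the commutativity of the big square, hence the identity $\alpha'\circ\hat\phi=\hat{\bar\phi}\circ\alpha$. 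The main obstacle I expect is bookkeeping: keeping straight the directions of all the arrows (the $\dga$-side maps run opposite to the space-side maps, and the $\adga$ maps $\Phi,\Phi_f$ go $A_f\to A$ while $f^{!}$ goes $\Hom(\pi_f,G)\to\Hom(\pi,G)$), and making sure that the bijection $\beta_Z$ built from the chosen zig-zag $Z$ is literally the same object that enters both $\alpha_Z$ (via \eqref{eq:alphaz}) and the rung-by-rung argument — that is, that Proposition \ref{prop:ztos} / Corollary \ref{cor:alphaspec} is not secretly needed here. I believe it is not, because the ladder \eqref{eq:ziggy} already fixes compatible $Z$ and $Z'$, but verifying that the homotopy-commutativity of the rungs is genuinely \emph{augmented} homotopy-commutativity — so that Corollary \ref{cor:man} applies — is the delicate point.
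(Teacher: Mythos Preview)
Your approach is essentially the same as the paper's: split the claim into a $\Def$-level commutativity (the paper's diagram \eqref{eq:comm1}) proved rung-by-rung via Lemma~\ref{lem:htpy} and Corollary~\ref{cor:man}, and a monodromy-level commutativity (diagram \eqref{eq:comm2}) proved by the naturality of $\mon$ from \cite[\S6.3]{DP-ccm}, then identify $\hat\phi$ with $f_\sharp^!$ and $\hat{\bar\phi}$ with $\Phi\otimes\id$ on $\art$-points.

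The one genuine slip is directional: in your displayed square the vertical arrows are reversed. Since $\Phi\colon A'\to A$, the induced map on flat connections is $\Phi\otimes\id\colon \F(A',\g\otimes\m_\art)\to\F(A,\g\otimes\m_\art)$, and likewise $f_\sharp^!\colon \Hom(\pi',\dots)\to\Hom(\pi,\dots)$; both vertical arrows should point \emph{up}, and the left label should be $\Phi\otimes\id$ (not ``$\bar\phi\otimes\id$'', which is not meaningful). Correspondingly, your stated identity $\beta_{Z'}\circ(\Phi\otimes\id)^* = (\Omega_{\k}(f)\otimes\id)^*\circ\beta_Z$ does not compose; the correct version is $\beta_Z\circ\Def_{\widetilde\Phi\otimes\id}=\Def_{\widetilde{\Omega}(f)\otimes\id}\circ\beta_{Z'}$. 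You rightly anticipated that arrow-direction bookkeeping is the delicate point here; once that is straightened out, your argument matches the paper's.
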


\begin{proof}
In terms of functors of Artin rings, we have that $h_{\alpha}=\mon \circ \beta_Z$ and 
$h_{\alpha'}=\mon \circ \beta_{Z'}$.  
First we show that the following diagram commutes, for every local Artin algebra $\art$. 
\begin{equation}
\label{eq:comm1}
\begin{gathered}
\xymatrix{
\F(A, \g\otimes \m_{\art}) \ar@{=}[r]& 
\DS{\frac{\F(\widetilde{A} \otimes \g\otimes \m_{\art})}%
{\G(\widetilde{A} \otimes \g\otimes \m_{\art})}} \ar^(.46){\beta_Z}[r]
& 
\DS{\frac{\F(\widetilde{\Omega}(X) \otimes \g\otimes \m_{\art})}%
{\G(\widetilde{\Omega}(X) \otimes \g\otimes \m_{\art})}}
\\
\F(A', \g\otimes \m_{\art}) 
\ar@{=}[r] \ar_{\Phi\otimes \id}[u]
& 
\DS{\frac{\F(\widetilde{A'} \otimes \g\otimes \m_{\art})}%
{\G(\widetilde{A'} \otimes \g\otimes \m_{\art})}} 
\ar^(.46){\beta_{Z'}}[r]  \ar_{\Def_{\widetilde{\Phi}\otimes \id}(\art)}[u]
& 
\DS{\frac{\F(\widetilde{\Omega}(X') \otimes \g\otimes \m_{\art})}%
{\G(\widetilde{\Omega}(X') \otimes \g\otimes \m_{\art})}}
\ar_{\Def_{\widetilde{\Omega}(f)\otimes \id}(\art)}[u]
}
\end{gathered}
\end{equation}
Plainly, it is enough to verify the commutativity of this diagram 
for an elementary $q$-equivalence in $\ACDGA_0$. In this case, 
$\beta_Z=\Def_{\widetilde{\psi}\otimes \id}(\art)$ and 
$\beta_{Z'}=\Def_{\widetilde{\psi'}\otimes \id}(\art)$, 
by construction.  The claim now follows from Lemma \ref{lem:htpy} 
and Corollary \ref{cor:man}.

Next, we show that the diagram 
\begin{equation}
\label{eq:comm2}
\begin{gathered}
\xymatrixcolsep{30pt}
\xymatrix{
\DS{\frac{\F(\widetilde{\Omega}(X) \otimes \g\otimes \m_{\art})}%
{\G(\widetilde{\Omega}(X) \otimes \g\otimes \m_{\art})}} 
\ar^(.45){\mon}[r]&
\Hom (\pi_1(X),\exp(\g\otimes \m_{\art}))
\\
\DS{\frac{\F(\widetilde{\Omega}(X') \otimes \g\otimes \m_{\art})}%
{\G(\widetilde{\Omega}(X') \otimes \g\otimes \m_{\art})}}
\ar_{\Def_{\widetilde{\Omega}(f)\otimes \id}(\art)}[u]
\ar^(.45){\mon}[r]&
\Hom (\pi_1(X'),\exp(\g\otimes \m_{\art}))
\ar_{f_{\sharp}^{!}}[u]
}
\end{gathered}
\end{equation}
commutes, where the horizontal arrows are as in Theorem \ref{thm:monbij}. 
In fact, commutativity holds even before taking quotients by the 
gauge actions, due to the naturality properties of the monodromy 
construction, as detailed in \cite[\S 6.3]{DP-ccm}. 

It is now straightforward to check that the natural transformation between 
functors of Artin rings induced by $\hat{\bar\phi}$ takes the value 
$\Phi\otimes \id$ on $\art$, whereas for $\hat\phi$ we obtain the value $f_{\sharp}^{!}$.  
The commutativity of the above two diagrams now verifies the claim. 
\end{proof}

\subsection{A natural comparison between embedded jump loci}
\label{subsec:compare}

We now consider a family of maps between pointed spaces, 
$\{f\colon X\to X_f\}_{f\in E}$, indexed by a finite set $E$, 
and we let $\{ f_{\sharp}\colon \pi \to \pi_f\}$ be the family of induced 
homomorphisms on fundamental groups. 
We also consider a family of $\ACDGA$ maps, 
$\{\Phi_f\colon A_f\to A\}_{f\in E}$, indexed by the same set,  
and we will assume that $A$ and $A_f$ are connected $\dga$s.  
Fix an integer $q\ge 1$. 

Suppose that $\Omega(f)\simeq_q \Phi_f$ in $\ACDGA_0$. 
We then have a commuting diagram as in \eqref{eq:ziggy}, 
\begin{equation}
\label{eq:ziggyf}
\begin{gathered}
\xymatrix{
Z_f: \hspace{-20pt} & \Omega(X)  & A_1 \ar_(.45){\psi_0}[l]  \ar^{\psi_1}[r] & \cdots 
& A_{\ell-1}   \ar[l]\ar^{\psi_{\ell-1}}[r] & A \, \phantom{.}
\\
Z'_f: \hspace{-20pt} & \Omega(X_f) \ar^{\Omega(f)}[u] & A'_1  
\ar^{\Phi_1}[u] \ar_(.45){\psi'_0}[l]  \ar^{\psi'_1}[r] & \cdots 
& A'_{\ell-1}   \ar_{\Phi_{\ell-1}}[u] \ar[l]\ar^{\psi'_{\ell-1}}[r] & A_f \, . \ar_{\Phi_f}[u]
}
\end{gathered}
\end{equation}
Let  $\beta_{Z_f}$ and $\beta_{Z'_f}$ be the associated natural bijections, 
defined as in display \eqref{eq:betaz}. 

\begin{definition}
\label{def:unifq}
We will say that $\Omega(f)\simeq_q \Phi_f$ in $\ACDGA_0$, 
{\em uniformly}\/ with respect to $f\in E$ 
if the bijection $\beta_{Z_f}$ is independent of $f$. 
\end{definition}

We are now in a position to state and prove our main naturality result. 
As usual, $G$ is a $\k$-linear algebraic group (where $\k=\R$ or $\C$), and 
$\g$ is its Lie algebra.  Furthermore, we consider a rational representation 
$\iota\colon G\to \GL(V)$ over $\k$, and we let $\theta\colon \g\to \gl(V)$ be the 
tangential representation.  

\begin{theorem}
\label{thm:main}
Suppose the following conditions hold:
\begin{enumerate}
\item \label{h1}  
All the above spaces and $\dga$s are $q$-finite.
\item \label{h2}
Both $f$ and $\Phi_f$ are $(q-1)$-connected maps, for all $f\in E$. 
\item \label{h3}
$\Omega(f)\simeq_q \Phi_f$ in $\ACDGA_0$, uniformly with respect to $f\in E$.
\end{enumerate}

Then we may find  local analytic isomorphisms 
$a\colon \F(A,\g)_{(0)} \isom \Hom(\pi,G)_{(1)}$ 
and $a_f\colon \F(A_f,\g)_{(0)} \isom \Hom(\pi_f,G)_{(1)}$ for all $f\in E$ 
such that the following diagram commutes, for all $f\in E$,
\[
\xymatrix{
\F(A,\g)_{(0)} \ar[r]^(.45){a}& \Hom(\pi,G)_{(1)}\, \phantom{.}\\
\F(A_f,\g)_{(0)} \ar[u]^{\Phi_f\otimes \id} \ar[r]^(.45){a_f}
&  \Hom(\pi_f,G)_{(1)}\ar[u]_{f_{\sharp}^{!}}\, .
}
\]

Moreover, for all $f\in E$, $i\le q$, and $r\ge 0$, this construction induces a 
commuting diagram of (local, reduced) embedded jump loci,
\[
\xymatrix{
(\F(A,\g), \RR^i_r(A,\theta))_{(0)} \ar[r]^(.48){a}& (\Hom(\pi,G), \VV^i_r(X,\iota))_{(1)}
\, \phantom{,}\\
(\F(A_f,\g), \RR^i_r(A_f,\theta))_{(0)}\ar[u]^{\Phi_f\otimes \id} \ar[r]^(.48){a_f}
&  (\Hom(\pi_f,G), \VV^i_r(X_f,\iota))_{(1)}\, , \ar[u]_{f_{\sharp}^{!}}
}
\]
where both horizontal arrows are isomorphisms of analytic pairs. 
\end{theorem}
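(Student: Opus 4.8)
The plan is to reduce the entire statement to a single application of the simultaneous Artin approximation result, Proposition~\ref{prop:simartin}, whose hypotheses will be supplied by Lemmas~\ref{lem:embhat} and~\ref{lem:comm} together with conditions \eqref{h1}--\eqref{h3}. First I would fix notation for the relevant analytic local algebras: write $R$ and $\overline{R}$ for the analytic local rings of the germs $\Hom(\pi,G)_{(1)}$ and $\F(A,\g)_{(0)}$, and $R_f$, $\overline{R}_f$ for those of $\Hom(\pi_f,G)_{(1)}$ and $\F(A_f,\g)_{(0)}$; these are genuine analytic algebras by the $q$-finiteness hypothesis \eqref{h1} (with $q\ge1$, so $\pi$ and $\pi_f$ are finitely generated) together with finite-dimensionality of $\g$ and $V$. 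Since each $f$ is $(q-1)$-connected, hence $0$-connected, Corollary~\ref{cor:jumpnat-top} shows $f_{\sharp}^{!}$ is a closed embedding, so restriction of germs gives epimorphisms $\phi_f\colon R\surj R_f$; dually, applying Corollary~\ref{cor:jumpnat-inf} to the $(q-1)$-connected map $\Phi_f$ gives epimorphisms $\bar\phi_f\colon \overline{R}\surj \overline{R}_f$. For the ideals appearing in Proposition~\ref{prop:simartin} I would take $I^{(i,r)}\subseteq R$ to be the radical of the defining ideal of $\VV^i_r(X,\iota)_{(1)}$, and likewise $\overline{I}^{(i,r)}\subseteq\overline{R}$, $I^{(i,r)}_f\subseteq R_f$, $\overline{I}^{(i,r)}_f\subseteq\overline{R}_f$ from $\RR^i_r(A,\theta)_{(0)}$, $\VV^i_r(X_f,\iota)_{(1)}$, $\RR^i_r(A_f,\theta)_{(0)}$ respectively; the index set $F=\{(i,r): 0\le i\le q,\ 0\le r\le N\}$ is finite once $N$ is chosen so large that all of these loci are empty for $r>N$ (possible by $q$-finiteness, since in degrees $\le q$ the relevant cohomology has dimension bounded independently of the base point), and the remaining range $r>N$ is vacuous for the theorem. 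Finally, from hypothesis \eqref{h3} that $\Omega(f)\simeq_q\Phi_f$ uniformly, Definition~\ref{def:unifq} furnishes, for each $f$, zig-zags $Z_f$ and $Z'_f$ of $q$-equivalences in $\ACDGA$ as in \eqref{eq:ziggyf}; I would set $\alpha:=\alpha_{Z_f}$, which by \eqref{eq:alphaz} and the uniformity of the bijections $\beta_{Z_f}$ is \emph{independent of $f$}, and $\alpha_f:=\alpha_{Z'_f}$, viewed through \eqref{eq:hatiso} as isomorphisms $\alpha\colon\widehat{R}\isom\widehat{\overline{R}}$ and $\alpha_f\colon\widehat{R}_f\isom\widehat{\overline{R}}_f$.

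Next I would verify the four conditions \eqref{p1}--\eqref{p4} of Proposition~\ref{prop:simartin} for this data. Condition \eqref{p1} compares which germs are nonempty: by \eqref{eq:simvoid}, $I^{(i,r)}\ne R\Leftrightarrow b_i(X)\dim V\ge r\Leftrightarrow b_i(A)\dim V\ge r\Leftrightarrow\overline{I}^{(i,r)}\ne\overline{R}$ (using $b_i(X)=b_i(A)$ for $i\le q$, since $\Omega(X)\simeq_q A$), and the same for the $f$-labelled ideals; moreover $I^{(i,r)}=R\Rightarrow I^{(i,r)}_f=R_f$ and $\overline{I}^{(i,r)}=\overline{R}\Rightarrow\overline{I}^{(i,r)}_f=\overline{R}_f$ because the $(q-1)$-connectedness of $f$ and of $\Phi_f$ forces $b_i(X_f)\le b_i(X)$ and $b_i(A_f)\le b_i(A)$ for $i\le q$. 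Conditions \eqref{p2} and \eqref{p3} — that $\alpha$ carries $\widehat{I^{(i,r)}}$ onto $\widehat{\overline{I}}{}^{(i,r)}$ and $\alpha_f$ carries $\widehat{I^{(i,r)}_f}$ onto $\widehat{\overline{I}}{}^{(i,r)}_f$ — are precisely Lemma~\ref{lem:embhat}, applied to the pair $(X,A)$ with the zig-zag $Z_f$ and to $(X_f,A_f)$ with the zig-zag $Z'_f$. Condition \eqref{p4}, $\hat{\bar\phi}_f\circ\alpha=\alpha_f\circ\hat\phi_f$, is Lemma~\ref{lem:comm} applied to the map $f$ with the $q$-equivalence data giving $Z=Z_f$ and $Z'=Z'_f$; here it is crucial that the common $\alpha$ on the left is the \emph{same} map for every $f\in E$, which is exactly the uniformity arranged in the previous paragraph.

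With \eqref{p1}--\eqref{p4} in hand, Proposition~\ref{prop:simartin} yields analytic isomorphisms $a\colon R\isom\overline{R}$ and $a_f\colon R_f\isom\overline{R}_f$ with $a(I^{(i,r)})=\overline{I}^{(i,r)}$, $a_f(I^{(i,r)}_f)=\overline{I}^{(i,r)}_f$, and $\bar\phi_f\circ a=a_f\circ\phi_f$. Dualizing to germs, $a$ and $a_f$ become analytic isomorphisms $\F(A,\g)_{(0)}\isom\Hom(\pi,G)_{(1)}$ and $\F(A_f,\g)_{(0)}\isom\Hom(\pi_f,G)_{(1)}$; since $\phi_f$ and $\bar\phi_f$ are dual to $f_{\sharp}^{!}$ and $\Phi_f\otimes\id$, the identity $\bar\phi_f\circ a=a_f\circ\phi_f$ dualizes to commutativity of the first square of the theorem. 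Because $a$ identifies the radical ideal of $\VV^i_r(X,\iota)_{(1)}$ with that of $\RR^i_r(A,\theta)_{(0)}$ (and $a_f$ likewise for the $f$-versions), each of $a$, $a_f$ restricts to an isomorphism of reduced analytic pairs, and these restrictions assemble into the second commuting square, since it sits inside the first. This works for all $i\le q$ and $r\ge0$ — for $r\le N$ by the above, and for $r>N$ because all four jump loci are then empty — completing the proof.

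The main obstacle, and the one place where hypothesis \eqref{h3} does the real work, is producing a \emph{single} completion isomorphism $\alpha\colon\widehat{R}\isom\widehat{\overline{R}}$ at the source that is simultaneously compatible, in the sense of condition \eqref{p4}, with every $f\in E$: for a one-element family this is automatic from Lemma~\ref{lem:comm}, but for a general family one needs the bijections $\beta_{Z_f}$ (equivalently the isomorphisms $\alpha_{Z_f}$) to be independent of $f$, which is precisely the uniformity encoded in Definition~\ref{def:unifq}. The only other delicate point is the bookkeeping in condition \eqref{p1}, where the $(q-1)$-connectedness of the maps $f$ and $\Phi_f$ is used — via the inequalities $b_i(X_f)\le b_i(X)$ and $b_i(A_f)\le b_i(A)$ for $i\le q$ — to line up the (non)emptiness of the various jump loci on the two sides.
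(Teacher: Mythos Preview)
Your proposal is correct and follows essentially the same approach as the paper: set up the analytic local rings and radical ideals, use the uniformity hypothesis to get a single completion isomorphism $\alpha$, verify the hypotheses of Proposition~\ref{prop:simartin} via Lemmas~\ref{lem:embhat} and~\ref{lem:comm}, and dualize the output back to germs. The only minor differences are stylistic: the paper checks that $\phi_f,\bar\phi_f$ are epimorphisms by injectivity of the corresponding maps on $\art$-points rather than by appealing directly to the closed-embedding statements in Corollaries~\ref{cor:jumpnat-top} and~\ref{cor:jumpnat-inf}, and it phrases the non-properness implication in condition~\eqref{p1} via those same corollaries rather than via the Betti-number inequality $b_i(X_f)\le b_i(X)$ you use---but both routes are valid and amount to the same thing.
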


\begin{proof}
Due to our connectivity assumptions, Corollaries \ref{cor:jumpnat-top} and  
\ref{cor:jumpnat-inf} apply, thereby showing that both $f_{\sharp}^{!}$ and 
$\Phi_f\otimes \id$ respect the corresponding (global) jump loci.
We will deduce all other claims from Proposition \ref{prop:simartin}.

To begin with, we denote by $\phi_f \colon R\to R_f$ and 
$\bar\phi_f \colon \overline{R}\to \overline{R}_f$ the morphisms 
of analytic algebras corresponding to the local analytic maps 
$f_{\sharp}^{!} \colon \Hom(\pi_f, G)_{(1)} \to\Hom(\pi, G)_{(1)}$
and $\Phi_f\otimes \id \colon \F(A_f, \g)_{(0)} \to\F(A, \g)_{(0)}$, 
respectively.
To verify that both $\phi_f$ and $\bar\phi_f$ are 
epimorphisms, we may use a standard, equivalent property, 
namely, the injectivity of the associated natural transformation 
between $\Hom$-functors, see for instance \cite[III.4]{T}. In turn, 
this property readily follows from the injectivity on $\art$-points 
of the corresponding morphisms between affine coordinate rings, 
for an arbitrary commutative algebra $\art$. 

For representation varieties, the map on $\art$-points 
is given by $f_{\sharp}^{!} \colon \Hom(\pi_f, G(\art)) \to\Hom(\pi, G(\art))$.  
Clearly, this map is injective, since by assumption, $f$ is $0$-connected, i.e., 
$f_{\sharp}$ is surjective. Likewise, for varieties of flat connections, the 
map on $\art$-points is given by 
$\Phi_f\otimes \id \colon \F(A_f\otimes \g\otimes \art) 
\to\F(A\otimes \g\otimes \art)$.  Again, this map is injective, 
since by assumption $\Phi_f$ is $0$-connected, i.e., injective.
This shows that the first preliminary hypotheses from 
Proposition \ref{prop:simartin} are satisfied. 

For $i\le q$ and $r\ge 0$, let $I^i_r \subseteq R$ and 
$\overline{I}^i_r \subseteq \overline{R}$ be 
defining radical ideals for the reduced analytic germs 
$\VV^i_r(X,\iota)_{(1)}$ and $\RR^i_r(A,\theta)_{(0)}$,
as in Lemma \ref{lem:embhat}.  Similarly, for $f\in E$, let 
$I^i_r(f) \subseteq R_f$ and 
$\overline{I}^i_r(f) \subseteq \overline{R}_f$ be 
defining radical ideals for 
$\VV^i_r(X_f,\iota)_{(1)}$, and $\RR^i_r(A_f,\theta)_{(0)}$.  
We deduce from display \eqref{eq:simvoid} that $I^i_r \ne R$ 
if and only if $\overline{I}^i_r \ne  \overline{R}$, which  
happens precisely when $r\le b_i\cdot \dim (V)$, where 
recall $b_i=b_i(X)=b_i(A)$.  Similarly, $I^i_r(f)$ is a proper 
ideal if and only $\overline{I}^i_r(f)$ is a proper ideal.  

Note that $I^i_r=R$ is equivalent to $\VV^i_r(X,\iota)_{(1)}=\emptyset$,  
and similarly for $I^i_r(f)$.   
By Corollary \ref{cor:jumpnat-top}, if $\VV^i_r(X,\iota)_{(1)}$ is empty, 
then $\VV^i_r(X_f,\iota)_{(1)}$ is also empty. Consequently, if 
$I^i_r$ is non-proper, then $I^i_r(f)$ is also non-proper.  
Likewise, Corollary \ref{cor:jumpnat-inf} implies the following: 
if $\overline{I}^i_r$ is non-proper, then $\overline{I}^i_r(f)$ 
is also non-proper.  

The pairs $(i,r)$ with $0\le i \le q$ and $0\le r\le  b_i\cdot \dim (V)$ 
form a finite set, which we will denote by $F$.  
Plainly, we need to verify the second claim of the theorem only for the pairs 
$(i,r)\in F$ and the maps $f\in E$ for which the ideal $I^i_r(f)$ is proper.  

By assumption \eqref{h3}, $\Omega(f)\simeq_q \Phi_f$ in $\ACDGA_0$, 
uniformly with respect to $f\in E$.  
In particular, we have a zig-zag $Z_f$ of $q$-equivalences from $\Omega(X)$ 
to $A$ and a zig-zag $Z'_f$ from $\Omega(X_f)$ to $A_f$ for each $f\in E$, as  
in diagram \eqref{eq:ziggyf}.  
Let 
\begin{equation}
\label{eq:alphaf}
\alpha_f:=\mon\circ \beta_{Z'_f}\colon \widehat{R}_f \isom \widehat{\overline{R}}_f
\end{equation}
be the isomorphism from \eqref{eq:hatiso}. By our uniformity assumption, 
the isomorphisms $\mon\circ \beta_{Z_f}$ coincide with a fixed isomorphism, 
$\alpha\colon \widehat{R} \isom \widehat{\overline{R}}$. 

It follows from Lemma \ref{lem:embhat} that the isomorphism $\alpha_f$ 
identifies the ideal $\widehat{I}{}^i_r(f)$ with $\widehat{\overline{I}}{}^i_r(f)$, 
for all $i\le q$ and $r\ge 0$, and for all $f\in E$. Likewise, the isomorphism $\alpha$ 
identifies the ideal $\widehat{I}{}^i_r$ with $\widehat{\overline{I}}{}^i_r$, 
for all $i\le q$ and $r\ge 0$. 
Finally, assumption \eqref{p4} from Proposition \ref{prop:simartin} 
follows from Lemma \ref{lem:comm}.

The desired conclusions follow from Proposition \ref{prop:simartin}, applied to the 
above  ideals.
\end{proof}

\subsection{Naturality with respect to a single map}
\label{subsec:single}

For a one-element family $E=\{f\}$, the uniform equivalence property 
from Definition \ref{def:unifq} reduces to $\Omega(f)\simeq_q \Phi_f$ in $\ACDGA_0$.
We thus have the following immediate corollary to Theorem \ref{thm:main}.

\begin{corollary}
\label{cor:unione}
Let $f\colon X\to X'$ be a continuous, $(q-1)$-connected map 
between $q$-finite, pointed spaces, for some $q\ge 1$.  Suppose $\Phi\colon A'\to A$ 
is a $(q-1)$-connected $\dga$ map between $q$-finite $\dga$s such that 
$\Omega(f)\simeq_q \Phi$ in $\ACDGA_0$.  
We then may find  local analytic isomorphisms $a$ and $a'$ 
which fit into the diagram
\[
\xymatrix{
\F(A,\g)_{(0)} \ar[r]^(.4){a}& \Hom(\pi_1(X),G)_{(1)}\, \phantom{.}\\
\F(A',\g)_{(0)} \ar[u]^{\Phi\otimes \id} \ar[r]^(.4){a'}
&  \Hom(\pi_1(X'),G)_{(1)}\ar[u]_{f_{\sharp}^{!}}\, .
}
\]
Furthermore, for all $i\le q$, and $r\ge 0$, this construction induces a 
commuting diagram of (local, reduced) embedded jump loci,
\[
\xymatrix{
(\F(A,\g), \RR^i_r(A,\theta))_{(0)} \ar[r]^(.43){a}& (\Hom(\pi_1(X),G), \VV^i_r(X,\iota))_{(1)}
\, \phantom{,}\\
(\F(A',\g), \RR^i_r(A',\theta))_{(0)}\ar[u]^{\Phi\otimes \id} \ar[r]^(.43){a'}
&  (\Hom(\pi_1(X'),G), \VV^i_r(X',\iota))_{(1)}\, , \ar[u]_{f_{\sharp}^{!}}
}
\]
where both horizontal arrows are isomorphisms of analytic pairs. 
\end{corollary}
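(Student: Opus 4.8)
The plan is to obtain this statement as the special case of Theorem \ref{thm:main} in which the indexing set $E$ is a single point. Concretely, I would take $E=\{f\}$, set $X_f=X'$, $\pi_f=\pi_1(X')$, $A_f=A'$, and $\Phi_f=\Phi$, so that both families $\{f\colon X\to X_f\}_{f\in E}$ and $\{\Phi_f\colon A_f\to A\}_{f\in E}$ reduce to the single datum appearing in the corollary, and $\pi=\pi_1(X)$. The representation $\iota\colon G\to \GL(V)$ and its tangential representation $\theta$ are the same fixed data used in both statements.

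Next I would check the three hypotheses of Theorem \ref{thm:main}. Hypothesis \eqref{h1} ($q$-finiteness of all the spaces and $\dga$s in sight) and hypothesis \eqref{h2} ($(q-1)$-connectivity of $f$ and of $\Phi$) are precisely the standing assumptions of the corollary. For hypothesis \eqref{h3}, the key observation is that the uniformity clause of Definition \ref{def:unifq} --- namely, that the bijection $\beta_{Z_f}$ be independent of $f$ --- is automatically satisfied when $E$ has a single element, there being no second index against which to compare. Hence ``$\Omega(f)\simeq_q \Phi_f$ in $\ACDGA_0$, uniformly with respect to $f\in E$'' reduces to the plain relation $\Omega(f)\simeq_q \Phi$ in $\ACDGA_0$, which is exactly the remaining hypothesis. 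I would state this triviality explicitly, since it is the only point in the reduction that is not purely a change of notation.

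With all three hypotheses verified, I would invoke Theorem \ref{thm:main} to produce the local analytic isomorphisms $a\colon \F(A,\g)_{(0)}\isom \Hom(\pi_1(X),G)_{(1)}$ and $a_f\colon \F(A',\g)_{(0)}\isom \Hom(\pi_1(X'),G)_{(1)}$ fitting into the commuting square of ambient germs, together with the induced commuting square of reduced embedded jump loci for all $i\le q$ and $r\ge 0$, in which both horizontal arrows are isomorphisms of analytic pairs. Writing $a':=a_f$ then puts the conclusion in the form stated. Since everything is a literal specialization of the already-proved theorem, there is no real obstacle beyond the notational matching and the remark about the vacuity of the uniformity condition; in particular no new appeal to Artin approximation is needed here, that work having been done once and for all inside the proof of Theorem \ref{thm:main}.
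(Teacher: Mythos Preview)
Your proposal is correct and matches the paper's own treatment exactly: the paper states just before the corollary that for a one-element family $E=\{f\}$ the uniformity condition of Definition~\ref{def:unifq} reduces to $\Omega(f)\simeq_q \Phi_f$ in $\ACDGA_0$, and then records the result as an immediate corollary of Theorem~\ref{thm:main}. No separate proof is given in the paper, so your explicit verification of hypotheses \eqref{h1}--\eqref{h3} is, if anything, slightly more detailed than what appears there.
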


Here is a situation where this type of property holds. 

\begin{lemma}
\label{lemma:qeqmodel}
Let $f\colon X\to X'$ be a continuous map between pointed spaces, and 
assume $H^{\hdot}(f)$ is  $q$-connected, for some $q\ge 1$.  Let $A$ be a connected 
$\dga$, and suppose $A$ is a $q$-model for $X$. Then 
$\Omega(f)\simeq_q \id_A$ in $\ACDGA_0$. 
\end{lemma}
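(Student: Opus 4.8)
The statement claims that if $H^{\hdot}(f)$ is $q$-connected and $A$ is a connected $q$-model for $X$, then $\Omega(f)\simeq_q \id_A$ in $\ACDGA_0$. The natural approach is to produce a zig-zag of $q$-equivalences in $\ACDGA$, together with compatible $\adga$ maps, witnessing this relation according to the definition of $\simeq_q$ for morphisms. Recall that $\Omega(f)$ is the map $\Omega(X')\to\Omega(X)$, while $\id_A$ is the map $A\to A$. So the target of $\id_A$ is $A$, which is $q$-equivalent to $\Omega(X)$; and the source of $\id_A$ is $A$, which we want to be $q$-equivalent to $\Omega(X')$. The key observation is that the hypothesis on $H^{\hdot}(f)$ forces $\Omega(X')$ to \emph{also} be a $q$-model for $A$: since $\Omega(X)\simeq_q A$ by assumption and $H^{\hdot}(X)\cong H^{\hdot}(A)$, the $q$-connectivity of $H^{\hdot}(f)\colon H^{\hdot}(X')\to H^{\hdot}(X)$ together with a minimal-model argument should give $\Omega(X')\simeq_q A$ as well. (In particular $H^{\hdot}(X')$ is connected in degrees $\le q$, so $A\in\ACDGA_0$ makes sense on both sides.)

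First I would set up the diagram in the shape of \eqref{eq:ziggy}. Pick a $q$-minimal model map $\rho\colon\mathcal{M}_q\to A$; by Lemma \ref{lem:zig} and the discussion in \S\ref{subsec:minmod}, since $\Omega(X)\simeq_q A$ we get a short zig-zag $\Omega(X)\xleftarrow{\rho_X}\mathcal{M}_q\xrightarrow{\rho_A} A$ of $q$-equivalences in $\ACDGA$ (using that $\mathcal{M}_q$ is connected, hence canonically augmented). Now I want the analogous zig-zag on the source side, $\Omega(X')\xleftarrow{}\mathcal{M}'_q\xrightarrow{} A$, and a $\cdga$ map $\mathcal{M}'_q\to\mathcal{M}_q$ lifting $\Omega(f)$ on one side and $\id_A$ (via $\rho_A$) on the other, up to augmented homotopy. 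Concretely: choose a $q$-minimal model map $\bar\rho\colon\mathcal{M}'_q\to\Omega(X')$; since $H^{\hdot}(\Omega(f))=H^{\hdot}(f)$ is $q$-connected, the composite $\mathcal{M}'_q\xrightarrow{\bar\rho}\Omega(X')\xrightarrow{\Omega(f)}\Omega(X)$ is a $q$-equivalence from a $q$-minimal algebra, hence a $q$-minimal model map for $\Omega(X)$, and therefore (by uniqueness of $q$-minimal models) there is an isomorphism $\mathcal{M}'_q\xrightarrow{\simeq}\mathcal{M}_q$ making the relevant triangle commute up to homotopy. Composing with $\rho_A$ gives the map $\mathcal{M}'_q\to A$ we need on the right; lifting-up-to-homotopy for $\adga$ maps (as invoked in the proof of Proposition \ref{prop:ztos}) upgrades everything to the augmented setting. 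Assembling these pieces produces exactly a diagram of the form \eqref{eq:ziggy} with $\Phi=\Omega(f)$, $\bar\Phi=\id_A$, and middle columns built from $\mathcal{M}'_q\to\mathcal{M}_q$ and identity-type maps.

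The main obstacle I expect is bookkeeping the \emph{augmentations and augmented homotopies} rather than any deep content: one must check that each $q$-minimal model map can be taken to respect basepoints (i.e.\ augmentations), that the isomorphism $\mathcal{M}'_q\cong\mathcal{M}_q$ and the homotopies produced by the lifting property can be chosen in $\ACDGA$, and that the resulting square commutes up to \emph{augmented} homotopy, not merely up to homotopy. Here the hypothesis that $A$ is connected is essential: it guarantees $\mathcal{M}_q$, $\mathcal{M}'_q$ are connected, hence uniquely augmented, so there is no choice to make and every $\cdga$ map between them automatically preserves augmentations (cf.\ \eqref{eq:homacdga}); augmented homotopy then reduces to ordinary homotopy for the same reason. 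Once this is in place, the zig-zag $Z$ (top row) can even be taken to be the constant one realizing $\Omega(X)\simeq_q A$, and $Z'$ the one realizing $\Omega(X')\simeq_q A$, with the vertical maps being $\Omega(f)$, an isomorphism of minimal models, and $\id_A$ — so the diagram \eqref{eq:ziggy} commutes on the nose except for one homotopy coming from the lifting lemma. This exhibits $\Omega(f)\simeq_q\id_A$ in $\ACDGA_0$, as required.
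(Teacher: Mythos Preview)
Your proposal is correct and rests on the same idea as the paper's proof: exploit that $H^{\hdot}(f)$ being $q$-connected makes $\Omega(f)$ a $q$-equivalence, so a $q$-minimal model for $\Omega(X')$ is simultaneously one for $\Omega(X)$ and for $A$. However, your execution takes an unnecessary detour. You choose separate $q$-minimal models $\mathcal{M}_q$ for $A$ and $\mathcal{M}'_q$ for $\Omega(X')$, then invoke uniqueness of minimal models to produce an isomorphism $\mathcal{M}'_q\isom \mathcal{M}_q$ commuting \emph{up to homotopy}, and finally worry about upgrading everything to augmented homotopies. The paper avoids all of this: it picks a single $q$-minimal model map $\rho'\colon \mathcal{M}_q\to \Omega(X')$ together with $\bar\rho'\colon \mathcal{M}_q\to A$ (which exist since $\Omega(X')\simeq_q A$), and then simply sets $\rho:=\Omega(f)\circ\rho'$. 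The resulting diagram
\[
\xymatrix{
\Omega(X) & \mathcal{M}_q \ar_(.4){\rho}[l] \ar^{\bar\rho'}[r] & A\\
\Omega(X') \ar^{\Omega(f)}[u] & \mathcal{M}_q \ar_(.4){\rho'}[l] \ar@{=}[u] \ar^{\bar\rho'}[r] & A \ar_{\id_A}[u]
}
\]
commutes \emph{strictly}, with identity vertical maps in the middle and on the right. No second minimal model, no uniqueness argument, no homotopies at all. Your observation that connectedness of $\mathcal{M}_q$ handles the augmentation issues automatically is exactly right and is what makes this an $\ACDGA_0$ diagram, but you can reach the conclusion more cheaply than you did.
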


\begin{proof}
Consider the following commuting diagram in $\ACDGA_0$,
\begin{equation}
\label{eq:commute triangle}
\begin{gathered}
\xymatrixrowsep{6pt}
\xymatrix{ 
\Omega(X) \\
& \mathcal{M}_q \ar_(.42){\rho}[ul]   \ar^(.42){\rho'}[dl]   \ar^{\bar\rho'}[r] & A\, ,\\
\Omega(X') \ar^{\Omega(f)}[uu]
}
\end{gathered}
\end{equation}
where $\rho'$ and $\bar\rho'$ are $q$-minimal model maps 
provided by the assumption that $\Omega(X')\simeq_q \Omega(X)\simeq_q A$. 
Clearly, the map $\rho=\Omega(f) \circ \rho'$ is a 
$q$-equivalence, since both $\Omega(f)$ and $\rho'$ are.  
Hence, $\Omega(f)\simeq_q \id_A$ in $\ACDGA_0$,  
and the claim follows.
\end{proof}

\begin{corollary}
\label{cor:qeqmodel}
Fix $q\ge 1$. 
Let $f\colon X\to X'$ be a $(q-1)$-connected map between $q$-finite, pointed 
spaces, such that $H^{\hdot}(f)$ is $q$-connected. Let $A$ be a $q$-finite $\dga$, and 
suppose $A$ is a $q$-model for $X$. Then the conclusions of Corollary \ref{cor:unione} 
hold for $A'=A$ and $\Phi=\id_A$. 
\end{corollary}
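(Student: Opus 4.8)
The plan is to derive this as a direct specialization of Corollary \ref{cor:unione}, taking $A'=A$ and $\Phi=\id_A$; the only point requiring an argument is to produce the $q$-equivalence $\Omega(f)\simeq_q \id_A$ in $\ACDGA_0$, and that is exactly what Lemma \ref{lemma:qeqmodel} supplies.

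First I would record that the hypotheses of Corollary \ref{cor:unione} not involving $\Phi$ are immediate from what is given: $f\colon X\to X'$ is a $(q-1)$-connected map between $q$-finite pointed spaces, and $A$ is $q$-finite. Since $q\ge 1$, being $q$-finite forces $A$ to be a connected $\dga$, so $A^0=\k$ and hence $H^0(A)=\k$; thus $A$ lies in $\ACDGA_0$, with its unique augmentation, and $\id_A$ is a morphism in $\ACDGA_0$. Moreover $\id_A$ is an isomorphism, hence an $\infty$-equivalence, and in particular a $(q-1)$-connected $\dga$ map between the $q$-finite $\dga$s $A'=A$ and $A$; so it meets the connectivity and finiteness requirements imposed on $\Phi$ in Corollary \ref{cor:unione}.

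Next I would invoke Lemma \ref{lemma:qeqmodel}. Its hypotheses hold here: $A$ is a connected $\dga$ (just noted), $A$ is a $q$-model for $X$ by assumption, and $H^{\hdot}(f)$ is $q$-connected by assumption. The lemma therefore yields $\Omega(f)\simeq_q \id_A$ in $\ACDGA_0$, which is precisely the $q$-equivalence required by Corollary \ref{cor:unione} for the pair $(\Phi,A')=(\id_A,A)$.

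With every hypothesis of Corollary \ref{cor:unione} verified, its conclusions transfer verbatim: there are local analytic isomorphisms $a$ and $a'$ fitting into the stated commuting square relating $\F(A,\g)_{(0)}$, $\Hom(\pi_1(X),G)_{(1)}$, $\F(A,\g)_{(0)}$ and $\Hom(\pi_1(X'),G)_{(1)}$ (with left vertical map $\id_A\otimes\id$ and right vertical map $f_{\sharp}^{!}$), and for all $i\le q$ and $r\ge 0$ the induced commuting diagram of (local, reduced) embedded jump loci in which both horizontal arrows are isomorphisms of analytic pairs. I do not anticipate any real obstacle: the proof is bookkeeping, the only subtlety worth a sentence being that $q$-finiteness of $A$ with $q\ge 1$ already furnishes the connectedness needed both to place $A$ (and $\id_A$) in $\ACDGA_0$ and to apply Lemma \ref{lemma:qeqmodel}.
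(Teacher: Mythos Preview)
Your proposal is correct and matches the paper's intended argument: the corollary is stated without proof precisely because it follows immediately from Lemma~\ref{lemma:qeqmodel} (supplying $\Omega(f)\simeq_q \id_A$ in $\ACDGA_0$) together with Corollary~\ref{cor:unione}. One tiny remark: connectedness of $A$ is already part of the definition of $q$-finite in this paper, so you need not invoke $q\ge 1$ for that point.
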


We conclude with one more class of spaces and maps where  
Corollary \ref{cor:unione} applies. 
 
\begin{prop}
\label{prop:fmap}
Let $f\colon X\to X'$ be a $(q-1)$-connected map between $q$-finite, 
pointed spaces, for some $q\ge 1$. Assume that $f$ is formal over $\k$.   
Then the conclusions of Corollary \ref{cor:unione} hold for 
$\Phi=H^{\hdot}(f)\colon H^{\hdot}(X',\k)\to H^{\hdot}(X,\k)$. 
\end{prop}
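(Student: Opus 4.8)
The plan is to deduce Proposition \ref{prop:fmap} from Corollary \ref{cor:unione} by producing the required $q$-equivalence in $\ACDGA_0$ between $\Omega(f)$ and $H^{\hdot}(f)$. The key point is that $H^{\hdot}(f)\colon H^{\hdot}(X',\k)\to H^{\hdot}(X,\k)$ is a morphism of $q$-finite $\dga$s (with zero differentials), and it is $(q-1)$-connected: indeed, since $f$ is $(q-1)$-connected, the induced map on cohomology $H^{\hdot}(f)$ is $(q-1)$-connected by Hurewicz's theorem, as recalled in \S\ref{subsec:qconn}. So the hypotheses of Corollary \ref{cor:unione} on the $\dga$ side will be met once we know $\Omega(f)\simeq_q H^{\hdot}(f)$ in $\ACDGA_0$.

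First I would invoke Proposition \ref{prop:mainf} to get, from the formality of $f$ over $\k$ together with injectivity of $H^1(f)$, that $\Omega(f)\simeq H^{\hdot}(f)$ in $\k$-$\ACDGA_0$; this is an $\infty$-equivalence, which in particular is a $q$-equivalence, so $\Omega(f)\simeq_q H^{\hdot}(f)$ in $\ACDGA_0$. The one hypothesis of Proposition \ref{prop:mainf} that is not literally among the assumptions of Proposition \ref{prop:fmap} is the injectivity of $H^1(f)$, so the first genuine step is to check this. Since $q\ge 1$ and $f$ is $(q-1)$-connected, $f$ is in particular $0$-connected, hence $H^1(f)\colon H^1(X',\k)\to H^1(X,\k)$ is a monomorphism (a $0$-connected map of graded vector spaces is an isomorphism in degree $0$ and a monomorphism in degree $1$). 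Thus $H^1(f)$ is injective, and Proposition \ref{prop:mainf} applies.

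Having established $\Omega(f)\simeq_q H^{\hdot}(f)$ in $\ACDGA_0$, I would then simply apply Corollary \ref{cor:unione} with $A=H^{\hdot}(X,\k)$, $A'=H^{\hdot}(X',\k)$, and $\Phi=H^{\hdot}(f)$: the spaces $X$ and $X'$ are $q$-finite and the map $f$ is $(q-1)$-connected by hypothesis; the $\dga$s $H^{\hdot}(X,\k)$ and $H^{\hdot}(X',\k)$ are $q$-finite because the $q$-finiteness of a space forces finitely generated cohomology up through degree $q$ and connectedness; the $\dga$ map $\Phi$ is $(q-1)$-connected by the discussion above; and the $q$-equivalence $\Omega(f)\simeq_q \Phi$ in $\ACDGA_0$ is exactly what we just verified. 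Corollary \ref{cor:unione} then yields the commuting diagram of germs of representation varieties and varieties of flat connections, together with the induced commuting diagram of reduced embedded jump loci with horizontal isomorphisms of analytic pairs — which is the assertion of Proposition \ref{prop:fmap}.

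The argument is essentially a bookkeeping reduction, so there is no serious obstacle; the only point that requires a moment's care is the derivation of the injectivity of $H^1(f)$ from the $(q-1)$-connectivity hypothesis (so that Proposition \ref{prop:mainf} is genuinely applicable), and the verification that the cohomology algebras $H^{\hdot}(X,\k)$, $H^{\hdot}(X',\k)$ inherit $q$-finiteness from $q$-finiteness of the spaces. Both are routine once spelled out.
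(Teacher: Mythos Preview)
Your proof is correct and follows essentially the same approach as the paper: derive the injectivity of $H^1(f)$ from the $(q-1)$-connectivity hypothesis, invoke Proposition \ref{prop:mainf} to obtain $\Omega(f)\simeq H^{\hdot}(f)$ in $\ACDGA_0$, and then conclude via Corollary \ref{cor:unione}. The paper's proof is simply a terse two-line version of yours; you have filled in the routine verifications (that $H^{\hdot}(f)$ is $(q-1)$-connected and that the cohomology algebras are $q$-finite) which the paper leaves implicit.
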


\begin{proof}
Our connectivity hypothesis implies that $H^1(f)$ is injective.  
The claim follows at once from Proposition \ref{prop:mainf}.
\end{proof}

\section{K\"{a}hler manifolds}
\label{sect:kahler}

In this section we show that pointed holomorphic maps between compact \Ka 
manifolds can be {\em uniformly}\/ modeled by the homomorphisms induced 
in (real) cohomology.  As an application, we derive a structural result on 
the germs at the origin of rank $2$ embedded jump loci of \Ka groups. 

\subsection{Essentially rank one flat connections}
\label{subsec:prelim}

We start with some preliminary lemmas. 

\begin{lemma}
\label{lem:l1k}
A non-abelian Lie subalgebra $\g\subseteq \sl_2(\C)$ is either equal to 
$\sl_2(\C)$ or is isomorphic to the standard Borel subalgebra, $\sol_2(\C)$.
\end{lemma}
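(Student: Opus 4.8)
The plan is to classify non-abelian Lie subalgebras $\g$ of $\sl_2(\C)$ by their dimension. Since $\g$ is non-abelian, $\dim \g \ge 2$, and since $\dim \sl_2(\C) = 3$, we have $\dim \g \in \{2,3\}$. If $\dim \g = 3$ then $\g = \sl_2(\C)$ and there is nothing more to prove, so the substance of the argument is the case $\dim \g = 2$, where I would show that $\g$ is conjugate (hence isomorphic) to the standard Borel $\sol_2(\C)$ of upper-triangular trace-zero matrices.

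First I would recall a standard fact: every two-dimensional non-abelian Lie algebra is isomorphic, as an abstract Lie algebra, to the unique non-abelian $2$-dimensional Lie algebra, with basis $x,y$ satisfying $[x,y]=y$; and $\sol_2(\C)$, spanned by $h = \mathrm{diag}(1,-1)$ and $e = \left(\begin{smallmatrix} 0 & 1 \\ 0 & 0\end{smallmatrix}\right)$, realizes this with $[h,e]=2e$. This already gives the abstract isomorphism type, so strictly speaking the isomorphism claim is immediate from dimension count alone. I would nonetheless argue the stronger statement that a $2$-dimensional $\g \subseteq \sl_2(\C)$ is actually $\SL_2(\C)$-conjugate to $\sol_2(\C)$, since that is the geometrically meaningful form and is what gets used later. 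To do this, pick a basis $a, b$ of $\g$ with $[a,b] = b$ (after rescaling the non-abelian bracket). Then $\ad(a)$ acts on $\g$ with eigenvalues $0$ (on $a$) and $1$ (on $b$); in particular $b = [a,b]$ is a nonzero commutator, and $b$ is an eigenvector. The key linear-algebra step is to analyze $b$ as a $2\times 2$ trace-zero matrix: from $[a,b]=b$ one deduces that $b$ is nilpotent. Indeed, if $\lambda$ is an eigenvalue of $b$, then applying $\ad(a)$ repeatedly shifts eigenvalues, forcing (by finiteness of the spectrum) $\lambda = 0$; hence $b^2 = 0$ and $b \ne 0$ is a regular nilpotent. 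Conjugating by an element of $\SL_2(\C)$, I may assume $b = e = \left(\begin{smallmatrix} 0 & 1 \\ 0 & 0\end{smallmatrix}\right)$.

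It then remains to pin down $a$. Writing $a = \left(\begin{smallmatrix} p & q \\ s & -p\end{smallmatrix}\right)$ and imposing $[a,e] = e$, a direct computation gives $[a,e] = \left(\begin{smallmatrix} -s & 2p \\ 0 & s \end{smallmatrix}\right)$, so $s = 0$ and $2p = 1$, i.e. $a = \left(\begin{smallmatrix} 1/2 & q \\ 0 & -1/2\end{smallmatrix}\right)$. Thus $\g = \spn\{a, e\}$ is contained in — hence equal to, by dimension — the upper-triangular trace-zero matrices $\sol_2(\C)$, as claimed. (Conjugating further by a diagonal matrix normalizes $a$ to $\tfrac12 h$ if one wants the literal standard basis, but this is cosmetic.)

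The only mild obstacle is the nilpotency step for $b$: one must rule out $b$ being diagonalizable with nonzero eigenvalues, and the clean way is the eigenvalue-shifting argument via $\ad(a)$, or equivalently the observation that $\ad(a)$ having integer-spaced eigenvalues on the ambient $\sl_2(\C)$-action forces $\mathrm{ad}_{\sl_2(\C)}(b)$ to be nilpotent, hence $b$ nilpotent by a representation-theoretic comparison; I would use whichever phrasing is shortest. Everything else is a two-line matrix computation, so I expect the whole proof to be quite short.
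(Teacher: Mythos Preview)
Your proposal is correct. The paper itself does not give a proof --- it simply records the lemma as an ``easy exercise'' --- so there is nothing to compare at the level of argument; you have supplied a complete and standard proof where the paper supplies none.

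One minor remark: the lemma as stated only asks for an abstract isomorphism $\g\cong\sol_2(\C)$, and you observe yourself that this follows immediately from the classification of two-dimensional Lie algebras once the dimension count is made. Your further argument establishing $\SL_2(\C)$-conjugacy is correct but, strictly speaking, more than the lemma requires. If you want the shortest route to nilpotency of $b$ in that stronger argument, note that $b=[a,b]$ gives $\operatorname{tr}(b^2)=\operatorname{tr}(b[a,b])=\operatorname{tr}(bab)-\operatorname{tr}(b^2a)=0$ by cyclicity, and a traceless $2\times 2$ matrix with $\operatorname{tr}(b^2)=0$ is nilpotent; this is perhaps crisper than the eigenvalue-shifting phrasing.
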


\begin{proof}
Easy exercise.
\end{proof}

We now recall a few facts from \cite{MPPS}.  Let $A$ be a 
$\dga$, let $\g$ be a Lie algebra, and let 
$\F(A,\g)\subset A^1\otimes \g$ be the set of $\g$-valued flat 
connections on $A$.  Let us define $\F^1(A,\g)$ to be the subset 
of $A^1\otimes \g$  consisting of all tensors of the form 
$\eta \otimes g$ with $d \eta=0$.  
We also fix a finite-dimensional representation $\theta\colon \g\to \gl (V)$, 
and define $\Pi(A,\theta)$ to be the subset of $\F^1(A,\g)$ 
consisting of all tensors as above which also satisfy $\det(\theta(g))=0$. 
When $A$ is $1$-finite and $\g$ is finite-dimensional, both $\F^1(A,\g)$ and $\Pi(A,\theta)$ 
are closed, homogeneous subvarieties of $\F(A,\g)$.  Moreover, if $H^i(A)\ne 0$, then 
$\Pi(A,\theta)\subseteq \RR^i_1(A,\theta)$.

\begin{lemma}
\label{lem:l2k}
Under the above finiteness assumptions,
every $\dga$ map $\Phi\colon A'\to A$ induces algebraic maps, $\Phi\otimes \id 
\colon \F^1(A',\g) \to \F^1(A,\g)$ and $\Phi\otimes\id \colon  \Pi(A',\theta) \to \Pi(A,\theta)$.  Moreover, 
if $H^1(\Phi)$ is an isomorphism, then both these algebraic maps are isomorphisms. 
\end{lemma}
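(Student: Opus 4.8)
The plan is to observe that both $\F^1$ and $\Pi$ are built solely out of the space of degree-one cocycles together with the data $(\g,\theta)$, so that a $\dga$ map $\Phi$ affects them only through its degree-one component restricted to cocycles.

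First I would record the (essentially automatic) functoriality statement. Since $\Phi$ commutes with differentials, its degree-one component $\Phi^1$ carries $Z^1(A')=\ker(d\colon A'^1\to A'^2)$ into $Z^1(A)$. Hence the linear map $\Phi^1\otimes\id_\g\colon A'^1\otimes\g\to A^1\otimes\g$ sends a decomposable tensor $\eta\otimes g$ with $d\eta=0$ to $\Phi(\eta)\otimes g$, again with $d\Phi(\eta)=0$; this shows that $\Phi\otimes\id$ restricts to a map $\F^1(A',\g)\to\F^1(A,\g)$, which is algebraic, being the restriction of a linear map to closed subvarieties. The extra condition cutting out $\Pi(A',\theta)$ inside $\F^1(A',\g)$, namely $\det(\theta(g))=0$, constrains only the $\g$-factor of a decomposable tensor, which $\Phi\otimes\id$ leaves untouched; so the same map restricts further to $\Pi(A',\theta)\to\Pi(A,\theta)$.

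For the second assertion I would use connectedness to promote the hypothesis on $H^1$ to a statement about cocycles. Under the standing finiteness assumptions both $A$ and $A'$ are connected, hence $B^1(A')=d(A'^0)=0$ and likewise $B^1(A)=0$; therefore $Z^1=H^1$ in both, and the assumption that $H^1(\Phi)$ is an isomorphism says exactly that $L:=\Phi^1|_{Z^1(A')}\colon Z^1(A')\isom Z^1(A)$ is a linear isomorphism. Tensoring with $\id_\g$ then yields a linear isomorphism $L\otimes\id_\g\colon Z^1(A')\otimes\g\isom Z^1(A)\otimes\g$, which carries each decomposable tensor $\eta\otimes g$ to the decomposable tensor $L(\eta)\otimes g$ and hence restricts to a bijection between the loci of decomposable tensors, with inverse $L^{-1}\otimes\id_\g$. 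Since $\F^1(A',\g)$ and $\F^1(A,\g)$ are precisely these decomposable loci inside $Z^1(A')\otimes\g$ and $Z^1(A)\otimes\g$, we conclude that $\Phi\otimes\id$ restricts to an isomorphism of affine varieties $\F^1(A',\g)\isom\F^1(A,\g)$. Finally, since this isomorphism leaves the $\g$-component of each decomposable tensor unchanged, it preserves the equation $\det(\theta(g))=0$ and therefore restricts to an isomorphism $\Pi(A',\theta)\isom\Pi(A,\theta)$.

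I do not expect a serious obstacle. The one point that must be handled with care --- and the reason the hypothesis is placed on $H^1$ rather than on $Z^1$ or on $A^1$ --- is the step promoting the hypothesis that $H^1(\Phi)$ is an isomorphism to the conclusion that $\Phi^1$ restricts to an isomorphism on degree-one cocycles; this rests on the connectedness of $A$ and $A'$ (so that there are no exact $1$-forms), a feature guaranteed here by the $1$-finiteness assumptions. Everything else is the elementary observation that a linear isomorphism of the form $L\otimes\id_\g$ induces an isomorphism of the associated varieties of decomposable tensors, compatibly with the auxiliary condition $\det(\theta(g))=0$.
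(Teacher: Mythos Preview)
Your proposal is correct and is exactly what the paper has in mind: its proof reads in full ``Follows directly from the definitions,'' and you have unpacked precisely those definitions, using connectedness (built into $1$-finiteness here) to identify $Z^1$ with $H^1$ and then observing that $L\otimes\id_\g$ preserves decomposability and the $\g$-side condition $\det(\theta(g))=0$.
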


\begin{proof}
Follows directly from the definitions.
\end{proof}

\begin{lemma}
\label{lem:l4k}
Let $\Phi\colon A' \to A$ be a $\dga$ map, where 
$A' = (\bwedgedot U, d=0)$ with $0< \dim U<\infty$ and $A$ is $1$-finite, 
and assume $H^1(\Phi)$ is an isomorphism.  Also let $\g  \subseteq \sl_2(\C)$ 
be a Lie subalgebra, and let $\theta\colon \g\to \gl (V)$ be a finite-dimensional 
representation. Then the following hold:
\begin{enumerate}
\item \label{l41}
$\Phi\otimes \id$ induces an isomorphism between  $\F(A',\g)$ and $\F^1(A,\g)$. 
\item \label{l42}
$\Phi\otimes \id$ induces an isomorphism between  $\RR^1_1(A',\theta)$ and $\Pi(A,\theta)$. 
\end{enumerate} 
\end{lemma}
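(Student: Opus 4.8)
The plan is to reduce everything to the structure of $A' = (\bwedgedot U, d=0)$ and the hypothesis that $H^1(\Phi)$ is an isomorphism. First I would observe that since $A'$ has zero differential, a flat connection on $A'\otimes\g$ is simply an element $\omega\in U\otimes\g = A'^1\otimes\g$ satisfying $\tfrac12[\omega,\omega]=0$, i.e.\ the Maurer--Cartan equation collapses to the condition $[\omega,\omega]=0$. Writing $\omega=\sum_i u_i\otimes g_i$ with $\{u_i\}$ a basis of $U$, the condition $[\omega,\omega]=0$ unwinds to $\sum_{i<j} u_iu_j\otimes[g_i,g_j]=0$, and since the products $u_iu_j$ are linearly independent in $\bwedge^2 U$, this forces $[g_i,g_j]=0$ for all $i\ne j$. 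Thus $\F(A',\g)$ consists precisely of those $\omega$ whose $\g$-components span an abelian subalgebra of $\g$. Here I would invoke Lemma~\ref{lem:l1k}: a non-abelian $\g\subseteq\sl_2(\C)$ is either $\sl_2(\C)$ or $\sol_2(\C)$, and in either case every abelian subalgebra is $1$-dimensional (for $\sl_2$, every nonzero abelian subalgebra is a line; for the Borel $\sol_2$, the only abelian subalgebras are lines as well, since $\sol_2$ itself is non-abelian of dimension $2$). Hence every $\omega\in\F(A',\g)$ has the form $\omega = \eta\otimes g$ for a single $g\in\g$ and $\eta\in U$, which is exactly an element of $\F^1(A',\g)$; and of course $\F^1(A',\g)\subseteq\F(A',\g)$ always. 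So $\F(A',\g)=\F^1(A',\g)$. Combining this with Lemma~\ref{lem:l2k}, which says $\Phi\otimes\id$ restricts to an isomorphism $\F^1(A',\g)\isom\F^1(A,\g)$ when $H^1(\Phi)$ is an isomorphism, gives part~\eqref{l41}.

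For part~\eqref{l42}, I would first identify $\RR^1_1(A',\theta)$ concretely. Since $A'=\bwedgedot U$ with $d=0$, the Aomoto complex $(A'\otimes V, d_{\omega})$ for $\omega=\eta\otimes g$ has differential $d_{\omega} = \ad_{\omega}$, i.e.\ in degree $0$ it is the map $V\to U\otimes V$, $v\mapsto \eta\otimes\theta(g)v$. Its kernel is $\ker\theta(g)$, so $H^0(A'\otimes V,d_{\omega}) = \ker\theta(g)$, and then $H^1(A'\otimes V,d_{\omega})$ can be computed from the sequence $V\xrightarrow{\eta\otimes\theta(g)} U\otimes V\xrightarrow{\wedge\eta\otimes\theta(g)} \bwedge^2 U\otimes V$; a short linear-algebra computation (using that $\eta$ is a nonzero vector in $U$, so wedging with $\eta$ behaves predictably) shows that $\dim H^1 \ge 1$ precisely when $\det\theta(g)=0$, i.e.\ $\theta(g)$ is not invertible. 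That is exactly the defining condition for $\omega=\eta\otimes g$ to lie in $\Pi(A',\theta)$. Hence $\RR^1_1(A',\theta) = \Pi(A',\theta)$ as subvarieties of $\F(A',\g)=\F^1(A',\g)$. Now apply the second half of Lemma~\ref{lem:l2k}: since $H^1(\Phi)$ is an isomorphism, $\Phi\otimes\id$ carries $\Pi(A',\theta)$ isomorphically onto $\Pi(A,\theta)$. Composing, $\Phi\otimes\id$ carries $\RR^1_1(A',\theta)$ isomorphically onto $\Pi(A,\theta)$, which is part~\eqref{l42}.

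The only slightly delicate point, and the one I would be most careful about, is the computation $\dim H^1(A'\otimes V, d_{\omega}) \ge 1 \Leftrightarrow \det\theta(g)=0$ in the degenerate case $\eta=0$ (where $\omega=0$): there one should double-check against \eqref{eq:simvoid}, which says $0\in\RR^1_1(A',\theta)$ iff $b_1(A')\cdot\dim V\ge 1$, i.e.\ iff $\dim U\ge 1$ — consistent with the hypothesis $\dim U>0$ and with $0\in\Pi(A',\theta)$ since $\det\theta(0)=0$ whenever $\dim V\ge 1$ (and if $V=0$ the statement is vacuous). For $\eta\ne 0$ one completes $\eta$ to a basis of $U$ and the rank computation of $\wedge\eta\colon U\otimes V\to\bwedge^2 U\otimes V$ restricted to $\im(\eta\otimes\theta(g))$ versus all of $\ker(\wedge\eta\otimes\theta(g))$ is routine but must be done with care about multiplicities of $\theta(g)$'s eigenvalue $0$; I expect this bookkeeping to be the main (mild) obstacle, and I would lean on the already-cited computations from \cite{MPPS} to shorten it.
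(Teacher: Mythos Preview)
Your proposal is correct and follows the same overall strategy as the paper: establish $\F(A',\g)=\F^1(A',\g)$ and $\RR^1_1(A',\theta)=\Pi(A',\theta)$ for the exterior algebra $A'=(\bwedgedot U,0)$, then invoke Lemma~\ref{lem:l2k} to transport both equalities across $\Phi$.

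The only difference is in how those two equalities are proved. The paper dispatches them by citation: it observes that $A'$ is the Chevalley--Eilenberg cochain algebra of the abelian Lie algebra $U$ and quotes \cite[Lem.~4.14]{MPPS} for $\F(A',\g)=\F^1(A',\g)$; for part~\eqref{l42} it first checks the rank-one fact $\RR^1_1(A')=\{0\}$ and then applies \cite[Cor.~3.8]{MPPS} (the criterion ``$\eta\otimes g\in\RR^i_1(A,\theta)\cap\F^1(A,\g)$ iff some eigenvalue $\lambda$ of $\theta(g)$ satisfies $\lambda\eta\in\RR^i_1(A)$'') to deduce $\RR^1_1(A',\theta)=\Pi(A',\theta)$. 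Your version instead unpacks these citations by hand: the Maurer--Cartan argument showing the $\g$-components span an abelian (hence $\le 1$-dimensional) subalgebra of $\g\subseteq\sl_2$, and the direct computation $\dim H^1(A'\otimes V,d_{\eta\otimes g})=(\dim U)\cdot\dim\ker\theta(g)$. Both are sound; the paper's route is shorter, while yours is self-contained and makes the role of the hypothesis $\g\subseteq\sl_2(\C)$ more visible. One small remark: your case split via Lemma~\ref{lem:l1k} treats only non-abelian $\g$, but the abelian case is immediate since abelian subalgebras of $\sl_2(\C)$ are at most one-dimensional, so every tensor in $U\otimes\g$ is already of rank $\le 1$.
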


\begin{proof}
By construction, $A'$ is the Chevalley--Eilenberg cochain $\dga$ of the abelian 
Lie algebra $U$. Hence, by \cite[Lem.~4.14]{MPPS}, we have that $\F(A',\g)=\F^1(A',\g)$. 
The first claim follows at once from Lemma \ref{lem:l2k}.

It is easily checked that $\RR^1_1(A')=\{0\}$.  Since $\F(A',\g)=\F^1(A',\g)$, we 
may apply \cite[Cor.~3.8]{MPPS} to conclude that $\RR^1_1(A',\theta)=\Pi(A',\theta)$.
The second claim now follows  from Lemma \ref{lem:l2k}.
\end{proof}

\subsection{The uniformity property}
\label{subsec:uk}

Let $\{f\colon M\to M_f\}_{f\in E}$ be a finite family of pointed, holomorphic maps 
between compact \Ka manifolds.  Each map $f\in E$ induces a homomorphism 
$H^{\hdot}(f)\colon H^{\hdot}(M_f)\to H^{\hdot}(M)$ between the respective 
cohomology algebras with coefficients in $\k=\R$ or $\C$.  In fact, these 
homomorphisms may be viewed as $\ACDGA_0$ maps, by setting the 
differentials to be zero, and taking the augmentations given by the basepoints. 

\begin{prop}
\label{prop:unifk}
In the above setup, $\Omega(f)\simeq H^{\hdot}(f)$ in $\ACDGA_0$, 
uniformly with respect to $f\in E$. 
\end{prop}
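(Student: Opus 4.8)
The plan is to reduce the statement, by means of a standard formality argument for compact K\"ahler manifolds, to the uniform existence of a single zig-zag relating the Sullivan--de Rham functor to the cohomology functor, and then to verify that the induced natural bijection $\beta_{Z_f}$ on deformation functors does not depend on $f$. First I would recall the proof of the Formality Theorem from \cite{DGMS}: for a compact K\"ahler manifold $M$, the two natural $\ACDGA_0$-morphisms
\[
\xymatrix{\Omega(M) & \bigl(\ker d^c, d\bigr) \ar@{_{(}->}[l] \ar@{->>}[r] & \bigl(H^{\hdot}(M,\k), d=0\bigr)}
\]
(inclusion of the space of $d^c$-closed forms, and the projection sending a $d^c$-closed form to its de Rham class, which is well defined by the $dd^c$-lemma) are both $\infty$-equivalences, and hence so is the composite zig-zag. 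The crucial point, already essentially in \cite{DGMS}, is that this zig-zag is \emph{natural}: a holomorphic map $f\colon M\to M_f$ pulls back $d^c$-closed forms to $d^c$-closed forms and intertwines both arrows with $\Omega(f)$ and $H^{\hdot}(f)$. All the maps involved preserve basepoint augmentations, so the zig-zag lives in $\ACDGA_0$.

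Next I would assemble, for each $f\in E$, the commuting diagram \eqref{eq:ziggyf} out of the two naturality squares above (one for $M$, one for $M_f$), so that $\Omega(f)\simeq H^{\hdot}(f)$ in $\ACDGA_0$ via these canonical zig-zags $Z_f$ (from $\Omega(M)$ to $H^{\hdot}(M,\k)$) and $Z_f'$ (from $\Omega(M_f)$ to $H^{\hdot}(M_f,\k)$). Here the key observation is that the zig-zag $Z_f$ connecting $\Omega(M)$ to $H^{\hdot}(M,\k)$ is \emph{literally the same} for every $f$, since it only involves $M$ and not the target: it is the fixed DGMS zig-zag of $M$. By the very construction of $\beta_{Z_f}$ in \eqref{eq:betaz} (via Remark \ref{rem:tilde} and Theorem \ref{thm:1type}), the induced natural bijection on deformation functors depends only on the zig-zag $Z_f$, which is independent of $f$; hence so is $\beta_{Z_f}$. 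This is precisely the content of Definition \ref{def:unifq}, so uniformity follows.

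The one point requiring care — and the part I expect to be the main technical obstacle — is the bookkeeping needed to recast $\simeq$ (a general zig-zag of $\infty$-equivalences in $\ACDGA_0$) via the \emph{short} normal form of \eqref{eq:ziggy}/\eqref{eq:ziggyf}, and to check that the relevant $\beta_Z$ can be read off from a $q$-minimal model independently of auxiliary choices; here Proposition \ref{prop:ztos} (together with Corollary \ref{cor:alphaspec}) does the work, allowing one to replace an arbitrary zig-zag by a special one built from a common minimal model of $\Omega(M)\simeq H^{\hdot}(M,\k)$, and it is this common minimal model that makes $\beta_{Z_f}$ manifestly $f$-independent. A secondary nuisance is to confirm that the DGMS zig-zag really does preserve augmentations uniformly — but since every arrow is induced by pullback of forms or by passage to cohomology classes, and all of these commute with restriction to the basepoint, this is immediate. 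Modulo these formal verifications, the proposition follows by combining the DGMS naturality square with Definition \ref{def:unifq}.
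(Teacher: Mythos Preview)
Your approach is essentially the same as the paper's: exhibit a \emph{functorial} zig-zag of quasi-isomorphisms in $\ACDGA_0$ from $\Omega(M)$ to $(H^{\hdot}(M),d=0)$, using the DGMS argument, and then observe that since the zig-zag $Z_f$ for $M$ is literally independent of $f$, so is $\beta_{Z_f}$.

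Two remarks. First, there is a small but genuine gap: the DGMS zig-zag through $\ker d^c$ lives in the \emph{smooth} de Rham algebra $\Omega_{\DR}(M)$, whereas in this paper $\Omega(M)=\Omega_{\k}(M)$ denotes Sullivan's \emph{polynomial} de Rham algebra (see \S\ref{subsec:models}). You therefore still need a natural (in differentiable maps) zig-zag of quasi-isomorphisms in $\ACDGA$ between $\Omega_{\DR}(M)$ and $\Omega_{\k}(M)$; the paper supplies this via the proof of the de Rham theorem in \cite[\S11]{FHT}, and checks the augmentation condition by evaluating on a point. Second, your concern about reducing to a short zig-zag via Proposition~\ref{prop:ztos} is unnecessary here: Definition~\ref{def:unifq} only asks that the bijection $\beta_{Z_f}$ from \eqref{eq:betaz} be independent of $f$, and this is immediate once $Z_f$ itself is the same functorial zig-zag for every $f$. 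No minimal-model bookkeeping is required.
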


\begin{proof}
To prove the claim, it is enough to show there is a {\em functorial}\/ 
zig-zag of quasi-isomorphisms in $\ACDGA$ connecting $\Omega(M)$ 
to $H^{\hdot}(M)$, for any pointed, compact \Ka manifold $M$.  In order 
to construct such a zig-zag, we proceed in two steps, following 
\cite[\S 6]{DGMS} and \cite[\S 11]{FHT}.  It is enough to work 
over $\k=\R$.  

Let $(\Omega_{\DR} (M),d)$ be the de Rham $\dga$ of the underlying 
differentiable manifold, with $d$ the exterior differential.  Set $d^c=J^{-1} d J$, 
where $J$ is the complex structure on the tangent bundle to $M$. This 
gives another $\dga$, $(\Omega_{\DR} (M), d^c)$.  By the first proof 
of the Main Theorem from \cite{DGMS}, there is a zig-zag of 
quasi-isomorphisms in $\CDGA$ connecting $(\Omega_{\DR}(M),d)$ to 
$(H^{\hdot}(\Omega_{\DR} (M), d^c),d=0)$, natural with respect to 
holomorphic maps.  Taking homomorphisms induced in cohomology,  
we obtain a natural zig-zag of $\dga$ quasi-isomorphisms connecting 
$(H^{\hdot}_{\DR}(M),d=0)$ to $(H^{\hdot}(\Omega_{\DR} (M), d^c), d=0)$.  
In fact, both zig-zags are in $\ACDGA$, since all their terms 
are equal to $\R$ when $M$ is a point.  Combining these two 
zig-zags, we obtain a functorial zig-zag of quasi-isomorphisms 
in $\ACDGA$ from $\Omega_{\DR}(M)$ to $(H^{\hdot}_{\DR}(M), d=0)$.

On the other hand, the proof of the de Rham theorem from \cite{FHT} 
provides a zig-zag of quasi-isomor\-phisms in $\CDGA$ connecting  
$\Omega_{\DR}(M)$ to $\Omega_{\R}(M)$, which is natural with 
respect to differentiable maps.  An argument as above shows that this zig-zag is in 
$\ACDGA$.   Putting things together, and using the classical de Rham 
theorem, we arrive at the desired conclusion. 
\end{proof}

\subsection{Admissible maps and rank $1$ jump loci}
\label{subsec:arapura}

A connected, complex manifold $M$ is said to be a {\em quasi-compact \Ka 
manifold} if there is compact \Ka manifold  $\overline{M}$ 
and a normal crossing divisor $D\subset M$ such that 
$M=\overline{M}\setminus D$. Of course, all compact \Ka 
manifolds belong to this class.  Furthermore, if $M$ is 
an irreducible smooth, complex quasi-projective variety, 
or, for short, a {\em quasi-projective manifold}, then $M$ is 
also of this type, by resolution of singularities.

Given a quasi-compact \Ka manifold $M$, there is a certain finite 
family of pointed holomorphic maps, $\{f\colon M \to M_f\}_{f\in \cE(M)}$, 
with each $M_f$ a quasi-projective manifold, which is intimately related to 
the structure near $1$ of the characteristic variety $\VV^1_1(M)$.  

More precisely, a holomorphic map onto a smooth complex curve, $f\colon M \to M_f$,
is said to be {\em admissible}\/ if it extends to a holomorphic surjection with connected
fibers, $\overline{f}\colon \overline{M} \to \overline{M}_f$, where $\overline{M}$
(respectively $\overline{M}_f$) is a \Ka compactification of $M$ (respectively $M_f$)
obtained by adding a normal crossing divisor.
It is known that, up to reparametrization at the target, there is a finite family 
$\cE(M)$ of such maps with the property that $\chi(M_f)<0$. 
For each $f\in \cE(M)$, let us write $\pi=\pi_1(M)$ and $\pi_f=\pi_1(M_f)$. 
It is readily seen that the induced homomorphism 
on fundamental groups, $f_{\sharp}\colon \pi\to \pi_f$, is surjective. 
Work of Arapura \cite{Ar} shows that  the correspondence 
\begin{equation}
\label{eq:arapura}
f \leadsto f_{\sharp}^{!} \Hom (\pi_f, \C^{\times})
\end{equation}
establishes a bijection between the set $\cE(M)$ and the 
set of positive-dimensional, irreducible components of the 
characteristic variety $\VV^1_1(M)$ passing through $1$.  

For a pointed CW-space $M$ with fundamental group $\pi$, we denote by
$f_0 \colon M \to K(\pi_{\abf}, 1)$ the classifying map determined up to homotopy 
by the property that $(f_0)_{\sharp}= \abf \colon \pi \surj \pi_{\abf}$, 
where $\abf$ is the canonical projection of the group $\pi$ onto its 
maximal torsion-free abelian quotient. When $M$ is a 
quasi-compact \Ka manifold, we set 
\begin{equation}
\label{eq:em}
E(M)=\cE(M) \cup \{f_0\}. 
\end{equation}

In the rank one case, i.e., when $\iota=\id_{\C^{\times}}$, both inclusions,
\eqref{eq:repincl} and \eqref{eq:vincl} for $i=r=1$, become equalities 
near the origin $1$, for the family $\{ f_{\sharp} \mid f\in E(M) \}$.

\begin{theorem}
\label{thm:qk germs}
Let $M$ be a quasi-compact \Ka manifold, and let $\pi=\pi_1(M)$. Then,
\begin{align}
\label{eq:repincl-rk1}
\Hom(\pi,\C^{\times})_{(1)} &=
\bigcup_{f\in E(M)} f_{\sharp}^{!} \Hom (\pi_f,\C^{\times})_{(1)}, 
\\
\label{eq:vincl-rk1}
\VV^1_1(\pi)_{(1)} &= \bigcup_{f\in E(M)} f_{\sharp}^{!} \VV^1_1 (\pi_f)_{(1)}.
\end{align}
\end{theorem}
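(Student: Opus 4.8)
The plan is to read both equalities off from Arapura's description of the first characteristic variety, after passing to germs at the origin and recognizing that the term indexed by the classifying map $f_0$ supplies exactly the ``abelian part'' of each jump locus. Set $b_1=b_1(M)$. I will freely use Corollary \ref{cor:pijump} to identify $\VV^1_1(\pi)=\VV^1_1(M)$, $\VV^1_1(\pi_f)=\VV^1_1(M_f)$ and $\VV^1_1(\pi_{\abf})=\VV^1_1(K(\pi_{\abf},1))$, and Lemma \ref{lem:zeronto-top}, which says each $f_{\sharp}^{!}$ is a closed embedding fixing $1$; in particular $f_{\sharp}^{!}(\VV^1_1(\pi_f)_{(1)})$ is the germ at $1$ of the subvariety $f_{\sharp}^{!}\VV^1_1(\pi_f)\subseteq\Hom(\pi,\C^{\times})$, and likewise with $\Hom$ in place of $\VV^1_1$.

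Equality \eqref{eq:repincl-rk1} is essentially bookkeeping. Since $\C^{\times}$ is abelian, $\Hom(\pi,\C^{\times})=\Hom(H_1(M;\Z),\C^{\times})$ is the product of the torus $(\C^{\times})^{b_1}$ with a finite abelian group, so its germ at $1$ is the germ of the identity component $(\C^{\times})^{b_1}$; and since $\pi_{\abf}\cong\Z^{b_1}$ and $(f_0)_{\sharp}=\abf$, the map $(f_0)_{\sharp}^{!}$ identifies $\Hom(\pi_{\abf},\C^{\times})$ with that identity component. As $f_0\in E(M)$, the right side of \eqref{eq:repincl-rk1} contains $(f_0)_{\sharp}^{!}\Hom(\pi_{\abf},\C^{\times})_{(1)}=\Hom(\pi,\C^{\times})_{(1)}$, while the reverse inclusion is \eqref{eq:repincl}.

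For \eqref{eq:vincl-rk1}, the left side is the germ at $1$ of $\VV^1_1(M)$, i.e.\ the union of the germs at $1$ of the irreducible components of $\VV^1_1(M)$ that pass through $1$; by \eqref{eq:simvoid}, $1\in\VV^1_1(M)$ iff $b_1\ge 1$. If $b_1=0$ the left side is empty; moreover $\pi_{\abf}=0$, and for any $f\in\cE(M)$ the surjection $f_{\sharp}$ would force $b_1\ge b_1(M_f)\ge 2$ (a smooth curve of negative Euler characteristic has first Betti number at least $2$), so $\cE(M)=\emptyset$ and the right side is empty too. If $b_1\ge 1$: for $f\in\cE(M)$, $M_f$ is a smooth curve with $\chi(M_f)<0$, so a short Euler-characteristic computation gives $\VV^1_1(\pi_f)=\Hom(\pi_f,\C^{\times})$, whence $f_{\sharp}^{!}\VV^1_1(\pi_f)=f_{\sharp}^{!}\Hom(\pi_f,\C^{\times})$ is positive-dimensional; by Arapura's bijective correspondence \cite{Ar} recalled before the theorem, these exhaust the positive-dimensional irreducible components of $\VV^1_1(M)$ through $1$. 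For $f=f_0$, $\pi_{\abf}\cong\Z^{b_1}$ with $b_1\ge 1$ gives $\VV^1_1(\pi_{\abf})=\{1\}$, so $(f_0)_{\sharp}^{!}\VV^1_1(\pi_{\abf})_{(1)}=\{1\}_{(1)}$. Now each irreducible component of $\VV^1_1(M)$ through $1$ is either positive-dimensional, hence one of the $f_{\sharp}^{!}\Hom(\pi_f,\C^{\times})$ with $f\in\cE(M)$, or zero-dimensional, hence $\{1\}$; therefore $\VV^1_1(M)_{(1)}$ equals the union of $\{1\}_{(1)}$ with the germs of those positive-dimensional components, which is exactly the right side of \eqref{eq:vincl-rk1}.

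The auxiliary facts used — the identity component of $\Hom(\pi,\C^{\times})$, the equality $\VV^1_1(\Z^n)=\{1\}$ for $n\ge 1$, and $\VV^1_1(\pi_1(C))=\Hom(\pi_1(C),\C^{\times})$ for a smooth curve $C$ with $\chi(C)<0$ — are all routine. The single substantial ingredient is Arapura's classification of the positive-dimensional components of $\VV^1_1(M)$ passing through $1$; the one point that calls for care is verifying that the zero-dimensional component at $1$, present exactly when $b_1\ge 1$, is recovered precisely by the $f_0$-term, that is, by the abelian part of the jump locus. Making that matching precise is the crux of the argument.
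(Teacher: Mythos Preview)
Your proof is correct and follows essentially the same approach as the paper's own proof: identify the $f_0$-term with the identity component of the character torus to get \eqref{eq:repincl-rk1}, then for \eqref{eq:vincl-rk1} split into cases on $b_1$, use $\VV^1_1(\pi_f)=\Hom(\pi_f,\C^{\times})$ when $\chi(M_f)<0$, invoke Arapura for the positive-dimensional components through $1$, and pick up the remaining case (the germ $\{1\}$) via the $f_0$-term. Your treatment of the $b_1=0$ case is slightly more explicit than the paper's (you argue directly that $\cE(M)=\emptyset$, whereas the paper just uses the already-established inclusion \eqref{eq:vincl} to conclude the right side is empty once the left side is), but this is a cosmetic difference.
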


\begin{proof}
The first claim is easily verified. Indeed, the abelianization map, 
$\pi \surj \pi_{\ab}$, induces an isomorphism of character groups, 
while the map induced by the natural projection 
$\pi_{\ab} \surj \pi_{\abf}$ identifies $\Hom(\pi_{\abf}, \C^{\times})$ 
with the identity component of $\Hom(\pi_{\ab}, \C^{\times})$.  It follows that 
$(f_0)_{\sharp}=\pi_{\abf}$ induces an isomorphism 
between germs at $1$ of $\C^{\times}$-representation varieties, and so
\eqref{eq:repincl-rk1} holds.

The second claim is much more subtle. Since $\chi(M_f)<0$, 
it is easily seen that $\VV^1_1(\pi_f)=\VV^1_1(M_f)=\Hom (\pi_f, \C^{\times})$, 
for $f\in \cE(M)$. If $b_1(M)=0$, we know from \eqref{eq:simvoid} 
that $\VV^1_1(\pi)_{(1)}=\emptyset$, and we are done. If $b_1(M)>0$, then
either $\VV^1_1(\pi)_{(1)}=\{1\}$,  or all irreducible components of $\VV^1_1(\pi)$
passing through $1$ are positive-dimensional. In the first case we are done, since
$1\in (f_0)_{\sharp}^{!} \VV^1_1(\pi_{\abf})$. In the second case, equality
\eqref{eq:vincl-rk1} follows from the aforementioned deep results of Arapura.
This completes the proof.
\end{proof}

\subsection{Rank $2$ embedded jump loci of K\"{a}hler manifolds}
\label{subsec:pfrk2k}

Let $M$ be a compact \Ka manifold with fundamental group $\pi$. 
A map $f\colon M \to M_f$ is admissible in the sense from 
\S\ref{subsec:arapura} if $M_f$ is a compact Riemann surface 
and $f$ is a holomorphic surjection with connected fibers. The
Albanese map, $f_0\colon M \to \Alb(M)$, is a holomorphic map 
between compact \Ka manifolds which classifies the canonical 
projection, $\pi \surj \pi_{\abf}$.

Our next goal is to extend the rank $1$ results 
\eqref{eq:repincl-rk1}--\eqref{eq:vincl-rk1}
to the rank $2$ case.  We start with a lemma.

\begin{lemma}
\label{lem:l3k}
Let $M$ be a compact \Ka manifold with $b_1(M)>0$.  Let $\g$ 
be a non-abelian Lie subalgebra of $\sl_2(\C)$, and let 
$\theta\colon \g\to \gl (V)$ be a finite-dimensional representation. 
Then the following equalities hold:
\begin{align}
\label{eq:flateq}
\F(H^{\hdot}(M),\g) &= \F^1(H^{\hdot}(M),\g)\cup 
\bigcup_{f\in \mathcal{E}(M)}  f^{!} (\F(H^{\hdot}(M_f),\g)),
\\
\label{eq:reseq}
\RR^1_1(H^{\hdot}(M),\theta) &= \Pi(H^{\hdot}(M),\theta)\cup 
\bigcup_{f\in \mathcal{E}(M)}  f^{!} (\F(H^{\hdot}(M_f),\g)),
\end{align}
where all $\dga$s are endowed with zero differential. 
\end{lemma}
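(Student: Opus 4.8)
The plan is to establish each of the two displayed equalities by proving both inclusions, the ``$\supseteq$'' halves being elementary and the ``$\subseteq$'' halves resting on the Kähler structure results of \cite{MPPS} (and, through them, on the Castelnuovo--de Franchis theorem and Arapura's work \cite{Ar}). Throughout, every $\dga$ carries the zero differential, so a flat connection on $A=H^{\hdot}(M)$ is just a tensor $\omega\in H^1(M)\otimes\g$ with $[\omega,\omega]=0$ in $H^2(M)\otimes\g$, and $f^{!}$ is the affine map induced by $H^{\hdot}(f)$.

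\emph{The easy inclusions.} A rank $\le 1$ tensor $\eta\otimes g$ satisfies $\eta\wedge\eta=0$, hence is flat, so $\F^1(A,\g)\subseteq\F(A,\g)$; and $f^{!}\F(H^{\hdot}(M_f),\g)\subseteq\F(A,\g)$ by functoriality of $\F$. This gives ``$\supseteq$'' in \eqref{eq:flateq}. Since $b_1(M)>0$ we have $H^1(A)\ne 0$, whence $\Pi(A,\theta)\subseteq\RR^1_1(A,\theta)$ by the remark preceding Lemma~\ref{lem:l2k}. Finally, for $f\in\cE(M)$ the target $M_f$ is a compact curve with $\chi(M_f)<0$, so for any $\omega'\in\F(H^{\hdot}(M_f),\g)$ the complex $(H^{\hdot}(M_f)\otimes V,d_{\omega'})$ has Euler characteristic $\chi(M_f)\dim V<0$, forcing $\dim H^1\ge -\chi(M_f)\dim V>0$; thus $\F(H^{\hdot}(M_f),\g)=\RR^1_1(H^{\hdot}(M_f),\theta)$. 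Because $f$ is a holomorphic surjection with connected fibres, $f_{\sharp}$ is onto, so $H^{\hdot}(f)$ is $0$-connected and Corollary~\ref{cor:jumpnat-inf} (with $q=1$) embeds $\RR^1_1(H^{\hdot}(M_f),\theta)$ into $\RR^1_1(A,\theta)$. This proves ``$\supseteq$'' in \eqref{eq:reseq}.

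\emph{The hard inclusion in \eqref{eq:flateq}.} Here I would quote, or reprove, the flat-connection decomposition of \cite{MPPS}. By Lemma~\ref{lem:l1k}, $\g$ is $\sl_2(\C)$ or its Borel subalgebra $\sol_2(\C)$, and in either case unwinding $[\omega,\omega]=0$ in a standard basis of $\g$ shows that $\omega$ is flat precisely when the subspace $W_\omega\subseteq H^1(M)$ spanned by the $\g$-components of $\omega$ is isotropic for the cup product $H^1(M)\times H^1(M)\to H^2(M)$, and that $\rank\omega=\dim W_\omega$. If $\rank\omega\ge 2$, then $W_\omega$ is an isotropic subspace of dimension $\ge 2$, so the (generalized) Castelnuovo--de Franchis theorem for compact Kähler manifolds produces a holomorphic surjection with connected fibres $f\colon M\to C$ onto a compact curve of genus $\ge 2$ with $W_\omega\subseteq f^{*}H^1(C)$; after the standard reparametrisation at the target, $f\in\cE(M)$ by Arapura's description \cite{Ar}. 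Writing $\omega=f^{!}\omega'$ with $\omega'$ the tensor on $(f^{*})^{-1}(W_\omega)\otimes\g$ corresponding to $\omega$, one gets a flat connection on $H^{\hdot}(M_f)$ (its component span is isotropic because $W_\omega$ is and $f^{*}$ is injective on $H^1$ and $H^2$), so $\omega\in f^{!}\F(H^{\hdot}(M_f),\g)$, proving \eqref{eq:flateq}. I expect the identification of $W_\omega$ with a subspace of some $f^{*}H^1(M_f)$ — in particular absorbing the Cartan component of $\omega$ in the $\sl_2(\C)$ case — to be the main obstacle; this is exactly the Kähler-specific input supplied by \cite{MPPS,Ar}.

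\emph{Deducing \eqref{eq:reseq} from \eqref{eq:flateq}.} Let $\omega\in\RR^1_1(A,\theta)\subseteq\F(A,\g)$. By \eqref{eq:flateq} either $\omega\in f^{!}\F(H^{\hdot}(M_f),\g)$ for some $f\in\cE(M)$, and we are done, or $\omega=\eta\otimes g$ with $\eta\in H^1(M)$. In the latter case, if $\det\theta(g)=0$ then $\omega\in\Pi(A,\theta)$ and we are done, so assume $\det\theta(g)\ne 0$. Then $\theta(g)$ is invertible, and decomposing $V$ into generalized eigenspaces $V_\lambda$ (all with $\lambda\ne 0$) of $\theta(g)$ splits $(A\otimes V,d_\omega)$ as a direct sum over $\lambda$; filtering each $V_\lambda$ by the kernels of the nilpotent part of $\theta(g)|_{V_\lambda}$ exhibits $(A\otimes V_\lambda,d_\omega)$ as an iterated extension of copies of the rank-one complex $(A,d_{\lambda\eta})$. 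Since $\RR^1_1(A)$ is conical (rescaling by cohomological degree identifies $(A,d_{\lambda\eta})$ with $(A,d_\eta)$), if $\eta\notin\RR^1_1(A)$ then $H^1(A,d_{\lambda\eta})=0$ for every $\lambda\ne 0$, hence $H^1(A\otimes V,d_\omega)=0$, contradicting $\omega\in\RR^1_1(A,\theta)$; this is the rank-one reduction of \cite{MPPS}. Therefore $\eta\in\RR^1_1(A)$, and $\eta\ne 0$ (else $\det\theta(g)=0$), so the resonance decomposition for compact Kähler manifolds, $\RR^1_1(A)=\bigcup_{f\in\cE(M)}f^{*}H^1(M_f)$ (see \cite{MPPS}, building on \cite{Ar}), gives $f\in\cE(M)$ and $\eta'\in H^1(M_f)$ with $f^{*}\eta'=\eta$. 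Then $\eta'\otimes g$ is a rank $\le 1$ flat connection on $H^{\hdot}(M_f)$ with $f^{!}(\eta'\otimes g)=\omega$, so $\omega\in f^{!}\F(H^{\hdot}(M_f),\g)$. Combined with the easy inclusion, this proves \eqref{eq:reseq}.
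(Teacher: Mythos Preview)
Your proof is correct and amounts to unpacking the content of \cite[Cor.~7.2]{MPPS}, which is exactly what the paper's own proof cites (together with \cite{DPS-duke} to extend from quasi-projective to compact K\"ahler). One tiny slip: the parenthetical ``else $\det\theta(g)=0$'' is not the right justification for $\eta\ne 0$; rather, if $\eta=0$ then $\omega=0$, which lies in $\Pi(A,\theta)$, and you are already done.
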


\begin{proof}
This is proved in \cite[Cor.~7.2]{MPPS} for a $1$-formal, quasi-projective 
manifold $M$. By results from \cite{DPS-duke}, the proof also works for 
$1$-formal, quasi-compact \Ka manifolds, in particular, for a compact \Ka 
manifold $M$. 
\end{proof}

\begin{theorem}
\label{thm:rk2k}
Let $M$ be a compact \Ka manifold with fundamental group $\pi$, and set 
$E(M)=\cE(M) \cup \{f_0\}$ as in \eqref{eq:em}. Let $G$ be $\C$-linear algebraic 
group with non-abelian Lie algebra $\g\subseteq \sl_2(\C)$, 
and let $\iota\colon G\to \GL(V)$ be a rational representation.  Then, 
\begin{align}
\label{eq:repincl-k}
\Hom(\pi,G)_{(1)} &= \bigcup_{f\in E(M)} f_{\sharp}^{!} \Hom (\pi_f,G)_{(1)}, 
\\
\intertext{and, for $i=r=1$ or $i=0$ and $r\ge 1$,}
\label{eq:vincl-k}
\VV^i_r(\pi,\iota)_{(1)}&= \bigcup_{f\in E(M)} f_{\sharp}^{!} \VV^i_r (\pi_f,\iota)_{(1)}.
\end{align}
\end{theorem}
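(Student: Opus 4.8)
The plan is to transport the infinitesimal equalities of Lemma \ref{lem:l3k} to the topological side via Theorem \ref{thm:main} (equivalently Theorem \ref{thm:main-intro}), after setting up appropriate uniform $\adga$ models for the family $E(M)$. First I would dispose of the trivial case: if $b_1(M)=0$, then by \eqref{eq:simvoid} both sides of \eqref{eq:vincl-k} are empty (for $i=r=1$), $E(M)$ is reduced to constant maps, and for $i=0$ the statement is a tautology; so assume $b_1(M)>0$. Now for each admissible $f\colon M\to M_f$ in $\cE(M)$, the surface $M_f$ is a compact Riemann surface, hence formal, and the Albanese map $f_0\colon M\to \Alb(M)$ has formal target as well; moreover $M$ itself is formal by \cite{DGMS}. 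By Proposition \ref{prop:unifk}, the family of holomorphic maps $\{f\colon M\to M_f\}_{f\in E(M)}$ (adjoining $f_0$, which is also holomorphic between compact \Ka manifolds) satisfies $\Omega(f)\simeq H^{\hdot}(f)$ in $\ACDGA_0$, \emph{uniformly} with respect to $f\in E(M)$. Thus we may take $A=(H^{\hdot}(M),d=0)$, $A_f=(H^{\hdot}(M_f),d=0)$, and $\Phi_f=H^{\hdot}(f)$, with $q=1$.

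Next I would verify the hypotheses of Theorem \ref{thm:main} for $q=1$. Condition \eqref{h1} ($1$-finiteness) is clear since compact manifolds have finite-dimensional cohomology. Condition \eqref{h3} is exactly the uniform formality statement just recalled. Condition \eqref{h2} requires $f$ and $\Phi_f=H^{\hdot}(f)$ to be $0$-connected: for $f\in \cE(M)$ this holds because $f_{\sharp}\colon \pi\surj\pi_f$ is surjective (as recalled in \S\ref{subsec:arapura}, admissible maps have connected fibers) and because $H^1(f)\colon H^1(M_f)\to H^1(M)$ is injective (a standard fact for surjective holomorphic maps with connected fibers onto a curve, reflecting that $f^*$ is injective on $H^1$ by the projection formula / Leray); for $f_0$ the map $(f_0)_{\sharp}=\abf$ is surjective and $H^1(f_0)$ is an isomorphism onto the free part, in particular injective. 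Hence Theorem \ref{thm:main} applies and yields local analytic isomorphisms $a\colon \F(H^{\hdot}(M),\g)_{(0)}\isom \Hom(\pi,G)_{(1)}$ and $a_f\colon \F(H^{\hdot}(M_f),\g)_{(0)}\isom\Hom(\pi_f,G)_{(1)}$ intertwining $\Phi_f\otimes\id$ with $f_{\sharp}^{!}$, and compatibly restricting to isomorphisms of the embedded jump loci pairs $(\F,\RR^i_r)_{(0)}\isom(\Hom,\VV^i_r)_{(1)}$ for $i\le 1$, $r\ge 0$.

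Now I would conclude by a diagram chase. Since $a$ and each $a_f$ are isomorphisms of germs making the squares commute, the germ at $1$ of $\bigcup_{f\in E(M)} f_{\sharp}^{!}\Hom(\pi_f,G)$ corresponds under $a^{-1}$ exactly to $\bigcup_{f\in E(M)} (\Phi_f\otimes\id)\F(H^{\hdot}(M_f),\g)$ as a germ at $0$ inside $\F(H^{\hdot}(M),\g)$; similarly the $\VV^i_r$ side corresponds to the $\RR^i_r$ side. So \eqref{eq:repincl-k} will follow once we know
\[
\F(H^{\hdot}(M),\g)_{(0)} = \Big(\bigcup_{f\in E(M)} f^{!}\,\F(H^{\hdot}(M_f),\g)\Big)_{(0)}
\]
and likewise \eqref{eq:vincl-k} follows from the germ version of the $\RR^1_1$ equality in Lemma \ref{lem:l3k}. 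The point is that Lemma \ref{lem:l3k} gives the \emph{global} equalities \eqref{eq:flateq}--\eqref{eq:reseq}, whose right-hand sides involve an extra term $\F^1(H^{\hdot}(M),\g)$ (resp.\ $\Pi(H^{\hdot}(M),\theta)$); I must account for this term. Here is where $f_0$ enters: by Lemma \ref{lem:l4k}, applied to $\Phi_0=H^{\hdot}(f_0)\colon (\bwedgedot H^1(M),d=0)\to (H^{\hdot}(M),d=0)$ — which induces an isomorphism on $H^1$ — the map $\Phi_0\otimes\id$ carries $\F(A_0,\g)$ isomorphically onto $\F^1(H^{\hdot}(M),\g)$ and $\RR^1_1(A_0,\theta)$ isomorphically onto $\Pi(H^{\hdot}(M),\theta)$. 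Since $\Alb(M)=K(\pi_{\abf},1)$ up to rational homotopy and $A_0$ is a model for it, the term $f_0^{!}\F(H^{\hdot}(\Alb M),\g)$ on the right-hand side of the desired identity is precisely $\F^1(H^{\hdot}(M),\g)$, and analogously for $\Pi$. Substituting this into \eqref{eq:flateq}--\eqref{eq:reseq} collapses the exceptional terms into the union over $E(M)=\cE(M)\cup\{f_0\}$, giving exactly the germ equalities needed. Transporting through $a$ finishes both \eqref{eq:repincl-k} and \eqref{eq:vincl-k} for $i=r=1$; the case $i=0$, $r\ge 1$ is the elementary observation \eqref{eq:simvoid} together with $b_0=1$, so the $\VV^0_r$ equality reduces to whether $1$ lies in the union, which it does via $f_0$ when $r\le\dim V$ and otherwise all sets are empty.

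The main obstacle I anticipate is the bookkeeping at the last step: matching the extra homogeneous pieces $\F^1(H^{\hdot}(M),\g)$ and $\Pi(H^{\hdot}(M),\theta)$ appearing in Lemma \ref{lem:l3k} with the $f_0$-contribution in $E(M)$, and checking that this identification is compatible with the germ isomorphism $a_f$ for $f=f_0$ — i.e.\ that the model $A_0=(\bwedgedot H^1(M),d=0)$ is legitimately a $1$-model for $\Alb(M)$ compatible with the uniform zig-zag of Proposition \ref{prop:unifk}, so that Theorem \ref{thm:main}'s hypotheses genuinely hold for the enlarged family. This is where Theorem \ref{thm:2unif-intro} is needed to splice $f_0$ uniformly into the family, and its invocation (rather than a naive application of Theorem \ref{thm:main}) is the subtle point of the argument.
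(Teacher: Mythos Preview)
Your approach matches the paper's proof closely: set up the family $E(M)$ with $\Phi_f=H^{\hdot}(f)$ as uniform $\adga$ models via Proposition~\ref{prop:unifk}, apply Theorem~\ref{thm:main} with $q=1$, and then transport the global infinitesimal equalities of Lemmas~\ref{lem:l3k} and~\ref{lem:l4k} to the topological side. Three small corrections are in order.

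First, your closing concern is misplaced. Since $f_0$ is the \emph{Albanese map}, it is already a holomorphic map between compact \Ka manifolds, and $H^{\hdot}(\Alb(M))=\bwedgedot H^1(M)=A_0$ on the nose. Hence Proposition~\ref{prop:unifk} applies directly to the entire family $E(M)$, including $f_0$, and Theorem~\ref{thm:2unif-intro} is not needed here at all (the paper reserves that device for the principal-bundle case, where $f_0$ is not holomorphic). So the ``subtle point'' you anticipate simply does not arise.

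Second, in passing from \eqref{eq:reseq} to \eqref{eq:vincl-k} for $i=r=1$, you need one more ingredient: for $f\in\cE(M)$ the right-hand side of \eqref{eq:reseq} contains $f^{!}\F(H^{\hdot}(M_f),\g)$, not $f^{!}\RR^1_1(H^{\hdot}(M_f),\theta)$. These coincide because $M_f$ is a compact surface with $\chi(M_f)<0$, so an Euler-characteristic argument gives $\VV^1_1(M_f,\iota)=\Hom(\pi_f,G)$ (equivalently $\RR^1_1=\F$ on the infinitesimal side). You should make this step explicit.

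Third, your treatment of $i=0$ is incorrect as stated: $\VV^0_r(\pi,\iota)_{(1)}$ is in general a nontrivial subgerm of $\Hom(\pi,G)_{(1)}$, not just $\{1\}$ or $\emptyset$. The clean argument (the paper's) is that once \eqref{eq:repincl-k} is established, \eqref{eq:vincl-k} for $i=0$ follows immediately from Corollary~\ref{cor:jumpnat-top}, which gives $f_{\sharp}^{!}\VV^0_r(\pi_f,\iota)=\VV^0_r(\pi,\iota)\cap f_{\sharp}^{!}\Hom(\pi_f,G)$. Similarly, in the $b_1(M)=0$ case you still owe the equality $\Hom(\pi,G)_{(1)}=\{1\}$, which the paper extracts from \cite[Thm.~A]{DP-ccm}.
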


\begin{proof}
We wish to apply Theorem \ref{thm:main} with $q=1$ to the family 
of pointed maps $\{f\colon M \to M_f\}_{f\in E(M)}$ and $\dga$ maps 
$\Phi_f=\{H^{\hdot} (f) \colon H^{\hdot}(M_f) \to H^{\hdot}( M)\}_{f\in E(M)}$, 
where all the differentials are set equal to $0$.   For that, 
we need to verify that the three hypotheses of the theorem hold. 

First, all spaces and $\dga$s in question are $1$-finite (in fact, $\infty$-finite).  
Second, each map $f_{\sharp}$ is surjective, hence each $f\in E(M)$ is $0$-connected.  
Thus, $H^{\hdot} (f)$ is also $0$-connected. 
Finally, by Proposition \ref{prop:unifk}, $\Omega(f)\simeq H^{\hdot}(f)$ 
in $\ACDGA_0$, uniformly with respect to $f\in E(M)$. 

In the case when $i=0$, equality \eqref{eq:repincl-k} clearly implies 
equality \eqref{eq:vincl-k}, by Corollary \ref{cor:jumpnat-top}. Thus, we may assume 
$i=r=1$ in \eqref{eq:vincl-k}.

Suppose now that $b_1(M)=0$. By \eqref{eq:simvoid}, we have that 
$\VV^1_1(\pi,\iota)_{(1)}=\emptyset$.   Therefore, equality \eqref{eq:vincl-k} follows trivially.   
Moreover, the natural map $\Omega (K(1,1)) \to \Omega (K(\pi,1)) $ 
is a $1$-equivalence; hence, $\pi$ has the same $1$-minimal model as the trivial group. 
It then follows from \cite[Thm.~A]{DP-ccm} that $\Hom(\pi,G)_{(1)} = \{1\}$. Therefore, 
equality \eqref{eq:repincl-k} holds trivially. 
 
Thus, we may also assume that $b_1(M)>0$. We deduce from formula \eqref{eq:flateq} 
and Lemma \ref{lem:l4k}, part \eqref{l41} that
\begin{equation}
\label{eq:flateq-bis}
\F(H^{\hdot}(M),\g) = f_0^{!} \F(H^{\hdot}(M_0),\g)\cup 
\bigcup_{f\in \mathcal{E}(M)}  f^{!} (\F(H^{\hdot}(M_f),\g)),
\end{equation}
where $M_0$ denotes the Albanese variety $\Alb(M)\simeq K(\pi_{\abf},1)$,  
and $H^{\hdot}(M_0)=\bwedgedot H^1(M)$.  
By taking germs at the origin and using the naturality properties from 
Theorem \ref{thm:main}, formula \eqref{eq:flateq-bis} implies that equality 
\eqref{eq:repincl-k} holds.

Similarly, formula \eqref{eq:reseq} 
and Lemma \ref{lem:l4k} part \eqref{l42} together imply that
\begin{equation}
\label{eq:reseq-bis}
\RR^1_1(H^{\hdot}(M),\theta) =  f_0^{!}   \RR^1_1(H^{\hdot}(M_0),\theta)\cup 
\bigcup_{f\in \mathcal{E}(M)}  f^{!} (\F(H^{\hdot}(M_f),\g)).
\end{equation}

For each $f\in \mathcal{E}(M)$, note that $M_f$ is a $2$-dimensional 
CW-complex with $\chi(M_f)<0$. An easy Euler characteristic argument 
then shows that $\VV^1_1(\pi_f,\iota)=\VV^1_1(M_f,\iota)=\Hom(\pi_f,G)$.
Again by Theorem \ref{thm:main}, formula \eqref{eq:reseq-bis} now implies 
that equality \eqref{eq:vincl-k} holds.  This completes the proof.
\end{proof}

\begin{remark}
\label{rem:tri}
In \cite[Cor.~B]{LPT}, Loray, Pereira, and Touzet  prove the following 
result, which refines earlier results of Corlette and Simpson \cite{CS}. 
Let $X$ be a quasi-projective 
manifold, and let $\rho\in \Hom(\pi_1(X),\SL_2(\C))$ be a representation which is not 
virtually abelian.  Then there is an orbifold morphism, $f\colon X\to Y$, such that 
the associated representation, $\tilde\rho\in \Hom( \pi_1(X), \PSL_2(\C))$, belongs 
to $f_{\sharp}^{!} \Hom(\pi_1^{\orb}(Y),  \PSL_2(\C))$, where $Y$ is either a 
$1$-dimensional complex orbifold, or a polydisk Shimura modular orbifold. 

For a finitely generated group $\pi$ and a linear algebraic group $G$, 
the abelian part of the representation variety $\Hom(\pi,G)$ coincides 
near $1$ with $\abf^{!} \Hom(\pi_{\abf},G)_{(1)}$.  Indeed, \cite[Thm.~A]{DP-ccm} 
implies that the canonical projection $\pi_{\ab}\surj \pi_{\abf}$ induces an 
isomorphism of germs at the origin of the respective representation varieties. 

This remark shows that formula \eqref{eq:repincl-k} from Theorem \ref{thm:rk2k} 
may be viewed as a compact \Ka analogue near $1$ of \cite[Cor.~B]{LPT}.  
In this context, it provides a simpler classification: the representation 
$\rho$ is either abelian, or it pulls back via an admissible map 
from a compact Riemann surface of genus $g>1$. 
\end{remark}

\subsection{The main difficulty in the non-abelian case}
\label{subsec:2difficult}

The naturality property from \cite[Thm. B(2)]{DP-ccm} is a consequence of the 
following fact, which holds in the abelian case. Let $X$ be a $1$-finite space 
and $A^{\hdot}$ a $1$-finite $\dga$.  The fact that $\Omega_{\k}(X)\simeq_1 A$ 
in $\CDGA$ means that there is a zig-zag of $1$-equivalences in $\CDGA$, 
\begin{equation}
\label{eq:zig omega}
\xymatrix{\Omega_{\k}(X) & \NN  \ar_(.32){\psi}[l]  \ar^{\bar\psi}[r] & A
},
\end{equation}
where $\NN=(\bwedgedot U, d)$ is a $1$-minimal $\dga$, 
see Lemma \ref{lem:zig}. 
If $\g$ is an abelian Lie algebra, it  follows from the definitions 
that $\F(B,\g)=H^1(B)\otimes \g$,  for any connected $\dga$ $B$. 
Applying this observation to the map $\bar\psi$, we conclude that, 
in the abelian case, there is a bijection 
\begin{equation}
\label{eq:abelbij}
\xymatrix{ \bar\psi\otimes \id \colon \F(\NN,\g) \ar^(.6){\simeq}[r]&  \F(A,\g) }.
\end{equation}

For a non-abelian Lie algebra $\g$, though, this map is not necessarily surjective.  
To illustrate this phenomenon, we first need a lemma.

\begin{lemma}
\label{lem:barpsi}
If $\g=\sl_2(\C)$ and $\bar\psi\otimes \id $ is surjective, 
then $\F(A,\g)=\F^1(A,\g)$.
\end{lemma}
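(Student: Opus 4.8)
The plan is to analyze the structure of the map $\bar\psi\otimes\id\colon \F(\NN,\g)\to\F(A,\g)$ when $\g=\sl_2(\C)$, using the fact that $\NN=(\bwedgedot U,d)$ is a $1$-minimal algebra. First I would recall the characterization of flat connections on a $1$-minimal $\dga$: since $\NN$ is built from Hirsch extensions of degree $1$, the degree-$1$ part $\NN^1$ equals $U^1$, and the flatness equation $d\omega+\tfrac12[\omega,\omega]=0$ for $\omega\in U^1\otimes\g$ is controlled entirely by the holonomy Lie algebra $\h(\NN)$. In fact, as recalled in \S\ref{subsec:holo}, $\F(\NN,\g)\cong\Hom_{\Lie}(\h(\NN),\g)$, and since $\NN$ is $1$-minimal (hence its own holonomy-Lie-algebra cochain model), $\h(\NN)$ is the Malcev Lie algebra of the $1$-minimal model. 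The key point is that every flat connection on $\NN$ arises by iteratively solving the Maurer--Cartan equation over the tower of elementary Hirsch extensions, and at each stage the new generator is a cocycle mapping to $\bwedge^{\ge 2}U$; I would extract from this that the image of $\bar\psi\otimes\id$ consists precisely of those flat connections on $A$ whose class is ``built from $H^1$.''

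The core computation is then to pin down what surjectivity of $\bar\psi\otimes\id$ forces. Given an arbitrary flat connection $\omega\in\F(A,\g)$, by hypothesis we may write $\omega=(\bar\psi\otimes\id)(\eta)$ for some $\eta\in\F(\NN,\g)$. I would choose $\eta\in U^1\otimes\g=\NN^1\otimes\g$ and examine the flatness condition $d\eta+\tfrac12[\eta,\eta]=0$ in $\NN^2\otimes\g$. Decomposing $\NN^2=(\bwedge^2 U^1)\oplus(\text{image of degree-}1\text{ Hirsch generators }U^2_{\mathrm{new}})$ — more precisely, writing $\eta=\sum_j u_j\otimes g_j$ with $u_j$ a basis of $U^1$ — the $\bwedge^2 U^1$-component of the flatness equation reads $\sum_j (du_j)|_{\bwedge^2 U^1}\otimes g_j+\tfrac12\sum_{j,k} u_ju_k\otimes[g_j,g_k]=0$. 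The crucial feature of $\sl_2(\C)$ is that a nonzero bracket $[g_j,g_k]$ is never a scalar multiple of $g_j$ or $g_k$ (the bracket maps a $2$-plane to a transverse line in the $3$-dimensional space $\sl_2(\C)$), so the $\g$-valued coefficients force strong linear constraints; pushing this through I expect to conclude that the image of any flat connection on $\NN$ under $\bar\psi\otimes\id$ lands in $\F^1(A,\g)$, i.e.\ is a tensor $\alpha\otimes g$ with $d\alpha=0$. Since $\bar\psi$ induces an isomorphism on $H^1$ (being a $1$-equivalence), $\F^1(\NN,\g)$ maps isomorphically onto $\F^1(A,\g)$, and I would combine this with surjectivity to get $\F(A,\g)=\F^1(A,\g)$.

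An alternative, cleaner route I would try first: use that $\F^1(\NN,\g)=\F(\NN,\g)$ already — this is \cite[Lem.~4.14]{MPPS} applied to a $1$-minimal algebra, or follows because on a $1$-minimal $\dga$ with $\g\subseteq\sl_2(\C)$ every flat connection is of rank $1$ (the nilpotent/abelian structure of the holonomy Lie algebra of a $1$-minimal model, together with the bracket geometry of $\sl_2(\C)$, prevents higher-rank connections). If $\F(\NN,\g)=\F^1(\NN,\g)$, then by Lemma \ref{lem:l2k} the map $\bar\psi\otimes\id$ restricted to $\F^1(\NN,\g)\to\F^1(A,\g)$ is an isomorphism (since $H^1(\bar\psi)$ is an isomorphism), so its image is exactly $\F^1(A,\g)$; surjectivity of $\bar\psi\otimes\id$ onto all of $\F(A,\g)$ then gives $\F(A,\g)=\F^1(A,\g)$ immediately.

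The main obstacle will be justifying the claim $\F(\NN,\g)=\F^1(\NN,\g)$ for the $1$-minimal $\dga$ $\NN$ and $\g=\sl_2(\C)$, or equivalently controlling the image of $\bar\psi\otimes\id$. The subtlety is that $\NN$ can be an infinite tower of Hirsch extensions, so one must argue that no flat connection escapes the ``rank one'' locus — this is where the hypothesis $\g\subseteq\sl_2(\C)$ (non-abelian) is genuinely used, via the geometric fact that the bracket of two linearly independent elements of $\sl_2(\C)$ is independent from both. I would isolate this as a lemma about flat connections on $1$-minimal algebras valued in $\sl_2(\C)$, citing \cite{MPPS} where possible, and then the deduction of $\F(A,\g)=\F^1(A,\g)$ from surjectivity is a one-line consequence of Lemma \ref{lem:l2k}.
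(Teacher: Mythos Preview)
Your ``alternative, cleaner route'' is exactly the paper's argument: show $\F(\NN,\g)=\F^1(\NN,\g)$ via \cite[Lem.~4.14]{MPPS}, then use surjectivity of $\bar\psi\otimes\id$ (together with the trivial fact that $\bar\psi\otimes\id$ sends $\F^1(\NN,\g)$ into $\F^1(A,\g)$) to conclude. The obstacle you flag---that $\NN$ is an infinite tower while \cite[Lem.~4.14]{MPPS} is stated for cochain algebras of finite-dimensional nilpotent Lie algebras---is resolved in the paper by passing to the canonical filtration $\NN=\bigcup_{n\ge 1}\NN_n$: each $\NN_n$ is the cochain $\dga$ of a finite-dimensional nilpotent Lie algebra (here one uses $\dim H^1(\NN)<\infty$), so the cited lemma gives $\F(\NN_n,\g)=\F^1(\NN_n,\g)$ for every $n$, and taking the union yields $\F(\NN,\g)=\F^1(\NN,\g)$. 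Your invocation of Lemma~\ref{lem:l2k} is not needed; mere containment of the image in $\F^1(A,\g)$ suffices.
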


\begin{proof}
The $\cdga$ $\NN=(\bwedgedot U, d)$ 
comes endowed with the canonical filtration, $\NN=\bigcup_{n\ge 1} \NN_n$, 
of a $1$-minimal $\dga$, where each $\dga$ $\NN_n$ 
is of the form $(\bwedgedot U_n, d)$. 
Since $\dim H^1(\NN)<\infty$, $\mathcal{N}_n$ is the 
cochain algebra of a certain finite-dimensional, nilpotent Lie algebra. 
Since $\g=\sl_2(\C)$, it follows from \cite[Lem.~4.14]{MPPS} that 
$\F(\NN_n,\g)=\F^1(\NN_n,\g)$, for each $n\ge 1$. 
Hence, $\F(\NN,\g)=\F^1(\NN,\g)$. Since 
$\bar\psi\otimes \id$ is surjective, we conclude that $\F(A,\g)=\F^1(A,\g)$.
\end{proof}

\begin{example}
\label{ex:nolift}
Let $\Sigma_g$ be a compact Riemann surface of genus $g>1$.  Since  
$\Sigma_g$ is a formal space, the $\dga$ $A=(H^{\hdot}(\Sigma_g),d=0)$ 
is a finite model for it. Let $\NN$ be a $1$-minimal model for $A$, and 
let $\bar\psi \colon \NN\to A$ be the corresponding map. It follows 
from \cite[Lem.~7.3]{MPPS} that $\F(A,\g) \ne \F^1(A,\g)$. 
By Lemma \ref{lem:barpsi}, then, the map $\bar\psi\otimes \id $ 
is not surjective.
\end{example}

Thus, in the case when $G=\SL_2(\C)$, we have no natural analytic 
map $\F(A,\g) \to \Hom(\pi_1(X),G)$.  This is the reason why we have 
to construct a local analytic isomorphism between the germs 
at the origin of the two varieties, in a manner which is compatible with 
both continuous maps and $\dga$ maps, using the simultaneous 
Artin approximation technique from Proposition \ref{prop:simartin}.

\section{Principal bundles}
\label{sect:kfree}

In this section, we apply our theory to principal bundles. 
When the base manifold is formal, we obtain a structural 
result for the germs at the origin of  rank $2$ embedded jump loci of the total space.  

\subsection{Two-element families with the uniform property}
\label{subsec:two guys}

In the applications of Theorem \ref{thm:main}, we also need to take into account 
the projection of a group $\pi$ onto its maximal torsion-free abelian quotient, 
$\abf\colon \pi\surj \pi_{\abf}$. 

\begin{theorem}
\label{prop:2unif}
Let $f\colon M\to N$ be a continuous, pointed map.  Denote by $f_0\colon M \to 
K(\pi_1(M)_{\abf},1)$ the classifying map for the above projection.  Suppose that 
$M$ and $N$ are $q$-finite, for some $q\ge 1$, and that 
$\Omega(f)\simeq_q \Phi$ in $\ACDGA_0$, 
where $\Phi\colon A_N\to A_M$ is a $\dga$ map between $q$-finite objects. Set  
$A^{\hdot}_0= (\bwedgedot H^1(M), d=0)$.  There is then a $\dga$ map 
$\Phi_0\colon A_0 \to A_M$ inducing an isomorphism on $H^1$, and such that 
$\Omega(f_0)\simeq_q \Phi_0$ in $\ACDGA_0$, uniformly with respect to the 
families $\{f,f_0\}$ and $\{\Phi,\Phi_0\}$. Moreover, if $f$ and $\Phi$ are $0$-connected 
maps, then all the hypotheses from Theorem \ref{thm:main} are satisfied for $q=1$. 
\end{theorem}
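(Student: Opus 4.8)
The plan is to construct $\Phi_0$ from the abelian sub-algebra of a minimal model of $M$, and to force the uniformity clause by making the witness for $\Omega(f_0)\simeq_q\Phi_0$ and the given witness for $\Omega(f)\simeq_q\Phi$ induce the same bijection $\beta$ of \eqref{eq:betaz}. Concretely, I would first choose a $q$-minimal model map $\rho_q\colon\M_q=(\bwedgedot U,d)\to\Omega(M)$, set $W:=Z^1(\M_q)$, and form the sub-algebra $\mathcal{N}_1:=(\bwedgedot W,d=0)\subseteq\M_q$, which is closed under $d$; write $j\colon\mathcal{N}_1\hookrightarrow\M_q$ for the inclusion. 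Since $\M_q^0=\k$, one has $H^1(\M_q)=W$ and $H^1(j)$ is the identity, while $H^1(\rho_q)\colon W\to H^1(M)$ is an isomorphism; via the latter, identify $\mathcal{N}_1$ with $A_0=(\bwedgedot H^1(M),d=0)$. From $\Omega(f)\simeq_q\Phi$ in $\ACDGA_0$ we have a diagram of the form \eqref{eq:ziggyf} for $f$ whose top row is a zig-zag $Z_f$ of $q$-equivalences from $\Omega(M)$ to $A_M$; feeding $Z_f$ and $\rho_q$ into Proposition \ref{prop:ztos} yields a $q$-equivalence $\bar\rho_q\colon\M_q\to A_M$ in $\ACDGA$ such that the short zig-zag $S$ built from $\rho_q$ and $\bar\rho_q$ has $\beta_{Z_f}=\beta_S$. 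I then set $\Phi_0:=\bar\rho_q\circ j\colon A_0\to A_M$; since $H^1(j)$ and $H^1(\bar\rho_q)$ are isomorphisms, $\Phi_0$ induces an isomorphism on $H^1$, as claimed.

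Next I would exhibit a witness for $\Omega(f_0)\simeq_q\Phi_0$ whose top row is exactly $S$. The algebra $\mathcal{N}_1=(\bwedgedot W,d=0)$, with $\dim W=b_1(M)$, is a minimal model of the torus $K(\pi_{\abf},1)$, so I may choose a quasi-isomorphism $m\colon\mathcal{N}_1\to\Omega(K(\pi_{\abf},1))$ in $\ACDGA$ realizing, on $H^1$, the isomorphism $H^1(f_0)^{-1}\circ H^1(\rho_q\circ j)$; this is legitimate because $H^1(f_0)$ is an isomorphism — it is the $\k$-dual of the surjection $\abf$, and $\k$ has characteristic $0$ — and because a map out of $\mathcal{N}_1$ with isomorphic effect on $H^1$ induces an isomorphism on cohomology in every degree (the exterior power of $H^1(m)$), hence is a quasi-isomorphism. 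Because $\mathcal{N}_1$ is free on degree-$1$ generators carrying zero differential, two morphisms $\mathcal{N}_1\to B$ in $\ACDGA$ are augmented-homotopic once they induce the same map on $H^1$ (the linear homotopy $\Psi(w)=\phi_0(w)+t(\phi_1(w)-\phi_0(w))+dt\cdot b_w$ does it, with primitives $b_w$ taken in the augmentation ideal). By the choice of $m$, the maps $\rho_q\circ j$ and $\Omega(f_0)\circ m$ agree on $H^1$, so $\rho_q\circ j\simeq\Omega(f_0)\circ m$ in $\ACDGA$. Assembling the diagram \eqref{eq:ziggyf} for $f_0$ with top row $S$, bottom row given by $m$ together with the identification $\mathcal{N}_1\cong A_0$, left vertical $\Omega(f_0)$, middle vertical $j$, and right vertical $\Phi_0$, the left square commutes up to augmented homotopy by the previous sentence and the right square commutes strictly, since $\Phi_0=\bar\rho_q\circ j$. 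Thus $\Omega(f_0)\simeq_q\Phi_0$ in $\ACDGA_0$; and as the top row of this witness is $S$, we obtain $\beta_{Z_{f_0}}=\beta_S=\beta_{Z_f}$, which is exactly the uniformity of Definition \ref{def:unifq} for the family $\{f,f_0\}$.

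Finally, assuming $f$ and $\Phi$ are $0$-connected, I would check the three hypotheses of Theorem \ref{thm:main} with $q=1$. Finiteness: $M,N,A_M,A_N$ are $1$-finite by assumption; $K(\pi_{\abf},1)$ is the finite complex $T^{b_1(M)}$ because $\pi_{\abf}$ is finitely generated torsion-free abelian; and $A_0=(\bwedgedot H^1(M),d=0)$ is connected with $H^1(M)$ finite-dimensional. Connectivity: $f$ and $\Phi$ are $0$-connected by assumption; $f_0$ is $0$-connected since $(f_0)_{\sharp}=\abf$ is onto; and $\Phi_0$ is automatically $0$-connected, being $\id_{\k}$ in degree $0$ and injective in degree $1$, because a degree-$1$ element of $A_0$ is a cocycle representing its own class and $H^1(\Phi_0)$ is injective. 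Equivalence: $\Omega(f)\simeq_1\Phi$ and $\Omega(f_0)\simeq_1\Phi_0$, uniformly, since a $q$-equivalence is a $1$-equivalence and the bijections of \eqref{eq:betaz} are built solely from the $1$-type (via Remark \ref{rem:tilde} and Theorem \ref{thm:1type}), so both the equivalences and the coincidence $\beta_{Z_{f_0}}=\beta_{Z_f}$ pass from $q$ down to $1$. I expect the one genuinely delicate step to be the middle one: recognizing $\rho_q\circ j$ — the model structure map restricted to the abelian sub-algebra — as a model of the classifying map $f_0$ inside $\ACDGA_0$, and doing so while keeping the top zig-zag equal to the one already fixed for $f$; once one sees that $\Phi_0$ must be $\bar\rho_q$ composed with the inclusion of that sub-algebra, and that $m$ can be tuned to absorb any discrepancy on $H^1$, the rest is bookkeeping with Proposition \ref{prop:ztos} and the definitions.
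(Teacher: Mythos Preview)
Your proposal is correct and takes essentially the same approach as the paper: shorten the given zig-zag to a special $S$ via Proposition~\ref{prop:ztos}, set $\Phi_0=\bar\rho_q\circ j$ with $j$ the inclusion of the abelian subalgebra $\bwedge Z^1(\M_q)\cong A_0$ into the $q$-minimal model, and build a witness for $f_0$ whose top row is again $S$, forcing $\beta_{Z_{f_0}}=\beta_S=\beta_{Z_f}$. The only difference is in the left square of the witness: the paper begins with a $\pi_1$-\emph{adapted} $1$-minimal model map $\rho\colon\NN\to\Omega(M)$ in the sense of \cite[\S6.4]{DP-ccm} (then extended to $\M_q$), which comes packaged with a canonical inclusion $j\colon A_0\hookrightarrow\NN$ and a map $\rho_{\abf}\colon A_0\to\Omega(\pi_{\abf})$ making the diagram commute \emph{strictly} in $\ACDGA$, whereas you take an arbitrary $q$-minimal model, construct $m$ by hand with prescribed effect on $H^1$, and verify the square only up to augmented homotopy via the explicit linear homotopy---both legitimate, since Definition~\ref{def:qeqmaps} requires only homotopy-commutativity; your route is more self-contained, the paper's is shorter but leans on the black box from \cite{DP-ccm}.
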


\begin{proof}
The assumption that $\Omega(f)\simeq_q \Phi$ provides a zig-zag $Z$ 
of $q$-equivalences in $\ACDGA$ connecting $\Omega_{\k}(M)$ to $A_M$. 
Let $\rho\colon \NN \to \Omega_{\k}(M)$ be a $\pi_1$-adapted $1$-minimal model map, 
as in \cite[\S6.4]{DP-ccm}.  This map can be extended to a $q$-minimal model map 
$\rho :\M \to \Omega_{\k}(M)$. By Proposition \ref{prop:ztos}, there is a special zig-zag 
$S$ of the form 
$\xymatrixcolsep{14pt}
\xymatrix{
 \Omega_{\k}(M) & \mathcal{M} \ar_(.36){\rho}[l]  \ar^{\bar\rho}[r] & A_M}$
such that $\beta_Z=\beta_S$.

Now, as explained in \cite[\S6.4]{DP-ccm}, there is a canonical $\dga$ inclusion, 
$j\colon A_0\inj \NN$, inducing an isomorphism on $H^1$. It follows that the 
map $\Phi_0=\bar{\rho} \circ j\colon A_0 \to A_M$ has the same property.  Putting 
things together, we obtain the following commuting diagram in $\ACDGA$:
\begin{equation}
\label{eq:abfdgr}
\begin{gathered}
\xymatrixcolsep{30pt}
\xymatrix{
&\M \ar_{\rho}[dl]  \ar^{\bar\rho}[dr] \\
\Omega(M) & \NN \ar@{^{(}->}^{i}[u]  \ar_(.38){\rho}[l]  \ar^(.47){\bar\rho}[r]  
& A_M\,\phantom{.} \\
\Omega(\pi_{\abf}) \ar^{\Omega(f_0)}[u] & 
A_0 \ar_(.35){\rho_{\abf}}[l] \ar@{=}[r]   \ar@{^{(}->}^{j}[u]
& A_0\, .  \ar_{\Phi_0}[u]
}
\end{gathered}
\end{equation}

Both upper-diagonal arrows are $q$-equivalences, and both lower-horizontal 
arrows are $\infty$-equivalences.  It follows that $\Omega(f_0)\simeq_q \Phi_0$ 
in $\ACDGA_0$, as claimed.  The uniform property follows from 
the equality $\beta_Z=\beta_S$. It is obvious that the map $f_0$ is $0$-connected. 
Finally, since $H^1(\Phi_0)$ is injective, the map $\Phi_0$ is also $0$-connected.
\end{proof}

\subsection{Models for principal bundle projections}
\label{subsec:princ proj}

Let $K$ be a compact, connected real Lie group acting 
freely on a closed, smooth manifold $M$. 
Let $N=M/K$ be the orbit space, and let $f\colon M\to N$ 
be the projection map of the resulting principal $K$-bundle. 
Of course, both $M$ and $N$ have the homotopy type of 
a finite CW-complex.  We will fix compatible basepoints 
for $M$ and $N$.  Note that $f$ is $0$-connected, by 
the exact homotopy sequence of the fibration 
$K\to  M \xrightarrow{f} N$ and the connectivity of $K$.  

By a classical result of H.~Hopf, the cohomology algebra of $K$ 
(with coefficients in a field $\k$ of characteristic  $0$) is of the form  
$H^{\hdot}(K)=\bigwedge P^{\hdot}$, where $P^{\hdot}$ is a finite-dimensional, 
oddly graded $\k$-vector space. Let $[\tau]\colon P^{\hdot}\to H^{\hdot+1}(N)$ 
be the transgression in the Serre spectral sequence of our fibration.

Suppose $A_N$ is a $\dga$ model for $N$, so that there is a zig-zag 
of quasi-isomorphisms connecting $\Omega_{\k}(N)$ to $A_N$.  Such 
a zig-zag yields an isomorphism of $H^{\hdot}(N)$ with $H^{\hdot}(A_N)$. 
Let  $\tau\colon P^{\hdot}\to Z^{\hdot+1}(A_N)$ be a lift of $[\tau]$.  
As noted in \S\ref{subsec:minmod}, the Hirsch extension 
$A_M:=A_N\otimes_{\tau} \bigwedge P$
is well-defined, up to a $\dga$ isomorphism extending $\id_{A_N}$. 

\begin{prop}
\label{prop:hmodel}
Let $f\colon M\to N$ be the projection map of a principal $K$-bundle as above, 
and suppose $N$ admits a finite model $A_N$. Let $\Phi\colon A_N\inj A_M$ 
be the canonical $\dga$ inclusion.  Then $A_M$ is a finite model for $M$, 
and both $f$ and $\Phi$ are $0$-connected maps. Moreover, 
$\Omega(f)\simeq \Phi$ in $\ACDGA_0$, and thus 
the conclusions of Corollary \ref{cor:unione} hold for $q=1$.
\end{prop}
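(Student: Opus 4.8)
The plan is to (i) verify the two connectivity statements, (ii) show that $A_M$ is a model for $M$ compatibly with $f$, i.e.\ that $\Omega_{\k}(f)\simeq\Phi$ first in $\CDGA_0$ and then in $\ACDGA_0$, and (iii) read off the last assertion from Corollary~\ref{cor:unione}. Step (i) and the finiteness of $A_M$ are the easy part. The map $f$ is $0$-connected because $f_{\sharp}$ is onto, by the homotopy exact sequence of the fibration $K\to M\xrightarrow{f}N$ and the connectedness of $K$. Since $P^{\hdot}$ is concentrated in positive odd degrees, $(\bwedge P)^0=\k$ and $(\bwedge P)^j=0$ for $j<0$; hence $A_M^0=A_N^0=\k$, the inclusion $\Phi\colon A_N\inj A_M=A_N\otimes_{\tau}\bwedge P$ is an isomorphism in degree $0$, and, being the inclusion of a direct-summand sub-$\dga$, it is a monomorphism in every positive degree, so $\Phi$ is $0$-connected. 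Both $A_N$ and $A_M$ are connected, hence lie in $\ACDGA_0$ with their unique augmentations (which $\Phi$ preserves, by \eqref{eq:homacdga}); and $A_M^{\hdot}=A_N^{\hdot}\otimes\bwedge P$ is the tensor product of a finite $\dga$ with a finite-dimensional graded algebra, hence finite.

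For step (ii), I would realize the bundle $K\to M\xrightarrow{f}N$ as the pull-back, along the classifying map $g\colon N\to BK$, of the universal bundle $K\to EK\xrightarrow{p}BK$. By Hopf's theorem $H^{\hdot}(K,\k)=\bwedge P$ with $P$ finite-dimensional and oddly graded; by Borel's theorem $H^{\hdot}(BK,\k)=\k[Q]$ is polynomial on generators $q_i$ of even degree $|p_i|+1$, and as $BK$ is simply connected with such cohomology it is formal, with minimal Sullivan model $(\k[Q],d=0)$. The acyclic Koszul $\dga$ $W=(\k[Q]\otimes\bwedge P,\delta)$, $\delta q_i=0$ and $\delta p_i=q_i$, is then a relative minimal Sullivan model of $p$, the base inclusion $\k[Q]\inj W$ modelling $p$. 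Next, the cocycles $\tau(p_i)\in Z^{|p_i|+1}(A_N)$ of the statement represent the classes $[\tau](p_i)\in H^{|p_i|+1}(A_N)\cong H^{|p_i|+1}(N)$, which by naturality of the transgression equal $g^{*}$ of the universal classes $q_i$; hence the $\dga$ map $\gamma\colon(\k[Q],0)\to A_N$, $q_i\mapsto\tau(p_i)$, induces $g^{*}$ on cohomology. Since $(\k[Q],0)$ is a Sullivan algebra on even generators with zero differential, two $\dga$ maps out of it inducing the same map on cohomology are homotopic (such a map is just a choice of cocycle for each generator, and cohomologous choices give homotopic maps); so $\gamma$ models $g$. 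The standard description of a Sullivan model of a pull-back (e.g.\ \cite{FHT}), applied to the fibration $p$ with simply connected base and finite-type fibre, the relative model $\k[Q]\inj W$, and the model $\gamma$ of $g$ — together with the fact that $-\otimes_{\k[Q]}W$ preserves quasi-isomorphisms, $\k[Q]\inj W$ being a relative Sullivan algebra — then identifies the pushout $A_N\otimes_{\k[Q]}W=A_N\otimes_{\tau}\bwedge P=A_M$ as a model for $M=g^{*}EK$, with $\Phi\colon A_N\inj A_M$ modelling $f$. Thus $\Omega_{\k}(f)\simeq\Phi$ in $\CDGA_0$. To upgrade this to $\ACDGA_0$, note that every $\dga$ occurring above has connected cohomology and, working in the pointed category, all structure maps and homotopies respect augmentations — automatically so for maps out of the connected algebras $\k[Q]$, $W$, $A_N$, $A_M$ by \eqref{eq:homacdga} — whence $\Omega_{\k}(f)\simeq\Phi$ in $\ACDGA_0$.

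For step (iii) it remains to apply Corollary~\ref{cor:unione} with $q=1$: $M$ and $N$ are $1$-finite (closed smooth manifolds have the homotopy type of finite CW-complexes), $A_N$ and $A_M$ are $1$-finite, $f$ and $\Phi$ are $0$-connected, and $\Omega_{\k}(f)\simeq\Phi$ in $\ACDGA_0$ (in particular $\simeq_1$); so all of its hypotheses hold and its conclusions follow. The main obstacle is step (ii): matching the topological transgression of the principal bundle with the prescribed data $\tau$ and producing the \emph{explicit} Hirsch extension $A_N\otimes_{\tau}\bwedge P$ as a model for $M$ over the \emph{given} finite model $A_N$; once that identification is in hand, the connectivity, finiteness and augmentation bookkeeping, and the appeal to Corollary~\ref{cor:unione}, are routine.
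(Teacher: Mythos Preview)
Your argument is correct but takes a genuinely different route from the paper's.  The paper does not pass through the classifying space at all: it invokes the classical Hirsch Lemma directly for the fibration $K\to M\xrightarrow{f}N$ to obtain a quasi-isomorphism $h\colon \Omega(N)\otimes_{\tau}\bwedge P \to \Omega(M)$ in $\ACDGA$ sitting over $\Omega(f)$, and then transports this along the short zig-zag $\Omega(N)\xleftarrow{\psi}\mathcal{N}\xrightarrow{\bar\psi}A_N$ coming from the assumed model of $N$, using that the Hirsch-extension functor $(-)\otimes_{\tau}\bwedge P$ preserves quasi-isomorphisms (\cite[Lem.~14.2]{FHT}); splicing the two diagrams gives $\Omega(f)\simeq\Phi$ in $\ACDGA_0$.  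Your approach instead realizes $M$ as the pull-back of the universal bundle along $g\colon N\to BK$, models $BK$ by $(\k[Q],0)$ (using its formality), models the universal fibration by the Koszul relative Sullivan algebra $\k[Q]\inj W$, and then appeals to the pull-back/push-out model theorem to identify $A_N\otimes_{\k[Q]}W$ with the prescribed Hirsch extension $A_N\otimes_{\tau}\bwedge P$.  Both arguments ultimately rest on the same technical ingredient (relative Sullivan algebras preserve quasi-isomorphisms under base change), but the paper's is shorter and more self-contained, needing neither the formality of $BK$ nor the pull-back model machinery; your route, on the other hand, makes transparent why the transgression data $\tau$ arise, by identifying them with the characteristic classes $g^{*}(q_i)$.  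Your step upgrading the equivalence from $\CDGA_0$ to $\ACDGA_0$ is a bit brisk---one should check that the intermediate algebras in the pull-back model zig-zag (some built over the non-connected $\Omega(N)$ or $\Omega(BK)$) are augmented compatibly---but this is the same bookkeeping the paper does explicitly for its own (shorter) zig-zag, and it goes through in your setting as well.
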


\begin{proof}
Clearly, since $A_N$ is a finite $\dga$, then $A_M$ is also a finite $\dga$.  
Equally clearly, the map $\Phi$ is $0$-connected.
By the classical Hirsch Lemma (see \cite[pp.~216--218]{FHT}) there is 
a commutative diagram in $\ACDGA$,
\begin{equation}
\label{eq:hmodel}
\begin{gathered}
\xymatrixcolsep{30pt}
\xymatrix{
\Omega(M)  &\Omega(N)\otimes_{\tau} \bwedge P \ar_(.55){h}[l]
\\
&\Omega(N) \ar^{\Omega(f)}[lu]  \ar@{^{(}->}[u]
}
\end{gathered}
\end{equation}
where $h$ is an $\infty$-equivalence.   Since by assumption 
$\Omega(N)\simeq A_N$, there is a minimal $\cdga$
$\NN$, connected by quasi-isomorphisms $\psi\colon \NN\to \Omega(N)$ 
and $\bar\psi\colon \NN\to A_N$.  We then obtain a  
commuting diagram in $\CDGA$,
\begin{equation}
\label{eq:hirschdgr}
\begin{gathered}
\xymatrixcolsep{22pt}
\xymatrix{
\Omega(N)\otimes_{\tau} \!\bwedge P & 
\Omega(N) \otimes_{\tau_N} \!\bwedge P \ar_(.5){\simeq}[l] &
\NN \otimes_{\tau_{\NN}} \!\bwedge P  \ar_(.45){\psi\otimes \id}[l] 
\ar^(.5){\bar\psi\otimes \id}[r]   & 
A_N \otimes_{\tau_{A}} \!\bwedge P
 \ar^(.65){\simeq}[r] & A_M
\\
&\Omega(N) \ar@{^{(}->}[u] \ar@{_{(}->}[lu] & \NN  \ar@{^{(}->}[u] \ar_(.45){\psi}[l] 
\ar^(.5){\bar\psi}[r]  & A_N \ar@{^{(}->}[u] \ar@{^{(}->}_{\Phi}[ur]
}
\end{gathered}
\end{equation}
where the transgression $[\tau_{\NN}]$ is identified with $[\tau_N]$ 
using $H^{\hdot}(\psi)$, while $[\tau_{\NN}]$ is identified with $[\tau_A]$ 
using $H^{\hdot}(\bar\psi)$.   Note that all $\dga$s in \eqref{eq:hirschdgr} 
are augmented, and all maps respect augmentations. 
By \cite[Lem.~14.2]{FHT}, the maps $\psi\otimes \id$ and $\bar\psi\otimes \id$ 
are quasi-isomorphisms, since both $\psi$ and $\bar\psi$ are.  Splicing together  
diagrams \eqref{eq:hmodel} and \eqref{eq:hirschdgr} we reach the desired conclusions. 
\end{proof}

\subsection{Embedded jump loci of principal bundles}
\label{subsec:cjl bundle}

Before stating and proving the main result of this section, 
we need two more lemmas.
According to the guiding philosophy of \cite{MPPS}, the essentially rank $1$ 
part of the higher-rank resonance varieties of a $\dga$ is determined by rank $1$ 
resonance. We begin with a version of this general principle, valid for families 
of $\dga$ morphisms.

Fix an integer $q\ge 1$.  Let $\{\phi_f\colon A_f\to A\}_{f\in \cE}$ be 
a finite family of $(q-1)$-connected maps between connected 
$\C$-$\dga$s. Also, let $\g$ be a Lie algebra, and let 
$\theta\colon \g\to \gl(V)$ be a finite-dimensional representation. 
For each $i\le q$ such that $H^i(A)\ne 0$, Corollary 3.8 and Lemma 2.6 
from \cite{MPPS} give an inclusion 
\begin{equation}
\label{eq:riincl}
\RR^i_1(A,\theta) \supseteq  \Pi(A,\theta)\cup 
\bigcup_{f\in \cE}  (\phi_f\otimes \id) \RR^i_1(A_f,\theta) .
\end{equation}
\begin{lemma}
\label{lem:mppsfam}
Assume \eqref{eq:riincl} holds as an equality in the rank $1$ case.  Then 
\[
\RR^i_1(A,\theta) \cap \F^1(A,\g) \subseteq  \Pi(A,\theta)\cup 
\bigcup_{f\in \cE}  (\phi_f\otimes \id) \RR^i_1(A_f,\theta).
\]
\end{lemma}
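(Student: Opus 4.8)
The plan is to reduce the higher-rank statement to the rank-$1$ hypothesis by the ``essentially rank one'' mechanism of \cite{MPPS}. Concretely, suppose $\omega \in \RR^i_1(A,\theta) \cap \F^1(A,\g)$. By definition of $\F^1(A,\g)$, we may write $\omega = \eta \otimes g$ with $\eta \in A^1$ a cocycle and $g \in \g$. First I would dispose of the trivial possibilities: if $\eta = 0$ or $g = 0$ then $\omega = 0$, and since $H^i(A)\ne 0$ (as $\omega \in \RR^i_1(A,\theta)$ forces $b_i(A)\cdot\dim V \ge 1$, so $H^i(A)\ne 0$) we have $0 \in \Pi(A,\theta) \subseteq$ the right-hand side, using the fact recalled just before Lemma \ref{lem:l2k} that $\Pi(A,\theta)\subseteq \RR^i_1(A,\theta)$ whenever $H^i(A)\ne 0$, together with $0\in\Pi(A,\theta)$. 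So assume $\eta \ne 0$ and $g \ne 0$.

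Next I would split on whether $\det(\theta(g)) = 0$ or not. If $\det(\theta(g)) = 0$, then by the very definition of $\Pi(A,\theta)$ we have $\omega = \eta \otimes g \in \Pi(A,\theta)$, and we are done. So the remaining case is $\omega \in \F^1(A,\g)$ with $\eta \ne 0$ a cocycle and $\theta(g)$ invertible. The key observation here is that, in this case, the covariant differential $d_\omega$ on $A\otimes V$ should behave like the rank-$1$ twisted differential $d_\eta$ on $A\otimes\C$: more precisely, since $\theta(g)$ is a single invertible endomorphism of $V$, one can diagonalize (over $\C$) or at least put $\theta(g)$ in Jordan form and see that $(A\otimes V, d_\omega)$ decomposes, up to the appropriate filtration argument, into copies of rank-$1$ Aomoto complexes $(A, d_{c\eta})$ for the eigenvalues $c$ of $\theta(g)$ (this is exactly the content of \cite[Cor.~3.8]{MPPS}, which identifies the ``essentially rank one'' locus $\RR^i_1(A,\theta)\cap\F^1(A,\g)$ with a union indexed by rank-$1$ data). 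Thus membership $\omega \in \RR^i_1(A,\theta)$ translates into membership of some scalar multiple $c\eta \in \RR^i_1(A)$ (rank-$1$ resonance), for an eigenvalue $c\ne 0$ of $\theta(g)$.

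Now I would invoke the hypothesis: \eqref{eq:riincl} holds as an equality in the rank-$1$ case, i.e.
\[
\RR^i_1(A) = \RR^1_1(A,\id_\C)\text{-type locus} = \Pi(A,\id_\C)\cup \bigcup_{f\in\cE}(\phi_f\otimes\id)\RR^i_1(A_f).
\]
So $c\eta$ either lies in $\Pi(A,\id_\C)$ — which, chasing back through the rank-$1$-to-$V$ correspondence, forces $\omega$ into $\Pi(A,\theta)$ — or $c\eta = (\phi_f\otimes\id)(\xi)$ for some $f\in\cE$ and some $\xi\in\RR^i_1(A_f)$ in the rank-$1$ sense. In the latter case I would lift back up: since $\phi_f$ is a $\dga$ map, $\xi$ together with the eigendata of $\theta(g)$ assembles into a flat connection $\omega_f \in \F^1(A_f,\g)$ with $(\phi_f\otimes\id)(\omega_f) = \omega$, and the rank-$1$ membership $\xi\in\RR^i_1(A_f)$ upgrades (again by \cite[Cor.~3.8]{MPPS} applied to $A_f$) to $\omega_f \in \RR^i_1(A_f,\theta)$. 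Hence $\omega \in (\phi_f\otimes\id)\RR^i_1(A_f,\theta)$, which is what we wanted.

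The main obstacle I anticipate is making the ``eigenvalue decomposition'' step fully rigorous when $\theta(g)$ is not diagonalizable: one must check that the Jordan-block structure does not create cohomology that fails to be detected by the individual rank-$1$ pieces, so that $\dim H^i(A\otimes V,d_\omega)\ge 1$ really does imply $\dim H^i(A,d_{c\eta})\ge 1$ for at least one eigenvalue $c$. I expect this is precisely handled by the filtration argument in the proof of \cite[Cor.~3.8]{MPPS} (filtering $V$ by generalized eigenspaces and then by the nilpotent part, and reading off the associated graded), so the cleanest route is to cite that corollary for both $A$ and each $A_f$ rather than redo the linear algebra; the only genuinely new bookkeeping is checking that the rank-$1$ components arising for $A$ and the pullback components from the $A_f$ match up compatibly under $\phi_f\otimes\id$, which follows from functoriality of the constructions $\F^1(-,\g)$, $\Pi(-,\theta)$ and the Aomoto complex in Lemma \ref{lem:l2k}.
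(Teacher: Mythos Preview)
Your proposal is correct and follows essentially the same route as the paper: both arguments take $\omega=\eta\otimes g\in\RR^i_1(A,\theta)\cap\F^1(A,\g)$, dispose of the case $\omega\in\Pi(A,\theta)$ (equivalently $\det\theta(g)=0$), invoke \cite[Cor.~3.8]{MPPS} to extract an eigenvalue $\lambda\ne 0$ with $\lambda\eta\in\RR^i_1(A)$, apply the rank-$1$ hypothesis to write $\lambda\eta=\phi_f(\eta')$, set $\eta_f=\eta'/\lambda$, and invoke \cite[Cor.~3.8]{MPPS} once more on $A_f$ to conclude $\eta_f\otimes g\in\RR^i_1(A_f,\theta)$. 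Your worry about non-diagonalizable $\theta(g)$ is already absorbed into that corollary, as you suspected; and the branch ``$c\eta\in\Pi(A,\id_\C)$'' in your case analysis is vacuous since $\Pi(A,\id_\C)=\{0\}$ while $c\ne 0$ and $\eta\ne 0$.
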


\begin{proof}
Let $\omega=\eta\otimes g$ be a non-zero element in 
$\left( \RR^i_1(A,\theta) \cap \F^1(A,\g) \right) \setminus \Pi(A,\theta)$. 
From \cite[Cor.~3.8]{MPPS}, we know that $\eta\otimes g$ belongs to 
$\RR^i_1(A,\theta) \cap \F^1(A,\g)$ if and only if there is an eigenvalue 
$\lambda$ of $\theta(g)$ such that $\lambda \eta \in \RR^i_1(A)$. 
By our assumption on the rank $1$ resonance, $\lambda \eta=\phi_f(\eta')$, 
for some $\eta'\in \RR^i_1(A_f)$.  Since $\lambda\ne 0$ 
we infer that $\omega=(\phi_f \otimes \id) (\eta_f \otimes g)$, for some 
$\eta_f \in A^1_f$ such that $d \eta_f=0$ and $\lambda \eta_f \in \RR^i_1(A_f)$. 
Again by \cite[Cor.~3.8]{MPPS}, we conclude that 
$\eta_f \otimes g$ belongs to $\RR^i_1(A_f,\theta) \cap \F^1(A_f,\g)$, 
and we are done. 
\end{proof}

\begin{lemma}
\label{lem:flatres}
Let $A$ be a $1$-finite $\C$-$\dga$ with $d=0$, and let $\theta\colon \g\to \gl(V)$ 
be a finite-dimensional representation of a non-abelian Lie subalgebra of $\sl_2(\C)$. 
Then $\F(A,\g)=\F^1(A,\g)\cup \RR^1_1(A,\theta)$.
\end{lemma}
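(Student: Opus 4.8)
The inclusion $\F^{1}(A,\g)\cup\RR^{1}_{1}(A,\theta)\subseteq\F(A,\g)$ is immediate, so the real content is the reverse inclusion, and the plan is as follows. I would fix a flat connection $\omega\in\F(A,\g)$ that does not lie in $\F^{1}(A,\g)$ and prove directly that $H^{1}(A\otimes V,d_{\omega})\ne 0$. The key object is the \emph{support} $W\subseteq A^{1}$ of $\omega$, namely the smallest linear subspace with $\omega\in W\otimes\g$, equivalently $W=\{(\id\otimes\varphi)(\omega):\varphi\in\g^{\ast}\}$. Since $d=0$, the condition $\omega\notin\F^{1}(A,\g)$ says precisely that $\omega$ is not a decomposable tensor, that is, $\dim W\ge 2$.

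Next I would extract the algebraic consequence of flatness. Write $\omega=\sum_{i=1}^{k}w_{i}\otimes h_{i}$ with $\{w_{i}\}$ a basis of $W$; by minimality of $W$ the elements $h_{1},\dots,h_{k}$ are linearly independent in $\g$, so (using $\g\subseteq\sl_{2}(\C)$, cf.\ Lemma~\ref{lem:l1k}) we have $2\le k=\dim W\le\dim\g\le 3$. Because $d=0$, the Maurer--Cartan equation for $\omega$ reduces to $\sum_{i<j}w_{i}w_{j}\otimes[h_{i},h_{j}]=0$ in $A^{2}\otimes\g$, and I claim this forces $w_{i}w_{j}=0$ for all $i<j$, hence, together with $w_{i}^{2}=0$ in degree $1$, that $W\cdot W=0$ in $A^{2}$. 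This is where the hypothesis on $\g$ enters: when $k=2$ the bracket $[h_{1},h_{2}]$ is nonzero, since $\sl_{2}(\C)$ has no $2$-dimensional abelian subalgebra; when $k=3$ necessarily $\g=\sl_{2}(\C)$ by Lemma~\ref{lem:l1k}, and the bracket map $\bigwedge^{2}\sl_{2}(\C)\isom\sl_{2}(\C)$ (an isomorphism, since $\sl_{2}(\C)$ is perfect and both sides are $3$-dimensional) carries the basis $\{h_{i}\wedge h_{j}\}_{i<j}$ to a basis, so $[h_{1},h_{2}],[h_{1},h_{3}],[h_{2},h_{3}]$ are linearly independent. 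In either case, expanding the Maurer--Cartan relation in a basis of $\g$ adapted to these brackets forces each $w_{i}w_{j}$ to vanish. I expect this small piece of structure theory—ruling out the degenerate configurations—to be the only genuinely delicate point; everything else is formal.

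Finally I would convert $W\cdot W=0$ into the desired non-vanishing by a dimension count. Using the description of $d_{\omega}$ in \eqref{eq:aomoto}: for every $\xi\in W$ and $v\in V$ one has $d_{\omega}(\xi\otimes v)=\sum_{i}(w_{i}\xi)\otimes\theta(h_{i})v=0$, since $w_{i}\xi\in W\cdot W=0$; hence $W\otimes V$ lies inside the space of degree-$1$ cocycles of $(A\otimes V,d_{\omega})$. On the other hand the space of degree-$1$ coboundaries is a quotient of $A^{0}\otimes V$, hence has dimension at most $\dim V$ because $A^{0}=\C$. Therefore
\[
\dim H^{1}(A\otimes V,d_{\omega})\ \ge\ \dim(W\otimes V)-\dim V\ =\ (\dim W-1)\,\dim V\ \ge\ \dim V\ \ge\ 1 ,
\]
so $\omega\in\RR^{1}_{1}(A,\theta)$. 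This establishes $\F(A,\g)\subseteq\F^{1}(A,\g)\cup\RR^{1}_{1}(A,\theta)$ and completes the plan; note the argument needs only $1$-finiteness (so that $A^{1}$ is finite-dimensional) and never requires $A^{2}$ to be finite-dimensional.
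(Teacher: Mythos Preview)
Your argument is correct and follows the same skeleton as the paper's proof: given $\omega\in\F(A,\g)\setminus\F^{1}(A,\g)$, both proofs first show that the support $W\subseteq A^{1}$ of $\omega$ is an isotropic subspace of dimension at least $2$, and then deduce $\omega\in\RR^{1}_{1}(A,\theta)$ from this. The difference is in packaging. For the isotropy step, the paper simply cites \cite[Prop.~5.3]{MPPS}, whereas you unpack it directly via the structure of $\sl_{2}(\C)$ (no $2$-dimensional abelian subalgebra; the bracket $\bigwedge^{2}\sl_{2}\to\sl_{2}$ is an isomorphism). For the resonance step, the paper passes to the finite sub-$\dga$ $A_{W}=\C\cdot 1\oplus W$, observes $\chi(A_{W})=1-\dim W<0$, and invokes the Euler-characteristic criterion \cite[Prop.~2.4]{MPPS} to get $\F(A_{W},\g)=\RR^{1}_{1}(A_{W},\theta)$, then uses naturality to push this into $A$; you instead do the dimension count $\dim H^{1}\ge(\dim W)(\dim V)-\dim V$ directly in $A$. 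Your route is more elementary and self-contained, at the cost of reproving special cases of results already available in \cite{MPPS}; the paper's route is shorter but relies on those citations. Both tacitly assume $V\ne 0$, which is harmless here.
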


\begin{proof}
Let $\omega\in \F(A,\g)\setminus \F^1(A,\g)$.   It follows from the proof 
of \cite[Prop.~5.3]{MPPS} that  $\omega \in U \otimes \g$, 
where $U\subseteq A^1$ is a linear subspace of dimension 
at least $2$ which is isotropic with respect to the multiplication map 
$A^1\wedge A^1\to A^2$.   Clearly, $A_U:=\C\cdot 1\oplus U$ is a
finite sub-$\dga$ of $A$, and $\chi(H^{\hdot}(A_U))<0$.  
By \cite[Prop.~2.4]{MPPS}, we have that $\F(A_U,\g)=\RR^1_1(A_U,\theta)$.  
Therefore, $\omega \in \RR^1_1(A_U,\theta)\subseteq \RR^1_1(A,\theta)$, 
and this completes the proof.
\end{proof}

\begin{theorem}
\label{thm:kpencil}
Let $f\colon M\to N$ be the projection map of a principal $K$-bundle, 
where both $M$ and $N$ are smooth, closed manifolds, and $K$ 
is a compact, connected real Lie group. Let $G$ be a complex linear algebraic 
group, with non-abelian Lie algebra $\g\subseteq \sl_2(\C)$. 
Let $\iota \colon G \to \GL (V)$ be a rational representation. Let 
$f_{\sharp}\colon \pi_1(M)\surj \pi_1(N)$ be the induced homomorphism 
on fundamental groups, and let $\abf\colon \pi_1(M)\surj \pi_1(M)_{\abf}$ 
be the canonical projection. Suppose $N$ is formal. Then, 
\begin{align}
\label{eq:repincl-b}
\Hom(\pi_1(M),G)_{(1)} &= \abf^{!} \Hom(\pi_1(M)_{\abf},G)_{(1)} \cup 
f_{\sharp}^{!} \Hom(\pi_1(N),G)_{(1)}, 
\\
\intertext{and, for $i=r=1$ or $i=0$ and $r\ge 1$,}
\label{eq:vincl-b}
\VV^i_r(\pi_1(M),\iota)_{(1)}&= \abf^{!} \VV^i_r(\pi_1(M)_{\abf}, \iota)_{(1)} 
\cup f_{\sharp}^{!} \VV^i_r (\pi_1(N),\iota)_{(1)}.
\end{align}
\end{theorem}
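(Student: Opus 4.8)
The plan is to apply Theorem~\ref{thm:main} with $q=1$ to the two-element family $\{f, f_0\}$, where $f\colon M\to N$ is the bundle projection and $f_0\colon M\to K(\pi_1(M)_{\abf},1)$ classifies the torsion-free abelianization. First I would invoke Proposition~\ref{prop:hmodel}: since $N$ is formal, it has the finite model $A_N=(H^{\hdot}(N,\k),d=0)$, and Proposition~\ref{prop:hmodel} produces a finite model $A_M=A_N\otimes_\tau\bigwedge P$ for $M$, a $0$-connected inclusion $\Phi\colon A_N\hookrightarrow A_M$, and the equivalence $\Omega(f)\simeq\Phi$ in $\ACDGA_0$. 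Then I would feed this into Theorem~\ref{prop:2unif} (with $q=1$): this yields a $\dga$ map $\Phi_0\colon A_0\to A_M$, where $A_0=(\bwedgedot H^1(M),d=0)$, inducing an isomorphism on $H^1$, with $\Omega(f_0)\simeq\Phi_0$ uniformly with respect to the pair $\{f,f_0\}$, and it checks that all hypotheses of Theorem~\ref{thm:main} hold for $q=1$. Applying Theorem~\ref{thm:main} then gives local analytic isomorphisms $a\colon \F(A_M,\g)_{(0)}\isom\Hom(\pi_1(M),G)_{(1)}$, $a_0\colon\F(A_0,\g)_{(0)}\isom\Hom(\pi_1(M)_{\abf},G)_{(1)}$, and $a_N\colon\F(A_N,\g)_{(0)}\isom\Hom(\pi_1(N),G)_{(1)}$, compatible with $f_{\sharp}^!$, $\abf^!$, $\Phi_0\otimes\id$, $\Phi\otimes\id$, and likewise for the jump loci $\RR^i_r\leftrightarrow\VV^i_r$ for $i\le 1$.

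Having transported everything to the infinitesimal side, the task reduces to proving the two \emph{infinitesimal} equalities
\begin{equation}
\label{eq:flatgoal}
\F(A_M,\g)=(\Phi_0\otimes\id)\F(A_0,\g)\cup(\Phi\otimes\id)\F(A_N,\g)
\end{equation}
and, for $i=r=1$,
\begin{equation}
\label{eq:resgoal}
\RR^1_1(A_M,\theta)=(\Phi_0\otimes\id)\RR^1_1(A_0,\theta)\cup(\Phi\otimes\id)\RR^1_1(A_N,\theta)
\end{equation}
(the case $i=0$ following from \eqref{eq:repincl-b} via Corollary~\ref{cor:jumpnat-top}, and the case $b_1(M)=0$ being handled separately exactly as in the proof of Theorem~\ref{thm:rk2k}, using \eqref{eq:simvoid} and \cite[Thm.~A]{DP-ccm}). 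The inclusions $\supseteq$ come from functoriality, i.e.\ \eqref{eq:riincl} together with Corollary~\ref{cor:jumpnat-inf}. For $\subseteq$ in \eqref{eq:flatgoal}, I would split a flat connection $\omega\in\F(A_M,\g)$ according to whether $\omega\in\F^1(A_M,\g)$: if so, Lemma~\ref{lem:l4k}\eqref{l41} (applied to $\Phi_0\colon A_0\to A_M$, legitimate because $H^1(\Phi_0)$ is an isomorphism and $A_M$ is $1$-finite) shows $\omega\in(\Phi_0\otimes\id)\F(A_0,\g)=(\Phi_0\otimes\id)\F^1(A_0,\g)$; if not, Lemma~\ref{lem:flatres} gives $\omega\in\RR^1_1(A_M,\theta)$, so it suffices to treat \eqref{eq:resgoal}.

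For \eqref{eq:resgoal}, I would first establish the rank one version, namely $\RR^1_1(A_M)=(\Phi_0\otimes\id)\RR^1_1(A_0)\cup\bigcup(\dots)$ — here the relevant family is just $\{f_0,f\}$ — by citing the results of \cite{PS-15} on resonance of principal bundles with formal base (this is the place the hypothesis ``$N$ formal'' is really used, beyond giving the model), exactly as the authors announced in the introduction. Given the rank one equality, Lemma~\ref{lem:mppsfam} upgrades it to
\[
\RR^1_1(A_M,\theta)\cap\F^1(A_M,\g)\subseteq\Pi(A_M,\theta)\cup(\Phi\otimes\id)\RR^1_1(A_N,\theta),
\]
and since $\Pi(A_M,\theta)\subseteq\RR^1_1(A_0',\theta)$-type loci are captured by Lemma~\ref{lem:l4k}\eqref{l42} ($\Pi(A_M,\theta)=(\Phi_0\otimes\id)\RR^1_1(A_0,\theta)$), this disposes of the elements of $\RR^1_1(A_M,\theta)$ lying in $\F^1(A_M,\g)$. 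For $\omega\in\RR^1_1(A_M,\theta)\setminus\F^1(A_M,\g)$, I would argue as in the proof of Lemma~\ref{lem:flatres}: such $\omega$ lies in $U\otimes\g$ for an isotropic subspace $U\subseteq A^1_M$ of dimension $\ge 2$; the sub-$\dga$ $A_U=\C\cdot 1\oplus U$ has $\chi<0$, so $\F(A_U,\g)=\RR^1_1(A_U,\theta)$ by \cite[Prop.~2.4]{MPPS}, and one then traces $U$ back — via the bundle structure $A_M=A_N\otimes_\tau\bigwedge P$ with $P$ in odd degrees so that $A^1_M=A^1_N$ — to conclude $\omega$ is pulled back from $A_N$. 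The main obstacle I anticipate is precisely this last step: verifying the rank one resonance equality for the total space from \cite{PS-15} and correctly matching the ``positive-dimensional component'' bookkeeping with the terms $f_{\sharp}^!$ and $\abf^!$, i.e.\ ensuring that the pencils detected on $A_N$ and the abelian part on $A_0$ together exhaust $\RR^1_1(A_M,\theta)$; the degree-$1$ identity $A^1_M=A^1_N\oplus(\text{nothing})$ forced by $P$ odd is what makes this manageable.
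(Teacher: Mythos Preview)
Your overall architecture---invoke Proposition~\ref{prop:hmodel}, then Theorem~\ref{prop:2unif}, then Theorem~\ref{thm:main} to transport the problem to the infinitesimal side, and finally handle the $\F^1$ piece via Lemma~\ref{lem:l4k} and the rank-one resonance via \cite{PS-15} plus Lemma~\ref{lem:mppsfam}---matches the paper. The gap is in how you treat the non-$\F^1$ part.

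You apply Lemma~\ref{lem:flatres} to $A_M$ in two places: once to reduce \eqref{eq:flatgoal} to \eqref{eq:resgoal}, and once to analyze $\omega\in\RR^1_1(A_M,\theta)\setminus\F^1(A_M,\g)$. But Lemma~\ref{lem:flatres} (and the isotropic-subspace argument from its proof, which rests on \cite[Prop.~5.3]{MPPS}) requires $d=0$, whereas $A_M=A_N\otimes_\tau\bigwedge P$ has differential given by the transgression $\tau$ on $P$, which is generally nonzero. Relatedly, your claim that ``$P$ in odd degrees so that $A^1_M=A^1_N$'' is false whenever $P^1\neq 0$; for instance if $K$ has a circle factor then $A^1_M=A^1_N\oplus P^1$ strictly. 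So neither your reduction of \eqref{eq:flatgoal} to \eqref{eq:resgoal} nor your treatment of the non-$\F^1$ resonance elements goes through.

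The paper avoids this by citing \cite[Prop.~5.3]{PS-15} directly, which gives the \emph{global} decomposition $\F(A_M,\g)=\F^1(A_M,\g)\cup(\Phi\otimes\id)\F(A_N,\g)$; this is the key external input you are missing for \eqref{eq:flatgoal}. With that in hand, an element $\omega\in\RR^1_1(A_M,\theta)\setminus\F^1(A_M,\g)$ can be written as $(\Phi\otimes\id)(\omega')$ with $\omega'\in\F(A_N,\g)\setminus\F^1(A_N,\g)$, and now Lemma~\ref{lem:flatres} \emph{does} apply---to $A_N$, which has $d=0$ by formality---yielding $\omega'\in\RR^1_1(A_N,\theta)$. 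In short: apply Lemma~\ref{lem:flatres} to the base $A_N$, not to the total space $A_M$, and get the flat-connection decomposition from \cite{PS-15} rather than trying to derive it from the resonance statement.
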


\begin{proof}
By Corollary \ref{cor:jumpnat-top}, equality of germs at $1$ in \eqref{eq:repincl} 
implies equality at $1$ in \eqref{eq:vincl} for $i=0$ and $r\ge 1$.  Thus, 
in order to verify equality \eqref{eq:vincl-b}, it is enough to assume $i=r=1$.  
As we saw in the proof of Theorem \ref{thm:rk2k}, both our claims 
hold trivially when $b_1(M)=0$. 
Consequently, we may also assume that $b_1(M)>0$.  

Since the orbit space $N$ is formal, we may take as a model for it 
the $\dga$ $A_N=(H^{\hdot}(N), d=0)$.  
As usual, let $[\tau]\colon P^{\hdot}\to H^{\hdot+1}(N)$ 
be the transgression in the Serre spectral sequence of the fibration $K\to M\to N$. 
By Proposition \ref{prop:hmodel}, the Hirsch extension 
$A_M:=A_N\otimes_{\tau} \bigwedge P$ is a finite $\dga$ model for $M$,   
and  the canonical inclusion 
$\Phi\colon H^{\hdot}(N) \inj A^{\hdot}_M$ is a model for the  
map $\Omega(f)\colon \Omega(N) \to \Omega(M)$.  

Now set $A^{\hdot}_0=(\bwedgedot H^1(M), d=0)$, and let   
$f_0\colon M\to K(\pi_1(M)_{\abf},1)$ be the canonical map 
defined by the homomorphism $\abf$. 
By Theorem \ref{prop:2unif} (with $q=1$), 
there is a $\dga$ map $\Phi_0\colon A_0\to A_M$  
such that $\Omega(f_0)\simeq_1 \Phi_0$ and $\Omega(f)\simeq_1 \Phi$
in $\ACDGA_0$, uniformly with respect to the 
families $\{f, f_0\}$ and $\{\Phi, \Phi_0\}$. 
Since, as was mentioned in Proposition \ref{prop:hmodel}, 
both $f$ and $\Phi$ are $0$-connected, 
Theorem \ref{thm:main} applies, giving  
an analytic isomorphism of embedded germs, 
\begin{multline}
\label{eq:kpen1}
\left(\Hom(\pi_1(M),G) , \abf^{!} \Hom(\pi_1(M)_{\abf},G) \cup 
f_{\sharp}^{!} \Hom(\pi_1(N),G) \right)_{(1)} \cong 
\\  
\left(\F(A_M,\g) , (\Phi_0\otimes \id) \F(A_0,\g)\cup 
(\Phi\otimes \id) \F(A_N,\g) \right)_{(0)}.
\end{multline}

On the other hand, Proposition 5.3 from \cite{PS-15} guarantees the 
global equality
\begin{equation}
\label{eq:kpen2}
\F(A_M,\g) = \F^1(A_M,\g)\cup (\Phi\otimes \id) \F(A_N,\g).
\end{equation}

We may also apply Lemma \ref{lem:l4k} to the map $\Phi_0$  to deduce 
the global equalities 
\begin{align}
\label{eq:kpen3}
 \F^1(A_M,\g) &= (\Phi_0\otimes \id) \F(A_0,\g), \\
  \Pi(A_M,\theta) &= (\Phi_0\otimes \id) \RR^1_1(A_0,\theta). \notag
\end{align}

Using equations \eqref{eq:kpen1}--\eqref{eq:kpen3} as well as 
Theorem \ref{thm:main}, we see that, in order to complete the proof, 
it is enough to show the following: if the inclusion  
\begin{align}
\label{eq:kpen4}
\F(A_M,\g) &\subseteq  \F^1(A_M,\g)\cup (\Phi\otimes \id) \F(A_N,\g) 
\intertext{holds, then the inclusion}
\label{eq:kpen5}
\RR^1_1(A_M,\theta) &\subseteq  \Pi(A_M,\theta)\cup (\Phi\otimes \id) \RR^1_1(A_N,\theta)
\end{align}
also holds. Pick  $\omega \in \RR^1_1(A_M,\theta)$.  There are 
two cases to consider. 

First suppose that $\omega \in \F^1(A_M,\g)$. Using \cite[Prop.~5.5]{PS-15} 
and induction on the dimension of $P^1$, we see that $\RR^1_1(A_M)\subseteq 
\{0\}\cup \Phi (\RR^1_1(A_N))$.  Hence, we may apply 
Lemma \ref{lem:mppsfam} (for $i=q=1$) to the one-element family 
$\{\Phi\colon A_N\inj A_M\}$   and conclude that 
$\omega \in \Pi(A_M,\theta)\cup (\Phi\otimes \id) \RR^1_1(A_N,\theta)$, 
as required. 

Finally, suppose that $\omega \not\in \F^1(A_M,\g)$.   Then 
$\omega =\Phi\otimes \id (\omega')$, for some $\omega' \in \F(A_N,\g)  
\setminus$  $ \F^1(A_N,\g)$, by assumption \eqref{eq:kpen4}.  
By Lemma \ref{lem:flatres}, we have that $\omega'\in \RR^1_1(A_N, \theta)$. 
This verifies that \eqref{eq:kpen5} holds in this case, too, thereby 
completing the proof.
\end{proof}

Theorem \ref{thm:kpencil} improves on Theorem 1.5(2) from \cite{PS-15}, where 
an extra assumption  (injectivity of the transgression in degree $1$) was 
required. Our stronger result here is of the same flavor as  the equality 
\eqref{eq:repincl-k} from Theorem \ref{thm:rk2k}, in the context provided by  
Remark \ref{rem:tri}.  Namely, if $G$ 
is a $\C$-linear algebraic group with Lie algebra $\g$ as above, 
and if $\rho \colon \pi_1(M)\to G$ is a representation near the origin $1$, then 
$\rho$ is either abelian or pulls back  via $f$ from a $G$-representation 
of $\pi_1(N)$.

\section{Quasi-projective manifolds}
\label{sect:qproj}

We conclude with another interesting class of examples  
where the uniform property holds for one-element families 
of maps, namely, regular maps between  
smooth, quasi-projective varieties.  We also derive a non-compact 
analogue of Theorem \ref{thm:rk2k} for a special class of quasi-projective 
manifolds, namely, complements of complex hyperplane arrangements. 

\subsection{Mixed Hodge diagrams}
\label{subsec:mhd}

Let $M$ be an irreducible, smooth, complex quasi-projec\-tive variety, 
or, for short, a {\em quasi-projective manifold}. Note that $M$ is a finite space.
By resolution of singularities,
we have that $M=\overline{M}\setminus D$, where $\overline{M}$ is a 
smooth projective variety, and $D\subset \overline{M}$ is a normal crossing 
divisor. A map between such pairs, $\bar{f}\colon (\overline{M},D) \to (\overline{M}',D')$, is 
called a {\em regular morphism}\/ if the map $\bar{f}\colon \overline{M}\to \overline{M}'$ 
is a regular map with the property that $\bar{f}^{-1}(D')\subseteq D$. Clearly, the restriction 
$f\colon \overline{M}\setminus D \to \overline{M}'\setminus D'$ is also a regular map. 
Conversely, any regular map between quasi-projective manifolds is induced by a
regular morphism between convenient compactifications with normal crossing 
divisors. 

We want to prove a quasi-projective analogue of Proposition \ref{prop:mainf}. 
For that, we will need the theory of relative minimal models for mixed Hodge 
diagrams (MHDs, for short), developed by Cirici and Guill\'{e}n in \cite{CG}. 
We start by recalling some pertinent definitions and results from \cite{CG}. 

The objects of the category $\FDGA$ are of the form 
$(A^{\hdot}, W_{\sdot})$, where $(A^{\hdot}, d)$ is a $\dga$ defined over $\Q$ 
and $W_{\sdot}$ is an increasing, multiplicative, regular, exhaustive filtration 
on $(A^{\hdot}, d)$, called a {\em weight}\/ filtration.  
Such an object gives rise to a spectral sequence  in the category of 
bigraded $\dga$s, $\{E_r(A)\}_{r\ge 1}$, which converges to $H^{\hdot}(A)$.  
A morphism in $\FDGA$ is a $\dga$ map which respects filtrations.  Such a 
morphism $\psi$ induces a map of spectral sequences, $\{E_r(\psi)\}_{r\ge 1}$.

The objects of the category $\MHD$ are strings of morphisms 
in $\FDGA$ defined over $\C$, 
\begin{equation}
\label{eq:filtelem}
\xymatrixcolsep{24pt}
\HH:\ \xymatrix{
A_0  \ar^{\psi_0} [r] & A_1  & \ar[l] \cdots \ar[r] 
& A_{\ell-1}  & A_{\ell} \ar_(.4){\psi_{\ell-1}}[l] 
},
\end{equation}
where $(A_0,W)$ is defined over $\Q$ and all the induced maps 
$E_2(\psi_i)$ are isomorphisms. There are also additional data and 
axioms, related to the mixed Hodge structure (MHS) on 
$A_{\ell}$, see \cite[Def.~3.1]{CG}. A morphism of mixed Hodge diagrams, 
$\Phi\colon \HH'\to \HH$, is a tuple of $\fdga$ maps, $(\Phi_0,\Phi_1,\dots,\Phi_{\ell})$, 
commuting with the maps $\psi'$ and $\psi$, and such that $\Phi_0$ 
is defined over $\Q$.  There is also an extra condition on $\Phi_{\ell}$ 
pertaining to the MHS, see \cite[Def.~3.5]{CG}.

\subsection{The Gysin model of Morgan and Navarro}
\label{subsec:gysin}

Returning to our setup, let $M$ be a quasi-projective manifold, 
and let $\overline{M}=M\cup D$ be a normal-crossing compactification. 
Given these data, Navarro constructs in \cite{N} a mixed Hodge diagram 
$\HH(\overline{M},D)$, functorial with respect to regular morphisms of 
pairs (see also Hain \cite{H1}). Furthermore, there is an equivalence 
$A_0(\overline{M},D)\simeq \Omega_{\C}(M)$ in $\CDGA$, natural 
with respect to the pair $(\overline{M},D)$.  Moreover, 
$E_1(A_0(\overline{M},D))$ is isomorphic (as a bigraded $\dga$) 
to $\MG(\overline{M},D)$, the {\em Gysin model}\/ of 
$M=\overline{M}\setminus D$ constructed by Morgan 
in \cite{Mo, Mo-86} (see also Dupont \cite{Du}).
Note that this is a finite $\C$-model, defined 
over $\Q$, and that 
$\MG(\overline{M},\emptyset)=(H^{\hdot}(\overline{M}),d=0)$.  

Suppose $\bar{f}\colon (\overline{M},D) \to (\overline{M'},D')$ is a regular 
morphism, such that the restriction $f\colon M\to M'$ 
preserves basepoints. Naturality in the sense of Navarro yields an equivalence 
\begin{equation}
\label{eq:phizero}
\Omega_{\C} (f) \simeq \Phi_0(\bar{f}) 
\end{equation}
in $\C$-$\ACDGA_0$. Following Cirici and Guill\'{e}n \cite{CG}, 
we define  
\begin{equation}
\label{eq:mgnav}
\Phi (\overline{f})= E_1(\Phi_0 (\overline{f})) \colon 
\MG(\overline{M} ',D') \to \MG(\overline{M},D),
\end{equation} 
over $\C$. 

\begin{prop}
\label{lem:mainqp}
Let $f\colon M\to M'$ be a pointed, regular map between quasi-projective 
manifolds, inducing an injection on $H^1$.   Extend $f$ to a regular  
morphism, $\bar{f}\colon (\overline{M},D) \to (\overline{M'},D')$, 
by adding divisors with normal crossings in a suitable manner. 
Then $\Omega_{\C}(f) \simeq \Phi (\overline{f})$ in $\C$-$\ACDGA_0$. 
\end{prop}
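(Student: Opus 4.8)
The plan is to mimic the argument of Proposition~\ref{prop:mainf}, replacing the Sullivan minimal model machinery by the relative minimal model theory for mixed Hodge diagrams of Cirici--Guill\'en \cite{CG}, and then to transfer the resulting zig-zag through the functors $A_0(-)$ and $E_1(-)$. First I would invoke the existence of a relative minimal model for the morphism of mixed Hodge diagrams $\HH(\bar f)\colon \HH(\overline{M'},D')\to \HH(\overline{M},D)$: by \cite[\S5--6]{CG}, since $f$ induces an isomorphism on $H^0$, there is a relative minimal MHD $\MM=(\MM'\otimes \bwedge U)$, together with a weak equivalence of MHDs $h\colon \MM\to \HH(\overline{M},D)$ restricting to a minimal model $h'\colon\MM'\to \HH(\overline{M'},D')$ and with $\left.h\right|_{\MM'}$ covering $\HH(\bar f)$. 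This yields, componentwise, a commuting (up to homotopy) diagram of $\fdga$-morphisms
\[
\xymatrix{
\HH(\overline{M},D) & \MM \ar_{h}[l] \ar^{\bar h}[r] & \gr^W\!\MM\,,\\
\HH(\overline{M'},D') \ar^{\HH(\bar f)}[u] & \MM' \ar^{\iota}[u] \ar_{h'}[l] \ar^{\bar h'}[r] & \gr^W\!\MM'\,, \ar_{\gr^W\HH(\bar f)}[u]
}
\]
where the right-hand maps are the formality quasi-isomorphisms coming from the $E_1$-degeneration built into the MHD axioms, and $\iota$ is the canonical inclusion.

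Next I would apply the two functors. Taking $A_0$ of the left-hand square and using the naturality equivalence $A_0(\overline{M},D)\simeq \Omega_{\C}(M)$ from Navarro \cite{N} (together with \eqref{eq:phizero}), one obtains a zig-zag of $\cdga$-equivalences between $\Omega_{\C}(f)$ and the map induced by $\HH(\bar f)$ on the $A_0$-component. Taking $E_1$ of the whole diagram and using $E_1(A_0(\overline{M},D))\cong \MG(\overline{M},D)$ together with $E_1(\HH(\bar f))=\Phi(\bar f)$ from \eqref{eq:mgnav}, one sees that the $E_1$-pages of $\MM$ and $\gr^W\MM$ are both formal $\dga$s quasi-isomorphic to $\MG(\overline{M},D)$ (and similarly over $M'$), so that $\Phi(\bar f)$ is, up to a zig-zag of $\cdga$-equivalences, the map induced by $\HH(\bar f)$ on $E_1$. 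Composing these identifications shows $\Omega_{\C}(f)\simeq \Phi(\bar f)$ in $\C$-$\CDGA_0$.

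Finally I would promote this to $\C$-$\ACDGA_0$. Here the injectivity hypothesis on $H^1(f)$ enters exactly as in Proposition~\ref{prop:mainf}: it forces the relevant composite to be a $0$-equivalence, hence (by the discussion of weak relative minimal models in \S\ref{subsec:minmod}) the new generators satisfy $U^0=0$, so the relative minimal MHD $\MM$ has connected underlying algebra; the same then holds for its $A_0$- and $E_1$-components. All the $\dga$s in the two zig-zags are therefore in $\CDGA_0$, they are canonically augmented by the basepoint (or by $\varepsilon_{\MM}$), and every map in sight preserves augmentations, since the MHD weak equivalences of \cite{CG} restrict over a point to the identity on $\C$. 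I expect the main obstacle to be the bookkeeping needed to check that the relative minimal model theory of \cite{CG} genuinely produces a diagram whose $A_0$- and $E_1$-components are \emph{simultaneously} zig-zags of augmented $\cdga$ quasi-isomorphisms (connectivity of the fiber, compatibility of augmentations through $\gr^W$), rather than anything deep; the formality of the associated graded and the $E_1$-degeneration are precisely what make the $E_1$-component land in the formal situation, and that is the crux that makes $\Phi(\bar f)$, not just $\Phi_0(\bar f)$, a model for $f$.
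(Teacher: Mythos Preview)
Your proposal is correct and follows essentially the same route as the paper: both arguments invoke the Cirici--Guill\'en relative minimal model for the morphism of mixed Hodge diagrams $\HH(\bar f)$, use the $H^1$-injectivity of $f$ to force $U^0=0$ and hence connectedness (so that augmentations are automatic), and then pass to $E_1$ to reach the Gysin models and the map $\Phi(\bar f)=E_1(\Phi_0(\bar f))$.

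The one organizational difference worth flagging is your right-hand column with $\gr^W\!\MM$ and the maps $\bar h,\bar h'$. The paper does not introduce this extra column; instead it works entirely on the $\Q$-component, extracting from \cite[Thms.~3.17, 3.19]{CG} a single commuting square in $\FDGA$ with $\bwedge U'$ and $\bwedge U'\otimes\bwedge U$ on the right, and then applies the functor $E_1$ directly to that square. The crux is the identification $E_1(\bwedge U')=\bwedge U'$, $E_1(\bwedge U'\otimes\bwedge U)=\bwedge U'\otimes\bwedge U$, and $E_1(j)=j$, coming from Deligne's splitting over $\C$ (Morgan \cite[Thm.~9.6]{Mo}, \cite[Lem.~3.20]{CG}). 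This is cleaner than routing through $\gr^W$ (which is $E_0$, not $E_1$) and then invoking $E_1$-degeneration separately; it also avoids having to justify the existence of a genuine $\dga$ map $\MM\to\gr^W\!\MM$, which is exactly what the Deligne splitting provides but which you leave implicit. In substance the two executions coincide.
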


\begin{proof}
Looking at $\Q$-components of MHDs and ignoring additional 
MHS data, we extract from \cite[Theorems 3.17 \& 3.19]{CG} the 
following commuting square in $\FDGA$:
\begin{equation}
\label{eq:hrelmodel}
\begin{gathered}
\xymatrixcolsep{30pt}
\xymatrix{
A_0(\overline{M},D) &\bigwedge U' \otimes \bigwedge U 
\ar_(.52){\rho}[l] 
\\
A_0(\overline{M'},D') \ar^{\Phi_0(\bar{f})}[u] &\bigwedge U'\ar_(.45){\rho'}[l] 
\ar@{^{(}->}^{j}[u] 
}
\end{gathered}
\end{equation}

By \cite[Lemma~3.4]{CG}, the induced maps $E_2(\rho)$ 
and $E_2(\rho')$ are known to be isomorphisms. Hence, the maps 
$\rho$ and $\rho'$ are quasi-isomorphisms. Furthermore, the 
$\CDGA$ diagram underlying \eqref{eq:hrelmodel} has the 
following properties: $\rho'$ is a minimal model map, and $\rho$ 
is a relative minimal model map for $\Phi_0(\bar{f}) \circ \rho'$, 
in the sense of \S\ref{subsec:minmod}. 

Our injectivity assumption on $H^1(f)$, together with the equivalence 
from \eqref{eq:phizero}, 
imply that the map $\Phi_0(\bar{f}) \circ \rho'$ is a $0$-equivalence.  
Using the discussion from  \S\ref{subsec:minmod}, we infer that 
both $\bigwedge U'$ and $\bigwedge U' \otimes \bigwedge U$ 
are connected $\dga$s. In particular, all maps from diagram \eqref{eq:hrelmodel} 
respect augmentations. 

It's time now to take into account the available MHS data.  We know from 
the work of Cirici and Guill\'{e}n that the map $j$ is actually a morphism of 
mixed Hodge $\dga$s, in the sense of \cite[Definition 3.14]{CG}.  
According to Deligne's functorial splitting over $\C$ of mixed Hodge 
structures, we have the following identifications in $\CDGA$, 
\begin{equation}
\label{eq:deligne}
E_1(\bwedge U') = \bwedge U', \quad 
E_1(\bwedge U'\otimes \bwedge U) = \bwedge U' \otimes \bwedge U, \quad
E_1(j)=j.
\end{equation}
This can be verified using the argument of Morgan from \cite[Thm.~9.6]{Mo}.   
See also \cite[Lemma 3.20]{CG}, where no extra finite-type assumptions 
are needed (over $\C$).

Applying the $E_1$ functor to diagram \eqref{eq:hrelmodel}, we obtain 
the following commuting diagram in $\CDGA$,
\begin{equation}
\label{eq:e1sq}
\begin{gathered}
\xymatrixcolsep{30pt}
\xymatrix{
E_1(\bigwedge U' \otimes \bigwedge U)
\ar^(.60){E_1(\rho)}[r] 
&E_1(A_0)
\ar@{=}[r]
& \MG(\overline{M},D) 
\\
E_1(\bigwedge U') 
\ar^{E_1(j)}[u] \ar^(.60){E_1(\rho')}[r] 
&E_1(A_0')  
\ar_{E_1(\Phi_0)}[u]  \ar@{=}[r]
& \MG(\overline{M'},D')
\ar_{\Phi (\bar{f})}[u] 
}
\end{gathered}
\end{equation}

Here both horizontal arrows are quasi-isomorphisms, since 
$E_2(\rho)$ and $E_2(\rho')$ are isomorphisms.  Since all 
$\dga$s in sight are connected, \eqref{eq:e1sq} is a commuting 
diagram in $\ACDGA$. The desired conclusion follows by putting 
together the information from displays \eqref{eq:phizero} and
\eqref{eq:hrelmodel}--\eqref{eq:e1sq}. 
\end{proof}

\begin{remark}
\label{rem:deligne}
As mentioned previously, it is known that the Navarro model 
$E_1(A_0(\overline{M},D))$ is isomorphic in $\CDGA$ 
to Morgan's Gysin model $\MG(\overline{M},D)$. It is also known that 
the latter is functorial with respect to regular morphisms of pairs; see
\cite{Du} for a convenient, explicit description of the $\dga$ map 
$\MG (\bar{f}) \colon \MG(\overline{M} ',D') \to \MG(\overline{M},D)$
induced by $\bar{f}\colon (\overline{M},D) \to (\overline{M'},D')$.
But we do not know whether under this identification on objects the map 
$\MG(\bar{f})$ coincides with the map $\Phi (\bar{f})$ defined in \eqref{eq:mgnav}. 
If that were the case, one could use \cite[Ex.~5.3]{DP-ccm} to infer 
that the map $\Phi (\bar{f})=\MG(\bar{f})$ is injective, whenever $f\colon M\to M'$
is a regular surjection onto a curve, with connected generic fiber.
This observation, together with Proposition \ref{lem:mainqp}, would 
then imply that the conclusions of Corollary \ref{cor:unione} 
hold for regular admissible maps defined on quasi-projective manifolds, 
in the case when $q=1$. 
\end{remark}

\subsection{Hyperplane arrangements}
\label{subsec:arrs}

Let $\AA$ be an arrangement of hyperplanes, that is, a finite, 
non-empty collection of complex affine hyperplanes in $\C^{\ell}$, for some $\ell>0$.   
The union of these hyperplanes is an affine 
hypersurface, $V_\AA$,  defined by an equation of the form $Q_{\AA}=0$, 
where $Q_{\AA}=\prod_{H\in \AA} \alpha_H$ and $\alpha_H=0$ is a linear 
equation defining the hyperplane $H$.   
The complement of the arrangement, $M_\AA=\C^{\ell}\setminus V_\AA$, 
is a connected, smooth, quasi-projective variety, which has the homotopy type 
of a finite CW-complex of dimension at most $\ell$. 

A nice feature of this class of quasi-projective manifolds is that 
formality over $\k=\R$ or $\C$ holds in the following strong sense. 
For each $H\in \AA$, the logarithmic $1$-form
\begin{equation}
\label{eq:eh}
\xi_H = \frac{1}{2\pi \ii}\, d \log \alpha_H \in \Omega_{\DR}(M_{\AA})
\end{equation}
is a closed form.  Let $e_H\in H^1(M_{\AA},\k)$ be the cohomology class 
corresponding to $[\xi_H]\in H^1_{\DR}(M_{\AA})$ under the 
de Rham isomorphism.  It is known that $\{e_H \mid H\in \AA\}$ 
forms a basis for $H^1(M_{\AA},\k)$.  Thus, the $\k$-linear map  
$\xi_{\AA}\colon H^1(M_{\AA},\k) \to \Omega^1_{\DR}(M_{\AA})$ 
sending each $e_H$ to $\xi_H$ yields an isomorphism 
$[\xi_{\AA}] \colon H^1(M_{\AA},\k) \isom H^1_{\DR}(M_{\AA})$.

The celebrated Brieskorn--Orlik--Solomon theorem (see \cite{OT}) 
states that the cohomology ring $H^{\hdot}(M_{\AA},\Z)$ 
is the quotient of the exterior algebra $\bwedgedot H^{1}(M_{\AA},\Z)$ 
by an ideal generated in degrees at least $2$ and depending only on 
the intersection lattice of $\AA$. Moreover,
the extension of $\xi_{\AA}$ to a $\dga$ map, 
$\xi_{\AA} \colon (\bwedgedot H^{1}(M_{\AA},\k),d=0) \to 
\Omega^{\hdot}_{\DR}(M_{\AA})$,  factors through a  quasi-isomorphism 
\begin{equation}
\label{eq:os}
\xymatrix{\xi_{\AA}\colon  (H^{\hdot}(M_{\AA},\k) , d=0) \ar[r]& 
\Omega^{\hdot}_{\DR}(M_{\AA})}.
\end{equation}

We now suppose that the arrangement $\AA$ is {\em central}, i.e., all
hyperplanes $H\in \AA$ pass through the origin $0\in \C^{\ell}$.
For the purpose of studying the fundamental group $\pi=\pi_1(M_{\AA})$, 
we may assume that $\AA$ is a central arrangement in $\C^3$. This 
can be achieved by taking a generic $3$-slice  (if $\ell>3$), or taking the 
product with $\C^{3-\ell}$ (if $\ell<3$); neither operation changes the 
fundamental group of the complement.

Recall that $M_{\AA}$ is a quasi-projective manifold.  
By the discussion from \S\ref{subsec:arapura}, there is a 
finite set $\cE(M_{\AA})$ of admissible maps $f\colon M_{\AA} \to M_f$ 
(up to reparametrization at the target), such that $M_f$ is a smooth 
curve with $\chi(M_f)<0$.  It turns out that the mixed Hodge structure 
on $M_{\AA}$ is pure of weight $2$.  Consequently, each curve $M_f$ must be of the 
form $\CP^1 \setminus \{\text{$k$ points}\}$, for some $k\ge 3$.  

Falk and Yuzvinsky gave in \cite{FY} a particularly nice, combinatorial 
description of the set $\cE(M_{\AA})$, well-suited for our purposes here 
(see also \cite[\S 5]{DS} and \cite[\S 6]{PS-14}). The key combinatorial 
notion is that of a multinet. Given an integer $k\ge 3$, a {\em $k$-multinet}\/ 
$\NN$ on a central arrangement $\AA$ in $\C^3$ consists of a 
partition, $\AA=\AA_1\sqcup \cdots \sqcup \AA_k$, and a multiplicity function, 
$m\colon \AA \to \N$, satisfying several axioms, one of which being 
that the sum $\sum_{H\in \AA_{i}} m_H$ is independent of $i\in [k]$. 

The multinet axioms imply that the polynomials $Q_i=\prod_{H\in \AA_i} \alpha_H^{m_H}$ 
belong to a pencil of curves, that is, for each $i>2$ there are constants 
$a_i$ and $b_i$ such that $Q_i=a_i Q_1+b_i Q_2$. 
Consider the central line arrangement $\LL=\{\LL_1,\dots ,\LL_k\}$ in $\C^2$, 
with $\LL_i=\{g_i=0\}$, where $g_1=z_1$, $g_2=z_2$,  
and $g_i=a_i z_1+b_i z_2$ for $i>2$.  Let $f_{\NN}\colon M_{\AA}\to M_{\LL}$ 
be the regular map  with components $(Q_1,Q_2)$.  Projectivizing, 
we obtain an admissible map, 
\begin{equation}
\label{eq:mpen}
f_{\NN}\colon M_{\AA}\to \CP^1 \setminus \{\text{$k$ points}\}.
\end{equation}

More generally, there is a complete set of representatives for $\cE(M_{\AA})$ consisting of
admissible maps $f_{\NN} \colon M_{\AA}\to \CP^1 \setminus \{\text{$k$ points}\}$
obtained by restricting to $M_{\AA}$ the map $f_{\NN}\colon M_{\B}\to \CP^1 \setminus \{\text{$k$ points}\}$,
where $\NN$ is a $k$-multinet on a sub-arrangement $\B\subseteq \AA$; see \cite[Corollary 6.6]{PS-14}.

Now set $n=\abs{\AA}$, and identify $\pi_{\abf}=\Z^n$.  
Let $\CC$ be the Boolean arrangement in $\C^n$, consisting of all 
coordinate hyperplanes.  Clearly, $M_{\CC}=(\C^{\times})^n$.   Consider 
the regular map $f_0\colon M_{\AA} \to M_{\CC}$  with components 
$(\alpha_H)_{H\in \AA}$. As noted for instance in \cite[Lem.~5.1]{DS}, 
the induced homomorphism, $(f_0)_{\sharp}\colon 
\pi_1(M_{\AA})\to \pi_1(M_{\CC})$, coincides with the canonical projection, 
$\abf \colon \pi\surj \pi_{\abf}$.  
We let $E(M_{\AA})=\cE(M_{\AA})\cup \{f_0\}$, as in \eqref{eq:em}.

By construction, all maps $f\in E(M_{\AA})$ are of the form  $f\colon M\to M_f$, where 
$M=M_\AA$ and $M_f=M_{\AA_f}$, for some (affine) arrangement $\AA_f$.  

\begin{prop}
\label{prop:unifarr}
Let $\AA$ be a central hyperplane arrangement in $\C^3$, and fix a basepoint 
in $M_\AA$.  For $\k=\R$ or $\C$,  we have that $\Omega_{\k}(f)\simeq H^{\hdot}(f,\k)$ 
in $\k$-$\ACDGA_0$, uniformly with respect to $f\in E(M_{\AA})$. 
\end{prop}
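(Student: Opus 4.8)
The plan is to reduce the uniform formality statement for the family $E(M_{\AA})$ to the existence of a \emph{single} functorial zig-zag of $\ACDGA$-equivalences, just as in the proof of Proposition~\ref{prop:unifk}, but now with functoriality only with respect to the restricted class of \emph{regular} morphisms of arrangement complements (equivalently, regular morphisms of normal-crossing compactifications). Recall from Definition~\ref{def:unifq} that $\Omega_{\k}(f)\simeq H^{\hdot}(f,\k)$ uniformly over $f\in E(M_{\AA})$ means: choosing, for each $f\colon M\to M_f$, the induced zig-zags $Z_f$ (from $\Omega(M)$ to $H^{\hdot}(M)$) and $Z'_f$ (from $\Omega(M_f)$ to $H^{\hdot}(M_f)$) as in~\eqref{eq:ziggyf}, the natural bijection $\beta_{Z_f}$ is independent of $f$. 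This independence is automatic the moment the zig-zag connecting $\Omega(M)$ to $(H^{\hdot}(M),d=0)$ is produced by a functor applied to the identity of $M$ and the maps $\psi_i, \psi'_i$ in~\eqref{eq:ziggyf} are the images under this functor of the structure map $f$.

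So the key step is: \emph{exhibit a functor from the category of arrangement complements with regular morphisms to $\ACDGA_0$, together with a natural zig-zag of quasi-isomorphisms from $\Omega_{\k}(-)$ to $(H^{\hdot}(-,\k),d=0)$}. This is exactly what the main result of Falk--Yuzvinsky~\cite{FY} provides, once combined with the Brieskorn--Orlik--Solomon description~\eqref{eq:os}: the Orlik--Solomon algebra $(H^{\hdot}(M_{\AA},\k),d=0)$ is functorial with respect to the morphisms in $E(M_{\AA})$ (each $f\in E(M_{\AA})$ has the form $f\colon M_{\AA}\to M_{\AA_f}$ for an arrangement $\AA_f$, and regular maps of this type induce $\dga$ maps on Orlik--Solomon algebras, sending generators $e_H$ to pullbacks of generators), and the comparison map $\xi_{\AA}$ of~\eqref{eq:os} is natural with respect to these morphisms because it is defined on generators by $e_H\mapsto \frac{1}{2\pi\ii}d\log\alpha_H$, a formula that commutes with pullback of defining forms. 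Thus $\xi_{\AA}$ itself is a natural transformation $(H^{\hdot}(-,\k),d=0)\Rightarrow \Omega^{\hdot}_{\DR}(-)$; composing with the natural de Rham zig-zag $\Omega^{\hdot}_{\DR}(-)\simeq\Omega^{\hdot}_{\k}(-)$ (natural for smooth maps, hence a fortiori for regular maps, and in $\ACDGA$ because on a point all terms equal $\k$) produces the required functorial zig-zag in $\ACDGA_0$.

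Concretely, I would proceed in three steps. \emph{Step 1:} Record that each $f\in E(M_{\AA})$ is realized as a regular map $M_{\AA}\to M_{\AA_f}$ in the sense of~\S\ref{subsec:arrs}, so that on cohomology it induces an $\ACDGA_0$ morphism $H^{\hdot}(f,\k)\colon (H^{\hdot}(M_{\AA_f},\k),d=0)\to(H^{\hdot}(M_{\AA},\k),d=0)$, with augmentations given by the basepoints. \emph{Step 2:} Check that the BOS comparison $\xi$ of~\eqref{eq:os} is natural on the morphisms of Step~1, i.e.\ $\Omega_{\DR}(f)\circ\xi_{\AA_f}=\xi_{\AA}\circ H^{\hdot}(f,\k)$, which follows from $f^*(d\log\alpha_{H'})=d\log(\alpha_{H'}\circ f)$ and the fact that $\alpha_{H'}\circ f$ is, up to a nonzero scalar, a product of the $\alpha_H$ defining the relevant hyperplanes of $\AA$ (this is where one uses that $f$ comes from a multinet pencil or from the coordinate map $f_0$). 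Observe also that all terms are augmented and all maps augmentation-preserving, since evaluation at the basepoint is natural. \emph{Step 3:} Splice with the natural de Rham equivalence $\Omega_{\DR}\simeq\Omega_{\k}$ of~\S\ref{subsec:models} to obtain a functorial zig-zag of quasi-isomorphisms in $\ACDGA$ from $\Omega_{\k}(M_{\AA})$ to $(H^{\hdot}(M_{\AA},\k),d=0)$, natural over $E(M_{\AA})$; by the discussion in~\S\ref{subsec:compare} this naturality forces $\beta_{Z_f}$ to be independent of $f$, which is precisely the uniform equivalence claimed.

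\emph{The main obstacle} is Step~2: one must verify that the Falk--Yuzvinsky pencil maps $f_{\NN}$ (with components $(Q_1,Q_2)$, projectivized) really do pull logarithmic generators of the target curve's cohomology back into the span of the logarithmic generators $\xi_H$ of $M_{\AA}$, \emph{compatibly} with the Orlik--Solomon ring map $H^{\hdot}(f)$. The content here is that the defining equation of each puncture of $\CP^1\setminus\{k\text{ points}\}$ pulls back along $f_{\NN}$ to $Q_i/Q_j$ for suitable $i,j$, so $f_{\NN}^*(d\log)$ of that coordinate is $d\log Q_i - d\log Q_j = \sum_{H\in\AA_i} m_H\,\xi_H - \sum_{H\in\AA_j} m_H\,\xi_H$, which lies in $H^1(M_{\AA},\k)$ and matches $H^1(f_{\NN})$ computed algebraically on the Orlik--Solomon algebra; the same, simpler, check works for $f_0$ with components $(\alpha_H)_{H\in\AA}$. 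Once this compatibility on $H^1$ is in hand, naturality in all degrees is forced because both $H^{\hdot}(f)$ and the relation ``$\xi$ commutes with $f$'' are determined by their restrictions to degree one (the source $(H^{\hdot}(-,\k),d=0)$ being generated in degree one as an algebra, and $\Omega_{\DR}$ being a $\dga$). This reduces the whole proposition to the explicit combinatorics already recorded in~\cite{FY} (and~\cite[\S5]{DS},~\cite[\S6]{PS-14}), so no genuinely new computation is needed beyond assembling these ingredients.
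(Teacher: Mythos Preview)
Your proposal is correct and follows essentially the same approach as the paper: both reduce the uniform statement to checking that the Brieskorn--Orlik--Solomon comparison map $\xi_{\AA}\colon (H^{\hdot}(M_{\AA}),d=0)\to\Omega_{\DR}(M_{\AA})$ is natural with respect to each $f\in E(M_{\AA})$ (your Step~2, the paper's diagram~\eqref{eq:natos}), then reduce to degree~$1$ by generation of the Orlik--Solomon algebra, and finally verify the pullback formulas $f_0^*(d\log z_H)=d\log\alpha_H$ and $f_{\NN}^*(d\log g_i)=\sum_{H\in\AA_i}m_H\,d\log\alpha_H$ explicitly. The only cosmetic difference is that the paper treats each target $M_f$ as an arrangement complement $M_{\AA_f}$ (so that $\xi_{\AA_f}$ is available in the same notation) rather than working directly with $\CP^1$ minus points, and invokes \cite[Lem.~6.3]{PS-14} for the multinet formula rather than rederiving it.
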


\begin{proof}
In view of the Brieskorn--Orlik--Solomon isomorphism \eqref{eq:os}, 
it is enough to check the commutativity of the following diagram 
in $\CDGA$:
\begin{equation}
\label{eq:natos}
\xymatrixcolsep{30pt}
\xymatrix{
(H^{\hdot}(M_{\AA}) , d=0) \ar^(.58){\xi_{\AA}}[r]& 
\Omega^{\hdot}_{\DR}(M_{\AA})\\
(H^{\hdot}(M_{\AA_f}) , d=0) \ar^(.58){\xi_{\AA_f}}[r] \ar^{H^{\hdot}(f)}[u] 
& \Omega^{\hdot}_{\DR}(M_{\AA_f}) \ar_{\Omega^{\hdot}_{\DR}(f)}[u]
}
\end{equation}

Since the cohomology ring of an arrangement complement is generated 
in degree $1$, we may assume that $\hdot=1$ in the above diagram. 
Using the explicit construction of the map $\xi$ in degree $1$, we can 
further reduce to showing that $\Omega_{\DR}(f) (d \log \alpha_{H'})$ belongs to the 
$\Z$-span of $\{ d \log \alpha_H  \mid H\in \AA\}$, for every $H'\in \AA_f$. 

First assume $f=f_0$. Then the claim follows  from 
the formula $\Omega_{\DR}(f_0) (d \log z_H) = d\log \alpha_H$, for 
every $H\in \AA$, which in turn follows directly from the 
definition of $f_0$. 

Next assume $f=f_{\NN}$, for some multinet $\NN$ on a sub-arrangement 
$\B\subseteq \AA$.  Clearly, we may assume that $\B=\AA$.  The claim 
is now an easy consequence of the formula 
$\Omega_{\DR}(f_{\NN}) (d \log g_i) = \sum_{H\in \AA_i} m_H \, d\log \alpha_H$, 
which is verified in \cite[Lem.~6.3]{PS-14}. 
\end{proof}

\begin{theorem}
\label{thm:rk2arr}
Let $\AA$ be a central hyperplane arrangement with complement $M=M_{\AA}$. 
Write $\pi=\pi_1(M)$, and, for each map $f\colon M\to M_f$ in $E(M)$, 
set $\pi_f=\pi_1(M_f)$.   Let $G$ be a $\C$-linear algebraic 
group with non-abelian Lie algebra $\g\subseteq \sl_2(\C)$, 
and let $\iota\colon G\to \GL(V)$ be a rational representation.  Then, 
\begin{align}
\label{eq:repincl-arr}
\Hom(\pi,G)_{(1)} &= \bigcup_{f\in E(M)} f_{\sharp}^{!} \Hom (\pi_f,G)_{(1)}, 
\\
\intertext{and, for $i=r=1$ or $i=0$ and $r\ge 1$,}
\label{eq:vincl-arr}
\VV^i_r(\pi,\iota)_{(1)}&= \bigcup_{f\in E(M)} f_{\sharp}^{!} \VV^i_r (\pi_f,\iota)_{(1)}.
\end{align}
\end{theorem}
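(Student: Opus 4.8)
The plan is to follow the scheme of Theorem \ref{thm:rk2k}, with the role of compact Kähler geometry played by the multinet combinatorics underlying Proposition \ref{prop:unifarr}. First I would apply Theorem \ref{thm:main} with $q=1$ to the finite family of pointed maps $\{f\colon M\to M_f\}_{f\in E(M)}$ and to the $\ACDGA_0$ maps $\Phi_f=H^{\hdot}(f)\colon H^{\hdot}(M_f)\to H^{\hdot}(M)$, all carrying the zero differential. The three hypotheses are checked directly: $M$ and every $M_f$ are finite spaces with finite-dimensional cohomology algebras, so all objects in sight are $\infty$-finite; every $f\in E(M)$ is $0$-connected (for $f=f_0$ because $(f_0)_{\sharp}=\abf$ is onto, and for $f\in\cE(M)$ because admissible maps induce surjections on $\pi_1$, see \S\ref{subsec:arapura}), hence so is $H^{\hdot}(f)$; and the uniform equivalence $\Omega_{\k}(f)\simeq H^{\hdot}(f)$ in $\ACDGA_0$ over the family $E(M)$ is exactly Proposition \ref{prop:unifarr}. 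Since the case $i=0$, $r\ge 1$ of \eqref{eq:vincl-arr} follows from \eqref{eq:repincl-arr} via Corollary \ref{cor:jumpnat-top}, and since $b_1(M)=\abs{\AA}>0$, it suffices to prove \eqref{eq:repincl-arr} and \eqref{eq:vincl-arr} with $i=r=1$; by the analytic isomorphisms of pairs furnished by Theorem \ref{thm:main}, both reduce to \emph{global} identities between the infinitesimal jump loci of $A=(H^{\hdot}(M),d=0)$ and those of the $A_f=(H^{\hdot}(M_f),d=0)$.

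To establish those, I would invoke \cite[Cor.~7.2]{MPPS}: since $M$ is a formal quasi-projective manifold, that result yields
\[
\F(H^{\hdot}(M),\g) = \F^1(H^{\hdot}(M),\g)\cup \bigcup_{f\in \cE(M)}  (H^{\hdot}(f)\otimes\id)\bigl(\F(H^{\hdot}(M_f),\g)\bigr)
\]
together with the analogous identity expressing $\RR^1_1(H^{\hdot}(M),\theta)$ as the union of $\Pi(H^{\hdot}(M),\theta)$ with the same family of images over $\cE(M)$. Exactly as in the proof of Theorem \ref{thm:rk2k}, I would absorb the two ``essentially rank one'' terms into the $f_0$-piece: writing $M_0=M_{\CC}=(\C^{\times})^{n}$ with $n=\abs{\AA}$, so that $H^{\hdot}(M_0)=\bwedgedot H^1(M)$ and $H^{\hdot}(f_0)$ is an isomorphism in degree $1$, Lemma \ref{lem:l4k}, parts \eqref{l41} and \eqref{l42}, identifies $(H^{\hdot}(f_0)\otimes\id)\F(H^{\hdot}(M_0),\g)$ with $\F^1(H^{\hdot}(M),\g)$ and $(H^{\hdot}(f_0)\otimes\id)\RR^1_1(H^{\hdot}(M_0),\theta)$ with $\Pi(H^{\hdot}(M),\theta)$. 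This rewrites both displayed identities as unions over all of $E(M)=\cE(M)\cup\{f_0\}$. Along the way one records that for $f\in\cE(M)$ the target $M_f$ is $\CP^1$ minus $k\ge 3$ points, so $\chi(M_f)=2-k<0$, and the standard Euler-characteristic estimate on twisted Betti numbers gives $\VV^1_1(\pi_f,\iota)=\VV^1_1(M_f,\iota)=\Hom(\pi_f,G)$; this is why the $\cE(M)$-terms coincide for the representation variety and for the depth-one jump locus.

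It then remains to transport these global identities to germs at the origin through the analytic isomorphisms of Theorem \ref{thm:main}, which carry $\F(H^{\hdot}(M),\g)_{(0)}$ to $\Hom(\pi,G)_{(1)}$, each $(H^{\hdot}(f)\otimes\id)\F(H^{\hdot}(M_f),\g)_{(0)}$ to $f_{\sharp}^{!}\Hom(\pi_f,G)_{(1)}$, and $\RR^1_1(H^{\hdot}(M),\theta)_{(0)}$ to $\VV^1_1(M,\iota)_{(1)}$; this yields \eqref{eq:repincl-arr} and \eqref{eq:vincl-arr} and completes the proof. With Proposition \ref{prop:unifarr} and \cite[Cor.~7.2]{MPPS} already at our disposal, no genuinely difficult step remains; the one point that must not be overlooked is that hypothesis \eqref{h3} of Theorem \ref{thm:main} is required for the \emph{enlarged} family $E(M)$ — i.e. for the abelianization classifier $f_0$ on the same footing as the multinet pencils — and this is precisely what Proposition \ref{prop:unifarr} was designed to deliver.
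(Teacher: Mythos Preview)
Your proposal is correct and follows essentially the same approach as the paper's own proof: invoke Proposition~\ref{prop:unifarr} to verify the uniformity hypothesis of Theorem~\ref{thm:main} for the family $E(M_{\AA})$, appeal to \cite[Cor.~7.2]{MPPS} (which is precisely how the paper justifies that the conclusions of Lemma~\ref{lem:l3k} persist for the formal quasi-projective manifold $M_{\AA}$), absorb the rank-one pieces via Lemma~\ref{lem:l4k} applied to $H^{\hdot}(f_0)$, and then transport the resulting global infinitesimal identities through Theorem~\ref{thm:main}. The only small omission is the preliminary reduction to $\ell=3$ (needed because Proposition~\ref{prop:unifarr} is stated for arrangements in $\C^3$), which the paper dispatches in one line and which you may take as understood from \S\ref{subsec:arrs}.
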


\begin{proof}
As noted before, we may assume $\ell=3$.
The argument we give is closely modeled on the proof of Theorem \ref{thm:rk2k}. 
To begin with, note that the conclusions of Lemma \ref{lem:l3k} hold for the 
formal, quasi-projective manifold $M=M_\AA$, with the same proof.  Next, 
consider the map $f_0\colon M_\AA \to M_\CC$, and the induced 
$\dga$ map $H^{\hdot}(f_0)\colon H^{\hdot}(M_\CC) \to 
H^{\hdot}(M_\AA) $, where both differentials are $0$.  
Since $M_\CC=(\C^{\times})^n$, where $n=\abs{\AA}$, 
we may identify  $H^{\hdot}(M_\CC)$ with  $\bwedgedot H^1(M_\CC)$. 
Furthermore, $H^1(f_0)$ is an isomorphism, by construction. 
Hence, Lemma \ref{lem:l4k} may be applied to the map $\Phi=H^{\hdot}(f_0)$. 

By Proposition \ref{prop:unifarr}, we have that $\Omega(f)\simeq H^{\hdot}(f)$ 
in $\ACDGA_0$, uniformly with respect to $f\in E(M_{\AA})$. 
We may now apply Theorem \ref{thm:main} for  $q=1$ to the family of 
pointed continuous maps $\{f\colon M_{\AA} \to M_{\AA_f}\}_{f\in E(M_\AA)}$ 
and the family of $\dga$ maps $\{H^{\hdot}(f)\colon H^{\hdot}(M_{\AA_f}) \to 
H^{\hdot}(M_\AA)\}_{f\in E(M_\AA)}$, where again all differentials are $0$. 
The rest of the argument goes exactly as in the proof of Theorem \ref{thm:rk2k}.
\end{proof}

\section*{Acknowledgement}
Part of this work was done while the second author visited the 
Institute of Mathematics of the Romanian Academy in June, 2016.  
He thanks IMAR for its hospitality, support, and excellent 
research atmosphere. 


\newcommand{\arxiv}[1]
{\texttt{\href{http://arxiv.org/abs/#1}{arxiv:#1}}}
\newcommand{\arx}[1]
{\texttt{\href{http://arxiv.org/abs/#1}{arXiv:}}
\texttt{\href{http://arxiv.org/abs/#1}{#1}}}
\newcommand{\doi}[1]
{\texttt{\href{http://dx.doi.org/#1}{doi:#1}}}
\renewcommand{\MR}[1]
{\href{http://www.ams.org/mathscinet-getitem?mr=#1}{MR#1}}

\end{document}